\newcommand{\Ccal}{\mathcal{C}}
\newcommand{\Dcal}{\mathcal{D}}
\newcommand{\Ecal}{\mathcal{E}}
\newcommand{\Fcal}{\mathcal{F}}
\newcommand{\Gcal}{\mathcal{G}}
\newcommand{\Ical}{\mathcal{I}}
\newcommand{\Ocal}{\mathcal{O}}
\newcommand{\Scal}{\mathcal{S}}
\newcommand{\Tcal}{\mathcal{T}}
\newcommand{\Ibb}{\mathbb{I}}
\newcommand{\Nbb}{\mathbb{N}}
\newcommand{\Rbb}{\mathbb{R}}
\newcommand{\Zbb}{\mathbb{Z}}
\newcommand{\yon}{\mathbf{y}}
\newcommand{\Set}{\mathbf{Set}}
\newcommand{\FinSet}{\mathbf{FinSet}}
\newcommand{\Pos}{\mathbf{Pos}}
\newcommand{\Arch}{\mathbf{Arch}}
\newcommand{\mArch}{\mathbf{mArch}}
\newcommand{\Hom}{\mathrm{Hom}}
\newcommand{\Cat}{\mathbf{Cat}}
\newcommand{\TOP}{\mathfrak{TOP}}
\newcommand{\op}{^{\mathrm{op}}}
\newcommand{\co}{^{\mathrm{co}}}
\newcommand{\too}{\twoheadrightarrow}
\mathchardef\mhyphen="2D
\DeclareMathOperator{\im}{im}
\DeclareMathOperator{\id}{id}
\DeclareMathOperator{\Sh}{Sh}
\DeclareMathOperator{\Span}{Span}
\DeclareMathOperator{\mSpan}{mSpan}
\DeclareMathOperator{\colim}{colim}
\tikzset{
  no line/.style={draw=none,
    commutative diagrams/every label/.append style={/tikz/auto=false}},
  from/.style args={#1 to #2}{to path={(#1)--(#2)\tikztonodes}}
	}
\tikzset{symbol/.style={draw=none, every to/.append style={edge node = {node [sloped, allow upside down, auto=false] {$#1$}}}}}
\newtheorem{thm}{Theorem}[section]
\newtheorem{proposition}[thm]{Proposition}
\newtheorem{prop}[thm]{Proposition}
\newtheorem{lemma}[thm]{Lemma}
\newtheorem{fact}[thm]{Fact}
\newtheorem{crly}[thm]{Corollary}
\newtheorem{corollary}[thm]{Corollary}
\newtheorem{scholium}[thm]{Scholium}
\newtheorem{schl}[thm]{Scholium}
\theoremstyle{definition}
\newtheorem{definition}[thm]{Definition}
\newtheorem{dfn}[thm]{Definition}
\newtheorem{xmpl}[thm]{Example}
\theoremstyle{remark}
\newtheorem{remark}[thm]{Remark}
\newtheorem{rmk}[thm]{Remark}
\date{\today} 
\title{On Supercompactly and Compactly Generated Toposes}
\author[1]{Morgan Rogers \thanks{Universit\`a degli Studi dell{'}Insubria, Via Valleggio n. 11, 22100 Como CO} \thanks{Marie Sklodowska-Curie fellow of the Istituto Nazionale di Alta Matematica} \thanks{email: mrogers@uninsubria.it}}
\begin{document}

\maketitle{}

\abstract{We present and characterize the classes of Grothendieck toposes having enough supercompact objects or enough compact objects. In the process, we examine the subcategories of supercompact objects and compact objects within such toposes and classes of geometric morphism which interact well with these objects. We also present canonical classes of sites generating such toposes.}

\tableofcontents

\section*{Introduction}

In \cite{TTMA}, the author of the current paper obtained a characterisation of toposes of right actions of discrete monoids on sets, which are a special case of presheaf toposes. A natural next step in this direction is to consider sets as discrete spaces and to examine the categories of actions of topological monoids on them. It turns out that the resulting categories, to which a future paper will be devoted, are also toposes, and that moreover they fall into the class of \textit{supercompactly generated toposes} which we present and thoroughly investigate in this paper.

By a supercompactly generated topos, we mean a topos with a separating set of supercompact objects (see Definition \ref{dfn:scompact}). Apart from the toposes of topological monoid actions mentioned above, this class includes all regular toposes (Example \ref{xmpl:regular}), as well as all presheaf toposes and some other important classes described in Proposition \ref{prop:xmpls}, and so is of general interest in topos theory. Since it is convenient to do so, we also study \textit{compactly generated toposes}, which are conceptually similar enough that we can prove analogous results about them with little extra work. For brevity, some of the section headings refer only to the supercompact naming conventions.

This is not the first time these concepts have been studied. Some relevant results appear in Section 4.1 of the thesis of Bridge \cite{TAC}, where our supercompactly generated toposes are referred to as `\textit{locally supercompact}'; the reason for the terminology choice of the present author is that the adverb ``locally'' is already overloaded in topos-theory literature. Moreover, a special case of principal sites, `\textit{$B$-sites}', are the eventual focus of the paper \cite{SPM} of Kondo and Yasuda. We posit that where there are overlaps in the basic results of the present paper and those above (which we shall point out in situ), the main distinction of the present paper is the emphasis on topos-theoretic machinery: the present author avoids reasoning directly with sheaves as far as possible (with the exception of the non-constructive results of Section \ref{ssec:Deligne}), which allows for more concise categorical proofs.

We assume the reader is familiar with the basics of category theory and Grothendieck toposes; the content of Mac Lane and Moerdijk's textbook \cite{MLM} should be adequate. For Section \ref{sec:principal} we rely heavily on the recent monograph of Caramello \cite{Dense} which contains general results about site representations of toposes and morphisms of sites; we quote some of those results without proof here.

This work was supported by INdAM and the Marie Sklodowska-Curie Actions as a part of the \textit{INdAM Doctoral Programme in Mathematics and/or Applications Cofunded by Marie Sklodowska-Curie Actions}. The author would like to thank Olivia Caramello for her patience and helpful suggestions.

\subsection*{Overview}

The structure of this paper is as follows. In Section \ref{ssec:super}, we recall the definitions of supercompact and compact objects in a topos. This leads us to formally define supercompactly and compactly generated toposes in Section \ref{ssec:scompgen}.

In Section \ref{ssec:Cs}, we turn to the full subcategories of supercompact and compact objects in a topos, presenting the structure they inherit from their ambient toposes, with a focus on monomorphisms, epimorphisms, and the classes of \textit{funneling} and \textit{multifunneling} colimits, which we introduce in Definitions \ref{dfn:funnel} and \ref{dfn:multifun}. The purpose of this investigation is to present these subcategories as canonical sites for supercompactly and compactly generated toposes in Section \ref{ssec:cansite}, which we do in Theorem \ref{thm:canon}. Motivated by the special case of toposes of monoid actions, in Section \ref{ssec:twoval} we investigate some extra conditions on a supercompactly generated topos which guarantee further properties of its category of supercompact objects.

In Section \ref{ssec:proper}, we examine classes of geometric morphism whose inverse image functors preserve supercompact or compact objects, introducing notions of \textit{precise} and \textit{polite} geometric morphisms in analogy with proper geometric morphisms (Definition \ref{dfn:polite}), before focussing on relative versions of these properties (Definition \ref{dfn:relpolite}) which are more directly useful in our analysis. After establishing these definitions, we examine how more familiar classes of geometric morphism interact with supercompactly and compactly generated toposes: surjections and inclusions in Section \ref{ssec:surj}, then hyperconnected morphisms in Section \ref{ssec:hype}. This exploration gives us several tools for constructing such toposes, which are summarised in Theorem \ref{thm:closure}.

The focus of Section \ref{sec:principal} is a broader site-theoretic investigation. In Section \ref{ssec:stable}, we exhibit the categorical data of \textit{principal} and \textit{finitely generated} sites, which are natural classes of sites whose categories of sheaves are supercompactly and compactly generated toposes, respectively. In Section \ref{ssec:representable}, we examine the morphisms between the representable sheaves on these sites, and then show in Section \ref{ssec:quotient} how a general such site may be reduced via a canonical congruence without changing the resulting topos. We use what we have learned about these sites in Section \ref{ssec:redcat} to characterize the categories of supercompact and compact objects which were the subject of Section \ref{ssec:cansite} as \textit{reductive} and \textit{coalescent} categories, respectively (Definition \ref{dfn:reductive}), satisfying additional technical conditions. We make the correspondence between such categories and the toposes they generate explicit in Theorem \ref{thm:correspondence}. It is natural to compare these classes of categories to the well-known classes of (locally) regular and coherent categories, which we do in Section \ref{ssec:regcoh}.

Moving onto morphisms, we recall the definition of \textit{morphisms of sites} in Section \ref{ssec:morsites}, showing that, according to the class of sites under consideration, these induce the relatively precise, polite or proper geometric morphisms introduced in Section \ref{ssec:proper}. More significantly, restricting to canonical sites, we are able to extend the correspondences of Theorem \ref{thm:correspondence} to some $2$-equivalences between $2$-categories of sites and $2$-categories of toposes. In Section \ref{ssec:Deligne}, we examine the points of supercompactly and compactly generated toposes, extending the classical result of Deligne to show that any compactly generated topos has enough points. To ground the discussion, we present some examples and counterexamples of reductive and coalescent sites and their properties in Section \ref{ssec:xmpl}.

Finally, in Section \ref{sec:xmpls} we examine the special case of supercompactly and compactly generated localic toposes, recovering some Stone-type dualities and counterexamples in the process.

\section{Supercompact and Compact Objects}
\label{sec:Supercompact}

Throughout, when $(\Ccal,J)$ is a site, we write $\ell:\Ccal \to \Sh(\Ccal,J)$ for the composite of the Yoneda embedding and the sheafification functor, assuming the Grothendieck topology is clear in context, and call the images $\ell(C)$ of the objects $C \in \Ccal$ the \textbf{representable sheaves}.

\subsection{Supercompact Objects}
\label{ssec:super}

The following definitions can be found in \cite[Definition 2.1.14]{TST}:

\begin{dfn}
\label{dfn:scompact}
An object $C$ of a category $\Ecal$ is \textbf{supercompact} (resp. \textbf{compact}) if any jointly epic family of morphisms $\{A_i \to C \mid i \in I\}$ contains an epimorphism (resp. a finite jointly epic sub-family). 
\end{dfn}

Clearly every supercompact object is compact. Compact objects are more widely studied, notably in the lifting of the concept of compactness from topological spaces to toposes reviewed by Moerdijk and Vermeulen in \cite{Compact}. Since the two classes of objects behave very similarly, we treat them in parallel. 

In a topos, we may re-express the definitions of supercompact and compact objects in terms of their subobjects. As is standard, we can further convert any statement about subobjects of an object $X$ in a topos $\Ecal$ into a statement about the subterminal object in the slice topos $\Ecal/X$.

\begin{lemma}
\label{lem:scompact}
An object $C$ of a Grothendieck topos $\Ecal$ is supercompact (resp. compact) if and only if every covering of $C$ by a family (resp. a directed family) of subobjects $A_i \hookrightarrow C$ contains an isomorphism. This occurs if and only if the global sections functor $\Gamma: \Ecal/C \to \Set$ preserves arbitrary (resp. directed) unions of subobjects.
\end{lemma}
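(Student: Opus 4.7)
The plan is to prove the two equivalences in sequence, reducing the second to the first using the standard fact that subobjects of the terminal object $1_{\Ecal/C} = \id_C$ in the slice topos $\Ecal/C$ correspond exactly to subobjects of $C$ in $\Ecal$.

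For the first equivalence, the direction from the definition to the subobject version is immediate: a family of subobjects $A_i \hookrightarrow C$ that covers $C$ is in particular jointly epic, so in the supercompact case one such mono must be epi (hence an iso), and in the compact case a finite subfamily of monos must be jointly epic (in a topos, a finite jointly epic family of monos has its join equal to the whole, and by directedness one member of the original family already dominates this finite join, so is itself an iso). Conversely, given a jointly epic family $\{f_i : A_i \to C\}$, first I would factor each $f_i$ as $A_i \twoheadrightarrow \im(f_i) \hookrightarrow C$; joint epicity of the $f_i$ is equivalent to the subobjects $\im(f_i)$ covering $C$. In the supercompact case, the hypothesis gives some $\im(f_i) \cong C$, forcing $f_i$ to be epic. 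In the compact case, I would pass to the directed family of finite joins $\bigvee_{i \in F} \im(f_i)$ indexed by finite $F \subseteq I$; this directed family still covers $C$, so one such finite join equals $C$, exhibiting a finite jointly epic subfamily.

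For the second equivalence, I would first recall that $\Gamma : \Ecal/C \to \Set$ sends $X \to C$ to $\Hom_{\Ecal/C}(\id_C, X)$, is left exact, and preserves monomorphisms, so the inclusion $\bigcup_i \Gamma(A_i) \subseteq \Gamma(\bigcup_i A_i)$ is automatic for any family of subobjects of any object of $\Ecal/C$; the content of preservation is the reverse inclusion. Specializing to subobjects $A_i \hookrightarrow 1_{\Ecal/C}$, which correspond to subobjects of $C$, the set $\Gamma(1_{\Ecal/C})$ is a singleton (containing $\id_C$), while $\Gamma(A_i)$ is nonempty iff $A_i \to 1$ has a section iff $A_i \hookrightarrow C$ is an isomorphism. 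Therefore $\Gamma$ preserves the union $\bigcup_i A_i = 1$ iff some $A_i$ is an iso, which is exactly the subobject covering property for $C$. To upgrade this to unions of subobjects of an arbitrary object $X \to C$ in $\Ecal/C$, I would take any global section $s : 1 \to \bigcup_i A_i$ and pull back the family $\{A_i \hookrightarrow \bigcup_i A_i\}$ along $s$ to obtain a covering of $1_{\Ecal/C}$ by subobjects $s^{*}A_i$; the subterminal case then supplies a section of some $s^{*}A_i \to 1$, i.e., a factorisation of $s$ through some $A_i$, exhibiting $s \in \bigcup_i \Gamma(A_i)$. The same argument works in the directed/compact case since pullback preserves directedness of a family of subobjects.

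The main obstacle is simply keeping track of the definitions across the passage to the slice topos and noting that pullback of subobjects preserves directed joins, but no real difficulty arises since every step reduces to the characterisation already proved in the first equivalence.
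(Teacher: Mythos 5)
Your proof is correct and follows essentially the same route as the paper: the first equivalence via image factorizations (and finite joins/directedness in the compact case), and the second via pulling back the family of subobjects along a global section to reduce to the subterminal case, with the converse obtained by restricting $\Gamma$ to subterminals. The only cosmetic difference is that you spell out the easy inclusion $\bigcup_i \Gamma(A_i) \subseteq \Gamma(\bigcup_i A_i)$ explicitly, where the paper simply cites extensivity and Johnstone's argument.
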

\begin{proof}
For the first part, suppose that we are given a family (resp. a directed family) of subobjects covering a supercompact (resp. compact) object $C$. Then one of the monomorphisms involved must be epic and hence an isomorphism (resp. this collection contains a finite covering family, but the union of these subobjects is also a member of the family and must be covering). In the opposite direction it suffices to consider images of the morphisms in an arbitrary covering family (resp. finite unions of these images). 

The remainder of the proof is modelled after that of Johnstone in \cite[C1.5.5]{Ele}. Given an object $f: A \to C$ of $\Ecal/C$ which is a union (resp. a directed union) of subobjects $A_i \hookrightarrow A \to C$ and given a global section $x: C \to A$ of $f$, we may consider the pullbacks:
\[\begin{tikzcd}
C_i \ar[r, hook] \ar[d] \ar[dr, phantom, "\lrcorner", very near start] &
C \ar[d, "x"]\\
A_i \ar[r, hook] & A.
\end{tikzcd}\]
By extensivity, $C$ is the union of the $C_i$, and by the above one of the $C_i \hookrightarrow C$ must be an isomorphism, so that $x$ factors through one of the $A_i$, which gives the result.

Conversely, given a jointly epic family (resp. directed family) of subobjects $C_i \hookrightarrow C$ considered as subterminals in $\Ecal/C$, we may apply $\Gamma$ to see that one of them must be an isomorphism, as required.
\end{proof}


\subsection{Supercompactly Generated Toposes}
\label{ssec:scompgen}

The collections of supercompact and compact objects in \textit{any} Grothendieck topos are conveniently tractable:

\begin{lemma}
\label{lem:scsite}
Let $\Ecal \simeq \Sh(\Ccal,J)$ be a Grothendieck topos of sheaves on a small site $(\Ccal,J)$. Then the supercompact objects are quotients of the representable sheaves $\ell(C)$ for $C \in \Ccal$. In particular, they are indexed (up to isomorphism) by a set. Similarly, the compact objects are quotients of finite coproducts of the images $\ell(C)$, and so (up to isomorphism) also form a set.
\end{lemma}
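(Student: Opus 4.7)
The plan is to exploit the fact that in any Grothendieck topos of sheaves on a small site, every object is a colimit of representable sheaves, and in particular the canonical family of maps $\ell(C) \to X$ (as $C$ ranges over $\Ccal$ and over all morphisms into $X$) is jointly epimorphic. This single fact, combined with Definition \ref{dfn:scompact}, does almost all of the work.

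First I would treat the supercompact case. Given a supercompact object $X \in \Ecal$, I apply the definition to the canonical jointly epic family $\{\ell(C) \to X\}$: one of these maps must then be an epimorphism, exhibiting $X$ as a quotient of some representable $\ell(C)$. For the compact case, the same family instead contains a finite jointly epic sub-family $\{\ell(C_i) \to X\}_{i=1}^{n}$; since coproducts exist in $\Ecal$ and a family of morphisms is jointly epic precisely when the induced map from the coproduct is epic, $X$ is a quotient of the finite coproduct $\coprod_{i=1}^n \ell(C_i)$.

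For the cardinality claim, I would note that $\Ccal$ is small, so the set of (isomorphism classes of) representable sheaves is a set, as is the set of finite coproducts of these up to isomorphism. In a Grothendieck topos the subobjects of any object form a set (via the subobject classifier, or directly from the site presentation), so the quotients of any fixed object also form a set (being classified by, say, their kernel pairs, which are subobjects of the square). A set-indexed union of sets is a set, giving the conclusion in both cases.

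I do not anticipate any real obstacle: the only minor point is to be sure that ``jointly epic family'' in the definition of compactness transfers cleanly to ``epimorphism from a finite coproduct,'' but this is immediate from the universal property of the coproduct. Everything else is bookkeeping with the canonical colimit of representables.
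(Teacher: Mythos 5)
Your proof is correct and follows essentially the same route as the paper: apply the definition of (super)compactness to the jointly epimorphic family of morphisms from representable sheaves, obtaining an epimorphism from a single $\ell(C)$ in the supercompact case and from a finite coproduct $\coprod_i \ell(C_i)$ in the compact case. Your extra bookkeeping on why the quotients of a fixed object form a set is fine but is left implicit in the paper.
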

\begin{proof}
Given a supercompact object $Q$, since the objects $\ell(C)$ are separating in $\Ecal$, the collection of morphisms $\ell(C) \to Q$ (is inhabited and) jointly epimorphic. It follows that one such must be epimorphic.

Given a compact object $Q$, the above argument instead yields a (possibly empty) finite jointly epimorphic family of morphisms $\ell(C_i)\to Q$, which corresponds to an epimorphism $\coprod_{i\in I} \ell(C_i) \too Q$, as claimed. 
\end{proof}

Lemma \ref{lem:scsite} ensures that we can always consider the full subcategories on the supercompact (resp. compact) objects, equipped with the canonical topology induced by the topos (which shall be recalled in Definition \ref{dfn:effective} below) as an essentially small site $(\Ccal_s,J_{can}^{\Ecal}|_{\Ccal_s})$ (resp. $(\Ccal_c,J_{can}^{\Ecal}|_{\Ccal_c})$). By the Comparison Lemma, the induced canonical comparison morphism $\Ecal \to \Sh(\Ccal_s,J_{can}^{\Ecal}|_{\Ccal_s})$ is an equivalence if and only if the collection of supercompact objects is separating, and similarly for the compact case. We shall continue to use $\Ccal_s$ and $\Ccal_c$ to denote these categories in the remainder; for simplicity, we shall actually assume that we have chosen a representative set of the supercompact or compact objects, such as the quotients of representables in Lemma \ref{lem:scsite}, so that we are working with small sites.

\begin{dfn}
We say a topos is \textbf{supercompactly generated} (resp. \textbf{compactly generated}) if its collection of supercompact (resp. compact) objects is separating.
\end{dfn}

\begin{xmpl}
\label{xmpl:regular}
The syntactic category of a regular theory can be recovered from its classifying topos as the full category of regular objects. In general, we say that an object $X$ in a topos is \textbf{regular} if $X$ is supercompact and for any cospan
\[\begin{tikzcd}
Y \ar[r, "f"] & X & Z \ar[l, "g"']
\end{tikzcd}\]
with $Y$ and $Z$ supercompact, the pullback $Y \times_X Z$ is also supercompact. In particular, classifying toposes of regular theories are special cases of supercompactly generated toposes. The same can be said when `supercompact' is replaced by `compact' and `regular' is replaced by `coherent'. See the work of Caramello \cite{SCCT} for this result and a more detailed discussion (note that Caramello refers to regular objects as \textit{supercoherent} objects). We shall return to examination of categories of regular and coherent objects in Section \ref{ssec:regcoh}.
\end{xmpl}

Supercompactly generated toposes include several other important established classes of Grothendieck topos.

\begin{prop}
\label{prop:xmpls}
\begin{enumerate}[(i)]
	\item Every atomic topos is supercompactly generated.
	\item Every supercompactly generated topos is compactly generated and locally connected.
	\item Every presheaf topos is supercompactly generated.
\end{enumerate}
\end{prop}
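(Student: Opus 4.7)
Each of the three items reduces to exhibiting a known separating family and checking its members are supercompact, either via the subobject characterization of Lemma \ref{lem:scompact} or by a direct Yoneda-style manipulation of a jointly epic family.

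For (i), an atomic topos has a separating family of atoms, where an atom is a nonzero object whose only subobjects are $0$ and itself. If $A$ is an atom and $\{A_i \hookrightarrow A\}_{i\in I}$ is a family of subobjects covering $A$, then each $A_i$ is either $0$ or $A$; since the join is $A \neq 0$, at least one inclusion is the identity, so Lemma \ref{lem:scompact} declares $A$ supercompact. For (iii), I would show each representable $\yon(C)$ in $\widehat{\Ccal}=\Set^{\Ccal\op}$ is supercompact directly: given a jointly epic family $\{f_i : A_i \to \yon(C)\}_{i\in I}$, the element $\id_C \in \yon(C)(C)$ must lie in the image of some $f_{i_0}(C) \colon A_{i_0}(C) \to \yon(C)(C)$, and a chosen preimage corresponds by Yoneda to a morphism $\yon(C)\to A_{i_0}$ sectioning $f_{i_0}$; hence $f_{i_0}$ is split epic. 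Since representables separate, $\widehat{\Ccal}$ is supercompactly generated.

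For (ii), compactness of a supercompact object is immediate from Definition \ref{dfn:scompact}, since a singleton containing an epimorphism is a finite jointly epic subfamily. Local connectedness requires the further observation that every supercompact object is connected. First, $C$ cannot be initial: the empty family is jointly epic into $0$ yet contains no epimorphism. Second, if $C \cong A \sqcup B$, the two coproduct inclusions are a jointly epic family of subobjects, so by Lemma \ref{lem:scompact} one of them, say $A \hookrightarrow C$, is an isomorphism, and by extensivity $B \cong 0$. Hence a separating family of supercompact objects is a separating family of (nonzero) connected objects, which is precisely what is needed to deduce local connectedness (see Johnstone \cite[C3.3]{Ele}).

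The main obstacle is the final inference: turning a separating family of connected objects into local connectedness of the ambient topos. It is not a one-line deduction, as it requires passing to the canonical site supported on such a family, observing that every representable sheaf there is connected, and concluding that the constant-sheaf functor $\gamma^* \colon \Set \to \Ecal$ acquires a $\Set$-indexed left adjoint via sheafification. I would invoke this from the literature rather than reprove it, since the arguments for (i) and (iii) already carry the essential content of the proposition.
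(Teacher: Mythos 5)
Your proposal is correct and follows essentially the same route as the paper: atoms are supercompact by the subobject criterion for (i), representables admit a split epimorphism from any jointly epic family for (iii), and for (ii) supercompact objects are compact and indecomposable, with the final step delegated to the standard characterisation of locally connected toposes as those with a separating set of indecomposable objects. You merely spell out details the paper leaves implicit.
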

\begin{proof}
For (i), recall that a Grothendieck topos is atomic if and only if it has a separating set of atoms, and these are easily seen to be supercompact. For (ii), we can similarly observe that any supercompact object is compact and indecomposable, then recall (by Theorem 2.7 of \cite{SCCT}, say) that a topos is locally connected if and only if it has a separating set of indecomposable objects.

For (iii), note that the representable presheaves are irreducible (every jointly epic family over a representable contains a split epimorphism) so they are in particular supercompact.
\end{proof}

\subsection{Categories of Supercompact Objects}
\label{ssec:Cs}

We now examine properties of the categories $\Ccal_s$ and $\Ccal_c$ in a general topos $\Ecal$.

\begin{lemma}
\label{lem:closed}
Let $\Ecal$ be a Grothendieck topos and let $\Ccal_s$, $\Ccal_c$ be the categories of supercompact and compact objects of $\Ecal$ respectively. These categories are closed in $\Ecal$ under quotients.
\end{lemma}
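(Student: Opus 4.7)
The plan is to prove both claims in parallel by a direct pullback argument from the definition. Let $q\colon C \twoheadrightarrow Q$ be an epimorphism with $C$ supercompact (resp.\ compact); we want to show $Q$ is supercompact (resp.\ compact) by verifying the condition of Definition \ref{dfn:scompact}.

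Given a jointly epic family $\{f_i\colon A_i \to Q \mid i \in I\}$, I would form, for each $i$, the pullback square
\[\begin{tikzcd}
B_i \ar[r, "g_i"] \ar[d, "p_i"'] \ar[dr, phantom, "\lrcorner", very near start] & A_i \ar[d, "f_i"] \\
C \ar[r, "q"'] & Q.
\end{tikzcd}\]
The key input is that in a Grothendieck topos, jointly epimorphic families are stable under pullback (one sees this by replacing the family with the single epimorphism $\coprod_i A_i \to Q$ and using stability of epimorphisms under pullback, together with the fact that coproducts are universal). Consequently $\{p_i\colon B_i \to C\}_{i \in I}$ is jointly epic. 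Supercompactness (resp.\ compactness) of $C$ then produces some index $i_0$ with $p_{i_0}$ epic (resp.\ a finite subset $I_0 \subseteq I$ with $\{p_i\}_{i \in I_0}$ jointly epic).

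To transport this back to $Q$, observe that $f_{i_0} \circ g_{i_0} = q \circ p_{i_0}$ is a composite of epimorphisms, hence epic, and any morphism through which an epi factors is itself epic, so $f_{i_0}$ is epic. For the compact case, the analogous composition argument shows that $\{f_i\}_{i \in I_0}$ is jointly epic. Either way we have verified the required property of $Q$.

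I do not expect any serious obstacle: the argument reduces entirely to standard topos-theoretic facts (universality of colimits and pullback-stability of epis). The only point that warrants a line of justification is the pullback-stability of jointly epic families; one could alternatively reroute the proof through the subobject reformulation of Lemma \ref{lem:scompact}, pulling back a covering family of subobjects of $Q$ along $q$ to obtain a covering family of subobjects of $C$, but the direct argument above seems cleanest and handles both the supercompact and compact cases uniformly.
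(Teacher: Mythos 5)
Your proof is correct and follows essentially the same route as the paper: pull the jointly epic family back along the epimorphism $q$, use supercompactness (resp.\ compactness) of $C$ upstairs, and transport the resulting epimorphism (resp.\ finite subfamily) back down via the composite $q \circ p_{i_0} = f_{i_0} \circ g_{i_0}$. The paper's version is just terser, leaving the pullback-stability of jointly epic families and the final composition step implicit.
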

\begin{proof}
Given an epimorphism $k: D \too C$ with $D$ supercompact and a covering family over $C$, pulling back this family along $k$ we immediately conclude that one of the constituent morphisms must be an epimorphism. Thus $C$ is a member of $\Ccal_s$. The argument for $\Ccal_c$ is analogous, except that we end up with a finite family of morphisms.
\end{proof}

Lemma \ref{lem:closed} has as a consequence that when considering a covering family of supercompact or compact objects over an object $X$ of $\Ecal$, we may without loss of generality assume that the morphisms in the family are monomorphisms. That is, we may restrict attention to covering families of (super)compact \textit{subobjects} when we so choose, because a family of morphisms with common codomain in a topos is jointly epic if and only if the union of their images is the maximal subobject. 

The subcategories inherit some further structure from $\Ecal$.

\begin{crly}
\label{crly:images}
For $\Ecal$, $\Ccal_s$, $\Ccal_c$ as in Lemma \ref{lem:closed}, $\Ccal_s$ and $\Ccal_c$ are closed under image factorizations in $\Ecal$, so that in particular they have image factorizations.
\end{crly}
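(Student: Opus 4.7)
The plan is essentially a one-line deduction from Lemma \ref{lem:closed}, with the remaining content being bookkeeping about how image factorizations in $\Ecal$ restrict to the subcategories. Given a morphism $f : C \to D$ with $C, D \in \Ccal_s$, factor it in $\Ecal$ as $C \twoheadrightarrow I \hookrightarrow D$, where the first arrow is the canonical (regular) epimorphism onto the image and the second is the monic inclusion. Since $I$ is a quotient of the supercompact object $C$, Lemma \ref{lem:closed} immediately gives $I \in \Ccal_s$. The identical argument, with ``supercompact'' replaced by ``compact'', handles $\Ccal_c$.

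For the ``in particular'' clause, I would note that $\Ccal_s$ and $\Ccal_c$ are \emph{full} subcategories of $\Ecal$, so the factorization just produced lies entirely within the relevant subcategory and provides an image factorization there (the mono and epi properties are detected in $\Ecal$ and hence in any full subcategory containing the objects involved). I do not anticipate any real obstacle; the only thing worth being careful about is ensuring that what we call the image factorization in $\Ecal$ is meaningful — but this is standard in any topos, where every morphism admits a unique (up to unique isomorphism) factorization as a regular epimorphism followed by a monomorphism. The statement therefore amounts to observing that the epic half of this factorization is a quotient in the sense of Lemma \ref{lem:closed}.
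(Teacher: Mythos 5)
Your proposal is correct and is essentially identical to the paper's own proof: both take the (epi, mono) image factorization in $\Ecal$, apply Lemma \ref{lem:closed} to conclude the image object lies in $\Ccal_s$ (resp. $\Ccal_c$), and invoke fullness of the subcategory to place the factoring morphisms there. No gaps.
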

\begin{proof}
Given a morphism $C \to C'$ between supercompact (resp. compact) objects, the image object $C''$ in the factorization $C \too C'' \hookrightarrow C'$ is also supercompact (resp. compact) by Lemma \ref{lem:closed}, whence the factoring morphisms lie in $\Ccal_s$ (resp. $\Ccal_c$) since it is a full subcategory.
\end{proof}

Note that the resulting orthogonal factorization systems on $\Ccal_s$ and $\Ccal_c$ are not between all monomorphisms and all epimorphisms; only between those inherited from $\Ecal$. We spend the rest of this section deriving an intrinsic characterisation of these morphisms.

\begin{dfn}
\label{dfn:funnel}
We say a small indexing category $\Dcal$ is a \textbf{funnel} if it has a weakly terminal object. A \textbf{funneling diagram} in an arbitrary category $\Ccal$ is a functor $F: \Dcal \to \Ccal$ with $\Dcal$ a funnel. 
For example:
\[\begin{tikzcd}[row sep = small]
A_i \ar[dr, "f_i", shift left] \ar[dr, "f'_i"', shift right] & \\
\vdots & D. \\
A_j \ar[ur, "f_j", shift left] \ar[ur, "f'_j"', shift right] &
\end{tikzcd}\]
The colimit of $F$, if it exists, is an object $C$ of $\Ccal$ equipped with an epimorphism $f: D \too C$ through which all legs of the colimit cone factor.
\end{dfn}

Recall that a morphism $h:D \to C$ in a category $\Ccal$ is called a \textbf{strict epimorphism} if whenever another morphism $k: D \to E$ satisfies the condition that for each parallel pair $p,q: B \rightrightarrows D$ with $h \circ p = h \circ q$ we have $k \circ p = k \circ q$, it follows that $k$ factors uniquely through $h$. The dual concept appears in \cite[Theorem 4.1]{TGT}.

\begin{lemma}
In a small category $\Ccal$, a morphism $h:C' \to C$ is a strict epimorphism if and only if there exists a funneling diagram $F:\Dcal \to \Ccal$ with weakly terminal object $C'$ whose colimit is expressed by $h$.
\end{lemma}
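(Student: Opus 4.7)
The plan is to prove both directions directly from the definitions, with the forward direction constructing an explicit funnel out of the parallel pairs that $h$ equalizes, and the reverse direction exploiting the weakly terminal object to reduce a cocone-building problem to the strict-epi universal property.

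For the ($\Leftarrow$) direction, suppose $h = \lambda_d$ is the colimit leg at the weakly terminal object $d \in \Dcal$ of a funneling diagram $F: \Dcal \to \Ccal$ with $F(d) = C'$. First I would note that $h$ is epic: for each $i \in \Dcal$ pick any $\alpha_i: d_i \to d$, so that by the cocone condition $\lambda_i = h \circ F(\alpha_i)$, and any two morphisms $C \rightrightarrows X$ agreeing after precomposition with $h$ then agree on every leg $\lambda_i$, hence are equal by universality. Now given $k: C' \to E$ that equalizes every parallel pair equalized by $h$, define $\mu_i := k \circ F(\alpha_i)$. Independence from the choice of $\alpha_i$ is the key point and uses exactly the hypothesis on $k$: if $\alpha, \alpha': d_i \to d$, then $h \circ F(\alpha) = \lambda_i = h \circ F(\alpha')$, so $k \circ F(\alpha) = k \circ F(\alpha')$. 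Checking the cocone condition on $(\mu_i)_i$ for a morphism $\beta: d_i \to d_j$ is routine: pick $\alpha_j: d_j \to d$, then $\alpha_j \beta: d_i \to d$ serves as a choice of $\alpha_i$, giving $\mu_i = k \circ F(\alpha_j) \circ F(\beta) = \mu_j \circ F(\beta)$. The universal property of $\colim F$ yields a unique $g: C \to E$ with $g \circ \lambda_i = \mu_i$ for all $i$, and in particular $g \circ h = \mu_d = k$; uniqueness of $g$ is guaranteed by $h$ being epic.

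For the ($\Rightarrow$) direction, I would build the obvious diagram. Since $\Ccal$ is small, the collection of triples $(B,p,q)$ with $p,q: B \rightrightarrows C'$ and $hp = hq$ is a set. Let $\Dcal$ be the category with a distinguished object $d$, one additional object $d_{B,p,q}$ for each such triple, and non-identity morphisms given exclusively by a pair $\pi_1,\pi_2: d_{B,p,q} \to d$; composition is trivially determined. Clearly $d$ is weakly terminal, so $\Dcal$ is a funnel. Define $F: \Dcal \to \Ccal$ by $F(d)=C'$, $F(d_{B,p,q})=B$, $F(\pi_1)=p$, $F(\pi_2)=q$. A cocone under $F$ with vertex $X$ is then the same data as a morphism $\lambda: C' \to X$ together with morphisms $\nu_{B,p,q}: B \to X$ satisfying $\nu_{B,p,q} = \lambda \circ p = \lambda \circ q$, i.e., just a morphism $\lambda: C' \to X$ equalizing every parallel pair equalized by $h$. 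By hypothesis, each such $\lambda$ factors uniquely through $h$, so $(C,h)$ (together with the induced legs $h \circ p = h \circ q$ at the $d_{B,p,q}$) is the colimit of $F$, realized by $h$.

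The only substantive obstacle is the well-definedness step in the reverse direction, where one must confirm that the candidate cocone $(\mu_i)$ does not depend on the arbitrary choices $\alpha_i: d_i \to d$; this is what forces precisely the strict-epi equalizing hypothesis to be invoked, and it is what makes the correspondence with funnels (rather than with some more rigid diagram shape) the natural one.
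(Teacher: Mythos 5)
Your proof is correct and follows essentially the same route as the paper's: the forward direction realizes $h$ as the colimit of the funnel of all parallel pairs it coequalizes, and the reverse direction shows that any $k$ coequalizing the same pairs induces a cocone by composition and hence factors uniquely through $h$. Your version merely makes explicit the well-definedness and cocone checks that the paper leaves as "clearly induces a cone."
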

\begin{proof}
By definition, if $h$ is a strict epimorphism, it is a colimit for the diagram consisting of all pairs of morphisms with domain $C'$ which $h$ coequalizes. Conversely, if $h$ expresses the colimit of any funneling diagram, and $k$ coequalizes all of the same parallel pairs that $h$ does, then it clearly induces a cone by composition with the morphisms of the funneling diagram, whence it has a universal factorisation through $h$, as required.
\end{proof}

Notably, strict epimorphisms include isomorphisms and regular epimorphisms. Continuing with the parallel treatment of compactly generated toposes, we arrive at the following definitions.

\begin{dfn}
\label{dfn:multifun}
A small indexing category $\Dcal$ is a \textbf{multifunnel} if it has a (possibly empty) finite collection of objects $D_1, \dotsc, D_n$ to which all other objects admit morphisms\footnote{A reader interested in the obvious generalisations of this concept to higher cardinalities might prefer to employ a name such as `finitely funneled' to emphasise the finitary aspect.}. A colimit of a multifunneling diagram (a diagram indexed by a multifunnel) in a category $\Ccal$ shall be called a \textbf{multifunneling colimit}, and is defined by a finite jointly epic family from the images of the objects $D_1, \dotsc, D_n$. A finite jointly epic family obtained in this way will be called a \textbf{strictly epic finite family}.
\end{dfn}

\begin{lemma}
\label{lem:multi}
A category has multifunneling colimits if and only if it has finite coproducts and funneling colimits.
\end{lemma}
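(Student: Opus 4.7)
The forward direction is immediate from the definitions: a finite discrete category with objects $D_1,\dotsc,D_n$ (including the empty case $n=0$) is trivially a multifunnel, so its colimit computes the finite coproduct $F(D_1)\sqcup\dotsb\sqcup F(D_n)$ (with the initial object arising from $n=0$); and every funnel is a multifunnel whose weakly terminal family has a single element, so funneling colimits appear as special cases of multifunneling colimits.

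For the converse, given a multifunneling diagram $F:\Dcal\to\Ccal$ with weakly terminal family $D_1,\dotsc,D_n$, the plan is to realise its colimit in two stages. First, form $\Sigma:=F(D_1)\sqcup\dotsb\sqcup F(D_n)$ with coproduct injections $\iota_i:F(D_i)\to\Sigma$. Next, construct a funnel $\Dcal^\sharp$ by adjoining to $\Dcal$ a new object $\top$, together with, for every $D\in\Dcal$ and every morphism $g:D\to D_i$ of $\Dcal$, a fresh morphism $\tau_{D,i,g}:D\to\top$, subject to the relations $\tau_{D',i,g}\circ h = \tau_{D,i,g\circ h}$ for all composable $h:D\to D'$, $g:D'\to D_i$ in $\Dcal$. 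Extend $F$ to $F^\sharp:\Dcal^\sharp\to\Ccal$ by setting $F^\sharp(\top)=\Sigma$ and $F^\sharp(\tau_{D,i,g})=\iota_i\circ F(g)$; these assignments respect the imposed relations because $\iota_i\circ F(g)\circ F(h)=\iota_i\circ F(g\circ h)$.

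I then plan to exhibit a natural bijection between cocones under $F$ and cocones under $F^\sharp$ with a fixed vertex $C$. Given a cocone $\{\alpha_D\}_{D\in\Dcal}$, the universal property of $\Sigma$ yields a unique $\alpha_\top:\Sigma\to C$ with $\alpha_\top\circ\iota_i=\alpha_{D_i}$, and the compatibility $\alpha_\top\circ F^\sharp(\tau_{D,i,g})=\alpha_D$ reduces to the cocone equation $\alpha_{D_i}\circ F(g)=\alpha_D$; conversely, restriction of a cocone under $F^\sharp$ to $\Dcal$ yields a cocone under $F$, and its component at $\top$ is forced by the coproduct universal property. Consequently $\colim F=\colim F^\sharp$, and the latter exists as a funneling colimit in $\Ccal$ by hypothesis.

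The main technical obstacle is setting up $\Dcal^\sharp$ cleanly enough that composition of the adjoined morphisms with those of $\Dcal$ is well-defined and that $F^\sharp$ is genuinely functorial; this is essentially bookkeeping once one notes that $\top$ has no outgoing non-identity morphisms, so there are no further composites to check. The edge case $n=0$ is absorbed uniformly: $\Dcal$ is empty, $\Sigma$ is the initial object (the empty coproduct), and $\Dcal^\sharp$ is the one-object funnel whose colimit is $\Sigma$ itself.
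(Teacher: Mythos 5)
Your proposal is correct and follows essentially the same route as the paper: form the coproduct of the weakly terminal objects, compose the diagram's morphisms with the coproduct injections to obtain a funneling diagram over that coproduct, and match cocones via the coproduct's universal property. Your explicit construction of the index category $\Dcal^\sharp$ and the cocone bijection just makes precise the bookkeeping that the paper's terser proof leaves implicit.
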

\begin{proof}
Clearly finite coproducts and funneling colimits are special cases of multifunneling colimits. Conversely, given a multifunneling diagram $F: \Dcal \to \Ccal$ with weakly terminal objects $F(D_1),\dotsc,F(D_n)$, consider the coproduct of these objects. Composing the morphisms in the diagram $F$ with the coproduct inclusions, we get a funneling diagram. The universal property of the coproduct ensures that the colimit of this diagram coincides with the colimit of $F$.
\end{proof}

\begin{rmk}
\label{rmk:simple}
Note that we can make the further simplification, implicit in the diagram of Definition \ref{dfn:funnel}, that all of the non-identity morphisms in a funneling diagram have the weakly terminal object as their codomains, since given $t:A_i \to A_j$, there exists some morphism $f_j:A_j \to D$, and the cocone commutativity conditions for $f_j \circ t$ and $f_j$ ensure that $\lambda_i = \lambda_D \circ (f_j \circ t) = \lambda_j \circ t$ is automatically satisfied, so we may omit $t$ from the diagram. 
\end{rmk}

\begin{lemma}
\label{lem:closed2}
Let $\Ecal$ be a Grothendieck topos and let $\Ccal_s$, $\Ccal_c$ be the categories of supercompact and compact objects of $\Ecal$ respectively. Then $\Ccal_s$ is closed in $\Ecal$ under funneling colimits and $\Ccal_c$ is closed in $\Ecal$ under multifunneling colimits.
\end{lemma}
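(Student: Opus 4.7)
The plan is to reduce both closure statements to Lemma \ref{lem:closed} by exploiting the canonical (jointly) epimorphic presentations of funneling and multifunneling colimits already built into Definitions \ref{dfn:funnel} and \ref{dfn:multifun}.

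For $\Ccal_s$, I take a funneling diagram $F: \Dcal \to \Ccal_s$ with weakly terminal object $D_0$ and let $C$ denote its colimit computed in $\Ecal$. By the description recorded in Definition \ref{dfn:funnel}, $C$ comes equipped with an epimorphism $F(D_0) \too C$. Since $F(D_0)$ is supercompact, Lemma \ref{lem:closed} immediately places $C$ in $\Ccal_s$.

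For $\Ccal_c$, I first establish as an auxiliary step that finite coproducts of compact objects are compact. Given compact $X_1,\dotsc,X_n$ and a jointly epic family over $X_1 \sqcup \cdots \sqcup X_n$, I pull the family back along each coproduct inclusion $X_i \hookrightarrow X_1 \sqcup \cdots \sqcup X_n$; since joint epimorphisms are stable under pullback in a topos, each resulting family is jointly epic over $X_i$, and the compactness of $X_i$ selects a finite sub-collection covering $X_i$. Taking the (finite) union of these finitely many sub-collections as $i$ ranges over $\{1,\dotsc,n\}$ produces a finite jointly epic subfamily of the original, using extensivity to recombine. Now for a multifunneling diagram $F: \Dcal \to \Ccal_c$ with weakly terminal objects $D_1,\dotsc,D_n$, Definition \ref{dfn:multifun} tells us the colimit $C$ is presented by a finite jointly epic family $\{F(D_i) \to C\}_{i=1}^n$, or equivalently by an epimorphism $\coprod_{i=1}^n F(D_i) \too C$ whose domain is compact by the auxiliary step. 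Lemma \ref{lem:closed} then places $C$ in $\Ccal_c$.

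The only step requiring any real work is the auxiliary claim that finite coproducts of compact objects are compact; everything else is a formal unpacking of the colimit descriptions combined with closure under quotients. (As an alternative one could skip the intermediate coproduct entirely and run the pullback-and-select argument directly over the $F(D_i)$, at the cost of a less modular presentation.)
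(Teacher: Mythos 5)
Your proof is correct and takes essentially the same approach as the paper: both cases reduce to Lemma \ref{lem:closed} via the epimorphic presentation of the (multi)funneling colimit. The only difference is that in the compact case the paper pulls back a covering family directly along the members of the finite jointly epic family $\{F(D_i) \to C\}$ rather than routing through the coproduct $\coprod_i F(D_i)$ --- precisely the alternative you flag in your closing parenthesis --- so the two arguments are organizational variants of one another.
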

\begin{proof}
Let $F:\Dcal \to \Ccal_s$ be a funneling diagram with weakly terminal object $F(D)$. In $\Ecal$, this diagram has a colimit determined by an epimorphism $f: F(D) \too C$; the colimit $C$ is supercompact by Lemma \ref{lem:closed}, as required. The argument for multifunneling colimits in $\Ccal_c$ is analogous, except that we must pull back along each member of a finite family in the proof of Lemma \ref{lem:closed} to obtain the finite covering subfamily of a given covering family over the colimit.
\end{proof}

\begin{crly}
\label{crly:strict}
Let $\Ecal$ be a supercompactly (resp. compactly) generated topos and $\Ccal_s$, $\Ccal_c$ the usual subcategories. Then a morphism of $\Ccal_s$ is an epimorphism in $\Ecal$ if and only if it is a strict epimorphism in $\Ccal_s$, and a finite family of morphisms into $C$ in $\Ccal_c$ is jointly epic in $\Ecal$ if and only if it is a strictly epic finite family in $\Ccal_c$.
\end{crly}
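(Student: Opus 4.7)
The plan is to prove both equivalences in parallel, using a uniform ``kernel-pair cover'' strategy. For the \emph{if} direction of both claims, Lemma \ref{lem:closed2}, combined with fullness of the inclusions $\Ccal_s, \Ccal_c \hookrightarrow \Ecal$, will ensure that every funneling (resp.\ multifunneling) colimit computed in these subcategories is already a colimit in $\Ecal$, whence the leg(s) from the weakly terminal object(s) are (jointly) epic in $\Ecal$ by definition of these colimits.

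For the supercompact \emph{only if} direction, I would let $h: D \to C$ in $\Ccal_s$ be epic in $\Ecal$ and, to show that $h$ is a strict epimorphism in $\Ccal_s$, consider any $k: D \to E$ in $\Ccal_s$ that coequalizes every parallel pair in $\Ccal_s$ coequalized by $h$, seeking a unique factorization of $k$ through $h$. Because $\Ecal$ is a topos, $h$ is the coequalizer in $\Ecal$ of its kernel pair $\pi_1, \pi_2: R \rightrightarrows D$. Exploiting the hypothesis of supercompact generation, I would cover $R$ by a jointly epic family $\{f_\alpha: B_\alpha \to R\}$ with $B_\alpha \in \Ccal_s$. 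Each $f_\alpha$ yields a parallel pair $(\pi_1 f_\alpha, \pi_2 f_\alpha): B_\alpha \rightrightarrows D$ in $\Ccal_s$ that $h$ coequalizes, and hence by assumption $k$ coequalizes it too. Joint epicity of the $f_\alpha$ then forces $k\pi_1 = k\pi_2$, so $k$ factors uniquely through $h$ in $\Ecal$; fullness of $\Ccal_s \hookrightarrow \Ecal$ places the factoring morphism in $\Ccal_s$.

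For the compact \emph{only if} direction, I would reduce to the preceding argument via coproducts: given a finite jointly epic family $\{g_i: A_i \to C\}_{i=1}^n$ in $\Ecal$, the coproduct $B := \coprod_i A_i$ lies in $\Ccal_c$ by Lemma \ref{lem:closed2}, and the induced map $g := [g_i]: B \to C$ is epic in $\Ecal$, so the argument above applied in $\Ccal_c$ shows $g$ to be a strict epimorphism there. To upgrade this into a multifunneling structure on the family $\{g_i\}$, I would invoke extensivity of $\Ecal$: a cover of the kernel pair $R$ of $g$ by compact objects $\{X_\alpha \to R\}$ gives each $X_\alpha$ a morphism into $B \times B = \coprod_{i,j} A_i \times A_j$, which decomposes $X_\alpha$ as $\coprod_{i,j} X_{ij}^\alpha$; each summand is compact (summands of compact objects in a topos are compact, again by extensivity) and carries morphisms $p^\alpha_{ij}: X_{ij}^\alpha \to A_i$ and $q^\alpha_{ij}: X_{ij}^\alpha \to A_j$ with $g_i \circ p^\alpha_{ij} = g_j \circ q^\alpha_{ij}$. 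These spans together with the objects $A_i$ should assemble into a multifunneling diagram whose colimit is $C$ with leg maps $g_i$, the verification reducing once more to the kernel-pair coequalization. The main obstacle will be this last step: justifying compactness of the summands $X_{ij}^\alpha$ and checking that cocone compatibility over the assembled diagram exactly encodes coequalization of the kernel pair of $g$ in $\Ecal$.
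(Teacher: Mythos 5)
Your proof is correct and follows essentially the same route as the paper's: the \emph{if} direction via closure of $\Ccal_s$ and $\Ccal_c$ under (multi)funneling colimits (Lemma \ref{lem:closed2}), and the \emph{only if} direction by covering the domain of the (kernel) pair whose coequalizer is the given epimorphism with supercompact (resp.\ compact) objects. Your coproduct-and-extensivity decomposition for the compact case correctly fills in what the paper dismisses as ``analogous'', and the final step you flag as a potential obstacle does go through exactly as you describe (cocone compatibility over the assembled multifunnel is equivalent to coequalizing the kernel pair of $[g_i]$, since the $X^\alpha_{ij}$ jointly cover it).
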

\begin{proof}
This could be deduced by checking the conditions of the dual of \cite[Proposition 4.9]{TGT}, but rather than reproducing that result, we give a direct proof.

Suppose $h: C' \too C$ is epic in $\Ecal$ with $C',C$ in $\Ccal_s$. Any epimorphism in $\Ecal$ is regular, so is the coequalizer of some pair $p,q:D \rightrightarrows C'$. Since $D$ is covered by supercompact objects, composing $p$ and $q$ with the monomorphisms $C_i \hookrightarrow D$ such that $C_i$ is in $\Ccal_s$ we obtain a funneling diagram in $\Ccal_s$ whose colimit is still $C$, as required. The argument for $\Ccal_c$ is analogous.

Conversely, the inclusion of $\Ccal_s$ and $\Ccal_c$ into $\Ecal$ preserves funneling (resp. multifunneling) colimits by Lemma \ref{lem:closed2}, whence the strict epimorphisms (resp. strictly epic finite families) from these categories are still epic in $\Ecal$.
\end{proof}

In fact, epimorphic families in $\Ccal_c$ are better behaved than those in $\Ccal_s$ in general:
\begin{lemma}
\label{lem:compactepi}
Let $\Ecal$ be any Grothendieck topos. Then a family of morphisms in $\Ccal_c$ with common codomain is jointly epic in $\Ccal_c$ if and only if it is so in $\Ecal$. In particular, when $\Ecal$ is compactly generated, every jointly epimorphic family (including every epimorphism) in $\Ccal_c$ is strict.
\end{lemma}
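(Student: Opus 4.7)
My plan is to handle the backward direction as immediate—joint epicness in $\Ecal$ implies joint epicness in $\Ccal_c$ because $\Ccal_c$ is a full subcategory, so any morphisms out of the common codomain that are equalized by the family live equally well in $\Ecal$ and must agree there. The substance lies in the forward direction, which I would prove by contrapositive. Assume $\{f_i:C_i\to C\}_{i\in I}$ is a family in $\Ccal_c$ that fails to be jointly epic in $\Ecal$, and let $U := \bigcup_{i\in I}\im(f_i)\hookrightarrow C$, which is then a \emph{proper} subobject.

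The key step is to exhibit a pair of distinct morphisms in $\Ccal_c$ out of $C$ that are equalized by every $f_i$. For this I would form the pushout $P := C \cup_U C$ in $\Ecal$. Since $\Ecal$ is a topos, pushouts along monomorphisms are bicartesian squares of monomorphisms; in particular the two coprojections $\iota_1,\iota_2:C\to P$ are monomorphisms whose pullback recovers $U$, so $\iota_1\neq\iota_2$ (as $U\neq C$). Simultaneously, $C\amalg C$ is a finite coproduct of compact objects, hence a multifunneling colimit landing in $\Ccal_c$ by Lemma \ref{lem:closed2}, and $P$ is the coequalizer of $U\rightrightarrows C\amalg C$, i.e.\ a quotient of $C\amalg C$, so $P\in\Ccal_c$ by Lemma \ref{lem:closed}. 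Each $f_i$ factors through $U\hookrightarrow C$, so $\iota_1\circ f_i=\iota_2\circ f_i$ for all $i$, yet $\iota_1\neq\iota_2$ in $\Ccal_c$, contradicting joint epicness there.

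For the ``in particular'' clause, once $\Ecal$ is compactly generated the main statement gives that a jointly epic family in $\Ccal_c$ is jointly epic in $\Ecal$; compactness of the common codomain then extracts a finite jointly epic sub-family, which Corollary \ref{crly:strict} identifies as a strictly epic finite family in $\Ccal_c$. The case of a single epimorphism is the instance $|I|=1$. The only subtle technical ingredient is the bicartesianness of pushouts of monomorphisms in a topos (with monic, distinct coprojections when the original mono is proper), but this is a standard fact; granted it, the rest is a direct deployment of the closure properties of $\Ccal_c$ that have already been established.
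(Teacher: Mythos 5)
Your proof is correct, but it takes a genuinely different route from the paper's. The paper observes that joint epicness of $\{f_i:C_i\to C\}$ is \emph{equivalent} to a certain diagram --- with the $C_i$ mapping into two copies of $C$, which serve as the (finitely many) weakly terminal objects --- being a colimit diagram; since that is a multifunneling diagram, Lemma \ref{lem:closed2} says its colimit is created by the inclusion $\Ccal_c\hookrightarrow\Ecal$, and the equivalence of the two notions of joint epicness drops out in one line for families of arbitrary cardinality. You instead prove the nontrivial direction by contrapositive, exhibiting an explicit separating pair: the cokernel pair $C\rightrightarrows C\cup_U C$ of the proper union of images $U$, which lands in $\Ccal_c$ by closure under finite coproducts and quotients and has distinct coprojections by adhesivity. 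Your argument is sound --- every step you flag as standard is indeed used elsewhere in the paper (the bicartesian pushout of a mono appears in the proof of Lemma \ref{lem:compactmono}, which uses exactly this cokernel-pair device) --- and it has the virtue of producing a concrete witness to the failure of joint epicness rather than appealing to creation of colimits; the cost is that it is longer and routes through the image factorization and the union of a possibly infinite family of subobjects, where the paper's argument needs only the single displayed diagram. Your handling of the ``in particular'' clause (extract a finite jointly epic subfamily by compactness of $C$, then invoke Corollary \ref{crly:strict}) is also fine and matches the intended reading of ``strict'' for infinite families, namely that the generated sieve contains a strictly epic finite family and is hence effective-epimorphic.
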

\begin{proof}
Observe that a family of morphisms $f_i:C_i \to C$ in a category is jointly epimorphic if and only if the diagram:
\[\begin{tikzcd}
C_i \ar[dr, phantom, "\ddots"] \ar[drr, "f_i", bend left] \ar[ddr, "f_i"', bend right] & & \\
& C_j \ar[r, "f_j"] \ar[d, "f_j"'] & C \ar[d, equal] \\
& C \ar[r, equal] & C
\end{tikzcd}\]
is a colimit diagram. But the diagram (after removing the copy of $C$ in the lower right corner) is clearly an instance of a multifunneling colimit, so by Lemma \ref{lem:closed2} its colimit is created by the inclusion of $\Ccal_c$ into $\Ecal$, so a family is jointly epic in $\Ccal_c$ if and only if it is so in $\Ecal$, as required.
\end{proof}

Having extensively discussed the epimorphisms, we should also discuss monomorphisms in the subcategories under investigation.

\begin{lemma}
\label{lem:monocoincide}
Monomorphisms in $\Ccal_s$ and $\Ccal_c$ coincide with monomorphisms in $\Ecal$ when $\Ecal$ is supercompactly or compactly generated, respectively.
\end{lemma}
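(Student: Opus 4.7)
The plan is to split the statement into two directions and observe that only one of them requires the generation hypothesis. The easy direction is that a monomorphism $m: A \to B$ of $\Ecal$ between supercompact (resp. compact) objects is automatically a monomorphism in the full subcategory $\Ccal_s$ (resp. $\Ccal_c$): given any parallel pair $f,g:C \rightrightarrows A$ in $\Ccal_s$, these are also morphisms in $\Ecal$ by fullness, so $m \circ f = m \circ g$ forces $f=g$. No hypothesis on $\Ecal$ is needed here.

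For the substantive direction, suppose $m:A \to B$ is a monomorphism in $\Ccal_s$ and that $\Ecal$ is supercompactly generated. I would take an arbitrary parallel pair $f,g:X \rightrightarrows A$ in $\Ecal$ with $m \circ f = m \circ g$, and argue $f=g$. The point is that, since the supercompact objects form a separating set, the family of all morphisms $h:C \to X$ with $C \in \Ccal_s$ is jointly epimorphic; hence to conclude $f=g$ it suffices to show $f \circ h = g \circ h$ for every such $h$. But for any such $h$, both composites $f \circ h, g \circ h:C \to A$ lie in $\Ccal_s$ (since both endpoints are supercompact and the inclusion is full), and they satisfy $m \circ (f \circ h) = m \circ (g \circ h)$, so the hypothesis that $m$ is monic in $\Ccal_s$ yields $f \circ h = g \circ h$.

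The compact case proceeds identically, since compactly generated means that the compact objects are separating, which is the only property of $\Ccal_s$ used above.

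There is no real obstacle here; the argument is essentially a bookkeeping exercise in unfolding the separating property. The only thing to be a little careful about is to notice that fullness of the inclusion $\Ccal_s \hookrightarrow \Ecal$ (respectively $\Ccal_c \hookrightarrow \Ecal$) is used twice: once in the trivial direction to regard morphisms of the subcategory as morphisms of $\Ecal$, and once in the nontrivial direction to conclude that the test morphisms $f \circ h$ and $g \circ h$ actually live in the subcategory where we can apply the hypothesis on $m$.
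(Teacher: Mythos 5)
Your proposal is correct and follows essentially the same argument as the paper: both directions are handled identically, with the substantive direction testing the parallel pair against a jointly epic family of morphisms from supercompact (resp.\ compact) objects and using fullness to apply the monicity hypothesis in the subcategory. The only cosmetic difference is that the paper covers the test object by supercompact \emph{subobjects} rather than arbitrary morphisms from supercompact objects, which changes nothing.
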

\begin{proof}
Certainly a monomorphism of $\Ecal$ lying in $\Ccal_s$ or $\Ccal_c$ is still monic, since there are fewer morphisms which it needs to distinguish in general.

Suppose $\Ecal$ is supercompactly generated and let $s : A \hookrightarrow B$ be a monomorphism in $\Ccal_s$, $Q$ an object of $\Ecal$ and $f, g : Q \rightrightarrows A$ such that $sf = sg$. Covering $Q$ with supercompact subobjects $q_i: Q_i \hookrightarrow Q$, consider $f q_i, g q_i: Q_i \rightrightarrows A$, which are morphisms in $\Ccal_s$. These are equalized by $s$ and hence are equal for every $i$. The $q_i$ being jointly epic then forces $f = g$. Thus $s$ is monic in $\Ecal$, as claimed. The argument for $\Ccal_c$ is analogous, replacing supercompact subobjects with compact ones.
\end{proof}

Once again, we can immediately strengthen this result for $\Ccal_c$.

\begin{lemma}
\label{lem:compactmono}
For any Grothendieck topos $\Ecal$, the monomorphisms of $\Ecal$ lying in $\Ccal_c$ are regular monomorphisms there. In particular, when $\Ecal$ is compactly generated, every monomorphism in $\Ccal_c$ is regular.
\end{lemma}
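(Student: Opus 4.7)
The plan is to exhibit each monomorphism in $\Ccal_c$ explicitly as an equalizer inside $\Ccal_c$, by using its cokernel pair. Let $m : A \hookrightarrow B$ be a monomorphism of $\Ecal$ with $A,B \in \Ccal_c$. Since $\Ecal$ is a topos, every monomorphism is effective: $m$ is the equalizer of the pair $i_1, i_2 : B \rightrightarrows B \sqcup_A B$ forming its pushout (cokernel pair) along itself. The task therefore reduces to checking that $B \sqcup_A B$ is itself in $\Ccal_c$, after which the equalizer computed in $\Ecal$ will automatically be the equalizer computed in $\Ccal_c$ by fullness of the inclusion $\Ccal_c \hookrightarrow \Ecal$.

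To see that $B \sqcup_A B \in \Ccal_c$, I would observe that the pushout diagram
\[\begin{tikzcd}
A \ar[r, "m"] \ar[d, "m"'] & B \\
B &
\end{tikzcd}\]
is a multifunneling diagram in the sense of Definition \ref{dfn:multifun}: the two copies of $B$ are weakly terminal objects, and every object (namely $A$) admits a morphism to each of them. Hence the pushout is a multifunneling colimit, and Lemma \ref{lem:closed2} guarantees that $\Ccal_c$ is closed under such colimits, so $B \sqcup_A B$ is compact.

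Putting the two observations together: the morphisms $i_1, i_2$ lie in $\Ccal_c$, and $m$ is their equalizer in $\Ecal$; since any cone over $(i_1,i_2)$ in $\Ccal_c$ is a fortiori a cone in $\Ecal$, and the induced factorisation through $A$ is a morphism of $\Ccal_c$ by fullness, $m$ is the equalizer of $i_1, i_2$ in $\Ccal_c$ as well. Thus $m$ is a regular monomorphism in $\Ccal_c$. The ``in particular'' clause then follows immediately: if $\Ecal$ is compactly generated, Lemma \ref{lem:monocoincide} identifies monomorphisms in $\Ccal_c$ with monomorphisms of $\Ecal$ lying in $\Ccal_c$, so every monomorphism of $\Ccal_c$ is covered by the previous argument.

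The only subtle step is the recognition that the cokernel pair is a multifunneling colimit; the rest is an application of the closure result already established, together with the standard fact that in a topos monomorphisms are effective. I do not expect any real obstacle here.
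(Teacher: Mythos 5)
Your proposal is correct and follows essentially the same route as the paper: both exhibit the monomorphism as the equalizer of its cokernel pair, observe that the pushout defining the cokernel pair is a multifunneling colimit so that it lies in $\Ccal_c$ by Lemma \ref{lem:closed2}, and conclude via fullness of the inclusion. The only cosmetic difference is that the paper justifies the equalizer property via adhesivity (the pushout along a mono is also a pullback), whereas you invoke the equivalent standard fact that monomorphisms in a topos are effective.
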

\begin{proof}
If $e:C' \hookrightarrow C$ is a morphism in $\Ccal_s$ which is monic in $\Ecal$, consider its cokernel pair in $\Ecal$:
\[\begin{tikzcd}
C' \ar[r, "e", hook] \ar[d, "e"', hook] \ar[dr, phantom, "\lrcorner", very near start] & C \ar[d, hook, "s"] \\
C \ar[r, hook, "t"'] & D. \ar[ul, phantom, "\ulcorner", very near start]
\end{tikzcd}\]
Since a topos is an adhesive category, this is also a pullback square and so $e$ is the equalizer of $s$ and $t$. Since pushouts are multifunnel colimits, $D$ lies in $\Ccal_s$, so the same is true there.
\end{proof}

Thus we can make Corollary \ref{crly:images} more precise.
\begin{crly}
\label{crly:orthog}
If $\Ecal$ is supercompactly generated, then $\Ccal_s$ has an orthogonal (strict epi,mono)-factorisation system. More generally, if $\Ecal$ is merely compactly generated, $\Ccal_c$ has an orthogonal (epi,mono)-factorisation system.
\end{crly}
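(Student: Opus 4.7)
The plan is to lift the orthogonal (epi, mono)-factorisation system of $\Ecal$ to the subcategories $\Ccal_s$ and $\Ccal_c$, using fullness together with the closure and translation results proved earlier in the section to stay inside each subcategory and to match up the relevant classes of morphisms correctly.

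For the existence half, I take any morphism $f: A \to B$ of $\Ccal_s$ (respectively $\Ccal_c$) and factor it in $\Ecal$ as $f = m \circ e$ with $e: A \too I$ epic and $m: I \hookrightarrow B$ monic. By Corollary \ref{crly:images}, $I$ lies in $\Ccal_s$ (respectively $\Ccal_c$), and by fullness both $e$ and $m$ are morphisms of the subcategory. The monomorphism $m$ remains monic in the subcategory since there are a fortiori fewer parallel pairs to distinguish. In the $\Ccal_s$ case, Corollary \ref{crly:strict} identifies $e$ as a strict epimorphism in $\Ccal_s$; in the $\Ccal_c$ case, Lemma \ref{lem:compactepi} directly says $e$ is an epimorphism (in fact a strict one) in $\Ccal_c$.

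For the orthogonality half, I consider a commutative square in $\Ccal_s$ (respectively $\Ccal_c$) whose left-hand arrow $e$ is a strict epimorphism (respectively an epimorphism) and whose right-hand arrow $m$ is a monomorphism. By Corollary \ref{crly:strict} (respectively Lemma \ref{lem:compactepi}), $e$ is an epimorphism in $\Ecal$; by Lemma \ref{lem:monocoincide}, $m$ is a monomorphism in $\Ecal$. The orthogonal (epi, mono)-factorisation system of the topos $\Ecal$ therefore supplies a unique diagonal filler. Since its domain and codomain already lie in the relevant subcategory, fullness ensures the filler is a morphism of $\Ccal_s$ (respectively $\Ccal_c$), and uniqueness in $\Ecal$ implies uniqueness in the subcategory.

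The main point to watch is that in the supercompact case we must insist on \emph{strict} epimorphisms rather than arbitrary epimorphisms of $\Ccal_s$: an epimorphism of $\Ccal_s$ need not be epic in $\Ecal$, so it would not pair correctly with $\Ecal$-monomorphisms under orthogonality. Corollary \ref{crly:strict} is precisely the bridge that pins down the correct left class, and Lemma \ref{lem:compactepi} is what collapses this subtlety in the compact case.
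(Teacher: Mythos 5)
Your argument is correct and is exactly the intended one: the paper states this corollary without proof, treating it as immediate from Corollary \ref{crly:images}, Corollary \ref{crly:strict}, Lemmas \ref{lem:compactepi} and \ref{lem:monocoincide}, and your write-up simply makes that implicit lifting of the (epi, mono) system from $\Ecal$ explicit. Your closing remark about needing \emph{strict} epimorphisms in the $\Ccal_s$ case correctly identifies the one genuine subtlety.
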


For later reference, we observe that even though the categories $\Ccal_s$ and $\Ccal_c$ need not have finite products (see Example \ref{xmpl:nonreg}), we can still extend Corollaries \ref{crly:images} and \ref{crly:orthog} with factorizations of spans through jointly monic spans. Corollary \ref{crly:orthog} is the case $I = 1$ of the following Lemma.

\begin{lemma}
\label{lem:jmonic}
Let $\{f_j: B \to A_j \mid j \in I\}$ be a collection of morphisms with common domain in $\Ccal_s$ or $\Ccal_c$. Then there exists a strict epimorphism $e: B \too R$ and morphisms $\{r_j: R \to A_j \mid j \in I\}$ which are jointly monic, such that $f_j = r_j \circ e$.
\end{lemma}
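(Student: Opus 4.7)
My plan is to construct the factorization inside the ambient topos $\Ecal$, using the product and the (regular epi, mono)-factorization available there, and then transport it back into the subcategory. Since the subobject lattice of $B \times B$ in a Grothendieck topos is small, only set-many distinct kernel pairs $\ker(f_j)$ can arise; as joint monicity is insensitive to replacing $\{f_j\}$ by a representative for each distinct kernel pair, I would first reduce to the case where $I$ is a set, which lets me invoke the existence of $\prod_{j \in I} A_j$ in $\Ecal$.

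Next, I would form the pairing $\phi = \langle f_j \rangle_{j \in I}\colon B \to \prod_j A_j$ in $\Ecal$ and factor it as $\phi = m \circ e$ with $e\colon B \too R$ a regular epimorphism and $m\colon R \hookrightarrow \prod_j A_j$ a monomorphism. Setting $r_j := \pi_j \circ m$ gives $f_j = r_j \circ e$ by construction. Because $R$ is a quotient of $B \in \Ccal_s$ (resp.\ $\Ccal_c$), Lemma \ref{lem:closed} places $R$ in $\Ccal_s$ (resp.\ $\Ccal_c$), and Corollary \ref{crly:strict} then upgrades $e$ to a strict epimorphism in the subcategory.

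Finally, I would verify joint monicity of $\{r_j\}$ inside the subcategory: given $g,h\colon X \to R$ in $\Ccal_s$ (resp.\ $\Ccal_c$) with $r_j g = r_j h$ for every $j$, the universal property of the product in $\Ecal$ forces $m \circ g = m \circ h$, and $m$ is monic in $\Ecal$ (hence monic in the subcategory by Lemma \ref{lem:monocoincide}), whence $g = h$. The only real subtlety I anticipate is the initial size reduction on $I$; alternatively one could bypass products entirely by taking $R$ to be the quotient of $B$ by the equivalence relation $\bigwedge_j \ker(f_j)$, which is again set-indexed thanks to the same subobject-lattice argument and produces the same object $R$.
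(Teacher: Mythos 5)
Your argument is correct, but it takes a genuinely different route from the paper's. The paper never leaves the subcategory: it defines $e\colon B \too R$ as the funneling colimit, taken in $\Ccal_s$ (resp.\ $\Ccal_c$) using closure under such colimits (Lemma \ref{lem:closed2}), of the diagram of \emph{all} parallel pairs into $B$ that are coequalized by every $f_j$; by the characterisation of strict epimorphisms as funneling colimits, $e$ is then strict by construction, each $f_j$ factors through $e$ because it is a cocone over that diagram, and the factors are jointly monic by construction. You instead descend to $\Ecal$, pair the $f_j$ into $\prod_j A_j$, and take the image factorization there. What your route buys is a completely transparent joint-monicity check (your $r_j$ are jointly monic even in $\Ecal$, which is stronger than required; the appeal to Lemma \ref{lem:monocoincide} is superfluous and not literally applicable since $\prod_j A_j$ need not lie in the subcategory, but you only use that $m$ is monic in $\Ecal$). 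What it costs is (a) the preliminary size reduction on $I$ so that the product exists --- which your kernel-pair argument handles correctly, though it is also immediate from $\Ccal_s$ being essentially small --- and (b) a genuine reliance on Corollary \ref{crly:strict}, hence on $\Ecal$ being supercompactly (resp.\ compactly) generated, to upgrade the $\Ecal$-epimorphism $e$ to a \emph{strict} epimorphism of the subcategory; the paper's construction yields strictness in an arbitrary Grothendieck topos. Since the Lemma is presented as the $I$-indexed generalization of Corollary \ref{crly:orthog}, which carries exactly that hypothesis, your reliance on it is legitimate, and under that hypothesis your $R$ coincides with the paper's.
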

\begin{proof}
Let $e$ be the strict epimorphism obtained from the funneling colimit of the collection of all parallel pairs of morphism which are coequalized by all of the $f_j$. By definition, all of the $f_j$ factorize through it, and by construction the factors form a jointly monic family.
\end{proof}

\subsection{Canonical Sites of Supercompact Objects}
\label{ssec:cansite}

We have now done enough work to usefully apply the proof of Giraud's theorem and obtain a canonical site of definition for a supercompactly or compactly generated Grothendieck topos.

\begin{dfn}
\label{dfn:effective}
Recall that a sieve $S$ on an object $C$ of a category $\Ccal$ is \textbf{effective-epimorphic} if, when $S$ is viewed as a full subcategory of $\Ccal/C$, $C$ is the colimit of the (possibly large) diagram $D_S: S \hookrightarrow \Ccal/C \to \Ccal$ obtained by composing with the forgetful functor. A sieve generated by a single morphism $f$ is effective-epimorphic if and only if the morphism is a strict epimorphism. Such a sieve $S$ is \textbf{universally} effective-epimorphic if its pullback along the functor $\Ccal/D \to \Ccal/C$ induced by a morphism $f: D \to C$ is effective-epimorphic for any $f$.

The \textbf{canonical Grothendieck topology} $J_{can}^{\Ccal}$ on $\Ccal$ is the topology whose covering sieves are precisely the universally effective-epimorphic ones. If $\Ccal$ is a Grothendieck topos, this coincides with the Grothendieck topology whose covering sieves are those containing small jointly epic families. 
\end{dfn}

We recall the following result, which appears as \cite[Proposition 4.36]{Dense}:

\begin{lemma}
\label{lem:coincide}
Let $\Ecal$ be a Grothendieck topos and $\Ccal$ a small full separating subcategory of $\Ecal$. Let $S$ be a sieve in $\Ccal$ on an object $C$ and let $D_S$ be the diagram in $\Ccal$ described in Definition \ref{dfn:effective}. Suppose that the colimit of $D_S$ in $\Ecal$ lies in $\Ccal$. Then $S$ is universally effective-epimorphic in $\Ccal$ if and only if it is the restriction to $\Ccal$ of a sieve containing a small jointly epic family in $\Ecal$.
\end{lemma}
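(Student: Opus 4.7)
The plan is to leverage the following equivalences in $\Ecal$'s canonical topology for a sieve $R$ on $C$: containing a jointly epimorphic family, being jointly epic itself, covering $C$, and $C = \colim_\Ecal D_R$; the last translates sieves into subobjects of $C$ via $\mathbf{a}(R) = \bigvee_{f \in R} \im(f)$, which is essentially the content of Definition \ref{dfn:effective}. Two auxiliary principles enter: canonical covering sieves are pullback-stable, and fullness of $\Ccal \subseteq \Ecal$ forces a $\Ccal$-diagram's $\Ecal$-colimit to coincide with its $\Ccal$-colimit whenever the apex lies in $\Ccal$ (cones transfer by fullness).

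For the forward direction, suppose $S$ is universally effective-epimorphic in $\Ccal$, so $C = \colim_\Ccal D_S$. The lemma's hypothesis together with fullness upgrades this to $C = \colim_\Ecal D_S$, hence $S$ is jointly epic in $\Ecal$, the sieve $\langle S \rangle$ it generates in $\Ecal$ is covering, and $\langle S \rangle|_\Ccal = S$ by the sieve property of $S$ (any $\Ccal$-morphism into $C$ that passes through an element of $S$ already lies in $S$, using fullness to locate the factor in $\Ccal$). Notice that universality of $S$ in $\Ccal$ is not actually invoked here: it is automatic on the $\Ecal$-side.

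For the backward direction, assume $S = R|_\Ccal$ with $R$ covering in $\Ecal$. Using that $\Ccal$ is separating, each morphism in a jointly epic subfamily of $R$ may be precomposed with a jointly epic family of morphisms whose domains lie in $\Ccal$; by the sieve property these composites lie in $S$, and the aggregate family is jointly epic in $\Ecal$. Hence $C = \colim_\Ecal D_S$, and by fullness $C = \colim_\Ccal D_S$, giving $S$ effective-epimorphic in $\Ccal$. For universality, fix $f: D \to C$ in $\Ccal$: the pullback sieve $f^*R$ is covering in $\Ecal$ (pullback-stability), and $(f^*R)|_\Ccal = f^*S$ by a direct check using $R|_\Ccal = S$, so reapplying the same argument yields $f^*S$ effective-epimorphic in $\Ccal$; the ``colimit-in-$\Ccal$'' hypothesis for $D_{f^*S}$ emerges automatically, since one shows $\colim_\Ecal D_{f^*S} = D \in \Ccal$ directly.

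The main technical hurdle is the identification $\colim_\Ecal D_R = \bigvee_{f \in R} \im(f)$, which distinguishes canonical covering from the weaker purely formal ``effective-epimorphic'' notion; once it is in hand, the proof becomes clean bookkeeping mediated by fullness and the separating property of $\Ccal$.
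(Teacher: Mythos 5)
The paper does not actually prove this lemma: it is quoted verbatim from Caramello's monograph (\cite[Proposition 4.36]{Dense}) with an explicit disclaimer in the introduction that such results are cited without proof. So there is no in-paper argument to compare against, and your proposal should be judged on its own terms. On those terms the skeleton is correct and is essentially the standard argument. The forward direction (fullness transfers the $\Ecal$-colimit with apex in $\Ccal$ to a $\Ccal$-colimit, hence $C \cong \colim_{\Ecal} D_S$, hence the members of $S$ are jointly epic, and $\langle S\rangle_{\Ecal}|_{\Ccal} = S$ by fullness plus the sieve property) is right, as is your observation that universality is not needed there. The backward direction correctly uses the separating hypothesis to refine a jointly epic $\Ecal$-family through $\Ccal$ into $S$, and the reduction of universality to pullback-stability of jointly epic sieves together with the identity $(f^*R)|_{\Ccal} = f^*S$ is clean; you are also right that the colimit hypothesis for $D_{f^*S}$ is automatic because the argument computes that colimit to be $D$.

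The one place to be careful is the step you yourself flag as the ``main technical hurdle,'' because it is genuinely where all the content lives and your slogan for it is slightly off. The identity $\colim_{\Ecal} D_R = \bigvee_{f \in R} \im(f)$ is a property of \emph{sieves}, not of arbitrary families, and when $S$ is only a sieve \emph{in $\Ccal$} it additionally requires the separating hypothesis --- topos exactness alone does not suffice. Concretely: joint epimorphy gives that the canonical comparison $\colim_{\Ecal} D_S \to C$ is epic, and the issue is monicity. For that, given $f, g \in S$ one must connect the two projections $D_f \times_C D_g \rightrightarrows D_f, D_g$ inside the indexing category of $D_S$; since $D_f \times_C D_g$ need not lie in $\Ccal$, one covers it by a jointly epic family of objects $E_k$ of $\Ccal$, notes that each composite $E_k \to D_f \times_C D_g \to C$ lies in $S$ (sieve property plus fullness), and obtains the required zigzags $D_f \leftarrow E_k \to D_g$ in $S$ over a cover of the kernel; effectiveness and universality of epimorphic families in $\Ecal$ then force the comparison to be monic. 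Equivalently, one shows the inclusion of $S$ into the canonical density diagram of $C$ over $\Ccal$ is final. Spelling this out would close the only real gap; everything else in your write-up is, as you say, bookkeeping.
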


\begin{thm}
\label{thm:canon}
Suppose $\Ecal$ is supercompactly generated. Let $J_r$ be the Grothendieck topology on $\Ccal_s$ whose covering sieves are those containing strict epimorphisms. Then $\Ecal \simeq \Sh(\Ccal_s,J_r)$.

Similarly, if $\Ecal$ is compactly generated, and $J_c$ is the Grothendieck topology on $\Ccal_c$ whose covering sieves are those containing strictly epic (equivalently, jointly epic) finite families. Then $\Ecal \simeq \Sh(\Ccal_c,J_c)$.
\end{thm}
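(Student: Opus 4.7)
The plan is to apply the Comparison Lemma to the small separating subcategory $\Ccal_s$ (respectively $\Ccal_c$) and then identify the resulting induced topology with $J_r$ (respectively $J_c$). The discussion following Lemma \ref{lem:scsite} already records that the canonical comparison morphism $\Ecal \to \Sh(\Ccal_s, J_{can}^{\Ecal}|_{\Ccal_s})$ is an equivalence as soon as $\Ccal_s$ is separating, which is precisely the hypothesis of supercompact generation. So the substance of the proof is entirely a topology comparison.

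First I would unwind what sieves belong to the restricted canonical topology. By Lemma \ref{lem:coincide}, a sieve $S$ on $C \in \Ccal_s$ is covering for $J_{can}^{\Ecal}|_{\Ccal_s}$ iff it is the restriction of a jointly epic family in $\Ecal$, provided the colimit of $D_S$ in $\Ecal$ lies in $\Ccal_s$; that hypothesis is trivially met for sieves of the form we ultimately care about, since a sieve containing a strict epimorphism $f:D\too C$ has colimit $C\in\Ccal_s$ by definition. So I would reduce the question to: $S$ covers $C$ in $J_{can}^{\Ecal}|_{\Ccal_s}$ iff $S$ is jointly epic in $\Ecal$.

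The two containments now follow from what is already in hand. For one direction, if $S$ is jointly epic in $\Ecal$, then \emph{because $C$ is supercompact}, the definition of supercompactness forces $S$ to contain some morphism $f:D\to C$ which is epic in $\Ecal$; Corollary \ref{crly:strict} then upgrades this to a strict epimorphism in $\Ccal_s$, putting $S \in J_r$. For the converse, any sieve in $\Ccal_s$ containing a strict epimorphism is, again by Corollary \ref{crly:strict}, jointly epic in $\Ecal$, hence in $J_{can}^{\Ecal}|_{\Ccal_s}$. Thus $J_{can}^{\Ecal}|_{\Ccal_s} = J_r$ and the first equivalence follows.

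For the compact case, the same skeleton works: compactness of $C\in\Ccal_c$ replaces the single epi with a finite jointly epic subfamily, and Lemma \ref{lem:compactepi} (or the compact half of Corollary \ref{crly:strict}) identifies finite jointly epic families in $\Ccal_c$ with strictly epic finite families, giving $J_{can}^{\Ecal}|_{\Ccal_c}=J_c$ and also justifying the parenthetical ``equivalently, jointly epic''. There is no serious obstacle; the only point that requires a little care is checking the colimit hypothesis of Lemma \ref{lem:coincide} on the nose, which is handled by noting that strict epimorphisms and strictly epic finite families are, by Lemma \ref{lem:closed2}, created by the inclusions $\Ccal_s,\Ccal_c\hookrightarrow\Ecal$.
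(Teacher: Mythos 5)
Your proposal is correct and follows essentially the same route as the paper: reduce via Giraud's theorem/the Comparison Lemma to identifying the restricted canonical topology on $\Ccal_s$ (resp.\ $\Ccal_c$) with $J_r$ (resp.\ $J_c$), then use supercompactness (resp.\ compactness) to extract a principal (resp.\ finitely generated) covering sieve and the identification of strict epimorphisms in $\Ccal_s$ with epimorphisms in $\Ecal$ --- which the paper obtains from Lemmas \ref{lem:coincide} and \ref{lem:closed2}, and you obtain from Corollary \ref{crly:strict}, itself a consequence of those lemmas. No gap.
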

\begin{proof}
By Giraud's theorem, given a (small, full) separating subcategory $\Ccal$ of objects in a Grothendieck topos $\Ecal$, we have an equivalence of toposes $\Ecal \simeq \Sh(\Ccal,J_{can}^{\Ecal}|_{\Ccal})$, where $J_{can}^{\Ecal}|_{\Ccal}$ is the restriction of the canonical topology on $\Ecal$ to $\Ccal$, whose covering sieves are the intersections of $J_{can}^{\Ecal}$-sieves with $\Ccal$. Thus it suffices to show in each case that the restriction of the canonical topology is the topology described in the statement.

By Lemma \ref{lem:closed2}, the principal (resp. finitely generated) sieves $S$ on $\Ccal_s$ (resp. $\Ccal_c$), whose corresponding diagrams $D_S$ are funneling (resp. multifunneling) diagrams, have colimits contained in $\Ccal_s$ (resp. $\Ccal_c$), so Lemma \ref{lem:coincide} applies. Thus these are effective epimorphic sieves if and only if the generating morphism is a strict epimorphism in $\Ccal_s$ (resp. the generating morphisms form a strictly epimorphic finite family in $\Ccal_c$).

Now given any sieve $S$ containing a jointly epic family on an object of $\Ccal_s$ (resp. $\Ccal_c$) in $\Ecal$, by the definition of supercompact (resp. compact) objects, $S$ must contain an epimorphism (resp. a finite covering family). In particular, every $J_{can}^{\Ecal}|_{\Ccal_s}$-covering sieve contains a $J_{can}^{\Ecal}|_{\Ccal_s}$-covering \textit{principal} sieve. Similarly, every $J_{can}^{\Ecal}|_{\Ccal_c}$-covering sieve contains a $J_{can}^{\Ecal}|_{\Ccal_c}$-covering \textit{finitely generated} sieve.

It follows that the strict epimorphisms in $\Ccal_s$ are precisely the morphisms generating universally effective-epimorphic families (and similarly for strict jointly epimorphic families in $\Ccal_c$), as required. Alternatively, see the proof of Proposition \ref{prop:representable} below for a direct argument showing that the strict epimorphisms (resp. strictly epic finite families) are stable.
\end{proof}

\begin{rmk}
It should be clear by now from our joint treatment of supercompactness and compactness that much of our analysis can be applied to more general notions of compactness. Indeed, Theorem \ref{thm:canon} is an explicit special case of Caramello's \cite[Proposition 4.36]{Dense}.

Suppose $P$ is some property of pre-sieves (families of morphisms with common codomain); then we may define \textit{$P$-compact} objects in a topos $\Ecal$ as those for which every jointly epic covering family contains a jointly epic presieve satisfying $P$. If $P$ satisfies suitable composition and stability criteria, which Caramello specifies, and the full subcategory $\Ccal_P$ of $\Ecal$ on the $P$-compact objects is separating and closed under certain colimits, then $\Ecal$ is equivalent to the category of sheaves on $\Ccal_P$ for the topology generated by the effective-epimorphic $P$-presieves in $\Ccal_P$. For supercompactness, $P$ is the property `is a singleton', while for ordinary compactness, $P$ is the property `is finite', and our earlier results show that these do satisfy Caramello's criteria.

While we shall not attempt to extend the present paper to this most general case, we encourage the reader to explore whether any given topos of interest to them is $P$-compactly generated for some suitable property $P$, and if so to compute the corresponding site produced by Caramello's result.
\end{rmk}

The advantage of the intrinsic expressions for the Grothendieck topologies in Theorem \ref{thm:canon} is that it guarantees that the categories of (super)compact objects contain enough information to completely reconstruct the toposes \textit{by themselves}. This immediately gives us results such as the following:
\begin{crly}
Suppose $\Ecal$ and $\Ecal'$ are supercompactly generated toposes and $\Ccal_s$, $\Ccal'_s$ are their respective categories of supercompact objects. Then $\Ecal \simeq \Ecal'$ if and only if $\Ccal_s \simeq \Ccal'_s$.
\end{crly}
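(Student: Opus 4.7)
The plan is to leverage Theorem \ref{thm:canon}, which expresses each of $\Ecal$ and $\Ecal'$ as sheaves on its category of supercompact objects for a topology defined in purely categorical terms (in terms of strict epimorphisms). Since both the class of supercompact objects and the notion of strict epimorphism are intrinsic categorical invariants, any equivalence on either side should transport cleanly to the other.

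For the forward direction, suppose $F:\Ecal \xrightarrow{\sim} \Ecal'$ is an equivalence of categories. Supercompactness, as given by Definition \ref{dfn:scompact}, is formulated entirely in terms of jointly epic families and epimorphisms, both of which are preserved and reflected by any equivalence. Hence $F$ restricts to an equivalence between the full subcategories $\Ccal_s$ and $\Ccal'_s$ of supercompact objects.

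For the converse, suppose $G:\Ccal_s \xrightarrow{\sim} \Ccal'_s$ is an equivalence. Strict epimorphisms are characterized purely in terms of the coequalization of parallel pairs (equivalently, as colimits of funneling diagrams), so $G$ preserves and reflects strict epimorphisms, and therefore transports the topology $J_r$ on $\Ccal_s$ to the corresponding topology $J'_r$ on $\Ccal'_s$. An equivalence of sites that preserves and reflects covers induces an equivalence of the associated sheaf toposes, so
\[\Ecal \simeq \Sh(\Ccal_s, J_r) \simeq \Sh(\Ccal'_s, J'_r) \simeq \Ecal',\]
where the outer equivalences are instances of Theorem \ref{thm:canon}.

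The only subtle point, and where I would spend care in writing up, is the verification that both supercompactness and the topology $J_r$ are genuinely invariant under categorical equivalence. Supercompactness is immediate once one recalls that equivalences preserve jointly epic families, and invariance of $J_r$ follows because ``strict epimorphism'' is defined by a universal property internal to the category; no further hypothesis on $G$ (such as being induced by a geometric morphism) is needed to conclude the equivalence of sheaf toposes.
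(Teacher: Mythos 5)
Your proposal is correct and takes essentially the same route as the paper, which states this corollary without a written proof as an ``immediate'' consequence of Theorem \ref{thm:canon}, the point being exactly the one you articulate: supercompactness and strict epimorphisms are intrinsic categorical invariants, so the canonical site $(\Ccal_s, J_r)$ is transported by any equivalence. Your explicit verification of the two invariance claims is a faithful expansion of the argument the paper leaves implicit.
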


\subsection{Cokernels, Well-Supported Objects and Two-Valued Toposes}
\label{ssec:twoval}

We saw in Lemmas \ref{lem:compactepi} and \ref{lem:compactmono} that when $\Ecal$ is compactly generated, every epimorphism in $\Ccal_c$ is strict and every monomorphism in $\Ccal_c$ is regular. This leads us to wonder under what extra conditions these facts hold true in $\Ccal_s$, given that $\Ecal$ is supercompactly generated.

\begin{xmpl}
To properly motivate this section, we show that epimorphisms in $\Ccal_s$ need not coincide with those in $\Ecal$. Let $\Dcal$ be the category
\[\begin{tikzcd}
A & B \ar[l, "l"'] \ar[r, "r"] & C.
\end{tikzcd}\]
In the topos $\Ecal$ of presheaves on $\Dcal$ it is easily calculated that the supercompact objects are precisely the representables, so $\Dcal$ coincides with $\Ccal_s$ (we shall see in Proposition \ref{prop:localic} that this argument is valid for all posets). The morphisms $l$ and $r$ are trivially epic in $\Dcal$ but are not epic in $\Ecal$.
\end{xmpl}

Our main tool in this section is the following definition.
\begin{dfn}
Given a morphism $f:A \to B$, its \textbf{cokernel}\footnote{Not to be confused with the \textit{cokernel pairs} mentioned in the proof of Lemma \ref{lem:compactmono}.} $B \to B/f$ is the pushout:
\[\begin{tikzcd}
A \ar[r, "f"] \ar[d, "!"'] & B \ar[d] \\
1 \ar[r, "x"] & B/f \ar[ul, "\ulcorner", phantom, very near start].
\end{tikzcd}\]
\end{dfn}

Cokernels are useful for understanding epimorphisms thanks to the following result.
\begin{lemma}
\label{lem:isokernel}
A morphism $f: A \to B$ of a topos $\Ecal$ is an epimorphism if and only if the lower morphism $x:1 \to B/f$ of its cokernel is an isomorphism (or equivalently, an epimorphism).
\end{lemma}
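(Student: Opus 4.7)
The plan is to prove both implications separately, leveraging the observation that any morphism with domain a terminal object is a split monomorphism. For the forward direction, I would note that $x$ is (by definition) the pushout of $f$ along the unique map $!:A\to 1$. Pushouts of epimorphisms are epimorphisms in any category, so $x$ is epic whenever $f$ is. Simultaneously, $x:1\to B/f$ is split by the unique map $B/f\to 1$ and hence monic. Since a topos is balanced, $x$ is an isomorphism.

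For the reverse direction, the same split-mono observation shows that ``$x$ epic'' and ``$x$ iso'' are equivalent, so it suffices to deduce $f$ epic from $B/f\cong 1$. The plan is to factor $f=i\circ e$ through its image, with $e:A\twoheadrightarrow \im(f)$ epic and $i:\im(f)\hookrightarrow B$ mono, and compute the pushout defining $B/f$ in two stages: first pushing out $!_A$ along $e$, then pushing out along $i$. By the forward direction applied to $e$, the first stage collapses to the terminal object, so $B/f$ is exhibited as the pushout of $!_{\im(f)}:\im(f)\to 1$ along the monomorphism $i$. Invoking adhesivity of $\Ecal$ (as already used in the proof of Lemma \ref{lem:compactmono}), this pushout is simultaneously a pullback, and the opposite edge $p:B\to B/f$ is mono. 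If $x$ is an isomorphism, then pulling $p$ back along $x$ exhibits $\im(f)\cong B$, forcing $i$ to be an isomorphism and $f=i\circ e$ to be epic.

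The main obstacle is the reverse direction. A more direct approach --- given $g,h:B\to Y$ with $gf=hf$, try to manufacture two maps $B/f\to Z$ which agree upon precomposition with $x$ and conclude $g=h$ --- flounders on the fact that the universal property of the pushout only admits pairs $(b:B\to Z,\,y:1\to Z)$ with $b\circ f = y\circ !_A$, and a generic pair $(g,h)$ does not satisfy any such constraint. Invoking adhesivity bypasses this difficulty by transferring the triviality of $B/f$ back to triviality of the image inclusion in one stroke.
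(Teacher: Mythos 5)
Your proposal is correct and takes essentially the same route as the paper's proof: the forward direction via ``pushouts of epis are epis'' plus the fact that an epimorphism out of $1$ is invertible, and the reverse direction via the image factorization $f = i\circ e$, pasting of the two pushout squares, and adhesivity turning the pushout along the mono $i$ into a pullback along the isomorphism $x$, whence $i$ is invertible. The only cosmetic differences are your split-mono/balanced justification in the forward direction and the (true but unneeded) remark that $p$ is monic.
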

\begin{proof}
If $f$ is an epimorphism we have:
\[\begin{tikzcd}
A \ar[r, "f", two heads] \ar[d, "!"'] & B \ar[d] \\
1 \ar[r, "x", two heads] & B/f \ar[ul, "\ulcorner", phantom, very near start],
\end{tikzcd}\] 
since the pushout of an epimorphism is epic. But any quotient of $1$ in a topos is an isomorphism, as required.

Conversely, if $x:1 \to B/f$ is an isomorphism, we can consider the epi-mono factorization $f = m \circ e$:
\[\begin{tikzcd}
A \ar[rr, bend left, "f"] \ar[r, "e", two heads] \ar[d, "!"'] & A' \ar[r, "m", hook] \ar[d, "!"'] & B \ar[d] \\
1 \ar[rr, bend right, "x"'] \ar[r, "\sim"] & 1 \ar[r, "\sim"] \ar[ul, "\ulcorner", phantom, very near start] & B/f.
\end{tikzcd}\]
By the first part, the left hand square and outside rectangle are both pushouts, which makes the right hand square a pushout. But in a topos (or any adhesive category), a pushout square in which the upper horizontal morphism is monic is also a pullback square. Thus $m$ is an isomorphism, and $f$ is epic.
\end{proof}

Recall that an object $A$ of a category with a terminal object is \textbf{well-supported} if the unique morphism $!_A:A \to 1$ is an epimorphism in $\Ecal$. Using cokernels, we obtain a partial dual to Lemma \ref{lem:monocoincide} even without requiring $\Ecal$ to be (super)compactly generated.
\begin{lemma}
\label{lem:presic}
Let $\Ecal$ be a topos and $\Ccal_s$ the usual subcategory. Let $A$ be an object of $\Ccal_s$ which is well-supported as an object of $\Ecal$. Then a morphism $A \to B$ of $\Ccal_s$ is an epimorphism in that category if and only if it is an epimorphism in $\Ecal$.
\end{lemma}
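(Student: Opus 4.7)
The plan is to reduce the problem to Lemma \ref{lem:isokernel}, which detects epicness of $f : A \to B$ in $\Ecal$ via whether the map $x : 1 \to B/f$ in the cokernel square is an isomorphism. The ``only if'' direction is immediate because $\Ccal_s \hookrightarrow \Ecal$ is a full subcategory, so any equation between morphisms of $\Ccal_s$ pulled back along $f$ is also an equation in $\Ecal$. The interesting direction assumes $f$ epic in $\Ccal_s$ and seeks to show it is epic in $\Ecal$. The natural move is to form the cokernel pushout
\[\begin{tikzcd}
A \ar[r, "f"] \ar[d, "!_A"'] & B \ar[d, "p"] \\
1 \ar[r, "x"] & B/f
\end{tikzcd}\]
in $\Ecal$ and try to show, using the hypothesis that $f$ is epic in $\Ccal_s$, that $x$ is an epimorphism (hence iso, since any quotient of $1$ in a topos is iso).

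First I would verify that the entire square lives in $\Ccal_s$. The well-supportedness of $A$ makes $!_A$ epic in $\Ecal$, so $1$ is a quotient of $A \in \Ccal_s$ and hence lies in $\Ccal_s$ by Lemma \ref{lem:closed}. The map $p$ is the pushout of the epimorphism $!_A$ along $f$, so it is also epic, and $B/f$ is a quotient of $B \in \Ccal_s$; hence $B/f \in \Ccal_s$ as well. In particular $p$, $x$, and $!_B$ are all morphisms of $\Ccal_s$.

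Next I would compare the two parallel morphisms $p, \, x \circ !_B : B \rightrightarrows B/f$. Pushout commutativity together with $!_A = !_B \circ f$ gives
\[ p \circ f \;=\; x \circ !_A \;=\; (x \circ !_B) \circ f, \]
and now the hypothesis that $f$ is epic in $\Ccal_s$ forces $p = x \circ !_B$ (this is the one place the hypothesis is used). Since $p$ is epic in $\Ecal$, the composite $x \circ !_B$ is epic, so its final factor $x$ is epic in $\Ecal$; by Lemma \ref{lem:isokernel}, $f$ is epic in $\Ecal$.

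The main obstacle is really hidden in the first step: ensuring that the terminal object — and consequently $B/f$ and the morphism $!_B$ — genuinely lies in $\Ccal_s$, so that the epicness of $f$ in $\Ccal_s$ can be applied to the parallel pair $p, x \circ !_B$. This is precisely what the well-supportedness hypothesis on $A$ delivers; without it $1$ need not be supercompact (as the example of $\Sh(\Rbb)$ mentioned elsewhere illustrates), $!_B$ need not be a morphism of $\Ccal_s$, and the cokernel-based argument collapses.
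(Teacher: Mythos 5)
Your proof is correct and follows essentially the same route as the paper's: form the cokernel pushout, use well-supportedness of $A$ to see that $1$ and $B/f$ lie in $\Ccal_s$ and that $q$ is epic, cancel $f$ against the parallel pair $q, x \circ {!_B}$ in $\Ccal_s$, and conclude via Lemma \ref{lem:isokernel}. The only difference is that you spell out in more detail why the whole square lives in $\Ccal_s$, which the paper leaves implicit.
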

\begin{proof}
An epimorphism of $\Ecal$ lying in $\Ccal_s$ clearly remains epic there.

Conversely, if $e : A \too B$ is epic in $\Ccal_s$, consider the cokernel $B \to B/e$:
\[\begin{tikzcd}
A \ar[r, "e"] \ar[d, "!_{A}"', two heads] & B \ar[d, "q", two heads] \\
1 \ar[r, "x"] & B/e \ar[ul, "\ulcorner", phantom, very near start].
\end{tikzcd}\]
By assumption $B/e$ is supercompact as $q$ is epic in $\Ecal$. The unique morphism $!_{A}:A \to 1$ factors through $e$ via the unique morphism $!_{B}:B \to 1$. Thus we have $qe = x!_{A} = x!_{B}e$, and since these expressions are composed of morphisms lying in $\Ccal_s$ where $e$ is epic, it follows that $q = x!_{B}$ whence $x$ is epic and hence an isomorphism. Hence $e$ is epic in $\Ecal$, by Lemma \ref{lem:isokernel}.
\end{proof}

It is worth noting that the existence of any well-supported supercompact object forces the terminal object of $\Ecal$ to be supercompact. Moreover, this proof concertedly fails when the object $A$ is not well-supported:

\begin{lemma}
\label{lem:coker}
Let $\Ecal$ be a supercompact topos. Then $\Ccal_s$ is closed under cokernels if and only if every supercompact object is well-supported.
\end{lemma}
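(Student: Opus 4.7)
The plan is to treat the two directions separately, with the central tool being Lemma \ref{lem:isokernel}, which converts ``$f$ is epic'' into an isomorphism condition on its cokernel. I will also use the general facts that the pushout of an epimorphism along any morphism is an epimorphism, and that the two structure morphisms into any pushout are jointly epimorphic (the latter following immediately from the universal property of the pushout).

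For the reverse implication, suppose every supercompact object of $\Ecal$ is well-supported and let $f:A\to B$ be a morphism in $\Ccal_s$. In the defining pushout square for the cokernel $B\to B/f$, the left-hand vertical arrow $!_A$ is epic by hypothesis, so the opposite edge $q:B\to B/f$ is also epic; this makes $B/f$ a quotient of the supercompact object $B$, and therefore $B/f\in\Ccal_s$ by Lemma \ref{lem:closed}.

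For the forward implication, assume $\Ccal_s$ is closed under cokernels. Assuming $\Ccal_s$ is nonempty (the claim being vacuous otherwise), the preliminary step is to deduce that $1\in\Ccal_s$: for any $A\in\Ccal_s$, the cokernel of $\id_A$ is the pushout of $\id_A$ along $!_A$, and pushing out along an isomorphism leaves an isomorphism, so this cokernel is $1$. Now for any $A\in\Ccal_s$ I form the cokernel of $!_A:A\to 1$, which is a morphism between objects of $\Ccal_s$. The resulting pushout square has two structure morphisms $x,y:1\rightrightarrows 1/!_A$, which are jointly epimorphic. Since $1/!_A\in\Ccal_s$ by the closure hypothesis, supercompactness forces one of $x,y$ to be epic; but any quotient of $1$ in a topos is an isomorphism (as used in the proof of Lemma \ref{lem:isokernel}), so $1/!_A\cong 1$ and in particular $x$ is an isomorphism. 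Lemma \ref{lem:isokernel} then yields that $!_A$ is epic, i.e., $A$ is well-supported.

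The main obstacle is the forward direction, where the closure hypothesis must first be leveraged to place the terminal object inside $\Ccal_s$ before the relevant cokernel of $!_A$ can even be taken within $\Ccal_s$. The trick is to obtain $1$ as the cokernel of an identity morphism on any supercompact object; once $1\in\Ccal_s$ is secured, combining supercompactness of $1/!_A$ with the joint-epic nature of the pushout injections and the rigidity of $1$ in a topos forces $!_A$ to be epic.
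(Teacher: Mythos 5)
Your proof is correct and follows essentially the same strategy as the paper's: the easy direction is identical, and in the forward direction both arguments hinge on the two jointly epic legs $1 \rightrightarrows P$ into a supercompact pushout of copies of $1$, one of which must then be an epimorphism and hence an isomorphism. The only (harmless) differences are that you take the cokernel of $!_A$ itself rather than of its support $U \hookrightarrow 1$, conclude via Lemma \ref{lem:isokernel} rather than via the pushout-is-pullback (adhesivity) argument, and re-derive $1 \in \Ccal_s$ even though the hypothesis that $\Ecal$ is a supercompact topos already provides it.
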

\begin{proof}
Given a supercompact object $A$, consider its \textbf{support}, which is the subterminal object $U \hookrightarrow 1$ in the factorization of the morphism $!_A$. By Lemma \ref{lem:closed}, $U$ is supercompact, so if $\Ccal_s$ is closed under cokernels, the pushout of $U \hookrightarrow 1$ along itself must be in $\Ccal_s$. But the colimit morphisms $1 \rightrightarrows 1 +_U 1$ are jointly epic, so one of them must be an epimorphism and hence an isomorphism, which forces $U \cong 1$. Thus $A$ is well-supported, as required.

Conversely, if every object $A$ of $\Ccal_s$ is well-supported then the cokernel (in $\Ecal$) of a morphism $A \to B$ in $\Ccal_s$ is a quotient of $B$ and so is supercompact. Thus $\Ccal_s$ is closed under cokernels, as required.
\end{proof}

We shall see a relevant sufficient condition for this to occur in Proposition \ref{prop:hype2}. In this setting we can also strengthen Lemma \ref{lem:monocoincide}.

\begin{schl}
\label{schl:regularmono}
Let $\Ecal$ be a topos such that every object of $\Ccal_s$ is well-supported. Then monomorphisms in $\Ccal_s$ inherited from $\Ecal$ are regular. In particular, if $\Ecal$ is also supercompactly generated, then all monomorphisms in $\Ccal_s$ are regular.
\end{schl}
\begin{proof}
By Proposition \ref{prop:hype2}, the hypotheses guarantee that the cokernel of a morphism in $\Ccal_s$ also lies in that category.

As remarked in the proof of Lemma \ref{lem:isokernel}, the pushout square defining the cokernel of an inclusion of supercompact objects $i: A \hookrightarrow B$ is also a pullback in $\Ecal$. It follows easily that $i$ is the equalizer in $\Ecal$ of the morphisms $q, x!_B: B \rightrightarrows B/i$ described in the proof of Lemma \ref{lem:presic}, and is consequently also their equalizer in $\Ccal_s$.

If $\Ecal$ is supercompactly (resp. compactly) generated then we may apply Lemma \ref{lem:monocoincide} to conclude that the above applies to all monomorphisms of $\Ccal_s$ (resp. $\Ccal_c$).
\end{proof}

On the other hand, we shall see in Example \ref{xmpl:freemon} that it is not in general possible to strengthen the properties of epimorphisms in $\Ccal_s$ beyond the consequence of the proof of Theorem \ref{thm:canon} that they are strict. In particular, they are not regular in general.

Finally, we explicitly record how Lemma \ref{lem:presic} sjimplifies the expression for the Grothendieck topology $J_r$ induced on $\Ccal_s$ from Theorem \ref{thm:canon}.
\begin{crly}
\label{crly:site}
Let $\Ecal$ be a supercompactly generated topos and $\Ccal_s$ its full subcategory of supercompact objects. Suppose every object of $\Ccal_s$ is well-supported. Let $J_r$ be the topology on $\Ccal_s$ whose covering sieves are precisely those containing epimorphisms. Then $\Ecal \simeq \Sh(\Ccal_s,J_r)$.
\end{crly}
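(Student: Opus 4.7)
The plan is to invoke Theorem \ref{thm:canon} and show that, under the hypothesis that every supercompact object is well-supported, the topology $J_r$ described in the present statement coincides with the Grothendieck topology (also called $J_r$) produced there, whose covering sieves are those containing strict epimorphisms in $\Ccal_s$. Thus the work reduces to verifying that, on $\Ccal_s$, a sieve contains a strict epimorphism if and only if it contains an epimorphism.

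One inclusion is immediate: a strict epimorphism is in particular an epimorphism in $\Ccal_s$, so any sieve containing a strict epimorphism contains an epimorphism. For the converse, I would take a sieve $S$ on an object $C$ of $\Ccal_s$ and pick an epimorphism $e: A \to C$ of $\Ccal_s$ belonging to $S$. Since $A$ lies in $\Ccal_s$, the standing hypothesis makes $A$ well-supported in $\Ecal$, so Lemma \ref{lem:presic} applies and tells us that $e$ is already an epimorphism in $\Ecal$. Corollary \ref{crly:strict} then upgrades this: any morphism in $\Ccal_s$ which is epic in $\Ecal$ is a strict epimorphism in $\Ccal_s$. Hence $S$ contains a strict epimorphism, completing the equality of the two topologies.

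I do not expect a real obstacle; the argument is a direct chain Lemma \ref{lem:presic} $\Rightarrow$ Corollary \ref{crly:strict}. The only point that genuinely uses the hypothesis is that the domain of the chosen epimorphism lies in $\Ccal_s$ and must be well-supported in order to invoke Lemma \ref{lem:presic}; this is exactly what the blanket assumption on objects of $\Ccal_s$ provides. Once the topologies are identified, the equivalence $\Ecal \simeq \Sh(\Ccal_s, J_r)$ is inherited verbatim from Theorem \ref{thm:canon}.
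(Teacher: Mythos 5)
Your proposal is correct and follows exactly the route the paper intends: the corollary is presented as the combination of Theorem \ref{thm:canon} with Lemma \ref{lem:presic} (plus Corollary \ref{crly:strict}) to identify sieves containing epimorphisms of $\Ccal_s$ with sieves containing strict epimorphisms. Nothing is missing.
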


\subsection{Proper, Polite and Precise Morphisms}
\label{ssec:proper}

In this section we present some classes of geometric morphism whose inverse image functors interact well with supercompact and compact objects. The material in this section contains more technical topos theoretic concepts than that in the earlier sections, so may be skipped on a first reading; the key result is Proposition \ref{prop:relpres}. 

The latter part of Lemma \ref{lem:scompact}, regarding compact objects, states precisely that the geometric morphism $\Ecal/C \to \Set$ is proper in the sense of Moerdijk and Vermeulen \cite[Definition I.1.8]{Compact} (see also Johnstone \cite[C3.2.12 and C1.5.5]{Ele}), or equivalently that $\Ecal/C$ is a compact Grothendieck topos. We can generalise Moerdijk and Vermeulen's definition of proper morphisms to the arbitrary union case in order to capture the idea of supercompactness. In order to do this, we recall some classic topos-theoretic constructions, from \cite[Chapter 2]{TT}.

Recall from \cite[Definition 2.11]{TT} that an \textbf{internal category} $\Ibb$ in a topos (or, more generally, a cartesian category) $\Ecal$ consists of:
\begin{enumerate}[(i)]
	\item An \textit{objects of objects} $I_0$ and an \textit{object of morphisms} $I_1$ in $\Ecal$, and
	\item Morphisms $i:I_0 \to I_1$, $d,c:I_1 \rightrightarrows I_0$ and $m: I_2 \to I_1$,
\end{enumerate}
where $I_2$ is the \textit{object of composable pairs}, defined as the pullback:
\[\begin{tikzcd}
I_2 \ar[r, "\pi_2"] \ar[d,"\pi_1"']
\ar[dr, phantom, "\lrcorner", very near start] &
I_1 \ar[d, "d"]\\
I_1 \ar[r, "c"] &
I_0.
\end{tikzcd}\]
The morphisms define the \textit{identity morphisms}, \textit{domains}, \textit{codomains} and \textit{composition}, respectively. These are required to satisfy the equations $di = ci = \id_{I_0}$, $dm = d\pi_1$, $cm = c\pi_2$, $m(\id \times m) = m(m \times \id)$ and $m(\id \times i) = m(i \times \id) = \id_{I_1}$. These are diagrammatic translations of the axioms for ordinary categories in $\Set$. An \textbf{internal functor} between internal categories is a pair of morphisms between the respective objects of objects and objects of morphisms commuting with the respective structure morphisms.

\begin{dfn}
\label{dfn:diagram}
We say an internal category in $\Ecal$ is \textbf{inhabited}\footnote{This is the constructive term for `non-emptiness', meaning `has an element'. Translating this condition into the internal language of the topos $\Ecal$, we get the well-supportedness condition.} if its object of objects is well-supported. An internal category is \textbf{directed} if it is moreover directed in the internal logic of $\Ecal$ (which implies inhabitedness).
\end{dfn}

Given an internal category $\Ibb$ in $\Ecal$, we recall from \cite[Definition 2.14]{TT} that an (internal) \textbf{diagram} of shape $\Ibb$ consists of:
\begin{enumerate}
 	\item An object $a: F_0 \to I_0$ of $\Ecal/I_0$, and
 	\item A morphism $b: F_1 \to F_0$, 
\end{enumerate}
where $F_1$ is the defined as the pullback
\[\begin{tikzcd}
F_1 \ar[r, "\pi_2"] \ar[d,"\pi_1"']
\ar[dr, phantom, "\lrcorner", very near start] &
F_0 \ar[d, "a"]\\
I_1 \ar[r, "d"] &
I_0,
\end{tikzcd}\]
such that $ab = c\pi_2$, $b(\id \times i) = \id_{F_0}$, and $e(e \times \id) = e(\id \times m)$. In $\Set$, this data captures the encoding of a functor into $\Set$ via the Grothendieck construction.

There is an accompanying notion of natural transformation, and hence we obtain the internal diagram category $[\Ibb,\Ecal]$. This is a topos over $\Ecal$, which is proved using the comonadicity theorem in \cite[Corollary 2.33]{TT}.

\begin{dfn}
\label{dfn:polite}
For every object $E$ of $\Ecal$ and every internal category $\Ibb$ (resp. inhabited internal category $\Ibb$; filtered internal category $\Ibb$) in $\Ecal/E$, there is an induced pullback square of diagram toposes\footnote{See \cite[Corollary B3.2.12]{Ele} for an explanation of why this square is a pullback.}:
\begin{equation}
\label{eq:polite}
\begin{tikzcd}[column sep = large]
{[f^*(\Ibb){,}\Fcal/f^*(E)]} \ar[r, "(f/E)^{\Ibb}"] \ar[d,"\pi"']
\ar[dr, phantom, "\lrcorner", very near start] &
{[\Ibb{,}\Ecal/E]} \ar[d, "\pi"]\\
\Fcal/f^*(E) \ar[r, "f/E"] &
\Ecal/E,
\end{tikzcd}
\end{equation}
where $f^*(\Ibb)$ is the internal category in $\Fcal/f^*(E)$ obtained by applying $(f/\Ibb)^*$; this functor preserves finite limits, and hence the structure of an internal category. Each vertical morphism labelled $\pi$ has inverse image functor sending an object to the `constant diagram'; as well as a right adjoint $\pi_*$, this functor always has an $\Ecal$-indexed left adjoint $\pi_!$. The functors $\pi_*$ and $\pi_!$ send an internal diagram to its (internal) limit and colimit, respectively.

We call a geometric morphism $f:\Fcal\to \Ecal$ \textbf{precise} (resp. \textbf{polite}; \textbf{proper}) if the square above satisfies the condition $\pi_! \circ (f/E)^{\Ibb}_*(V) \cong (f/E)_* \circ \pi_!(V)$ for every subterminal object $V$ of $[f^*(\Ibb),\Fcal/f^*(E)]$. This can be understood as stating that the direct image of $f$ preserves $\Ecal$-indexed (resp. $\Ecal$-indexed inhabited; $\Ecal$-indexed directed) unions of subobjects.

We call a Grothendieck topos \textbf{supercompact} (resp. \textbf{compact}) if its unique geometric morphism to $\Set$ is precise (resp. proper), which by Lemma \ref{lem:scompact} occurs if and only if the terminal object has these properties. The global sections morphism of a Grothendieck topos is polite if and only if the topos is supercompact or degenerate (that is, the terminal object is either supercompact or initial).
\end{dfn}

Note that Moerdijk and Vermeulen denote $\pi_!$ by $\infty^*$ because, in the proper case, $\pi_!$ preserves finite limits and hence is the inverse image functor of a geometric morphism in the opposite direction. In the precise and polite cases the left adjoint $\pi_!$ is in general not left exact, so this notation no longer makes sense.

\begin{xmpl}
\label{xmpl:compshf}
The presheaf topos $[\Ccal\op,\Set]$ is supercompact if and only if $\Ccal$ is a funnel in the sense of Definition \ref{dfn:funnel}. Indeed, if $\Ccal$ has a weakly terminal object $C_0$ then there is by inspection an epimorphism $\yon(C_0) \too 1$ in $[\Ccal\op,\Set]$, and conversely if $1$ is supercompact then one of the morphisms $\yon(C) \to 1$ must be an epimorphism, since these are jointly epic, whence every object of $\Ccal$ admits at least one morphism to the corresponding object of $\Ccal$.

More generally, $[\Ccal\op,\Set]$ is compact if and only if $\Ccal$ is a multifunnel in the sense of Definition \ref{dfn:multifun}.
\end{xmpl}

An immediate consequence of Definition \ref{dfn:polite} is that we can relativise the concepts of supercompactness and compactness to depend on the base topos over which we work (so far we have been implicitly working over $\Set$). Viewing a geometric morphism $\Fcal \to \Ecal$ as expressing $\Fcal$ as a topos over $\Ecal$, an object $X$ of $\Fcal$ is \textbf{$\Ecal$-compact} if the composite geometric morphism $\Fcal/X \to \Fcal \to \Ecal$ is proper, for example. Conversely, since in this paper we will only be concerned with objects which are supercompact relative to some fixed base topos $\Scal$, it makes sense to employ the broader classes of geometric morphism introduced in \cite[Chapter V]{Compact}.

\begin{dfn}
\label{dfn:relpolite}
Let $p:\Ecal \to \Scal$ and $q:\Fcal \to \Scal$ be toposes over $\Scal$. A geometric morphism $f:\Fcal \to \Ecal$ over $\Scal$ is \textbf{$\Scal$-relatively precise} (resp. \textbf{$\Scal$-relatively polite}; \textbf{$\Scal$-relatively proper}) if its direct image preserves arbitrary $\Scal$-indexed unions (resp. inhabited $\Scal$-indexed unions; directed $\Scal$-indexed unions) of subobjects. Explicitly, this requires that the respective conditions of Definition \ref{dfn:polite} hold for diagrams $\Ibb$ in $\Ecal$ of the form $p^*(\Ibb')$, where $\Ibb'$ is a diagram category of the appropriate type in $\Scal$. We shall assume $\Scal$ is $\Set$ in the remainder, and so we drop the `$\Scal$-' prefixes. 
\end{dfn}

All of these definitions appear hard to work with in general for the simple reason that internal diagram categories take a significant amount of computation to express and work with concretely (which is to say externally) in any given case. However, the $\Scal$-relative notions conveniently coincide with their external counterparts, in a sense made precise in Lemma \ref{lem:relprop} below. Also, all of the notions are clearly stable under slicing and composition, from which we can extract general consequences which are sufficient for the purposes of this paper.

By inspection, we have the following relationships between Definitions \ref{dfn:polite} and \ref{dfn:relpolite}.
\begin{lemma}
\label{lem:relvsnorel}
Consider a commuting triangle of geometric morphisms:
\[\begin{tikzcd}
\Fcal \ar[rr, "f"] \ar[dr, "q"'] & & \Ecal \ar[dl, "p"]\\
& \Set. &
\end{tikzcd}\]
Then:
\begin{enumerate}
	\item If $f$ is precise, it is relatively precise.
	\item The morphisms $p$ and $q$ are relatively precise morphism if and only if they are precise.
	\item If $f$ is relatively precise and $p$ is precise, then $q$ is precise.
\end{enumerate}
The same statements hold when `precise' is replaced by `polite' or `proper'.
\end{lemma}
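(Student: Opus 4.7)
All three claims (and their polite and proper analogues) reduce to comparing which classes of internal categories $\Ibb$ appear in the quantifier of the common commutativity condition $\pi_! \circ (\bullet)^{\Ibb}_* \cong (\bullet)_* \circ \pi_!$ shared by Definitions \ref{dfn:polite} and \ref{dfn:relpolite}. My plan is to handle each clause by inspecting its indexing class; no deeper topos-theoretic input beyond these definitions is needed.

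Clause (1) is immediate: internal categories in $\Ecal$ of the form $p^*(\Ibb')$ with $\Ibb' \in \Set$ form a subclass of all internal categories in $\Ecal$, so preciseness of $f$ restricts at once to relative preciseness of $f$. For clause (2), I will observe that when $p: \Ecal \to \Set$ is viewed as a morphism over $\Set$, the structure map of its codomain is the identity on $\Set$, so the diagrams $\id^*(\Ibb') = \Ibb'$ quantified over in the definition of relative preciseness of $p$ are already all internal categories in $\Set$; via the Grothendieck construction one checks that the $E$-parametrised internal categories in $\Set/E$ appearing in the definition of preciseness of $p$ yield the same class, so the two definitions agree tautologically for $p$, and likewise for $q$. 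I expect this to be the main obstacle, since it requires unwinding the bookkeeping of slicing by an arbitrary $E \in \Set$ and checking that it adds nothing new when the base is already $\Set$.

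For clause (3), the plan is a two-step calculation using $q_* = p_* \circ f_*$. Fix a small category $\Ibb'$ in $\Set$ and a subterminal $V$ in the relevant diagram topos over $\Fcal$, noting $q^*(\Ibb') = f^*p^*(\Ibb')$. Relative preciseness of $f$, applied to the $\Ecal$-diagram $p^*(\Ibb')$, lets $f_*$ commute with $\pi_!$ on $V$; preciseness of $p$ (equivalently, by (2), relative preciseness of $p$), applied to the $\Set$-diagram $\Ibb'$, lets $p_*$ commute with $\pi_!$ on $f_* V$. Composing the two commutations yields $q_* \pi_!(V) \cong \pi_! q_*(V)$, so $q$ is relatively precise, hence precise again by (2). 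The polite and proper cases are obtained by restricting the word ``internal category'' throughout to ``inhabited'' or ``filtered'' internal categories respectively; since all three notions are characterised by the same shape of commutativity condition, no additional argument is required.
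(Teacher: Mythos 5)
Your proposal is correct and is essentially the paper's own argument: the paper states this lemma without a separate proof, introducing it with ``By inspection, we have the following relationships between Definitions \ref{dfn:polite} and \ref{dfn:relpolite},'' and your three clauses carry out exactly that inspection (restricting the quantifier over internal categories for (1), decomposing $\Set/E \simeq \Set^E$ to see the slicing adds nothing for (2), and pasting the two commutation squares along $q_* = p_* \circ f_*$ for (3)). The only cosmetic imprecision is writing $f_*V$ and $q_*$ where the direct images at the level of the diagram toposes, $(f/E)^{\Ibb}_*$ and $(q)^{\Ibb'}_*$, are meant, but this does not affect the argument.
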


A handy consequence of this for our objects of interest is the following:
\begin{crly}
\label{crly:polite}
Let $f: \Fcal \to \Ecal$ be a geometric morphism between Grothendieck toposes. If $f$ is relatively precise (resp. relatively polite, relatively proper), then $f^*$ preserves supercompact (resp. `supercompact or initial', compact) objects.
\end{crly}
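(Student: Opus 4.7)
The plan is to reduce the statement to Lemma \ref{lem:relvsnorel}(3) by slicing. Since supercompactness, `supercompactness or initiality', and compactness of an object $X$ are each equivalent, by the definitions following Definition \ref{dfn:polite}, to the corresponding property (preciseness, politeness, properness) of the global sections morphism $\Ecal/X \to \Set$, it suffices in each case to prove that $\Fcal/f^*(X) \to \Set$ has the same property. I would therefore fix an object $X$ of $\Ecal$ with the given property and consider the commuting triangle
\[\begin{tikzcd}
\Fcal/f^*(X) \ar[rr, "f/X"] \ar[dr] & & \Ecal/X \ar[dl]\\
& \Set, &
\end{tikzcd}\]
where $f/X$ is the slicing of $f$ at $X$, and $f/X$ is canonically a morphism over $\Set$.

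The right-hand leg $\Ecal/X \to \Set$ is precise (resp. polite, proper) by the hypothesis on $X$. For the left-hand leg $f/X$, I would appeal to the remark following Definition \ref{dfn:relpolite} that each of the relative notions is stable under slicing: since $f$ is relatively precise (resp. polite, proper) and all the morphisms in the triangle live over $\Set$, so is $f/X$. Then Lemma \ref{lem:relvsnorel}(3) applies directly to conclude that the composite $\Fcal/f^*(X) \to \Set$ is precise (resp. polite, proper), which translates back to the desired property for $f^*(X)$ in $\Fcal$.

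The only step requiring any genuine content is the stability of relative preciseness/politeness/properness under slicing, and this is already invoked in the paper just before Lemma \ref{lem:relvsnorel}; I would treat it as a black box, with the remark that stability follows from the fact that the pullback square in \eqref{eq:polite} defining the preservation condition is unchanged (up to canonical equivalence) when one replaces $\Ecal$ by $\Ecal/X$, $\Fcal$ by $\Fcal/f^*(X)$, and the diagram $\Ibb$ in $\Ecal/X$ by its pushforward into $\Ecal$. No step here is a serious obstacle; the argument is essentially a bookkeeping exercise translating the intrinsic definition of (super)compactness into the language of proper-style geometric morphisms via slicing.
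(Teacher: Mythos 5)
Your proposal is correct and is essentially the paper's own argument: the paper's proof considers exactly the same sliced triangle $\Fcal/f^*(E) \to \Ecal/E \to \Set$ and concludes via Lemma \ref{lem:relvsnorel}.3, implicitly using the stability of the relative notions under slicing noted just before that lemma. You have merely made that stability step explicit, which is a reasonable point of care but not a departure from the paper.
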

\begin{proof}
Given an object $E$ of $\Ecal$ and a relatively precise (resp. relatively polite, relatively proper) morphism $f$, consider the triangle:
\[\begin{tikzcd}
\Fcal/f^*(E) \ar[rr, "f/E"] \ar[dr] & & \Ecal/E \ar[dl]\\
& \Set. &
\end{tikzcd}\]
If $E$ is supercompact (resp. `supercompact or initial', compact), then the global sections morphism of $\Ecal/E$ is also precise (resp. polite, proper), so $f^*(E)$ must be supercompact (resp. supercompact or initial, compact) by Lemma \ref{lem:relvsnorel}.3.
\end{proof}

In order to make more explicit arguments, we now extend Moerdijk and Vermeulen's characterisation of relatively proper geometric morphisms in \cite[Proposition V.3.7(i)]{Compact}.
\begin{lemma}
\label{lem:relprop}
A geometric morphism $f:\Fcal \to \Ecal$ over $\Scal$ is relatively precise (resp. relatively polite, relatively proper) if and only if for any $\Scal$-indexed\footnote{Taking $\Scal$ to be $\Set$, $\Scal$-indexed just means set-indexed, or small.} (resp. $\Scal$-inhabited, $\Scal$-directed) jointly epimorphic family $\{g_i: X_i \hookrightarrow f^*(Y)\}$ of subobjects in $\Fcal$ there exists a jointly epimorphic family $\{h_j: Y_j \to Y\}$ in $\Ecal$ such that each $f^*(h_j)$ factors through some $g_i$.
\end{lemma}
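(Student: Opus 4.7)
The plan is to extend the argument of Moerdijk and Vermeulen for the relatively proper case by exploiting the adjunction $f^* \dashv f_*$ uniformly for all three cases. The forward direction uses the unit $\eta: \id \Rightarrow f_* f^*$ to transport a subobject cover of $f^*(Y)$ down to a subobject cover of $Y$, while the converse uses the counit $\epsilon: f^* f_* \Rightarrow \id$ to reduce the general preservation-of-unions condition to the special case in which the ambient object is of the form $f^*(Y)$, by taking $Y := f_*(Z)$. The shape of the indexing family (arbitrary, inhabited, or directed) is preserved by all of the monotone operations used below, so the precise, polite, and proper cases can be handled in parallel.

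For the forward implication, assume $f_*$ preserves the relevant class of unions of subobjects. Given a jointly epic family $\{g_i: X_i \hookrightarrow f^*(Y)\}$ of the appropriate shape, apply $f_*$ to obtain subobjects $f_*(g_i): f_*(X_i) \hookrightarrow f_* f^*(Y)$ whose union is $f_* f^*(Y)$ by hypothesis. Pull each back along $\eta_Y$ to define $h_i: Y_i \hookrightarrow Y$; since pullback has a right adjoint it preserves unions, so $\{h_i\}$ is jointly epic, and both directedness and non-emptiness are inherited because $f_*$ and $\eta_Y^{-1}$ preserve the subobject ordering. To see that each $f^*(h_i)$ factors through $g_i$, apply $f^*$ to the pullback square defining $Y_i$, compose the resulting square with $\epsilon_{f^*(Y)}: f^* f_* f^*(Y) \to f^*(Y)$, and use naturality of $\epsilon$ together with the triangle identity $\epsilon_{f^*(Y)} \circ f^*(\eta_Y) = \id$: this yields $f^*(h_i) = g_i \circ \epsilon_{X_i} \circ (f^*(Y_i) \to f^* f_*(X_i))$, as required.

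For the converse, assume the covering family condition and let $\{X_i \hookrightarrow Z\}$ be a jointly epic family of subobjects in $\Fcal$ of the appropriate shape. Setting $Y := f_*(Z)$, pull each $X_i$ back along $\epsilon_Z$ to obtain subobjects $X'_i \hookrightarrow f^*(Y)$ which jointly cover $f^*(Y)$ and inherit the indexing shape. Applying the hypothesis produces a jointly epic family $\{h_j: Y_j \to f_*(Z)\}$ in $\Ecal$ such that each $f^*(h_j)$ factors through some $X'_{i(j)}$; post-composing with $\epsilon_Z$ then shows the adjoint transpose of $h_j$ factors through $X_{i(j)}$, so by the adjunction $h_j$ itself factors through $f_*(X_{i(j)}) \hookrightarrow f_*(Z)$. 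Joint epicness of $\{h_j\}$ therefore forces $\{f_*(X_i)\}$ to be jointly epic in $f_*(Z)$. Specialising to $Z = \bigcup_i X_i$ for an arbitrary family of subobjects of any ambient object then recovers preservation of unions in the sense of Definition \ref{dfn:relpolite}. The only delicate point is the bookkeeping which confirms that the indexing modifier survives the pullback and adjoint transposition steps in both directions; the remainder is routine diagram chasing.
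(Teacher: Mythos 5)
Your argument is in substance the paper's own: both directions run on the adjunction $f^*\dashv f_*$, with the unit transporting a cover of $f^*(Y)$ down to a cover of $Y$ and the counit handling the converse, and the conclusion is correct. One step is misstated, however. The hypothesis of relative precision concerns the \emph{sliced} direct image $(f/Y)_*$, which on subobjects of $f^*(Y)$ is $\eta_Y^*\circ f_*$, not $f_*$ alone; so your claim that the $f_*(X_i)$ have union all of $f_*f^*(Y)$ is not what the hypothesis provides. What it does provide is precisely that the pullbacks $h_i=\eta_Y^*(f_*(X_i))=(f/Y)_*(X_i)$ have union $Y$ --- which is the family you construct in the very next step anyway, and is exactly the paper's choice of $h_j$, so the forward direction survives once that sentence is rephrased. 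In the converse you prove the marginally more general statement that $f_*$ carries covers of an \emph{arbitrary} object $Z$ of $\Fcal$ to covers of $f_*(Z)$, by reducing to $f^*(f_*(Z))$ via $\epsilon_Z$; this does imply the official condition (pull back along $\eta_Y$ once more, which preserves unions since pullback has a right adjoint), but that last translation should be said explicitly, because the definition is phrased in terms of $(f/E)_*$ acting on subobjects of $f^*(E)$. The paper instead stays inside the slice over $Y$, restricting the given family to $f^*(Y')$ where $Y'=(f/Y)_*\bigl(\bigcup_i X_i\bigr)$ and applying the covering hypothesis there; both routes are equally short.
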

\begin{proof}
Suppose $f$ has one of the relative properties and we are given a collection of subobjects of the relevant type. By assumption, their union is preserved by $(f/Y)_*$, whence there are subobjects $h_j: Y_j \hookrightarrow Y$ (the images under $(f/Y)_*$ of the subobjects) which are also jointly epic. By construction, each of these must have image under $f^*$ which factors through one or more of the $X_i$.

Conversely, given an $\Scal$-indexed diagram (of the relevant type) of subterminal objects $\{g_i: X_i \to f^*(Y)\}$ in $\Fcal/f^*(Y)$, let $U \hookrightarrow f^*(Y)$ be the union of the $g_i$, and let $Y' \hookrightarrow Y$ be its image under $(f/Y)_*$ in $\Ecal/Y$. Applying $f^*$, we have a monomorphism $f^*(Y') \hookrightarrow U$, so we can pull back to get a jointly epimorphic family of the appropriate shape $\{g'_i: X'_i \to f^*(Y')\}$ over $f^*(Y')$. This is precisely data of the required form to apply the hypotheses, and the covering family of $Y'$ thus provided ensures that the union of the images of the $g_i$ under $(f/Y)_*$ is precisely $Y'$.
\end{proof}

It is an indirect consequence of Lemma \ref{lem:relprop} that the converse of Corollary \ref{crly:polite} cannot hold in general. Indeed, if the only compact object of $\Ecal$ is the initial object, such as in Example \ref{xmpl:Nlocalic} below, then the preservation of compact or supercompact objects by $f^*$ is a vacuous condition, but the required properties in the characterisation of Lemma \ref{lem:relprop} are clearly non-trivial. However, $\Ecal$ failing to have enough supercompact (resp. compact) objects is the only obstacle.

\begin{prop}
\label{prop:relpres}
Let $f:\Fcal \to \Ecal$ be a geometric morphism, and suppose $\Ecal$ is supercompactly generated. Then $f$ is relatively precise if and only if $f^*$ preserves supercompact objects, and relatively polite if and only if $f^*$ preserves `supercompact or initial' objects. If $\Ecal$ is merely compactly generated, then $f$ is relatively proper if and only if $f^*$ preserves compact objects.
\end{prop}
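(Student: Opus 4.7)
The forward implications are precisely Corollary \ref{crly:polite}, so all the work is in the converses. My plan is to verify the concrete criterion of Lemma \ref{lem:relprop}: given a jointly epimorphic family of subobjects $\{g_i : X_i \hookrightarrow f^*(Y) \mid i \in I\}$ of the appropriate type (small, inhabited, or directed) in $\Fcal$, I must construct a jointly epimorphic family $\{h_j : Y_j \to Y\}$ in $\Ecal$ such that each $f^*(h_j)$ factors through some $g_i$.

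The core idea is that the hypothesis on $\Ecal$ lets me cover $Y$ in $\Ecal$ by (super)compact objects, and then preservation under $f^*$ lets me invoke the internal characterisation of (super)compactness from Lemma \ref{lem:scompact} directly in $\Fcal$. Concretely, in the precise case, I take a jointly epic family $\{k_l : C_l \to Y\}$ with each $C_l$ supercompact (available since $\Ecal$ is supercompactly generated), and pull the $g_i$ back along $f^*(k_l)$:
\[\begin{tikzcd}
X_i \times_{f^*(Y)} f^*(C_l) \ar[r, hook] \ar[d] \ar[dr, phantom, "\lrcorner", very near start] & f^*(C_l) \ar[d, "f^*(k_l)"]\\
X_i \ar[r, hook, "g_i"'] & f^*(Y).
\end{tikzcd}\]
The pulled-back family is jointly epic over $f^*(C_l)$, which is supercompact by hypothesis, so Lemma \ref{lem:scompact} forces one of the pullbacks to be an isomorphism; equivalently, $f^*(k_l)$ factors through some $g_{i(l)}$. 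The family $\{k_l\}$ then witnesses the criterion of Lemma \ref{lem:relprop}.

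For the proper case I run the same argument, but now the pulled-back family of subobjects of $f^*(C_l)$ is directed and $f^*(C_l)$ is compact; by Lemma \ref{lem:scompact} a directed cover of a compact object must already contain an isomorphism (the finite covering subfamily has union in the family, which must then itself cover). For the polite case I handle the extra subtlety that $f^*(C_l)$ may fail to be supercompact by being initial: then $f^*(k_l): 0 \to f^*(Y)$ factors through every $g_i$ by the uniqueness of maps out of the strict initial object, and since the family is inhabited there is some $g_{i_0}$ available as a factorisation target.

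The only real obstacle is bookkeeping in the polite case: the `or initial' clause is precisely what forces the criterion to demand merely inhabited families rather than arbitrary ones, and I would comment on this to make clear why the three cases cannot be unified into a single slick statement. No machinery beyond Lemma \ref{lem:scompact}, Corollary \ref{crly:polite}, and Lemma \ref{lem:relprop} is needed.
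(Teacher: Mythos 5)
Your proposal is correct and follows essentially the same route as the paper: both directions are split the same way (the forward one delegated to Corollary \ref{crly:polite}), and the converse is established by verifying the criterion of Lemma \ref{lem:relprop} via pulling the given family of subobjects back along the inverse images of a covering of $Y$ by supercompact (resp.\ compact) objects and applying Lemma \ref{lem:scompact}. The only cosmetic difference is that the paper covers $Y$ by supercompact \emph{subobjects} rather than a general jointly epic family, which changes nothing of substance.
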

\begin{proof}
Given an arbitrary (resp. inhabited) jointly epic collection of subobjects $\{ g_i: X_i \hookrightarrow f^*(Y) \}$ in $\Fcal$, consider a (possibly empty) covering of $Y$ by supercompact subobjects $h_j : Y_j \hookrightarrow Y$ in $\Ecal$. Since each $f^*(Y_j)$ is supercompact (resp. supercompact or initial), pulling back the inclusions $g_i$ along $f^*(h_j)$, we conclude that one of the resulting inclusions $f^*(Y_j)$ (if there are any) must be an isomorphism by Lemma \ref{lem:scompact}; in the inhabited case, this is trivially true when $f^*(Y_j)$ is initial. Hence the $f^*(h_j)$ each factor through one of the $g_i$, whence the criteria of Lemma \ref{lem:relprop} are fulfilled. The compactly generated case, with a directed family of subobjects, is analogous.
\end{proof}

Since the initial object in any topos is strict, the distinction between relatively precise and relatively polite morphisms is indeed as small as this proposition makes it seem.

\begin{lemma}
\label{lem:0refl}
Given a geometric morphism $f: \Fcal \to \Ecal$, $f^*$ reflects the initial object if and only if $f_*$ preserves it.
\end{lemma}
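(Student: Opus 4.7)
The plan is to use strictness of the initial object in a topos (any morphism $X \to 0$ forces $X \cong 0$) together with the unit and counit of the adjunction $f^* \dashv f_*$.

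For the ``if'' direction, assume $f_*$ preserves $0$ and suppose $f^*(E) \cong 0$. Applying $f_*$ gives $f_* f^*(E) \cong f_*(0) \cong 0$, so the unit $\eta_E : E \to f_* f^*(E)$ supplies a morphism $E \to 0$ in $\Ecal$. Strictness of the initial object then yields $E \cong 0$.

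For the ``only if'' direction, assume $f^*$ reflects $0$ and consider $f_*(0)$. The counit $\varepsilon_0 : f^* f_*(0) \to 0$ is a morphism into the initial object of $\Fcal$, so by strictness $f^* f_*(0) \cong 0$; the reflection hypothesis then gives $f_*(0) \cong 0$.

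There is no real obstacle here: the argument is entirely formal, relying only on the adjunction $f^* \dashv f_*$ and the fact that in any topos (indeed, any cartesian closed category with initial object) the initial object is strict. The same proof would go through for any adjunction between categories whose initial objects are strict.
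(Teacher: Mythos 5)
Your proof is correct and is essentially identical to the paper's: both directions use strictness of the initial object applied to the unit $E \to f_*f^*(E)$ and the counit $f^*f_*(0) \to 0$ respectively. Nothing to add.
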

\begin{proof}
If $f^*$ reflects $0$, then considering the counit $f^*f_*(0) \to 0$, we conclude that $f^*f_*(0)$ is initial (by strictness of the initial object), whence $f_*(0) \cong 0$ so $f_*$ preserves the initial object. Conversely, if $f_*$ preserves $0$, then given $C$ with $f^*(C) \cong 0$ the unit $C \to f_*f^*(C) \cong 0$ is a morphism to $0$, whence $C$ is initial.
\end{proof}

\begin{crly}
\label{crly:precise}
Let $f: \Fcal \to \Ecal$ be a geometric morphism, and suppose $\Ecal$ is supercompactly generated. Then $f$ is relatively precise if and only if it is relatively polite and $f^*$ reflects the initial object.
\end{crly}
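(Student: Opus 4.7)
The plan is to reduce the biconditional to Proposition \ref{prop:relpres} and then exploit a simple observation about supercompact objects. By that proposition, relative preciseness of $f$ is equivalent to $f^*$ preserving supercompact objects, while relative politeness is equivalent to $f^*$ preserving `supercompact or initial' objects, so the question becomes whether preservation of supercompactness is equivalent to preservation of supercompactness-or-initiality together with reflection of $0$. The key observation I would record at the outset is that no supercompact object is initial: if $C \cong 0$, then the empty family into $C$ is jointly epic but contains no epimorphism, so $C$ fails to be supercompact.

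For the forward direction, assume $f$ is relatively precise. Then $f^*$ preserves supercompact objects and, as a left adjoint, preserves the initial object, so it preserves supercompact-or-initial objects, giving relative politeness. To show $f^*$ reflects $0$, I would suppose $f^*(C) \cong 0$; since $\Ecal$ is supercompactly generated, $C$ admits a (possibly empty) covering family $\{S_i \to C\}$ by supercompact objects. If this family were nonempty, applying $f^*$ to one of its members would yield a morphism $f^*(S_i) \to 0$, forcing $f^*(S_i) \cong 0$ by strictness of the initial object in $\Fcal$, which contradicts the supercompactness of $f^*(S_i)$. Hence the family is empty, and $C \cong 0$.

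For the converse, assume $f$ is relatively polite and $f^*$ reflects $0$, and let $C$ be supercompact in $\Ecal$. By relative politeness, $f^*(C)$ is supercompact or initial; if $f^*(C) \cong 0$, then reflection of $0$ would force $C \cong 0$, contradicting the initial observation that a supercompact object cannot be initial. Hence $f^*(C)$ is supercompact, so $f$ is relatively precise. The only subtle step is recognising that supercompact objects are never initial; once this is in hand, both directions are essentially formal consequences of Proposition \ref{prop:relpres}.
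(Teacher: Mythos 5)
Your proof is correct, and the converse direction coincides with the paper's argument: both reduce to Proposition \ref{prop:relpres} and use the observation that a supercompact object is never initial (the empty family over $0$ is jointly epic but contains no epimorphism). The only real divergence is in the forward direction, where you establish reflection of $0$ by covering $C$ with supercompact objects and deriving a contradiction from $f^*(S_i)\cong 0$; the paper instead notes that $0$ is the \emph{empty} union of subobjects, hence is preserved by $f_*$ by relative preciseness, and then invokes Lemma \ref{lem:0refl} to convert preservation of $0$ by $f_*$ into reflection of $0$ by $f^*$. The paper's version buys a little more: that implication holds for any geometric morphism, without assuming $\Ecal$ is supercompactly generated, whereas your covering argument genuinely uses the hypothesis. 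Your derivation of relative politeness via preservation of `supercompact or initial' objects is also slightly more roundabout than simply observing that arbitrary $\Scal$-indexed unions include the inhabited ones, but it is valid under the stated hypotheses.
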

\begin{proof}
A (relatively) precise morphism is (relatively) polite. Considering $0$ as an empty union of subobjects of any given object, it is preserved by $f_*$ by relative preciseness, so $f^*$ reflects $0$ by Lemma \ref{lem:0refl}, as required. Note that this implication holds even when $\Ecal$ is not supercompactly generated.

Conversely, given that $f$ is relatively polite and $f^*$ reflects $0$, we have that $f^*$ preserves `supercompact or initial' objects by Proposition \ref{prop:relpres}, but a supercompact object $X$ of $\Ecal$ is sent to an initial object if and only if it is initial, which is impossible, so $f^*(X)$ is supercompact, as required.
\end{proof}

\subsection{Inclusions and Surjections}
\label{ssec:surj}

Recall that a geometric morphism $f:\Fcal \to \Ecal$ is a \textbf{surjection} if $f^*$ is faithful, or equivalently if $f^*$ is a comonadic functor. Meanwhile, a geometric morphism $f:\Fcal \to \Ecal$ is an \textbf{inclusion} (or \textit{embedding}) if its direct image $f_*$ is full and faithful. See \cite[\S VII.4]{MLM} or \cite[A4.2]{Ele} for general results regarding these, which we shall assume familiarity with. In this section we examine how these two types of geometric morphism interact with supercompact and compact objects, as well as some of the classes of geometric morphism introduced in the last section.

Recall that a geometric inclusion $f: \Fcal \to \Ecal$ is \textbf{closed} if there is some subterminal object $U$ in $\Ecal$ such that $f_*f^*$ sends an object $X$ to the pushout of the product projections from $X \times U$. The following result illustrates why the precise, polite and proper morphisms are too restrictive for analysing supercompactly generated subtoposes.

\begin{lemma}
\label{lem:incl}
An inclusion of toposes $f: \Fcal \to \Ecal$ is proper if and only if it is closed, if and only if it is polite. An inclusion is precise if and only if it is an equivalence.
\end{lemma}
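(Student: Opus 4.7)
The plan is to reduce the three-way equivalence to the classical fact that proper inclusions are exactly closed inclusions \cite[Corollary V.4.3]{Compact}, and then to establish the remaining implications by a direct Lawvere--Tierney computation. The implication \emph{polite $\Rightarrow$ proper} is immediate, since by Definition \ref{dfn:diagram} directed internal categories are inhabited, so preserving inhabited unions is \textit{a fortiori} preserving directed unions. Thus I would structure the proof around showing \emph{closed $\Rightarrow$ polite}, which combined with the Moerdijk--Vermeulen theorem (\emph{proper $\Rightarrow$ closed}) closes the loop.

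For \emph{closed $\Rightarrow$ polite}, let $f:\Fcal \hookrightarrow \Ecal$ be the closed inclusion associated to a subterminal $U \hookrightarrow 1_\Ecal$. The key input is the explicit closure operator: for any $Y' \in \Ecal$, a subobject $W \hookrightarrow Y'$ has closure $\overline{W} = W \cup (Y' \times U)$ (where $Y' \times U$ is the pullback of $U \hookrightarrow 1$ along $Y' \to 1$), and $f_*$ identifies $\Fcal$ with the full subcategory of closed objects. In particular, every subobject of the form $f_*(V) \hookrightarrow f_*(Y)$ contains $f_*(Y) \times U$. Now given an inhabited family of subobjects $\{V_i \hookrightarrow Y\}$ in $\Fcal$, pick any $i_0$: then $f_*(Y) \times U \subseteq f_*(V_{i_0}) \subseteq \bigcup_i f_*(V_i)$, so the union is already closed in $\Ecal$ and therefore coincides with its closure $f_*(\bigcup_i V_i)$. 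Hence $f_*$ preserves inhabited unions, which is what is required.

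For the precise case, equivalences are trivially precise. Conversely, a precise inclusion is polite, hence closed at some subterminal $U$ by the first part. Preciseness additionally demands that $f_*$ preserve the \emph{empty} union, i.e.\ $f_*(0_\Fcal) \cong 0_\Ecal$. But $0_\Fcal$ viewed in $\Ecal$ is the closure of $0_\Ecal \hookrightarrow 1_\Ecal$, which by the formula above is $0_\Ecal \cup U = U$; so $U \cong 0_\Ecal$, and the closed subtopos at $0$ is all of $\Ecal$, making $f$ an equivalence. The only real subtlety is verifying the identification $f_*(0_\Fcal) \cong U$ and the closure formula for the closed subtopos, but these are standard facts about the Lawvere--Tierney topology $j(p) = p \vee U$, so I expect the main conceptual point—rather than a true obstacle—to be recognising that inhabitedness of the family is \emph{precisely} what is needed to conclude the union contains $Y' \times U$ without any further work.
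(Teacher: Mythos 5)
Your proposal is correct and follows essentially the same route as the paper: the paper simply outsources the chain proper $\Leftrightarrow$ closed $\Leftrightarrow$ polite to \cite[Remark C3.2.9]{Ele} (whose content is exactly your closure-operator computation $\overline{W} = W \cup (Y' \times U)$ plus stability of closed inclusions under slicing), and for preciseness runs the same subterminal-$U$ argument, phrased via $f^*$ reflecting $0$ rather than $f_*(0_\Fcal) \cong U$ --- equivalent by Lemma \ref{lem:0refl}. The only point worth adding is that politeness in Definition \ref{dfn:polite} concerns $\Ecal$-indexed (internal) inhabited unions, so your external computation should be accompanied by the remark that closed inclusions and the closure formula are stable under slicing, which is precisely the extra ingredient the paper extracts from Johnstone's remark.
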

\begin{proof}
The first part is the conclusion of \cite[Remark C3.2.9]{Ele}, where it is observed that a closed inclusion has a direct image functor preserving arbitrary inhabited ($\Ecal$-indexed) unions of subobjects and that closed inclusions are stable under slicing, and conversely that any proper inclusion is closed.

Given that $f$ is a closed inclusion, any nontrivial subobject of the corresponding subterminal object $U$ in $\Ecal$ is sent by $f^*$ to the initial object in $\Fcal$, so the initial object is reflected if and only if $U$ is initial, in which case $f$ is an equivalence.
\end{proof}

The relative versions of these properties are well-behaved with respect to surjections and inclusions.

\begin{prop}
\label{prop:surjinc}
Consider a factorisation of a geometric morphism $f$,
\begin{equation}
\label{eq:factors}
\begin{tikzcd}
\Fcal \ar[rr, "f"] \ar[dr, "q"'] & & \Ecal\\
& \Gcal \ar[ur, "p"']. &
\end{tikzcd}
\end{equation}
\begin{enumerate}
	\item Suppose that $p$ is an inclusion. Then if $f$ is relatively precise (resp. relatively polite, relatively proper), so is $q$.
	\item Suppose that $q$ is a surjection. Then if $f$ is relatively precise (resp. relatively polite, relatively proper), so is $p$.
\end{enumerate}
It follows that a geometric morphism is relatively precise (resp. relatively polite, relatively proper) if and only if both parts of its surjection-inclusion factorisation are.
\end{prop}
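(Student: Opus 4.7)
The plan is to verify the pointwise criterion from Lemma \ref{lem:relprop}: a geometric morphism has the relatively precise (resp.\ polite, proper) property precisely when a small (resp.\ inhabited, directed) jointly epic family of subobjects over $f^*(Y)$ admits a jointly epic refining family over $Y$. The three flavours are formally identical, since the operations used below preserve the indexing condition (inhabitedness is obvious; directedness is preserved because the relevant functors preserve monos and commute with binary joins of subobjects, being left or right adjoints). I will therefore write only the precise case.

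For part (1), the idea is to use that the counit $p^*p_* \Rightarrow \id_{\Gcal}$ is an isomorphism because $p$ is an inclusion. Given $Y \in \Gcal$ and a jointly epic family $\{g_i \hookrightarrow q^*(Y)\}$ of subobjects in $\Fcal$, set $Z := p_*(Y)$; then $f^*(Z) \cong q^*p^*(Z) \cong q^*(Y)$, so the $g_i$ form a jointly epic family into $f^*(Z)$. Applying relative preciseness of $f$ yields a jointly epic family $\{h_j: Z_j \to Z\}$ in $\Ecal$ with each $f^*(h_j)$ factoring through some $g_i$. Pushing the $h_j$ down via the colimit-preserving functor $p^*$ gives a jointly epic family $\{p^*(h_j): p^*(Z_j) \to Y\}$ in $\Gcal$, and since $q^*(p^*(h_j)) = f^*(h_j)$ the factorisation conditions persist.

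For part (2), I push the family up instead: given $\{g_i \hookrightarrow p^*(Y)\}$ jointly epic in $\Gcal$, the family $\{q^*(g_i) \hookrightarrow f^*(Y)\}$ is jointly epic in $\Fcal$, so relative preciseness of $f$ produces $\{h_j: Y_j \to Y\}$ jointly epic in $\Ecal$ together with, for each $j$, an index $i(j)$ such that $f^*(h_j)$ factors through $q^*(g_{i(j)})$. The nontrivial step is to lift this to a factorisation of $p^*(h_j)$ through $g_{i(j)}$ in $\Gcal$: form the pullback
\[\begin{tikzcd}
W_j \ar[r] \ar[d, hook] & X_{i(j)} \ar[d, hook, "g_{i(j)}"] \\
Y_j \ar[r, "p^*(h_j)"'] & p^*(Y)
\end{tikzcd}\]
in $\Gcal$, apply $q^*$ (which preserves this pullback), and note that the left-hand monomorphism becomes an isomorphism in $\Fcal$ because $f^*(h_j)$ factors through $q^*(g_{i(j)})$. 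Since $q$ is a surjection, $q^*$ is conservative, so $W_j \hookrightarrow Y_j$ is already an isomorphism in $\Gcal$, giving the desired factorisation.

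The final equivalence then follows: stability of each of the three relative properties under composition was noted after Definition \ref{dfn:relpolite}, so the relatively $*$ property of $p$ and $q$ implies that of $f$; the converse is precisely parts (1) and (2) applied to the surjection-inclusion factorisation. The main obstacle is the lifting step in part (2) — a set-theoretic factorisation through $q^*(g_i)$ does not a priori come from a factorisation through $g_i$ in $\Gcal$, and surjectivity of $q$ enters essentially through the conservativity of $q^*$ which lets us descend the resulting pullback isomorphism.
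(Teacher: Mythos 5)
Your proposal is correct and follows essentially the same route as the paper: both parts verify the criterion of Lemma \ref{lem:relprop}, part (1) using that the counit $p^*p_* \Rightarrow \id$ is an isomorphism to rewrite $q^*(Z)$ as $f^*(p_*(Z))$, and part (2) using left-exactness and conservativity of $q^*$ to descend the factorisation from a pullback square whose image becomes an isomorphism. The only cosmetic difference is that the paper phrases the part (2) pullback as the intersection of $g_i$ with the image of $p^*(h_j)$, which is the same square you draw.
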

\begin{proof}
We use the characterisation of these properties from Lemma \ref{lem:relprop}.

1. Given a jointly epic family (resp. inhabited family, directed family) $\{g_i: X_i \hookrightarrow q^*(Z)\}$ in $\Ecal$, we may express $Z$ up to isomorphism as $p^*(Y)$ (taking $Y = p_*(Z)$, say), so this can without loss of generality be seen as a family $\{g_i: X_i \hookrightarrow f^*(Y)\}$. Since $f$ is relatively precise (resp. relatively polite, relatively proper), we have a jointly epic family $\{h_j:Y_j \to Y\}$ such that each $f^*(h_j)$ factors through some $g_i$, and hence $\{p^*(h_j): p^*(Y_j) \to Z\}$ is the required family to fulfill the criterion of Lemma \ref{lem:relprop}.

2. Given a jointly epic family (resp. inhabited family, directed family) $\{g_i: X_i \hookrightarrow p^*(Y)\}$ in $\Ecal$, we have $\{q^*(g_i): q^*(X_i) \hookrightarrow f^*(Y)\}$ in $\Fcal$ being of the desired form to ensure that there is a covering family $\{h_j: Y_j \to Y\}$ such that each $f^*(h_j)$ factors through some $q^*(g_i)$. But then $q^*$ being conservative forces the $p^*(h_j)$ to factor through $g_i$, as required. Indeed, the intersection of $g_i$ with the image of $p^*(h_j)$ is preserved by $q^*$, and one of the sides of the resulting pullback square is sent to an isomorphism, which is reflected by $q^*$.
\end{proof}

\begin{crly}
\label{crly:ptcompact}
Any topos $\Ecal$ with a surjective point is supercompact. Any topos with a finite jointly surjective collection of points is compact.
\end{crly}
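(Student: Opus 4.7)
The plan is to deduce both statements by applying Proposition \ref{prop:surjinc}.2, which transports the relative versions of our properties along surjections. In both cases the key observation is that a point $p : \Set \to \Ecal$ is by definition a morphism over $\Set$, so that $\gamma_{\Ecal} \circ p = \id_{\Set}$, where $\gamma_{\Ecal}$ denotes the global sections morphism.

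For the first statement, given a surjective point $p : \Set \to \Ecal$, I would factor $\id_{\Set} : \Set \to \Set$ as $\gamma_{\Ecal} \circ p$. The identity is trivially relatively precise. Since $p$ is a surjection, Proposition \ref{prop:surjinc}.2 then forces $\gamma_{\Ecal}$ to be relatively precise. Because $\gamma_{\Ecal}$ is a morphism to $\Set$, relative preciseness coincides with preciseness by Lemma \ref{lem:relvsnorel}.2, and this is by Definition \ref{dfn:polite} exactly the assertion that $\Ecal$ is supercompact.

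For the second statement, given finitely many points $p_1, \dotsc, p_n : \Set \to \Ecal$, I would assemble them into a single geometric morphism $p : \Set^n \to \Ecal$ (using that finite coproducts of toposes exist and that the coproduct of $n$ copies of $\Set$ is the presheaf topos on the finite discrete category $\mathbf{n}$). The inverse image $p^* : E \mapsto (p_1^*E, \dotsc, p_n^*E)$ is faithful precisely when the family $\{p_i^*\}$ is jointly faithful, i.e. when the $p_i$ are jointly surjective, so $p$ is a surjection. The composite $\gamma_{\Ecal} \circ p$ equals the global sections morphism $\gamma_{\Set^n}$ of $\Set^n$, and $\Set^n$ is compact because $\mathbf{n}$ is a multifunnel (see Example \ref{xmpl:compshf}); equivalently, its terminal $(1,\dotsc,1)$ is a finite coproduct of supercompact objects. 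Hence $\gamma_{\Set^n}$ is proper, and Proposition \ref{prop:surjinc}.2 combined with Lemma \ref{lem:relvsnorel}.2 again yields that $\gamma_{\Ecal}$ is proper, so $\Ecal$ is compact.

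The only non-mechanical step is identifying the coproduct of $n$ copies of $\Set$ with $\Set^n$ and verifying compactness of this topos, together with the characterisation of joint surjectivity of a family of points in terms of a single surjection from the coproduct; after that, the result is an immediate consequence of the machinery already in place.
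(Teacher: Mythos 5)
Your proof is correct and follows essentially the same route as the paper: both arguments factor the (relatively precise, resp.\ proper) global sections morphism of $\Set$, resp.\ of $\Set^n \simeq \Set/K$, through the given surjection and invoke Proposition \ref{prop:surjinc}.2 together with Lemma \ref{lem:relvsnorel}. The extra details you supply (identifying the coproduct of $n$ copies of $\Set$ and checking its compactness, and relating joint surjectivity of the points to surjectivity of the assembled morphism) are left implicit in the paper but are the same underlying facts.
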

\begin{proof}
The hypotheses correspond to the existence of a surjection $\Set \to \Ecal$, or a surjection $\Set/K \to \Ecal$ with $K$ finite. The unique morphism $\Set \to \Set$ is (an equivalence, and hence) precise, while the morphism $\Set/K \to \Set$ is proper. Thus by Proposition \ref{prop:surjinc}.2 and Lemma \ref{lem:relvsnorel}, we conclude that $\Ecal$ is supercompact (resp. compact) over $\Set$.
\end{proof}

When the codomain of the geometric morphism is supercompactly (resp. compactly) generated, the simpler characterisation of Proposition \ref{prop:relpres} comes to our aid.

\begin{lemma}
\label{lem:incl2}
Suppose that $\Ecal$ is supercompactly (resp. compactly) generated and $f: \Fcal \to \Ecal$ is an inclusion. If $p$ is relatively polite (resp. relatively proper), $\Fcal$ is also supercompactly (resp. compactly) generated.
\end{lemma}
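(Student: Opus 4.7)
The plan is to pull the separating family of supercompact (resp. compact) objects in $\Ecal$ back to $\Fcal$ along $f^{*}$. Two ingredients make this work: since $f$ is an inclusion, $f_{*}$ is fully faithful, so the counit $f^{*}f_{*} \Rightarrow \id_{\Fcal}$ is an isomorphism; and by Corollary~\ref{crly:polite}, relative politeness (resp. relative properness) of $f$ forces $f^{*}$ to send supercompact (resp. compact) objects of $\Ecal$ to objects of $\Fcal$ that are supercompact-or-initial (resp. compact).

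First I would fix an arbitrary $X \in \Fcal$ and set $Y := f_{*}(X)$. Using that $\Ecal$ is supercompactly (resp. compactly) generated, I obtain a jointly epic family $\{g_{i}: C_{i} \to Y\}_{i \in I}$ with each $C_{i}$ supercompact (resp. compact) in $\Ecal$. Next I would apply $f^{*}$: as a left adjoint it preserves coproducts and epimorphisms, hence jointly epic families, and combined with the isomorphism $f^{*}f_{*}(X) \cong X$ this yields a jointly epic family $\{f^{*}(g_{i}) : f^{*}(C_{i}) \to X\}$ in $\Fcal$.

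In the compactly generated case, Corollary~\ref{crly:polite} immediately gives that each $f^{*}(C_{i})$ is compact, so $\Fcal$ has a separating family of compact objects. In the supercompactly generated case, each $f^{*}(C_{i})$ is supercompact \emph{or} initial; the initial ones may be discarded without affecting joint epimorphicity, since the initial object of a topos is strict and hence any morphism with initial domain has initial image and contributes nothing to the union. What remains is a jointly epic family from genuinely supercompact objects, witnessing that $X$ is covered by supercompact objects. Since $X$ was arbitrary, $\Fcal$ is supercompactly generated.

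The argument is essentially mechanical once these reductions are in place; the only subtle point is the harmless removal of the initial summands in the polite case, justified by strictness of $0$ in a topos. I do not anticipate any significant obstacle.
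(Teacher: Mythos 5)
Your proposal is correct and follows essentially the same route as the paper: both arguments push the separating family of supercompact (resp.\ compact) objects of $\Ecal$ through $f^*$, using the fact that relative politeness (resp.\ properness) makes $f^*$ preserve supercompact-or-initial (resp.\ compact) objects, and that for an inclusion the counit $f^*f_* \Rightarrow \id_\Fcal$ is an isomorphism so the resulting family still separates. The paper states this in one line via Proposition~\ref{prop:relpres}; you have simply spelled out the covering argument and the (correct, harmless) discarding of initial summands.
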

\begin{proof}
By Proposition \ref{prop:relpres}, the separating collection of supercompact (resp. compact) objects in $\Ecal$ is mapped by the inverse image of the relatively polite (resp. relatively proper) inclusion $f$ to a separating collection of objects of the same kind (or initial objects) in $\Fcal$. 
\end{proof}

We shall see in Corollary \ref{crly:incl} that Lemma \ref{lem:incl2} is optimal, in the sense that a topos is supercompactly (resp. compactly) generated if and only if it admits a relatively polite (resp. relatively proper) inclusion into a presheaf topos.

\begin{crly}
\label{crly:factor2}
Suppose that $\Ecal$ is supercompactly (resp. compactly) generated. Then a geometric morphism $f: \Fcal \to \Ecal$ is relatively polite (resp. relatively proper) if and only if both parts of its surjection-inclusion factorisation have inverse images preserving supercompact or initial (resp. compact) objects.
\end{crly}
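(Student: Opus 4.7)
The plan is to combine Proposition \ref{prop:surjinc} (which reduces the relative politeness/properness of $f$ to that of the two factors) with the characterisation in Proposition \ref{prop:relpres} (which translates relative politeness/properness into a preservation property of the inverse image, provided the codomain is suitably generated), bridging the gap for the intermediate topos using Lemma \ref{lem:incl2}. Write $f = p \circ q$ as in \eqref{eq:factors}, with $q:\Fcal \to \Gcal$ the surjection and $p:\Gcal \to \Ecal$ the inclusion.

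For the forward direction, suppose $f$ is relatively polite (resp. relatively proper). By Proposition \ref{prop:surjinc}, both $p$ and $q$ are relatively polite (resp. relatively proper). Since $\Ecal$ is supercompactly (resp. compactly) generated and $p$ is a relatively polite (resp. relatively proper) inclusion, Lemma \ref{lem:incl2} tells us that $\Gcal$ is also supercompactly (resp. compactly) generated. Thus Proposition \ref{prop:relpres} applies to each of $p$ (with codomain $\Ecal$) and $q$ (with codomain $\Gcal$), yielding that $p^*$ and $q^*$ each preserve supercompact or initial (resp. compact) objects.

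For the converse, assume that both $p^*$ and $q^*$ preserve supercompact or initial (resp. compact) objects. Applying Proposition \ref{prop:relpres} to $p$ first (whose codomain $\Ecal$ is by hypothesis supercompactly (resp. compactly) generated), we conclude that $p$ is relatively polite (resp. relatively proper). Invoking Lemma \ref{lem:incl2} once more then guarantees that $\Gcal$ is supercompactly (resp. compactly) generated, which is exactly the hypothesis needed to apply Proposition \ref{prop:relpres} to $q$; together with the preservation property of $q^*$, this shows $q$ is relatively polite (resp. relatively proper). Finally, Proposition \ref{prop:surjinc} combines $p$ and $q$ into the desired statement for $f$.

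The proof is essentially a bookkeeping exercise chaining the three earlier results; the only subtle point — and the reason the statement needs the codomain $\Ecal$ to be supercompactly/compactly generated in the first place — is that Proposition \ref{prop:relpres} cannot be applied directly to $q$ without first knowing that $\Gcal$ enjoys the same generation property. This is precisely why Lemma \ref{lem:incl2} must be deployed twice (once in each direction) before and after handling $p$, and it is the only step requiring any thought beyond quoting the referenced propositions.
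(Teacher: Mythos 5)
Your proof is correct and follows exactly the route the paper takes: its own proof is a one-sentence appeal to Proposition \ref{prop:relpres}, Proposition \ref{prop:surjinc} and Lemma \ref{lem:incl2}, with the intermediate topos inheriting the generation property just as you describe. Your write-up merely spells out the bookkeeping (including the correct observation that Lemma \ref{lem:incl2} must be invoked before Proposition \ref{prop:relpres} can be applied to the surjection), so there is nothing to add.
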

\begin{proof}
This follows from applying Proposition \ref{prop:relpres} to the factorisation in Proposition \ref{prop:surjinc}, using Lemma \ref{lem:incl2} to conclude that the intermediate topos must be supercompactly (resp. compactly) generated.
\end{proof}

More generally, surjections interact well with supercompact and compact objects.

\begin{lemma}
\label{lem:surjreflect}
Suppose $f:\Fcal \to \Ecal$ is a surjective geometric morphism. Then $f^*$ reflects supercompact, compact and initial objects. 
\end{lemma}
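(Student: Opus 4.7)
The plan is to leverage the fact that the inverse image of a surjection is comonadic, hence conservative, and combine this with the standard preservation properties of geometric morphisms (finite limits and all colimits) together with the availability of image factorizations in any topos.

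First, I would record the basic facts. Since $f$ is a surjection, $f^*$ is faithful; being a comonadic functor it is in particular conservative, so it reflects isomorphisms. It also preserves monomorphisms, epimorphisms, coproducts, and the initial object (the last three because $f^*$ is a left adjoint; the first because $f^*$ is left exact). The reflection of the initial object is now essentially immediate: if $f^*(X) \cong 0$, then $f^*$ sends the unique morphism $0 \to X$ to a morphism $f^*(0) \cong 0 \to f^*(X) \cong 0$, which is necessarily an isomorphism, so conservativity forces $0 \to X$ to be an isomorphism and $X$ to be initial.

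The main technical step is to upgrade this to the reflection of (joint) epimorphisms. Given $g:Y \to X$ with $f^*(g)$ epic, I would factor $g = m \circ e$ as an epi followed by a mono in $\Ecal$; since $f^*$ preserves both epis and monos, $f^*(g) = f^*(m) \circ f^*(e)$ is the image factorization of $f^*(g)$ in $\Fcal$, and $f^*(g)$ being epic forces $f^*(m)$ to be iso. Conservativity then gives $m$ iso, so $g$ is epic. Because a family $\{g_i : Y_i \to X\}$ is jointly epic exactly when the induced map $\coprod_i Y_i \to X$ is epic, and $f^*$ preserves coproducts while reflecting epimorphisms, $f^*$ also reflects joint epimorphicity (and hence finite joint epimorphicity when it preserves and reflects finiteness of indexing, which is automatic).

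The supercompact and compact cases then fall out immediately. Given a jointly epic family $\{g_i : Y_i \to X\}$ in $\Ecal$ with $f^*(X)$ supercompact (resp. compact), applying $f^*$ produces a jointly epic family $\{f^*(g_i) : f^*(Y_i) \to f^*(X)\}$ in $\Fcal$; supercompactness (resp. compactness) of $f^*(X)$ yields some $f^*(g_{i_0})$ that is epic (resp. a finite subfamily that is jointly epic), and the reflection of epimorphisms (resp. of joint epimorphicity) shows that $g_{i_0}$ is epic in $\Ecal$ (resp. that the corresponding finite subfamily of the $g_i$ is jointly epic). I do not anticipate any serious obstacle; the only care needed is to notice that reflecting epimorphisms does not follow formally from faithfulness alone, but requires the image-factorization argument above.
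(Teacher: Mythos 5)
Your proof is correct, and it reaches the conclusion by a somewhat different route from the paper's. The paper works explicitly in the category of coalgebras for the comonad induced by $f$ and reflects (joint) epimorphicity by invoking \emph{creation of colimits} by the comonadic functor $f^*$, re-using the multifunneling colimit diagram of Lemma \ref{lem:compactepi} to lift the epimorphism (resp.\ finite jointly epic subfamily) back to $\Ecal$; the initial object is handled via Lemma \ref{lem:0refl} and strictness of $0$. You instead extract from comonadicity only \emph{conservativity}, and combine it with preservation of image factorizations (plus balancedness of toposes) to reflect epimorphisms, and with preservation of coproducts to reflect joint epimorphicity. Both arguments ultimately rest on comonadicity of $f^*$; yours has the advantage of never leaving the ambient toposes or manipulating coalgebras, while the paper's creation-of-colimits argument generalises more directly to reflecting the strictness/colimit-diagram formulations used elsewhere in Section \ref{ssec:Cs}. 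One small correction to your closing remark: reflection of epimorphisms \emph{does} follow formally from faithfulness alone (if $f^*(g)$ is epic and $a\circ g = b\circ g$, then $f^*(a) = f^*(b)$ and hence $a = b$), so the image-factorization detour, though perfectly valid, is not actually needed there; where conservativity genuinely earns its keep is in the reflection of isomorphisms used for the initial object.
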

\begin{proof}
Since the inverse image functor of $f$ is comonadic, we have an equivalence between $\Ecal$ and the topos of coalgebras for the comonad on $\Fcal$ induced by $f$. Without loss of generality we work with coalgebras.

Given a coalgebra $(X,\alpha: X \to f^*f_*(X))$ and a jointly epic family $g_i: (U_i,\beta_i) \to (X,\alpha)$, since $f^*$ preserves arbitrary colimits, the underlying family of morphisms $g_i:U_i \to X$ in $\Fcal$ must be jointly epic. Thus if $X = f^*(X,\alpha)$ is supercompact (resp. compact), one of the $g_i$ must be an epimorphism (resp. there is a finite jointly epic subfamily of the $g_i$). Since $f^*$ moreover creates colimits, we conclude (via the same colimit diagram employed in Lemma \ref{lem:compactepi}) that $g_i$ is an epimorphism in $\Ecal$ too (resp. that the finite subfamily lifts to a jointly epic finite subfamily). Thus $(X,\alpha)$ is supercompact, as required.

Preservation of the initial object by $f_*$, which is equivalent to reflection of $0$ by $f^*$ by Lemma \ref{lem:0refl}, is due to strictness of the initial object forcing $f^*f_*(0) \cong 0$.
\end{proof}

It follows from Lemma \ref{lem:surjreflect} and Corollary \ref{crly:precise} that a geometric surjection is relatively precise if and only if it is relatively polite.

\subsection{Hyperconnected Morphisms}
\label{ssec:hype}

Recall that a geometric morphism $f:\Fcal \to \Ecal$ is said to be \textbf{hyperconnected} if $f^*$ is full and faithful and its image is closed in $\Fcal$ under subobjects and quotients (up to isomorphism). See \cite[Section A4.6]{Ele} for some background on hyperconnected morphisms, as well as localic morphisms, which shall feature in Section \ref{sec:xmpls}, below.

When it comes to hyperconnected morphisms into $\Set$, there are various alternative characterisations; note that these rely on properties of $\Set$, so are not true constructively.

\begin{prop}
\label{prop:hype2}
Let $\Ecal$ be a Grothendieck topos. Then the following are equivalent:
\begin{enumerate}
	\item The unique geometric morphism $\Ecal \to \Set$ is hyperconnected.
	\item $\Ecal$ is \textbf{two-valued}: the only subterminal objects are the initial and terminal objects.
	\item Every object of $\Ecal$ is either well-supported (the unique morphism $X \to 1$ is an epimorphism) or initial, but not both.
	\item $\Ecal$ is non-degenerate and has a separating set of well-supported objects.
\end{enumerate}
\end{prop}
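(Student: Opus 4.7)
The plan is to establish the cycle $(1) \Rightarrow (2) \Rightarrow (3) \Rightarrow (4) \Rightarrow (2) \Rightarrow (1)$. Write $\gamma \colon \Ecal \to \Set$ for the global-sections geometric morphism, so $\gamma^*(S) \cong \coprod_{s \in S} 1$. For $(1) \Rightarrow (2)$, every subterminal $U \hookrightarrow 1$ is a subobject of $\gamma^*(1) = 1$, and the image of $\gamma^*$ is closed under subobjects by hyperconnectedness, so $U \cong \gamma^*(V)$ for some $V \subseteq 1$ in $\Set$, forcing $U \cong 0$ or $U \cong 1$. For $(2) \Rightarrow (3)$, factor the unique map $X \to 1$ through its image as $X \twoheadrightarrow U \hookrightarrow 1$; by $(2)$, $U$ is $0$ or $1$. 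In the first case, strictness of the initial object in a topos forces $X \cong 0$; in the second, $X$ is well-supported. The two cases are mutually exclusive because two-valuedness includes $0 \not\cong 1$.

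For $(3) \Rightarrow (4)$, non-degeneracy is immediate from the ``not both'' clause. Any Grothendieck topos admits a small separating set; by $(3)$ each of its elements is either well-supported or initial, and deleting the initial ones yields a separating family of well-supported objects, since morphisms out of $0$ are unique and so never distinguish a pair of parallel arrows. For $(4) \Rightarrow (2)$, let $U \hookrightarrow 1$ be subterminal with $U \not\cong 0$, and let $\Gcal$ be the separating family of well-supported objects provided by $(4)$. In a Grothendieck topos every object is a colimit over its comma category with $\Gcal$, so the set of morphisms $G \to U$ with $G \in \Gcal$ cannot be empty; pick one, say $h \colon G \to U$. Then $G \xrightarrow{h} U \hookrightarrow 1$ equals the unique map $G \to 1$, which is epic because $G$ is well-supported; since the composite is epic, its second factor $U \hookrightarrow 1$ is epic, and being monic as well is an isomorphism.

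For $(2) \Rightarrow (1)$, the task is to verify that $\gamma^*$ is fully faithful with image closed under subobjects and quotients. By extensivity of $\Ecal$, any morphism $1 \to \gamma^*(T) = \coprod_{t \in T} 1$ corresponds to a decomposition $1 \cong \coprod_{t \in T} W_t$ with each $W_t$ subterminal, and $(2)$ forces each $W_t$ to be $0$ or $1$; disjointness of the coproduct inclusions precludes two summands from both being $1$ (else $0 \cong 1$), while at least one must equal $1$ for the coproduct to be $1$, so exactly one does. Hence $\Hom_{\Ecal}(1, \gamma^*(T)) \cong T$, from which fullness and faithfulness of $\gamma^*$ follow by writing $\gamma^*(S)$ as a coproduct of copies of $1$. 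The same extensivity argument presents any subobject of $\gamma^*(T)$ as $\coprod_{t \in T} V_t$ with $V_t$ subterminal, hence of the form $\gamma^*(T')$ for $T' \subseteq T$; and any quotient of $\gamma^*(T)$ is determined by an equivalence relation, which as a subobject of $\gamma^*(T \times T)$ is itself in the image of $\gamma^*$, so the quotient coincides with $\gamma^*$ applied to the corresponding quotient of sets.

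I expect the most delicate step to be $(4) \Rightarrow (2)$, where one needs to pass from ``non-initial subterminal'' to ``admits a morphism from a well-supported generator''; this is why I appeal to the standard fact that a separating set in a Grothendieck topos realises every object as a colimit over its comma category. The remaining implications rest on elementary topos-theoretic inputs: strictness of the initial object, disjointness of coproduct inclusions, and the definitions of two-valuedness, hyperconnectedness, and separating families.
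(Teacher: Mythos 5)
Your proof is correct. Three of your implications ($1\Rightarrow 2$, $2\Rightarrow 3$, $3\Rightarrow 4$) essentially coincide with the paper's arguments (closedness of the image of $\gamma^*$ under subobjects, the epi--mono factorisation of $X \to 1$, and pruning the initial objects from a generating set). The genuine divergence is in the return leg of the cycle. For $(2)\Rightarrow(1)$ the paper simply takes the hyperconnected--localic factorisation of $\Ecal \to \Set$ and observes that the localic part is sheaves on the frame of subterminals, which two-valuedness collapses to $\Set$; you instead verify the definition of hyperconnectedness by hand, using infinitary extensivity to compute $\Hom(1,\gamma^*(T)) \cong T$ and to show closure of the image under subobjects and quotients. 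Your route is longer but more self-contained, avoiding the factorisation theorem at the cost of a careful quotient argument (which you handle correctly via kernel pairs and full faithfulness of $\gamma^*$). You also close the cycle as $(4)\Rightarrow(2)$, producing a morphism from a well-supported generator into any non-initial subterminal, whereas the paper proves $(4)\Rightarrow(3)$ by noting that an inhabited colimit of well-supported objects is well-supported; these are near-equivalent observations, and yours is arguably the more explicit of the two. One small point worth making explicit either way: the stated definition of two-valued should be read as including $0 \not\cong 1$, which you use at several junctures (mutual exclusivity in $(2)\Rightarrow(3)$, and uniqueness of the summand equal to $1$ in $(2)\Rightarrow(1)$).
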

\begin{proof}
($1 \Leftrightarrow 2$) The inverse image of a hyperconnected geometric morphism is full and faithful and closed under subobjects, so in particular the only subobjects of $1$ in $\Ecal$ is $0$. Conversely, if $\Ecal$ is two-valued, consider the hyperconnected-localic factorization of the unique geometric morphism $\Ecal \to \Set$; the intermediate topos is the localic reflection of $\Ecal$, equivalent to the topos of sheaves on the locale of subterminal objects of $\Ecal$, so is equivalent to $\Set$. Thus the morphism $\Ecal \to \Set$ is hyperconnected.

($2 \Leftrightarrow 3$) If $\Ecal$ is two-valued, the monic part of the epi-mono factorization of $X\to 1$ is non-trivial iff $X \cong 0$, so $X$ is either initial or well-supported. Conversely, any proper subterminal object fails to be well-supported, so if $3$ holds there can be no proper subterminals and $\Ecal$ is two-valued.

($3 \Leftrightarrow 4$) Since $\Ecal$ is a Grothendieck topos, it has some generating set of objects; any such set is still generating after excluding the initial object, and since $0$ is distinct from $1$, any generating set contains a non-initial object, so we obtain a generating set of non-initial objects as required. Conversely, an inhabited colimit of well-supported objects is well-supported, so any non-initial object is well-supported if there is a separating set of well-supported objects. 
\end{proof}

In particular, since supercompact objects are not initial, we obtain a necessary and sufficient condition for the hypotheses in Section \ref{ssec:twoval} to hold:
\begin{crly}
\label{crly:epic}
If $\Ecal$ is a two-valued (Grothendieck) topos, every supercompact object in $\Ecal$ is well-supported. In particular, a morphism $A \to B$ in the subcategory $\Ccal_s$ of supercompact objects objects of $\Ecal$ is an epimorphism if and only if it is epic in $\Ecal$, so every epimorphism in $\Ccal_s$ is strict. Conversely, if $\Ecal$ is supercompactly generated, then $\Ccal_s$ is closed under cokernels if and only if $\Ecal$ is two-valued.
\end{crly}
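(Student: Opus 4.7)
The corollary bundles together three statements, which I plan to prove in sequence, using the machinery of Section~\ref{ssec:twoval} combined with the characterisations of two-valuedness in Proposition~\ref{prop:hype2}.

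For the first claim, that every supercompact object in a two-valued topos is well-supported, I would appeal directly to Proposition~\ref{prop:hype2}(3): in a two-valued topos every object is either initial or well-supported. It then remains to rule out the initial case, and here the key observation is that $0$ is never supercompact. Indeed, the empty family of arrows into $0$ is vacuously jointly epic (any two parallel morphisms out of $0$ coincide by the universal property), and it clearly contains no epimorphism, so $0$ fails the defining property of a supercompact object.

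For the second claim, the coincidence of epimorphisms of $\Ccal_s$ with those of $\Ecal$ follows by applying Lemma~\ref{lem:presic} to each morphism of $\Ccal_s$, which is legitimate precisely because the domain is now well-supported. Strictness of every $\Ccal_s$-epimorphism is then an immediate consequence of Corollary~\ref{crly:strict}, which identifies the morphisms of $\Ccal_s$ that are epic in $\Ecal$ with the strict epimorphisms of $\Ccal_s$.

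For the converse statement, both directions pivot on Lemma~\ref{lem:coker}, which identifies closure of $\Ccal_s$ under cokernels with the condition that every supercompact object is well-supported. The ``only if'' implication is immediate from the first part. For the ``if'' implication, assuming $\Ecal$ is supercompactly generated and $\Ccal_s$ is closed under cokernels, Lemma~\ref{lem:coker} supplies a separating set of well-supported objects, and Proposition~\ref{prop:hype2}(4) then yields two-valuedness. The only mild obstacle I foresee is the edge case of the degenerate topos, in which $\Ccal_s$ is empty and so trivially closed under cokernels without $\Ecal$ being two-valued in the sense of Proposition~\ref{prop:hype2}(4); I would read the statement as implicitly restricted to the non-degenerate case, which is the natural habitat of Proposition~\ref{prop:hype2} and is automatic as soon as a single supercompact object is exhibited.
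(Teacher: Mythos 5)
Your argument is correct and is precisely the chain of citations the paper intends: the corollary is stated without proof, immediately after the observation that supercompact objects are never initial (your empty-family argument for why $0$ fails supercompactness is the right justification of that remark), with Lemma \ref{lem:presic}, Corollary \ref{crly:strict}, Lemma \ref{lem:coker} and Proposition \ref{prop:hype2} doing exactly the work you assign them. Your two caveats --- that the strictness clause leans on Corollary \ref{crly:strict}, whose stated hypothesis is supercompact generation, and that the final equivalence via Proposition \ref{prop:hype2}(4) needs non-degeneracy --- are well taken, and reflect the standing assumptions of Section \ref{ssec:twoval} rather than defects in your argument.
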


Thus we can give the counterexample, promised earlier, to the hypothesis that epimorphisms in $\Ccal_s$ are regular when $\Ecal$ is two-valued.
\begin{xmpl}
\label{xmpl:freemon}
Let $M$ be the free monoid on two generators, viewed as a category, and let $\Ecal = [M\op,\Set]$, where the objects are viewed as right $M$-sets. It is easily checked that this topos is two-valued, and being a presheaf topos it is supercompactly generated. The supercompact objects in this topos are precisely the cyclic right $M$-sets.

Given a cyclic $M$-set $N$ generated by $n$, the relation generated by a pair of morphisms $f,g: N \rightrightarrows M$ is the one identifying $f(n)k$ with $g(n)k$ for each $k \in M$. A case-by-case analysis of the possible pairs of elements $f(n),g(n)$ demonstrates that there is no pair of which the epimorphism $M \too 1$ is a coequalizer, so this epimorphism is not regular in the category of supercompact objects of $\Ecal$.
\end{xmpl}

The main reason for our interest in hyperconnected morphisms, however, is that they create the structure of supercompactly (and compactly) generated Grothendieck toposes which we are studying in this paper.

\begin{crly}
\label{crly:hypepres}
If $f: \Fcal \to \Ecal$ is a hyperconnected geometric morphism between Grothendieck toposes, then it is precise. Thus (being surjective) $f^*$ preserves and reflects supercompact, compact and initial objects.
\end{crly}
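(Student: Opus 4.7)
The plan is to establish that $f$ is precise; the claims about preservation and reflection then follow from earlier results. Preciseness is the strongest of the three conditions in Definition \ref{dfn:polite}, so it implies both politeness and properness, and consequently (by Lemma \ref{lem:relvsnorel}.1) also relative preciseness, relative politeness, and relative properness. Corollary \ref{crly:polite} then gives that $f^*$ preserves supercompact and compact objects. Since $f^*$ is full and faithful, $f$ is a surjection, so Lemma \ref{lem:surjreflect} gives reflection of supercompact, compact, and initial objects; preservation of the initial object is automatic for any inverse image functor.

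For preciseness itself, the key observation is that any hyperconnected geometric morphism $g: \Gcal \to \Hcal$ induces a frame isomorphism $g^* : \Sub_\Hcal(1) \xrightarrow{\sim} \Sub_\Gcal(1)$. Every subterminal in $\Gcal$ is a subobject of $1_\Gcal = g^*(1_\Hcal)$, so closure of the image of $g^*$ under subobjects makes this restriction surjective; full faithfulness of $g^*$ makes it injective and order-reflecting. A short unit-counit computation identifies the inverse with the restriction of $g_*$, so $g_*$ preserves arbitrary joins of subterminals.

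Next, one verifies that slicing preserves hyperconnectedness: for any $E \in \Ecal$, the morphism $f/E: \Fcal/f^*(E) \to \Ecal/E$ has an inverse image functor inheriting full faithfulness and subquotient-closure from $f^*$. Since subterminals in $\Fcal/f^*(E)$ are precisely subobjects of $f^*(E)$ in $\Fcal$, the preceding paragraph applied to $f/E$ shows that $(f/E)_*$ preserves all joins of such subobjects, including joins indexed by an internal category $\Ibb$ in $\Ecal/E$. Unpacking Definition \ref{dfn:polite}: a subterminal $V$ of $[f^*(\Ibb), \Fcal/f^*(E)]$ encodes an internal $\Ibb$-indexed family of subobjects of $f^*(E)$, with $\pi_!(V)$ computing the internal join and $(f/E)^{\Ibb}_*$ acting pointwise via $(f/E)_*$; the required isomorphism $\pi_! \circ (f/E)^{\Ibb}_*(V) \cong (f/E)_* \circ \pi_!(V)$ then expresses exactly this preservation of joins.

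The main obstacle is the final identification---matching $\pi_!$ on subterminals with the intrinsic join operation and $(f/E)^{\Ibb}_*$ with the pointwise direct image---which relies on the interpretation noted by the author after Definition \ref{dfn:polite} and is ultimately justified by the internal machinery of diagram toposes from \cite[Chapter 2]{TT}.
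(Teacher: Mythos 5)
Your derivation of the consequences from preciseness is fine (and your route through Corollary \ref{crly:polite} is arguably cleaner than the paper's citation of Proposition \ref{prop:relpres}, since it needs no generation hypothesis on $\Ecal$), and your opening observation --- that a hyperconnected morphism restricts to an equivalence between the frames of subterminal objects, with inverse given by the direct image --- is exactly the key fact the paper's proof also turns on. The problem is the last step. Definition \ref{dfn:polite} asks for an isomorphism $\pi_! \circ (f/E)^{\Ibb}_*(V) \cong (f/E)_* \circ \pi_!(V)$ for subterminals $V$ of the internal diagram topos $[f^*(\Ibb),\Fcal/f^*(E)]$, where $\Ibb$ ranges over \emph{internal} categories in $\Ecal/E$. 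What you prove is that $(f/E)_*$ preserves arbitrary \emph{externally} (set-)indexed joins of subobjects of $f^*(E)$. These are not the same statement: an internal $\Ibb$-indexed family is not a set-indexed family, and the sentence after Definition \ref{dfn:polite} is an informal gloss, not a licence to substitute one for the other. Your own phrase ``the main obstacle is the final identification'' flags exactly the point where the proof is missing; in particular, your claim that $(f/E)^{\Ibb}_*$ acts ``pointwise via $(f/E)_*$'' on subterminals is itself something that needs proof, and it needs the one fact you never state: that $(f/E)^{\Ibb}$, being a pullback of a hyperconnected morphism, is again hyperconnected.

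That observation is precisely how the paper closes the gap, and it makes the argument a three-line computation inside the diagram square rather than an appeal to an interpretation. Since $f^{\Ibb}$ is hyperconnected, every subterminal $V$ of $[f^*(\Ibb),\Fcal]$ is of the form $(f^{\Ibb})^*(U)$ for a subterminal $U$ of $[\Ibb,\Ecal]$; then
\[
f_*\pi_!(V) = f_*\pi_!(f^{\Ibb})^*(U) = f_*f^*\pi_!(U) = \pi_!(U) = \pi_!(f^{\Ibb})_*(f^{\Ibb})^*(U) = \pi_!(f^{\Ibb})_*(V),
\]
where the second equality uses that $\pi_!$ is an $\Ecal$-indexed left adjoint (so it commutes with the inverse image functors across the pullback square --- a Beck--Chevalley condition you would also need to invoke), the third uses full faithfulness of $f^*$, and the fourth uses that $(f^{\Ibb})^*$ is full and faithful on subterminals. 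Running the same computation in each slice $\Ecal/E$ gives preciseness. I would encourage you to rewrite your final paragraph along these lines: your reduction to subterminal frames is the right instinct, but the bridge from external joins back to the internal condition is the content of the proof, not a footnote to it.
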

\begin{proof}
We extend Moerdijk and Vermeulen's proof that hyperconnected morphisms are proper in \cite[Proposition I.2.4]{Compact}, replacing $\infty^*$ with $\pi_!$.

Suppose $f : \Fcal \to \Ecal$ is hyperconnected, and consider a diagram of the form
\begin{equation}
\label{eq:polite2}
\begin{tikzcd}[column sep = large]
{[f^*(\Ibb){,}\Fcal]} \ar[r, "f^{\Ibb}"] \ar[d,"\pi"']
\ar[dr, phantom, "\lrcorner", very near start] &
{[\Ibb{,}\Ecal]} \ar[d, "\pi"]\\
\Fcal \ar[r, "f"] &
\Ecal.
\end{tikzcd}
\end{equation}
Since $f^{\Ibb}$ is a pullback of $f$, it is hyperconnected too, so that any $V \hookrightarrow 1$ in $[f^*(\Ibb){,}\Fcal]$ is of the form $(f^{\Ibb})^*(U)$ for some $U \hookrightarrow 1$ in $[\Ibb{,}\Ecal]$ (the restriction of a hyperconnected morphism to the subterminal objects is an equivalence). Thus,
\[f_*\pi_!(V) = f_*\pi_!(f^{\Ibb})^*(U) = f_*f^*\pi_!(U) = \pi_!(U),\]
where the last equality holds since $f^*$ is full and faithful. But $U = (f^{\Ibb})_*(f^{\Ibb})^*(U) = (f^{\Ibb})_*(V)$, so $f_*\pi_!(V) = \pi_!(f^{\Ibb})_*(V)$, as required. The same argument applied in slices gives the result.

Preservation of supercompact and compact objects by $f^*$ then follows from Proposition \ref{prop:relpres} and Corollary \ref{crly:precise} (preservation of the initial object is automatic), while reflection follows from Lemma \ref{lem:surjreflect}.
\end{proof}

\begin{thm}
\label{thm:hype2}
Let $f:\Fcal \to \Ecal$ be a hyperconnected geometric morphism between elementary toposes. If $\Fcal$ is a Grothendieck topos, then so is $\Ecal$. Assuming this is so, if $\Fcal$ is also:
\begin{enumerate}[(i)]
	\item is supercompactly generated, or
	\item is compactly generated, or
	\item has enough points, or
	\item is two-valued,
\end{enumerate}
then $\Ecal$ inherits that property.
\end{thm}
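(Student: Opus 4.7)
The plan leverages two facts: since $f$ is hyperconnected, $f^*$ is fully faithful with essential image closed in $\Fcal$ under subobjects and quotients, and Corollary \ref{crly:hypepres} tells us that $f^*$ preserves and reflects supercompact, compact, and initial objects.

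To begin, I would verify that $\Ecal$ is Grothendieck whenever $\Fcal$ is. Fix a small separating set $\{G_i\}_{i \in I}$ of $\Fcal$ and let $\Scal$ be the collection, up to isomorphism, of those $Y \in \Ecal$ such that $f^*(Y)$ arises as a quotient of some $G_i$ in $\Fcal$; this is genuinely a set, since quotients of each $G_i$ form a set. I would then argue that $\Scal$ separates $\Ecal$: given distinct $a, b: X \rightrightarrows Z$ in $\Ecal$, faithfulness of $f^*$ produces $g: G_i \to f^*(X)$ with $f^*(a) \circ g \neq f^*(b) \circ g$; image-factoring $g$ as $G_i \twoheadrightarrow I \hookrightarrow f^*(X)$ and invoking closure of $\im(f^*)$ under subobjects writes $I \cong f^*(Y)$ with the monomorphism equal to $f^*(\iota)$ for a unique $\iota: Y \hookrightarrow X$, so that $Y \in \Scal$ and $\iota$ separates $a$ from $b$ by faithfulness of $f^*$. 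The conclusion follows from the standard fact that an elementary topos with a small separator is Grothendieck.

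For (i) and (ii), the same construction with $\{G_i\}$ taken to be the separating family of supercompact (respectively compact) objects of $\Fcal$ yields an $\Scal$ whose members satisfy that $f^*(Y)$ is a quotient of a supercompact (resp. compact) object and so is itself supercompact (resp. compact) in $\Fcal$ by Lemma \ref{lem:closed}, whence $Y$ inherits the property in $\Ecal$ via the reflection clause of Corollary \ref{crly:hypepres}. Thus $\Ecal$ carries a separating family of supercompact (resp. compact) objects.

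For (iii), each point $p: \Set \to \Fcal$ composes with $f$ to give a point $f \circ p$ of $\Ecal$, and faithfulness of $f^*$ immediately promotes joint conservativity of the family $\{p^*\}$ on $\Fcal$ to joint conservativity of $\{(f\circ p)^*\}$ on $\Ecal$. For (iv), any subterminal $U \hookrightarrow 1$ of $\Ecal$ maps to a subterminal $f^*(U) \hookrightarrow 1$ of $\Fcal$, which by two-valuedness is $0$ or $1$; the first case gives $U \cong 0$ by the reflection of initial objects in Corollary \ref{crly:hypepres}, while the second gives $U \cong 1$ because fully faithful functors reflect isomorphisms. The main obstacle is the opening Grothendieck claim: parts (i)--(iv) reduce to routine bookkeeping atop Corollary \ref{crly:hypepres} and the closure properties of $\im(f^*)$, but reconstructing a small separator of $\Ecal$ from one in $\Fcal$ is where hyperconnectedness genuinely pulls its weight.
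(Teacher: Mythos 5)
Your construction of the small separating set $\Scal$ for $\Ecal$ is correct, and is in fact a slightly more economical route than the paper's: where the paper expresses $f^*(Q)$ as a colimit of separating objects and image-factors every leg of the colimit cone, you image-factor a single separating morphism $G_i \to f^*(X)$ and use closure of the image of $f^*$ under subobjects, which suffices for separation. Parts (i)--(iv) then proceed exactly as in the paper's proof, via Lemma \ref{lem:closed} and Corollary \ref{crly:hypepres} for (i) and (ii), composition of points for (iii), and the equivalence of $f^*$ on subterminals for (iv). The genuine gap is the final step of the opening claim: it is \emph{not} a standard fact that an elementary topos with a small separating set is a Grothendieck topos. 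The category of finite sets is a locally small elementary topos separated by $\{1\}$, yet it is not Grothendieck. The version of Giraud's theorem actually required (e.g.\ \cite[C2.2.8(v)]{Ele}) also demands that $\Ecal$ be locally small and cocomplete, and your argument addresses neither.

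Both properties do follow from hyperconnectedness, and supplying them is precisely the remaining content of the paper's proof. Local smallness of $\Ecal$ is immediate from faithfulness of $f^*$. For cocompleteness, one shows that $f^*$ creates all small colimits: given a small diagram in $\Ecal$, the colimit of its image under $f^*$ in $\Fcal$ is a quotient of a coproduct of objects in the image of $f^*$; since $f^*$ preserves coproducts and its image is closed under quotients, that colimit again lies in the image, and full faithfulness of $f^*$ transports the universal property back down to $\Ecal$. Without an argument of this kind the appeal to Giraud's theorem does not go through, so you should insert it before concluding that $\Ecal$ is Grothendieck.
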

\begin{proof}
Let $\Ccal$ be a (full subcategory on a) small separating set of objects in $\Fcal$. Then every object of $\Fcal$ is a colimit of a diagram in $\Ccal$. Given an object $Q$ of $\Ecal$, we can express $f^*(Q)$ as such a colimit; write $\lambda_i: C_i \to f^*(Q)$ with $C_i \in \Ccal$ for the legs of the colimit cone. By taking image factorizations of the $\lambda_i$ we obtain an expression for $f^*(Q)$ as a colimit where the legs of the colimit cone are all monomorphisms. Since the image of $f^*$ is closed under subobjects, we obtain an expression for $f^*(Q)$ as a colimit of objects of the form $f^*(D_i)$ with $D_i$ in $\Ecal$, which moreover are quotients of objects in the separating subcategory $\Ccal$ of $\Fcal$.

Thus, since $f^*$ creates all small colimits, the quotients of objects in $\Ccal$ lying in $\Ecal$ form a separating set. Also, $\Ecal$ is locally small since $\Fcal$ is. Thus by the version of Giraud's Theorem that appears in \cite[C2.2.8(v)]{Ele}, say, $\Ecal$ is a Grothendieck topos.

The inheritance of property (i) (resp. (ii)) follows from Corollary \ref{crly:hypepres}, taking $\Ccal$ in the above to be $\Ccal_s$ (resp. $\Ccal_c$) and noting that the objects $f^*(D_i)$ in the argument above are supercompact (resp. compact) in $\Fcal$ by Lemma \ref{lem:closed}, whence the $D_i$ are so in $\Ecal$ by Corollary \ref{crly:hypepres}.

For (iii), if $\Fcal$ has enough points, which is to say that there is a collection of geometric morphisms $\Set \to \Fcal$ whose inverse images are jointly faithful, then composing each point with the hyperconnected morphism to $\Ecal$ gives such a collection for $\Ecal$. Finally, for (iv), note once again that the restriction of $f$ to subterminal objects is an equivalence.
\end{proof}
In spite of our reliance on Corollary \ref{crly:hypepres} here, we shall see in Example \ref{xmpl:surjprec} that we cannot extend Theorem \ref{thm:hype2}(i) or (ii) to relatively precise or relatively proper surjections, although parts (iii) and (iv) do apply in that situation.

We can summarise the results from the last two sections as stability results for supercompactly and compactly generated toposes.

\begin{thm}
\label{thm:closure}
Suppose $\Ecal$ is a supercompactly (resp. compactly) generated Grothendieck topos. If $\Fcal$ is:
\begin{enumerate}
	\item The domain of a closed inclusion $f:\Fcal \to \Ecal$, or more generally, the domain of a relatively polite (resp. relatively proper) inclusion into $\Ecal$,
	\item The domain of a local homeomorphism $g: \Fcal \simeq \Ecal/X \to \Ecal$, or
	\item The codomain of a hyperconnected morphism $h: \Ecal \to \Fcal$,
\end{enumerate}
then $\Fcal$ is also a supercompactly (resp. compactly) generated Grothendieck topos.
\end{thm}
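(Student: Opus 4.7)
The plan is to dispatch the three stability results in turn, in each case reducing as much as possible to machinery already established. Parts (1) and (3) will be essentially bookkeeping, while part (2) requires a short but genuine direct argument.

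For part (1), the closed inclusion case is absorbed by the stated generalisation, so it suffices to show that a relatively polite (resp. relatively proper) inclusion $f:\Fcal \hookrightarrow \Ecal$ carries the supercompact (resp. compact) generation property across. But this is exactly Lemma \ref{lem:incl2}: the inverse image $f^*$ sends the separating family of supercompact (resp. compact) objects of $\Ecal$ to a family of objects which are supercompact or initial (resp. compact or initial) in $\Fcal$, and discarding initial ones still yields a separating family in the inclusion setting since $f_*$ is fully faithful. That closed inclusions fall under this umbrella is Lemma \ref{lem:incl} combined with Lemma \ref{lem:relvsnorel}.

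For part (2), the key fact is that the forgetful functor $X_!:\Ecal/X \to \Ecal$ is left adjoint to $X^*$ and creates all small colimits; in particular a family in $\Ecal/X$ is jointly epimorphic if and only if its image in $\Ecal$ is. From this it follows directly that an object $(C\to X)$ is supercompact (resp. compact) in $\Ecal/X$ if and only if $C$ is supercompact (resp. compact) in $\Ecal$: any jointly epic family over $(C\to X)$ is jointly epic over $C$ after forgetting, and conversely any jointly epic family over $C$ lifts (by composition with $C\to X$) to a jointly epic family in $\Ecal/X$. Given this, a separating family of supercompact (resp. compact) objects $C_i$ in $\Ecal$ yields a separating family in $\Ecal/X$: for any $(A\to X)$, choose a covering of $A$ by morphisms $C_i\to A$ with $C_i$ supercompact (resp. a finite such covering, in the compact case), and compose with $A\to X$ to obtain the required jointly epic family over $(A\to X)$.

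Finally, part (3) is precisely Theorem \ref{thm:hype2}(i) and (ii), whose proof has already been carried out. The main obstacle, insofar as there is one, lies in part (2), because the local homeomorphism $g:\Ecal/X \to \Ecal$ cannot in general be shown to be relatively precise or relatively proper in the sense of Section \ref{ssec:proper}; indeed, $g^* = X\times(-)$ need not preserve supercompact objects, as that would demand a product-stability condition on supercompact objects of $\Ecal$ which does not hold in general (cf.\ the discussion of regular objects in Example \ref{xmpl:regular}). It is therefore essential that the argument for part (2) proceed via the left adjoint $X_!$ and colimit-creation, rather than via Proposition \ref{prop:relpres} or Corollary \ref{crly:polite}.
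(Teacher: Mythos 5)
Your proposal is correct and follows essentially the same route as the paper: part (1) via Lemma \ref{lem:incl2} together with Lemmas \ref{lem:incl} and \ref{lem:relvsnorel} (the paper cites Corollary \ref{crly:polite}, which amounts to the same thing), part (3) by direct appeal to Theorem \ref{thm:hype2}, and part (2) by showing that supercompactness (resp.\ compactness) passes to the slice so that the objects $(C\to X)$ with $C$ in the separating family do the job. The only cosmetic difference is in part (2), where the paper justifies slice-stability via Lemma \ref{lem:scompact} and the identity $(\Ecal/X)/(Y\to X)\simeq \Ecal/Y$, whereas you argue directly from the fact that the forgetful functor $\Ecal/X\to\Ecal$ creates colimits and hence preserves and reflects jointly epic families; both verifications are valid, and your closing observation that the local homeomorphism need not be relatively precise, so Proposition \ref{prop:relpres} cannot be used here, is accurate.
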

\begin{proof}
Let $\Ccal_s$ and $\Ccal_c$ be the (separating) subcategories of supercompact and compact objects in $\Fcal$, respectively.

1. For any inclusion, the images of objects in $\Ccal_s$ (resp. $\Ccal_c$) under $f^*$ would form a separating set for $\Ecal$. The stated properties ensure that these objects are all supercompact or initial (resp. compact), so that $\Ecal$ is supercompactly (resp. compactly) generated, by Corollary \ref{crly:polite}.

2. The objects with domain in $\Ccal_s$ (resp. $\Ccal_c$) in any slice $\Fcal/X$ form a separating set. These lifted objects inherit the property of being supercompact (resp. compact), by Lemma \ref{lem:scompact} and the standard result $(\Fcal/X)/(Y \to X) \simeq \Fcal/Y$.

3. This is immediate from Theorem \ref{thm:hype2}.
\end{proof}

\section{Principal and Finitely Generated Sites}
\label{sec:principal}

So far, we have established properties of `canonical' sites for supercompactly and compactly generated toposes. In the spirit of Caramello's work in \cite{SCGI}, we obtain in this section a broader class of sites whose toposes of sheaves have these properties.

We will occasionally make use of the following fact regarding representable sheaves, which is easily derived from the fact that the functor $\ell$ is a dense morphism of small-generated sites, in the sense described by Caramello in \cite{Dense}:
\begin{fact}
\label{fact1}
A sieve $T$ on $\ell(C)$ in $\Sh(\Ccal,J)$ is jointly epimorphic if and only if the sieve $\{f:D \to C \mid \ell(f)\in T\}$ is $J$-covering.
\end{fact}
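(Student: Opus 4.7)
The plan is to reduce both directions to the following standard consequence of the universal property of sheafification: a sieve $S$ on $C$ in $\Ccal$ is $J$-covering if and only if the family $\{\ell(f)\mid f\in S\}$ is jointly epimorphic in $\Sh(\Ccal,J)$. This holds because the inclusion of the sieve $yS\hookrightarrow yC$ of presheaves becomes an isomorphism under sheafification precisely when $S\in J(C)$.

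For the $(\Leftarrow)$ direction, suppose $S:=\{f:D\to C\mid \ell(f)\in T\}$ is $J$-covering. Then $\{\ell(f)\mid f\in S\}\subseteq T$ is already jointly epimorphic by the standard fact, so $T$ is too. For the $(\Rightarrow)$ direction, suppose $T$ is jointly epimorphic; I would show that the subfamily $\{\ell(f)\mid f\in S\}\subseteq T$ is itself jointly epimorphic, from which $S\in J(C)$ follows by the standard fact again. Writing $U:=\bigcup_{f\in S}\im(\ell(f))\hookrightarrow\ell(C)$, it suffices to show $\im(t)\subseteq U$ for every $t:X\to\ell(C)$ in $T$. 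Since representables separate, $X$ admits a jointly epimorphic family $g_j:\ell(D_j)\to X$, so $\im(t)=\bigcup_j\im(t\circ g_j)$; each composite $t\circ g_j$ lies in $T$, since $T$ is a sieve. This reduces the task to showing $\im(g)\subseteq U$ for every morphism $g:\ell(D)\to\ell(C)$ in $T$.

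The main obstacle is handling such a $g$ when $J$ is not subcanonical, since then $g$ need not itself be of the form $\ell(f)$. Here the plus-construction description $\Hom(\ell(D),\ell(C))\cong\ell(C)(D)$ provides a $J$-cover $\{h_i:D_i\to D\}$ and morphisms $f_i:D_i\to C$ satisfying $g\circ\ell(h_i)=\ell(f_i)$. Since $T$ is a sieve containing $g$, each $\ell(f_i)$ lies in $T$, so $f_i\in S$; by the standard fact, $\{\ell(h_i)\mid i\}$ is jointly epimorphic, hence $\im(g)=\bigcup_i\im(\ell(f_i))\subseteq U$, as required. This plus-construction argument is precisely what Caramello's framework of dense morphisms of small-generated sites in \cite{Dense} packages cleanly, explaining the appeal made in the surrounding paragraph.
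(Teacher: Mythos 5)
Your proof is correct, and it is essentially the argument the paper intends: the paper offers no proof beyond citing that $\ell$ is a dense morphism of small-generated sites in Caramello's sense, and your two key inputs --- that $S\in J(C)$ iff $\{\ell(f)\mid f\in S\}$ is jointly epic, and that every $g:\ell(D)\to\ell(C)$ is locally of the form $\ell(f_i)$ along a $J$-cover $\{h_i\}$ --- are precisely the cover-reflection and local-fullness properties that this density packages. The reduction of a general $t:X\to\ell(C)$ to the representable case via a covering family $\ell(D_j)\to X$ and closure of $T$ under precomposition is also sound, so there is no gap.
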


\subsection{Stable Classes}
\label{ssec:stable}

\begin{dfn}
\label{dfn:stable}
Let $\Ccal$ be a small category. A class $\Tcal$ of \textit{morphisms} in $\Ccal$ is called \textbf{stable} if it satisfies the following three conditions:
\begin{enumerate}
	\item $\Tcal$ contains all identities.
	\item $\Tcal$ is closed under composition.
	\item For any $f:C\to D$ in $\Tcal$ and any morphism $g:B \to D$ in $\Ccal$, there exists a commutative square
	\begin{equation}
	\begin{tikzcd}
	A \ar[r, "f'"] \ar[d, "g'"'] & B \ar[d, "g"]\\
	C \ar[r, "f"'] & D,
	\end{tikzcd}
	\label{eq:stable}
	\end{equation}
	in $\Ccal$ with $f'\in \Tcal$.
\end{enumerate}
These correspond to the necessary and sufficient conditions for $\Tcal$-morphisms to be singleton presieves generating a Grothendieck topology, as presented by Mac Lane and Moerdijk in \cite[Exercise III.3]{MLM}. We call the resulting Grothendieck topology the \textbf{principal topology generated by $\Tcal$}.
\end{dfn}

In \cite{SPM}, Kondo and Yasuda call a stable class of morphisms \textit{semi-localizing}, in reference to a related definition in \cite{GZ}. They call a principal topology an \text{$A$-topology}, presumably because the atomic topology is an example of a principal topology; see Example \ref{xmpl:atomic}. We have chosen a naming convention that we believe to be more evocative in this context.

Continuing the parallel investigation of compactness, we obtain a related concept by replacing individual morphisms by finite families of morphisms.
\begin{dfn}
\label{dfn:stable2}
Let $\Tcal'$ be a class of \textit{finite families} of morphisms with specified common codomain in $\Ccal$, we say $\Tcal'$ is \textbf{stable} if
\begin{enumerate}[1'.]
	\item $\Tcal'$ contains the families whose only member is the identity.
	\item $\Tcal'$ is closed under multicomposition, in that if $\{f_i:D_i \to C \mid i=1,\dotsc,n\}$ is in $\Tcal'$ and so are $\{g_{j,i}: E_j \to D_i \mid j=1,\dotsc,m_i\}$ for each $i = 1,\dotsc,n$, then $\{f_i \circ g_{j,i}\}$ is also a member of $\Tcal'$.
	\item For any $\{f_i:D_i \to C \mid i=1,\dotsc,n\}$ in $\Tcal'$ and any morphism $g:B \to C$ in $\Ccal$, there is a $\Tcal'$-family $\{h_j:A_j \to B \mid j=1,\dotsc,m\}$ such that each $g \circ h_j$ factors through one of the $f_i$.
\end{enumerate}
These are necessary and sufficient conditions for $\Tcal'$-families to generate a Grothendieck topology, which we call the \textbf{topology (finitely) generated by $\Tcal'$}.
\end{dfn}

Note that we do not require $\Ccal$ to have pullbacks in the above definitions, so it is sensible to compare them with the usual notion of stability with respect to pullbacks.
\begin{lemma}
\label{lem:pbstable}
Let $\Tcal$ be a stable class of morphisms in $\Ccal$ with the additional `push-forward' property:
\begin{enumerate}
	\setcounter{enumi}{3}
	\item Given any morphism $f$ of $\Ccal$ such that $f \circ g \in \Tcal$ for some morphism $g$ of $\Ccal$, we have $f \in \Tcal$.
\end{enumerate}
This in particular is true of the class of epimorphisms, for example. Then morphisms in $\Tcal$ are stable under any pullbacks which exist in $\Ccal$.
\end{lemma}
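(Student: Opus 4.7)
The plan is a short chase using the pullback's universal property. Suppose we are given a pullback square in $\Ccal$
\[\begin{tikzcd}
A \ar[r, "f'"] \ar[d, "g'"'] \ar[dr, phantom, "\lrcorner", very near start] & B \ar[d, "g"]\\
C \ar[r, "f"'] & D,
\end{tikzcd}\]
with $f \in \Tcal$. I want to show $f' \in \Tcal$. Stability condition 3 applied to $f \in \Tcal$ and $g$ produces some commutative square
\[\begin{tikzcd}
A'' \ar[r, "f''"] \ar[d, "g''"'] & B \ar[d, "g"]\\
C \ar[r, "f"'] & D,
\end{tikzcd}\]
with $f'' \in \Tcal$, but this auxiliary square need not be a pullback. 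The universal property of the actual pullback then yields a unique mediating morphism $h: A'' \to A$ with $f' \circ h = f''$ and $g' \circ h = g''$. Since $f'' = f' \circ h$ lies in $\Tcal$, push-forward property 4 forces $f' \in \Tcal$, as required.

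For the parenthetical remark, I would verify that epimorphisms form a class satisfying property 4: if $f \circ g$ is epic and $p,q$ satisfy $p \circ f = q \circ f$, then $p \circ (f \circ g) = q \circ (f \circ g)$, whence $p = q$, so $f$ is epic. (Properties 1 and 2 are trivial, and property 3 is precisely the statement that epimorphisms are stable under pullback wherever pullbacks along them exist, which for the class of all epimorphisms is only guaranteed in sufficiently nice categories, but the lemma is formulated to extract it from the mere hypothesis of stability in the sense of Definition \ref{dfn:stable}.)

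I do not foresee a genuine obstacle here: the whole content is the observation that condition 3 asserts only \emph{existence} of a $\Tcal$-morphism completing some square over $g$, while condition 4 promotes that existential witness to the pullback through the canonical factorisation. The one point worth flagging in the write-up is that no assumption is made that the ambient category has all pullbacks, so the claim must be read as being about whichever pullbacks happen to exist, which matches the statement of the lemma.
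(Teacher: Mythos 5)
Your proof is correct and is essentially the same argument as the paper's: apply stability axiom 3 to produce a commutative square with a $\Tcal$-morphism over $g$, factor it through the pullback via the universal property, and invoke the push-forward property 4 to conclude. The extra verification that epimorphisms satisfy property 4 is a harmless addition the paper leaves implicit.
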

\begin{proof}
Given a pullback of a $\Tcal$ morphism, the comparison between any square provided by \eqref{eq:stable} and this pullback provides a factorization of a $\Tcal$ morphism through the pullback, so by the assumed property the pullback is also in $\Tcal$, as required.
\end{proof}

As Kondo and Yasuda remark in \cite{SPM}, we can extend a stable class of morphisms $\Tcal$ to the stable class $\hat{\Tcal}$ of morphisms whose principal sieves contain a member of $\Tcal$ (that is, morphisms $f$ such that $f \circ g \in \Tcal$ for some $g$) without changing the resulting principal topology. Thus we can safely assume that stable classes satisfy axiom 4 of Lemma \ref{lem:pbstable} if we so choose. The advantage of enforcing this assumption is that it gives a one-to-one correspondence between stable classes of morphisms in a category and the principal Grothendieck topologies on that category, since we can recover the classes as those morphisms which generate covering sieves.

\begin{rmk}
\label{rmk:split}
The class of identity morphisms in any category satisfies axioms 1,2 and 3, but in order to satisfy axiom 4 it must be extended to the class of split epimorphisms, which is easily verified to satisfy all four axioms in any category. Thus the class of split epimorphisms corresponds to the trivial Grothendieck topology. It is worth noting also that every split epimorphism is regular and hence strict.
\end{rmk}

We may similarly extend a class $\Tcal'$ to a maximal class $\hat{\Tcal}'$. However, for a class of finite families to be maximal, it must be closed under supersets as well as under the equivalent of the push-forward property of Lemma \ref{lem:pbstable}, so we have two extra axioms:
\begin{enumerate}[1'.]
	\setcounter{enumi}{3}
	\item Given a finite family $\mathfrak{f} = \{f_i:D_i \to C\}$ of morphisms in $\Ccal$ such that every morphism in some $\Tcal'$-family over $C$ factors through one of the $f_i$, we have $\mathfrak{f} \in \Tcal'$.
	\item Any finite family $\mathfrak{f} = \{f_i:D_i \to C\}$ of morphisms in $\Ccal$ containing a $\Tcal'$-family $\mathfrak{f}'$ is also a member of $\Tcal'$.
\end{enumerate}
The equivalent of the pullback stability statement of Lemma \ref{lem:pbstable} is as follows.

\begin{schl}
\label{schl:finpb}
Suppose that $\Tcal'$ is a stable class of finite families of morphisms in a category $\Ccal$ satisfying the additional axioms 4' and 5'. Then $\Tcal'$ is stable under any pullbacks which exist in $\Ccal$, in that given a family $\{g_i:E_i \to C\}$ in $\Tcal'$ and $h:C'\to C$ such that the pullback of $g_i$ along $h$ exists for each $i$, the family $\{h^*(g_i):E'_i \to C'\}$ is in $\Tcal'$.
\end{schl}
We leave the proof, and the verification that enforcing axioms 4' and 5' gives a one-to-one correspondence between stable classes of finite families and finitely generated Grothendieck topologies, to the reader.

\begin{dfn}
\label{dfn:sites}
Let $\Ccal$ be a small category, $\Tcal$ a stable class of its morphisms and $J_{\Tcal}$ the corresponding Grothendieck topology. We call a site $(\Ccal,J_{\Tcal})$ constructed in this way a \textbf{principal site}. Similarly, for a stable class of finite families $\Tcal'$ on $\Ccal$, we have a corresponding Grothendieck topology $J_{\Tcal'}$; a site of the form $(\Ccal,J_{\Tcal'})$ shall be called a \textbf{finitely generated site}.
\end{dfn}

\begin{prop}
\label{prop:representable}
Let $\Ccal$ be a small category and $J$ a Grothendieck topology on it. Then the representable sheaves are all supercompact if and only if $J = J_{\Tcal}$ is a principal topology for some stable class $\Tcal$ of morphisms in $\Ccal$. In particular, the topos of sheaves on a principal site $(\Ccal,J_{\Tcal})$ is supercompactly generated.

Similarly, the representable sheaves are all compact in $\Sh(\Ccal,J)$ if and only if $J = J_{\Tcal'}$ for a stable class $\Tcal'$ of finite families of morphisms in $\Ccal$, and the topos of sheaves on a finitely generated site $(\Ccal,J_{\Tcal'})$ is compactly generated.
\end{prop}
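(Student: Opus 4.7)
\bigskip

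\noindent\textit{Proof plan.} The proof splits into an ``if'' and an ``only if'' direction, with the compact case proceeding in strict parallel to the supercompact case, so I would write the supercompact argument in detail and then indicate the modifications needed for the compact version.

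For the ``if'' direction, suppose $J = J_\Tcal$ is the principal topology generated by a stable class $\Tcal$. The plan is to fix $C \in \Ccal$ and show that $\ell(C)$ is supercompact by unpacking a jointly epimorphic sieve $T$ on $\ell(C)$ via Fact \ref{fact1}: the sieve $S = \{h : D \to C \mid \ell(h) \in T\}$ is $J$-covering, hence by the definition of $J_\Tcal$ it contains some $f : D \to C$ in $\Tcal$. Since $\langle f \rangle$ is itself $J$-covering, Fact \ref{fact1} applied in the other direction shows that the principal sieve on $\ell(C)$ generated by $\ell(f)$ is jointly epimorphic, i.e. $\ell(f)$ is an epimorphism. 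As $\ell(f) \in T$, this exhibits an epimorphism inside $T$, as required. The compact analogue replaces ``a morphism in $\Tcal$'' with ``a $\Tcal'$-family'' and concludes with a finite jointly epimorphic subfamily of $T$.

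For the ``only if'' direction, assuming every representable is supercompact, the plan is to take $\Tcal$ to be the class of morphisms $f$ such that the principal sieve $\langle f \rangle$ is $J$-covering, and verify the three axioms of Definition \ref{dfn:stable} together with the equality $J = J_\Tcal$. Axiom 1 is immediate as the maximal sieve is always covering, and axiom 2 follows because if $\langle f \rangle$ and $\langle g \rangle$ are covering then by Fact \ref{fact1} both $\ell(f)$ and $\ell(g)$ are epimorphisms, hence so is $\ell(f \circ g) = \ell(f) \circ \ell(g)$, which translates back via Fact \ref{fact1} into $\langle f \circ g \rangle$ being covering. The equality $J = J_\Tcal$ reduces to showing that every $J$-covering sieve $S$ on $C$ contains a $\Tcal$-morphism: by Fact \ref{fact1}, the family $\{\ell(h) \mid h \in S\}$ is jointly epic on the supercompact object $\ell(C)$, so one $\ell(h)$ is epic, and Fact \ref{fact1} again gives $h \in \Tcal$.

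The main obstacle will be axiom 3 (stability under pullback-like cospans), since $\Ccal$ need not admit pullbacks and $\ell$ need not be full. The plan here is to use transitivity of $J$: given $f : C \to D$ in $\Tcal$ and $g : B \to D$, the pullback sieve $g^*\langle f \rangle$ on $B$, consisting of all $h : A \to B$ with $gh$ factoring through $f$, is $J$-covering. Passing via Fact \ref{fact1} to a jointly epimorphic family over $\ell(B)$ and invoking the supercompactness of $\ell(B)$, one such $\ell(h)$ is epic; transferring back, $h \in \Tcal$ and the factorisation $gh = f g'$ provides the required commutative square. The compact version runs identically using stable classes of finite families and the characterisation of compactness as the existence of a finite jointly epic subfamily, invoking the axioms 1'--3' of Definition \ref{dfn:stable2}; the ``in particular'' clauses are then immediate since the representables are separating in any sheaf topos.
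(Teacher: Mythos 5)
Your proof is correct and follows the same overall strategy as the paper's (Fact \ref{fact1} in both directions, plus supercompactness of the representables to extract the stable class), but two local choices differ and are worth comparing. First, you take $\Tcal = \{f \mid \langle f\rangle \text{ is } J\text{-covering}\}$ where the paper takes $\Tcal = \{f \mid \ell(f) \text{ epic}\}$; these classes coincide, but each time you pass from ``$\ell(h)$ is an epimorphism'' back to ``$h \in \Tcal$'' you need that $\langle h\rangle$ itself is $J$-covering, whereas Fact \ref{fact1} applied to the sieve generated by $\ell(h)$ literally yields only that the a priori larger sieve $\{k \mid \ell(k) \text{ factors through } \ell(h)\}$ is covering. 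The step is closed by the standard observation that the $J$-closure of $\langle h\rangle$ corresponds to the image of $\ell(h)$ as a subobject of $\ell(B)$, so that $\langle h\rangle$ is covering exactly when $\ell(h)$ is epic; you should make this explicit (the paper's choice of $\Tcal$ trades this for having to check $J_{\Tcal}\subseteq J$ separately, which rests on the same fact). Second, your verification of the stability axiom via the pullback sieve $g^*\langle f\rangle$ and the pullback-stability axiom of Grothendieck topologies is genuinely cleaner than the paper's, which forms the pullback of $\ell(f)$ along $\ell(g)$ in the topos and then descends in two stages (covering the pullback object by representables, then refining again to land in the image of $\ell$); your route stays entirely in the site. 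Incidentally, with your definition of $\Tcal$, axiom 2 is immediate from the transitivity axiom for $J$ and does not need the detour through $\ell$, which as written relies on the same delicate direction of Fact \ref{fact1} noted above.
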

\begin{proof}
By Fact \ref{fact1}, for $J = J_{\Tcal}$, given a covering family on $\ell(C)$, the sieve it generates must contain the image of a $\Tcal$-morphism; call it $g$. Then any morphism in the family through which $g$ factors generates a covering sieve and so $g$ must be an epimorphism. Thus $\ell(C)$ is supercompact, as required.

Conversely, given that $\ell(C)$ is supercompact for every $C$, let $\Tcal$ be the class of morphisms $f$ such that $\ell(f)$ is epimorphic. We first claim that $\Tcal$ is a stable class. Indeed, axioms 1, 2 and 4 are immediate; to see that axiom 3 holds, suppose that $f: C \to D$ is in $\Tcal$ and $g: B \to D$ is any $\Ccal$ morphism. Then we may consider the pullback of $\ell(f)$ against $\ell(g)$ in $\Sh(\Ccal,J)$:
\[\begin{tikzcd}
A \ar[r, "f'", two heads] \ar[d, "g'"'] \ar[dr, phantom, "\lrcorner", very near start] & \ell(B) \ar[d, "\ell(g)"]\\
\ell(C) \ar[r, "\ell(f)"', two heads] & \ell(D).
\end{tikzcd}\]
Since $A$ is covered by objects of the form $\ell(C')$, by supercompactness of $\ell(B)$ there must be an epimorphism $\ell(C')\too \ell(B)$ factoring through the pullback, and in turn the sieve it generates must contain an epimorphism in the image of $\ell$, so that we ultimately recover the square \eqref{eq:stable} required for axiom 3.

Now we show that $J = J_{\Tcal}$. Given a $J$-covering sieve $S$ on $C$ (generated by a family of morphisms, for example), the sieve generated by $\ell(S) := \{\ell(g) \mid g\in S\}$ necessarily covers $\ell(C)$, and therefore $\ell(S)$ contains an epimorphism by supercompactness, so that the original sieve must have contained a member of $\Tcal$, which gives $J \subseteq J_{\Tcal}$. Conversely, $J_{\Tcal}$-covering sieves are certainly $J$-covering, so $J_{\Tcal} \subseteq J$. Thus $J$ is a principal topology, as claimed.

The argument for finitely generated sites is almost identical after replacing $\Tcal$-morphisms with finite $\Tcal'$-families in the first part, and defining $\Tcal'$ families to be those finite families which are mapped by $\ell$ to jointly epic families in the second part.
\end{proof}

In particular, we may extend a stable class of morphisms $\Tcal$ to a stable class of families of morphisms $\Tcal'$ by viewing the morphisms in $\Tcal$ as one-element families. Then it is clear that $J_{\Tcal} = J_{\Tcal'}$.

Intermediate between the two classes of sites discussed so far are a class which we call \textbf{quasi-principal sites}: these are sites $(\Ccal,J)$ such that for every object $C \in \Ccal$, either the empty sieve is a covering sieve on $C$ or every covering sieve on $C$ contains a principal sieve. Observe that if $\Ccal'$ is the full subcategory of $\Ccal$ on the latter class of objects (which we can always construct over $\Set$), then $\Sh(\Ccal,J) \cong \Sh(\Ccal',J_{\Tcal})$, where $\Tcal$ is the class of morphisms generating principal covering sieves.

The following result subsumes Lemma 4.11 of Bridge's thesis \cite{TAC}; our work up to this point allows us to avoid any direct manipulation with sheaves in the proof.

\begin{crly}
\label{crly:incl}
Let $(\Ccal,J)$ be a small site. Then the inclusion $\Sh(\Ccal,J) \to [\Ccal\op,\Set]$ is relatively precise (resp. relatively proper) if and only if $J$ is a principal (resp. finitely generated) topology. The inclusion is relatively polite if and only if $(\Ccal,J)$ is a quasi-principal site.
\end{crly}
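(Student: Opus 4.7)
The plan is to apply Proposition \ref{prop:relpres} to the inclusion $i : \Sh(\Ccal,J) \to [\Ccal\op,\Set]$. Since $[\Ccal\op,\Set]$ is supercompactly generated by Proposition \ref{prop:xmpls}(iii), and hence also compactly generated, that proposition reduces the claim that $i$ is relatively precise (resp.\ relatively polite, relatively proper) to the statement that $i^*$ preserves supercompact (resp.\ `supercompact or initial', compact) objects.

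Next, I would observe that by Lemma \ref{lem:scsite} every supercompact (resp.\ compact) object of $[\Ccal\op,\Set]$ is a quotient of some representable $\yon(C)$ (resp.\ of a finite coproduct of representables). Since $i^*$ preserves epimorphisms, finite coproducts, and the initial object, and since quotients of supercompact or initial objects remain in those classes (by Lemma \ref{lem:closed} together with the strictness of $0$), each of the three preservation conditions collapses to a condition on representables: $i^*$ preserves the relevant class if and only if each $\ell(C) = i^*\yon(C)$ lies in that class.

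For the precise and proper cases, the required equivalence is then exactly the content of Proposition \ref{prop:representable}: every $\ell(C)$ is supercompact (resp.\ compact) precisely when $J$ is principal (resp.\ finitely generated). The only case that requires a small additional argument is the polite one, where I would show that every $\ell(C)$ is supercompact or initial if and only if $(\Ccal,J)$ is quasi-principal. Using Fact \ref{fact1}, the empty family is jointly epic over $\ell(C)$ precisely when $\ell(C) \cong 0$, which happens exactly when the empty sieve is $J$-covering on $C$; and when $\ell(C) \not\cong 0$, the argument used in the proof of Proposition \ref{prop:representable} (via Fact \ref{fact1}) shows that $\ell(C)$ is supercompact if and only if every $J$-covering sieve on $C$ contains a principal sub-sieve. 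Matching these two clauses object-by-object against the definition of quasi-principal site yields the desired equivalence. The only potential pitfall, rather than a real obstacle, is bookkeeping around the two clauses of the quasi-principal definition: they are not formally exclusive, but in practice they are, since $0$ fails to be supercompact.
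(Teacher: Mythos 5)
Your proposal is correct and follows essentially the same route as the paper: reduce via Proposition \ref{prop:relpres} to preservation of (super)compact objects by sheafification, collapse that to a condition on the representables since all other such objects are quotients of (finite coproducts of) representables, and conclude by Proposition \ref{prop:representable}, handling the polite case by the observation that $\ell(C)$ is initial exactly when the empty sieve covers $C$.
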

\begin{proof}
Since $[\Ccal\op,\Set]$ is supercompactly generated, by Proposition \ref{prop:relpres}, the inclusion $\Sh(\Ccal,J) \to [\Ccal\op,\Set]$ is relatively precise if and only if the sheafification functor preserves supercompact objects; this in particular requires all of the objects $\ell(C)$ to be supercompact, which occurs if and only if $J$ is principal by Proposition \ref{prop:representable}. But any other supercompact objects are quotients of representables, so  $\ell(C)$ being supercompact for every $C$ is also sufficient. As usual, the relatively proper case is analogous.

For relative politeness, we relax the conditions above to requiring that each representable is sent to a supercompact or initial object, and note that the empty sieve is covering on $C$ if and only if $\ell(C)$ is initial.
\end{proof}

\subsection{Morphisms Between Sheaves on Principal Sites}
\label{ssec:representable}

In order to better understand the relationship between a principal site and the topos it generates, we employ some results of Caramello in \cite{Dense}, which enable us to describe morphisms in a Grothendieck topos $\Sh(\Ccal,J)$ in terms of those in a presenting site $(\Ccal,J)$.

For a general site $(\Ccal,J)$, the functor $\ell:\Ccal \to \Sh(\Ccal,J)$ is neither full nor faithful. To describe the full collection of morphisms in the sheaf topos, several notions are introduced in \cite[\S 2]{Dense}, of which we introduce the relevant special cases here.

For morphisms $h,k:A \rightrightarrows B$, we say $h$ and $k$ are \textbf{$J$-locally equal} (written $h \equiv_J k$) if there is a $J$-covering sieve $S$ on $A$ such that $h\circ f = k \circ f$ for every $f \in S$. If $J$ is principal (resp. finitely generated) then this is equivalent to saying that there is some $\Tcal$-morphism which equalizes $h$ and $k$ (resp. a $\Tcal'$-family whose members all equalize $h$ and $k$). This leads naturally to the following moderately technical definitions:

\begin{dfn}
Let $\Ccal$ be a small category and $\Tcal$ a stable class of morphisms in $\Ccal$. Then for objects $A,B$ in $\Ccal$, a \textbf{$\Tcal$-span} from $A$ to $B$ is a span
\begin{equation}
\begin{tikzcd}
& C \ar[dl,"f"'] \ar[dr,"g"] &\\
A & & B,
\end{tikzcd}
\label{eq:span}
\end{equation}
such that $f$ is in $\Tcal$. A \textbf{$\Tcal$-arch} is a $\Tcal$-span such that for any $h,k:D\rightrightarrows C$ with $f \circ h = f \circ k$ we have $g \circ h \equiv_{J_{\Tcal}} g \circ k$.

Similarly, for $\Tcal'$ a stable class of finite families of morphisms on $\Ccal$, a \textbf{$\Tcal'$-span} is a finite (possibly empty) family of spans:
\begin{equation}
\begin{tikzcd}
& C_i \ar[dl,"f_i"'] \ar[dr,"g_i"] &\\
A & & B,
\end{tikzcd}
\label{eq:multispan}
\end{equation}
such that $\{f_1, \dotsc, f_n\}$ is in $\Tcal'$. A \textbf{$\Tcal'$-multiarch} is a $\Tcal'$-multispan such that for any $h:D \to C_i$, $k:D\to C_{i'}$ with $f_i \circ h = f_{i'}\circ k$ we have $g_i \circ h \equiv_{J_{\Tcal'}} g_{i'} \circ k$.

The constituent morphisms in any span or multispan will be refered to as their \textbf{legs}.
\end{dfn}

\begin{lemma}
\label{lem:Trel}
Let $(\Ccal,J_\Tcal)$ be a principal site. Let $\Arch_\Tcal(A,B)$ be the collection of $\Tcal$-arches from $A$ to $B$ in $\Ccal$. For each $\Tcal$-arch $(t,g) \in \Arch_\Tcal(A,B)$, there is a (necessarily unique) morphism $\ell(t,g): \ell(A) \to \ell(B)$ in $\Sh(\Ccal,J_{\Tcal})$ such that $\ell(t,g) \circ \ell(t) = \ell(g)$. The mapping $\ell$ so defined is a surjection from $\Arch_\Tcal(A,B)$ to the set of morphisms from $\ell(A)$ to $\ell(B)$ in $\Sh(\Ccal,J_{\Tcal})$.

Similarly, letting $\mArch_{\Tcal'}(A,B)$ be the set of $\Tcal'$-multiarches from $A$ to $B$, $\ell$ induces a surjection from $\mArch_{\Tcal'}(A,B)$ to $\Hom_{\Sh(\Ccal,J_{\Tcal'})}(A,B)$.
\end{lemma}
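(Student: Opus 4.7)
The plan is to exploit two key facts about the principal site $(\Ccal,J_\Tcal)$: (i) $\ell(t)$ is a (strict) epimorphism in $\Sh(\Ccal,J_\Tcal)$ whenever $t\in\Tcal$, by Proposition \ref{prop:representable}; and (ii) every morphism $\psi:\ell(D)\to\ell(C)$ admits a $\Tcal$-refinement $r:D'\to D$ such that $\psi\circ\ell(r)=\ell(h)$ for some $h:D'\to C$ in $\Ccal$. Fact (ii) drops out of the explicit (double-plus) sheafification description of $\ell(C)$ together with the observation that principal sieves are cofinal among $J_\Tcal$-covers. The $\Tcal$-arch condition will be revealed to be precisely the compatibility that converts this descent data into a well-defined morphism of sheaves.

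For existence of $\ell(t,g)$, uniqueness is immediate from (i). Since $\ell(t)$ coequalises its kernel pair, existence reduces to verifying that $\ell(g)$ equalises every parallel pair $p,q:X\rightrightarrows\ell(C)$ with $\ell(t)p=\ell(t)q$. Covering $X$ by representables, we may assume $X=\ell(D)$. Applying (ii) to $p$, then to $q\circ\ell(r_1)$, and combining the resulting refinements via the stability axioms into a single $r:D'\to D$ in $\Tcal$, we obtain $h,k:D'\rightrightarrows C$ with $p\,\ell(r)=\ell(h)$ and $q\,\ell(r)=\ell(k)$. The hypothesis forces $\ell(th)=\ell(tk)$, hence $th\equiv_{J_\Tcal}tk$, so some further $s:D''\to D'$ in $\Tcal$ satisfies $ths=tks$ on the nose in $\Ccal$. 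The $\Tcal$-arch hypothesis applied to $hs,ks$ then yields $ghs\equiv_{J_\Tcal}gks$, and the epicness of $\ell(s)$ and $\ell(r)$ (via (i)) cancels off to give $\ell(g)p=\ell(g)q$.

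For surjectivity, any $\phi:\ell(A)\to\ell(B)$ corresponds to an element of $\ell(B)(A)$; unpacking the double-plus construction with cofinal principal sieves yields a $\Tcal$-morphism $t:C\to A$ and a morphism $g:C\to B$ with $\phi\circ\ell(t)=\ell(g)$. The arch condition drops out formally: if $th=tk$ for $h,k:D\rightrightarrows C$, then $\ell(gh)=\phi\,\ell(th)=\phi\,\ell(tk)=\ell(gk)$, i.e.\ $gh\equiv_{J_\Tcal}gk$, and uniqueness forces $\ell(t,g)=\phi$. The $\Tcal'$-multiarch case follows the same template, with finite $\Tcal'$-families replacing single $\Tcal$-morphisms throughout and jointly epimorphic families replacing strict epimorphisms; the analogues of (i) and (ii) hold over finitely generated sites with essentially identical justifications. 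The main obstacle is (ii): it is a density statement about $\ell$, technically the heart of the argument, which can alternatively be extracted as a special case of the general machinery of \cite{Dense}.
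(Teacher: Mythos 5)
Your argument is correct, but it takes a genuinely different route from the paper: the paper's entire proof of this lemma is a one-line citation of \cite[Proposition 2.5]{Dense}, which already packages the local representability of morphisms between sheafified representables over an arbitrary site, whereas you reconstruct that content by hand in the special case of a principal site. Your two ingredients are sound: (i) $\ell(t)$ is epic for $t\in\Tcal$ is exactly Fact \ref{fact1} applied to the principal sieve generated by $t$; and (ii) your density statement is the unwinding of the double-plus construction, using that every $J_\Tcal$-cover contains a $\Tcal$-morphism and that $\Tcal$ is closed under composition (which is all you need to merge the two layers of refinement --- the pullback-style stability axiom is not actually invoked there). The deduction of existence via the kernel pair of the regular epimorphism $\ell(t)$, the cancellation of $\ell(s)$ and $\ell(r)$, and the formal verification of the arch condition in the surjectivity step are all valid; you also correctly (if implicitly) use that $\ell(f)=\ell(f')$ iff $f\equiv_{J_\Tcal}f'$, the standard fact that sheafification identifies exactly the locally equal elements. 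What your approach buys is a self-contained proof that makes visible \emph{why} the arch condition is the right compatibility condition; what the paper's approach buys is brevity and consistency with its systematic reliance on the site-theoretic machinery of \cite{Dense}, which it also uses for the composition and identification results (Lemmas \ref{lem:archcat} and \ref{lem:Tarch}) that follow.
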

\begin{proof}
This is immediate from \cite[Proposition 2.5]{Dense}.
\end{proof}

Intuitively it seems that the collections $\Arch_\Tcal(A,B)$ ``should'' be the morphisms of a category. However, while Caramello's result \cite[Proposition 2.5(iv)]{Dense} suggests a composition of arches coming from covering families that generate sieves, this composition produces a maximal family of arches presenting the composite rather than a single $\Tcal$-arch; there is a similar problem for multiarches. We therefore examine what structure exists in general, and identify some sufficient conditions under which arches and multiarches admit a composition operation.

\begin{lemma}
\label{lem:archcat}
Let $(\Ccal,J_{\Tcal})$ be a principal site. For each pair of objects $A$ and $B$ in $\Ccal$, let $\Span_\Tcal(A,B)$ be the collection of $\Tcal$-spans from $A$ to $B$. Then $\Span_\Tcal(A,B)$ admits a canonical categorical structure, where a morphism $x: (t:C \to A, g:C \to B) \to (t':C' \to A, g':C' \to B)$ is a morphism $x: C \to C'$ with $t = t' \circ x$ and $g = g' \circ x$. This restricts to give a category structure on $\Arch_\Tcal(A,B)$ too.

Expanding upon this, if $(\Ccal,J_{\Tcal'})$ is a finitely generated site, there is a canonical categorical structure on each collection of $\Tcal'$-multispans $\mSpan_{\Tcal'}(A,B)$, where $\vec{x}: (t_i:C_i \to A, g_i:C_i \to B) \to (t'_j:C'_j \to A, g'_j:C'_j \to B)$ consists of an index $j$ for each index $i$, and a morphism $x_i:C_i \to C'_j$ with $t_i = t'_j \circ x_i$ and $g_i = g'_j \circ x_i$. Note that any permutation of the spans forming a given $\Tcal'$-multispan form a $\Tcal'$-multispan which is isomorphic in this category. Once again, this structure restricts to the collections of multiarches.
\end{lemma}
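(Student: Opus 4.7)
The plan is to verify directly that the proposed data satisfy the axioms of a category, since everything reduces to straightforward bookkeeping in $\Ccal$.

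First I would treat $\Span_\Tcal(A,B)$. Given a composable pair of morphisms of spans $x:(C,t,g)\to(C',t',g')$ and $y:(C',t',g')\to(C'',t'',g'')$, I would define $y\circ x$ to be the composite in $\Ccal$; the commuting triangles for $t$ and $g$ are preserved under composition since $t''\circ(y\circ x)=(t''\circ y)\circ x=t'\circ x=t$, and likewise for $g$. The identity morphism on $(C,t,g)$ is $\id_C$, and associativity and unitality are inherited from $\Ccal$. The restriction to $\Arch_\Tcal(A,B)$ is immediate, since being a $\Tcal$-arch is a condition on the span itself and imposes no constraint on morphisms between arches; hence $\Arch_\Tcal(A,B)$ is a full subcategory of $\Span_\Tcal(A,B)$.

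Next I would treat $\mSpan_{\Tcal'}(A,B)$. Here a morphism $\vec{x}: (C_i,t_i,g_i)_{i\in I}\to (C'_j,t'_j,g'_j)_{j\in I'}$ consists of a function $\sigma:I\to I'$ together with, for each $i\in I$, a morphism $x_i:C_i\to C'_{\sigma(i)}$ in $\Ccal$ satisfying $t_i=t'_{\sigma(i)}\circ x_i$ and $g_i=g'_{\sigma(i)}\circ x_i$. Given a further morphism $\vec{y}:(C'_j,t'_j,g'_j)_{j\in I'}\to(C''_k,t''_k,g''_k)_{k\in I''}$ with index function $\tau$ and components $y_j$, I would define $\vec{y}\circ\vec{x}$ to have index function $\tau\circ\sigma$ and components $y_{\sigma(i)}\circ x_i$; the required triangle identities and associativity reduce to the corresponding identities in $\Ccal$, and the identity morphism takes $\sigma=\id_I$ and $x_i=\id_{C_i}$. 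As before, the property of being a $\Tcal'$-multiarch is a property of the multispan alone, so $\mArch_{\Tcal'}(A,B)$ is a full subcategory.

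Finally, for the permutation remark, I would observe that if $\pi:I\to I$ is a bijection then the multispan $(C_{\pi(i)},t_{\pi(i)},g_{\pi(i)})_{i\in I}$ is, as an unordered finite family, identical to $(C_i,t_i,g_i)_{i\in I}$, and the morphism given by $\sigma=\pi$ together with $x_i=\id_{C_{\pi(i)}}$ (plus its inverse constructed from $\pi^{-1}$) is an isomorphism in $\mSpan_{\Tcal'}(A,B)$. No step is really an obstacle here; the only mild subtlety is being careful, in the multispan case, to package both the reindexing function $\sigma$ and the component morphisms into a single notion of morphism so that composition is well-defined, but once that convention is fixed the verifications are automatic.
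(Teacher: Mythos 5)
Your verification is correct and is exactly the routine check the paper has in mind: the paper states this lemma without proof, treating the category axioms as immediate from the definitions, and your direct verification (composition and identities inherited from $\Ccal$, arches/multiarches as full subcategories since the arch condition is a property of the span alone, and the permutation isomorphism via identity components) fills in precisely those omitted details. No gaps.
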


Thus in the best case scenario where it \textit{is} possible to construct a weak composition on arches, we may obtain a bicategory (see \cite[Definition 1.1]{bicat} for a definition of bicategory) from the principal site $(\Ccal,J_{\Tcal})$, whose $0$-cells are the objects of $\Ccal$, whose $1$-cells are $\Tcal$-arches and whose $2$-cells are morphisms between these. The relationship between this bicategory and the subcategory of $\Sh(\Ccal,J_{\Tcal})$ on the representables is simply that of collapsing the vertical morphisms, in the following sense:

\begin{lemma}
\label{lem:Tarch}
Two $\Tcal$-arches (resp. $\Tcal'$-multiarches) from $A$ to $B$ on a principal (resp. finitely generated) site are identified by $\ell$ if and only if they are in the same component in the category $\Arch_\Tcal(A,B)$ (resp. $\mArch_{\Tcal'}(A,B)$) described in Lemma \ref{lem:archcat}.
\end{lemma}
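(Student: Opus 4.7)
My plan is to handle each direction of the biconditional separately. The easy direction will fall out of the uniqueness clause in Lemma \ref{lem:Trel}; the hard direction will require an explicit zigzag construction built from the stability axioms for $\Tcal$ (respectively $\Tcal'$) together with locality of equality.

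For the easy direction, given a morphism $x: (t,g) \to (t',g')$ in $\Arch_\Tcal(A,B)$, so that $t = t' \circ x$ and $g = g' \circ x$, I post-compose the defining identity $\ell(t',g') \circ \ell(t') = \ell(g')$ with $\ell(x)$ on the right to obtain $\ell(t',g') \circ \ell(t) = \ell(g)$. Since $t \in \Tcal$ the singleton sieve it generates is $J_\Tcal$-covering, so $\ell(t)$ is epimorphic (by Fact \ref{fact1}, exactly as in the proof of Proposition \ref{prop:representable}), and the uniqueness of the factorisation through $\ell(t)$ in Lemma \ref{lem:Trel} forces $\ell(t,g) = \ell(t',g')$. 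Any zigzag in $\Arch_\Tcal(A,B)$ is a chain of such morphisms (in either direction), so the whole connected component lies in a single fibre of $\ell$. The argument for $\mArch_{\Tcal'}$ is the same, invoking joint epicness of $\{\ell(t'_j)\}_j$ in place of $\ell(t)$.

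For the converse in the principal case, starting from $\ell(t,g) = \ell(t',g')$, the first step is to apply stability axiom 3 to $t \in \Tcal$ and $t' : C' \to A$, producing a commutative square
\begin{equation*}
\begin{tikzcd}
P \ar[r, "t_1"] \ar[d, "t_2"'] & C' \ar[d, "t'"] \\
C \ar[r, "t"'] & A
\end{tikzcd}
\end{equation*}
with $t_1 \in \Tcal$. Closure of $\Tcal$ under composition, applied to $t', t_1 \in \Tcal$, shows that the common morphism $t \circ t_2 = t' \circ t_1$ lies in $\Tcal$, so both spans $(t \circ t_2, g \circ t_2)$ and $(t \circ t_2, g' \circ t_1)$ are $\Tcal$-arches, their arch conditions being inherited from those of $(t,g)$ and $(t',g')$ respectively. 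They sit canonically over $(t,g)$ and $(t',g')$ via $t_2$ and $t_1$ in $\Arch_\Tcal(A,B)$. From $\ell(t,g) = \ell(t',g')$ I then read off $\ell(g \circ t_2) = \ell(g' \circ t_1)$, i.e.\ $g \circ t_2 \equiv_{J_\Tcal} g' \circ t_1$, and principality of $J_\Tcal$ supplies a single $w:E \to P$ in $\Tcal$ with $g \circ t_2 \circ w = g' \circ t_1 \circ w$. The arch $(t \circ t_2 \circ w,\, g \circ t_2 \circ w) = (t \circ t_2 \circ w,\, g' \circ t_1 \circ w)$ then receives $w$ as a morphism into both intermediate arches, producing the zigzag
\begin{equation*}
(t,g) \xleftarrow{\,t_2\,} (t \circ t_2,\, g \circ t_2) \xleftarrow{\,w\,} (t \circ t_2 \circ w,\, g \circ t_2 \circ w) \xrightarrow{\,w\,} (t \circ t_2,\, g' \circ t_1) \xrightarrow{\,t_1\,} (t',g').
\end{equation*}

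The multiarch case runs along exactly the same lines: I apply axiom 3' to refine the $\Tcal'$-family $\{t_i\}_i$ along each leg $t'_j$ (and symmetrically $\{t'_j\}_j$ along each $t_i$) to produce a common $\Tcal'$-multispan whose members factor through legs of $\mathfrak{m}$ and of $\mathfrak{m}'$ simultaneously, then use that $\ell(\mathfrak{m}) = \ell(\mathfrak{m}')$ and the finite-generation of $J_{\Tcal'}$ to further refine so that the second-leg pairs agree strictly, not merely up to $J_{\Tcal'}$-local equality. Each intermediate multispan must be verified to be a multiarch, but this always reduces to applying the multiarch property of $\mathfrak{m}$ or $\mathfrak{m}'$ to pairs of composed legs, just as in the principal case. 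I expect the main obstacle to be purely notational: keeping track of the doubly- or triply-indexed refining families and arranging them so that each constituent span lands in a designated target span of $\mathfrak{m}$ or $\mathfrak{m}'$ with the correct compatibility equations, as required by the definition of morphism in $\mArch_{\Tcal'}(A,B)$ from Lemma \ref{lem:archcat}.
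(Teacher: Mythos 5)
Your proposal is correct, and its overall architecture matches the paper's: the forward direction uses epicness of $\ell(t)$ plus the uniqueness clause of Lemma \ref{lem:Trel} exactly as in the paper, and the converse produces a common refinement $\Tcal$-arch admitting morphisms to both $(t,g)$ and $(t',g')$, yielding the same two-step zigzag on each side. The one genuine difference is in how the converse obtains the common domain: the paper simply cites \cite[Proposition 2.5(iii)]{Dense} to extract a $\Tcal$-morphism $s:D\to A$ with factorizations $h,h'$ through $t,t'$ and $g\circ h \equiv_{J_\Tcal} g'\circ h'$, whereas you build this data by hand --- stability axiom 3 applied to $t$ along $t'$ gives the square with $t\circ t_2 = t'\circ t_1 \in \Tcal$, and the local equality $g\circ t_2 \equiv_{J_\Tcal} g'\circ t_1$ is read off from $\ell(t,g)=\ell(t',g')$ using the (more elementary, but still Caramello-sourced) fact that $\ell$ identifies parallel morphisms precisely when they are $J$-locally equal. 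From there both proofs coincide: principality converts local equality into a single equalizing $\Tcal$-morphism, and the resulting arch maps into both sides. Your version is slightly more self-contained at the cost of one extra stability square; your verification that the intermediate spans inherit the arch condition, and your sketch of the multiarch case (which the paper also leaves implicit), are both sound.
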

\begin{proof}
In one direction, if $x: (t:C \to A, g:C \to B) \to (t':C' \to A, g':C' \to B)$, then by definition the unique morphism $\ell(t,g): \ell(A) \to \ell(B)$ with $\ell(g) = \ell(t,g) \circ \ell(t)$ also satisfies $\ell(g') = \ell(t,g) \circ \ell(t')$, whence $\ell(t',g') = \ell(t,g)$. Thus $\ell$ identifies $\Tcal$-spans in the same component.

Conversely, applying \cite[Proposition 2.5(iii)]{Dense}, two $\Tcal$-arches $(t:C \to A, g:C \to B)$ and $(t':C' \to A, g':C' \to B)$ induce the same morphism in $\Sh(\Ccal,J_{\Tcal})$ if and only if there is a $\Tcal$-morphism $s:D \to A$ and morphisms $h:D \to C$ and $h':D' \to C'$ with $t \circ h = s = t' \circ h'$ and $g \circ h \equiv_{J_{\Tcal}} g' \circ h'$. Expanding on the latter condition, this implies the existence of some $\Tcal$-morphism $u:E \to D$ equalizing $g \circ h$ and $g' \circ h'$. But then $(t \circ h \circ u, g \circ h \circ u) = (t' \circ h' \circ u, g' \circ h' \circ u)$ is easily shown to be a $\Tcal$-arch, and it admits morphisms $h \circ u$ and $h' \circ u'$ to $(t,g)$ and $(t',g')$ respectively, whence these are in the same connected component, as required.
\end{proof}

\begin{proposition}
\label{prop:composition}
Suppose that $\Tcal$ is a stable class of morphisms in a small category $\Ccal$ such that axiom 3 of Definition \ref{dfn:stable} provides stability squares \textbf{weakly functorially}. That is, calling an ordered pair of morphisms $(h:A \to D,s: B \to D)$ with $s \in \Tcal$ a \textbf{$\Tcal$-cospan} from $A$ to $B$, suppose the stability axiom defines a mapping from $\Tcal$-cospans to $\Tcal$-spans satisfying the following conditions:
\begin{enumerate}
	\item For any $\Tcal$-morphism $t:A \to B$, the $\Tcal$-span coming from $(\id_A,t)$ is isomorphic in $\Span_{\Tcal}(A,B)$ to $(t,\id_B)$.
	\item If $f:C \to D$, $g:B \to D$ and $k:B' \to B$ such that $f$ is a $\Tcal$-morphism, the $\Tcal$-span obtained by applying the stability mapping along $g$ and then $k$ is isomorphic in $\Span_{\Tcal}(B',C)$ to that obtained by applying it along $g \circ k$.
	\item If $f:C \to D$, $g:B \to D$ and $e:C' \to C$ such that $e$ and $f$ are $\Tcal$-morphisms, the $\Tcal$-span obtained by applying the stability mapping along $f$ and then $e$ is isomorphic in $\Span_{\Tcal}(B,C')$ to that obtained by applying it along $f \circ e$.
\end{enumerate}
Then there is a weak composition on $\Tcal$-arches, in the sense that there are mappings
\[ \circ : \Arch_{\Tcal}(B,C) \times \Arch_{\Tcal}(A,B) \to \Arch_{\Tcal}(A,C), \]
which are associative and unital up to isomorphism of $\Tcal$-arches. Moreover, this composition is natural in the second component up to isomorphism, in the sense that for each fixed $\Tcal$-arch $(u,h)$ in $\Arch_{\Tcal}(B,C)$, a morphism $x: (t,g) \to (t',g')$ in $\Arch_{\Tcal}(A,B)$ induces a morphism $(u,h) \circ x: (u,h) \circ (t,g) \to (u,h) \circ (t',g')$ in $\Arch_{\Tcal}(A,C)$, and the resulting mapping $(u,h) \circ - : \Arch_{\Tcal}(A,B) \to \Arch_{\Tcal}(A,C)$ is functorial up to unit and associativity isomorphisms.
\end{proposition}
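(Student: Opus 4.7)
The plan is to define the composition via the stability mapping applied to the ``inner'' cospan, then verify that the result is a $\Tcal$-arch and deduce unitality, associativity and naturality from the three hypotheses. Concretely, for $(t:P\to A,\ g:P\to B)\in \Arch_\Tcal(A,B)$ and $(u:Q\to B,\ h:Q\to C)\in \Arch_\Tcal(B,C)$, I apply the hypothesised stability mapping to the $\Tcal$-cospan $(g,u)$ to obtain a $\Tcal$-span $(u':W\to P,\ g':W\to Q)$ with $u'\in\Tcal$ and $gu'=ug'$, and set
\[(u,h)\circ(t,g) := (t\circ u',\ h\circ g'),\]
whose first leg lies in $\Tcal$ by closure under composition. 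To see the result is an arch, take $p,q:V\rightrightarrows W$ with $tu'p = tu'q$: the arch property of $(t,g)$ applied to $u'p, u'q$ yields a $\Tcal$-morphism $s:V'\to V$ with $gu'ps = gu'qs$, whence $ug'ps = ug'qs$ by commutativity of the stability square, and the arch property of $(u,h)$ then yields a further $\Tcal$-morphism $s':V''\to V'$ with $hg'pss' = hg'qss'$, so $hg'p \equiv_{J_\Tcal} hg'q$ via $ss'\in\Tcal$.

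For the left unit $(t,g)\circ(\id_A,\id_A)$, the inner cospan is $(\id_A,t)$ whose stability span is isomorphic to $(t,\id_P)$ by condition~1, giving composite isomorphic to $(t,g)$; the right unit is handled similarly. For associativity, writing $(v:R\to C,\ k:R\to D)$ for a third arch, I unfold $(v,k)\circ((u,h)\circ(t,g))$ by first applying stability to $(g,u)$ (producing $(u',g')$ with intermediate object $W$) and then to $(hg',v)$. By condition~2 the second application decomposes up to isomorphism as stability applied to $(h,v)$, yielding $(v'':W_3\to Q,\ h''':W_3\to R)$, followed by stability applied to $(g',v'')$, yielding $(\widetilde v:Z\to W,\ \widetilde g:Z\to W_3)$; this gives composite arch $(tu'\widetilde v,\ kh'''\widetilde g)$. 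Dually, unfolding $((v,k)\circ(u,h))\circ(t,g)$ first applies stability to $(h,v)$ and then to $(g,uv'')$, and condition~3 decomposes the second step as stability to $(g,u)$ followed by stability to $(g',v'')$, producing the same composite up to isomorphism.

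For naturality in the second component, fix $(u,h)$ and let $x:(t,g)\to(t',g')$ in $\Arch_\Tcal(A,B)$, so $x:P\to P'$ with $t = t'x$ and $g = g'x$. Writing $\sigma$ for the stability mapping, let $\sigma(g,u) = (u',g')$ and $\sigma(g',u) = (u'':W'\to P',\ g''':W'\to Q)$ govern the two composites. Applying condition~2 to the factorisation $g = g'\circ x$, the span $\sigma(g'x,u)$ is isomorphic to the one obtained by applying $\sigma(g',u)$ first and then $\sigma(x,u'')$; the latter yields a morphism $\phi:W\to W'$ satisfying $u''\phi = xu'$ and $g'''\phi = g''$, which is precisely the required arrow $(u,h)\circ x$ between the composite arches. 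Iterated applications of condition~2 then give functoriality of $(u,h)\circ -$ up to the unit and associativity isomorphisms.

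The hard part will be the bookkeeping in the associativity step: although each of the two sides reduces by a single invocation of condition~2 or~3, one must carefully track the various intermediate objects and $\Tcal$-legs through two nested stability constructions and check that both descriptions converge on a common intermediate span. Once this is in place, the coherence of the unit isomorphisms and the functoriality of $(u,h)\circ -$ follow routinely from further applications of conditions~1 and~2.
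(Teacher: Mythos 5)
Your proposal is correct and takes essentially the same approach as the paper's own (much terser) proof: composition is defined by applying the stability mapping to the inner cospan, the arch property of the composite is verified by chaining the two arch conditions through a $\Tcal$-equalizing morphism, and unitality, associativity and naturality are extracted from conditions 1, 2/3 and 2 respectively. You in fact supply considerably more detail than the paper, which dismisses the arch verification as "easily checked" and does not unfold the associativity bookkeeping at all; the only point you gloss over ("the right unit is handled similarly") is the one orientation of the unit law that condition 1 does not literally cover, but the paper's proof is silent on this as well.
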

\begin{proof}
Even without the listed conditions, stability provides a putative definition of the composition operation: given a consecutive pair of $\Tcal$-arches, we simply apply the stability axiom to the pair of morphisms with common codomain,
\[\begin{tikzcd}
& & P \ar[dl, "\Tcal \ni t''"'] \ar[dr, "g''"] & & \\
& C \ar[dl, "t"'] \ar[dr, "g"] & & C' \ar[dl, "t'"'] \ar[dr, "g'"] & \\
A & & B & & B';
\end{tikzcd}\]
that the resulting $\Tcal$-span $(t \circ t'', g' \circ g'')$ is a $\Tcal$-arch is easily checked. The extra conditions are needed to make this operation weakly unital and associative. The naturality in the second component is a direct consequence of the second condition.
\end{proof}

For brevity, we leave the analogous statement and proof of Proposition \ref{prop:composition} for finitely generated sites to the reader, noting that the analogue of $\Tcal$-cospans will not be the duals of $\Tcal'$-multispans, but the more restrictive shape of diagram relevant to the stability axiom 3'.

One situation where the hypotheses of Proposition \ref{prop:composition} are satisfied is when $\Ccal$ has pullbacks, by Lemma \ref{lem:pbstable}.

\begin{corollary}
Let $(\Ccal,J_{\Tcal})$ be a principal site where $\Ccal$ has pullbacks, such as the canonical sites on locally regular categories we shall see in Definition \ref{dfn:regcoh}. Then the objects of $\Ccal$, the $\Tcal$-spans on $\Ccal$ and the morphisms between these assemble into a bicategory. In particular, the composition operations of Proposition \ref{prop:composition} are natural in the first component.

As usual, the analogous result for finitely generated sites holds, but we omit the proof.
\end{corollary}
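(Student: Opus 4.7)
The plan is to realize the proposed bicategory as a $\Tcal$-restricted version of the classical bicategory of spans over $\Ccal$. First, I would replace $\Tcal$ by its Kondo--Yasuda extension $\hat{\Tcal}$, which leaves $J_{\Tcal}$ and hence $\Sh(\Ccal,J_{\Tcal})$ unchanged and, by Lemma \ref{lem:pbstable}, ensures that the pullback in $\Ccal$ of any $\Tcal$-morphism lies again in $\Tcal$. This licenses the canonical choice of genuine pullbacks as the stability squares of axiom 3 of Definition \ref{dfn:stable}.

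With this choice in hand, I would define horizontal composition by the classical pullback of spans: given $\Tcal$-spans $(t_1:P_1\to A,\ g_1:P_1\to B)$ and $(t_2:P_2\to B,\ g_2:P_2\to C)$, form the pullback of $g_1$ along $t_2$ with projections $p:Q\to P_1$ and $q:Q\to P_2$, and take the composite to be $(t_1\circ p,\ g_2\circ q)$; its left leg lies in $\Tcal$ since $p$ is a pullback of $t_2\in\Tcal$ and $\Tcal$ is closed under composition. Identities are $(\id_A,\id_A)$, vertical composition of $2$-cells is composition in $\Ccal$, and horizontal composition of $2$-cells is induced by the universal property of pullbacks. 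The unitors, associator, and the pentagon and triangle coherences are constructed exactly as for the standard bicategory of spans on a category with pullbacks, using only universal properties shared by every pullback square; the $\Tcal$-restriction does not interfere, as the comparison morphisms are uniquely determined by their target legs and automatically lie in $\Tcal$ because they fit into pullback rectangles of $\Tcal$-morphisms.

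For the \emph{in particular} clause, I would observe that $\Tcal$-arches form a subclass of $\Tcal$-spans closed under the above composition: if $(t_1,g_1)$ and $(t_2,g_2)$ are arches and $h,k:D\rightrightarrows Q$ satisfy $t_1\circ p\circ h=t_1\circ p\circ k$, then the arch condition for $(t_1,g_1)$ yields a $J_{\Tcal}$-covering sieve on $D$ over which $g_1\circ p\circ h$ and $g_1\circ p\circ k$ agree; pullback commutativity turns this into equality of $t_2\circ q\circ h$ and $t_2\circ q\circ k$ on the same sieve, to which the arch condition for $(t_2,g_2)$ applies and, after invoking transitivity of $J_{\Tcal}$, yields the required local equality of $g_2\circ q\circ h$ and $g_2\circ q\circ k$. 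Since horizontal composition in any bicategory is a bifunctor, the desired naturality of Proposition \ref{prop:composition}'s composition in the first component is then automatic: a $2$-cell $y:(t',g')\Rightarrow(t'',g'')$ induces, via the universal property of the pullback defining $(t,g)\circ(t'',g'')$, a unique morphism of arches $(t,g)\circ(t',g')\to(t,g)\circ(t'',g'')$. The only real obstacle is bookkeeping around the nested pullbacks in the pentagon coherence and the need to keep each comparison inside $\Tcal$, which is precisely what passing to $\hat{\Tcal}$ makes transparent.
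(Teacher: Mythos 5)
Your proposal is correct and follows essentially the same route as the paper: pullbacks serve as the canonical stability squares, the span-bicategory coherences come from uniqueness in the universal property of pullbacks, and naturality in the first component falls out of the resulting bifunctoriality of horizontal composition (the paper just exhibits the comparison $2$-cell directly by pasting a pullback square onto the defining rectangle). The one presentational difference is that you pass explicitly to $\hat{\Tcal}$ to secure pullback-stability, so your $1$-cells are strictly the $\hat{\Tcal}$-spans, whereas the paper leaves the appeal to Lemma \ref{lem:pbstable} --- and hence the implicit assumption of axiom 4 --- tacit; this is if anything more careful than the printed proof.
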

\begin{proof}
The verification of the conditions in Proposition \ref{prop:composition} is straightforward; note that we actually require a specified choice of pullbacks, but the mediating isomorphisms are provided by universal properties. For the final claim, we observe that in the third condition of Proposition \ref{prop:composition}, we no longer need to restrict ourselves to the case where $e:C' \to C$ is a $\Tcal$-morphism, since we can complete the defining rectangle with a pullback square ($\Tcal$-morphisms are indicated with double-headed arrows):
\[\begin{tikzcd}
A' \ar[dr, phantom, "\lrcorner", very near start]
\ar[rr, bend left, "(f\circ e)'", two heads]
\ar[d, "g''"'] \ar[r, "e'"'] &
A \ar[dr, phantom, "\lrcorner", very near start]
\ar[r,"f'"', two heads] \ar[d, "g'"'] & B \ar[d, "g"] \\
C' \ar[r, "e"] \ar[rr, bend right, "f\circ e"', two heads] &
C \ar[r,"f", two heads] & D.
\end{tikzcd}\]
The morphism $e'$ provides the morphism of $\Tcal$-spans corresponding to $e$ to make the composition natural (again, up to relevant isomorphisms) in the first component, as claimed.

The commutativity of the associativity and identity coherence diagrams which are required to formally make this a bicategory are guaranteed by the uniqueness in the universal property of the pullbacks involved.
\end{proof}

\begin{remark}
\label{rmk:sievecat}
The fact that $\Tcal$-arches do not assemble into a bicategory in full generality is not merely an artefact of us having restricted ourselves to the data of the stable classes of morphisms (resp. finite families), rather than the principal (resp. finitely generated) Grothendieck topologies they generate. If we expand our collections of morphisms to multiarches indexed by arbitrary $J_{\Tcal}$-covering families, Caramello's construction \cite[Proposition 2.5(iv)]{Dense} does give a canonical family representing the composite, but it typically fails to be unital, since composing with an identity $\Tcal$-span gives a strictly larger family. We can restrict to $J$-covering sieves to avoid this problem, but even then, without pullbacks the composition may not be weakly associative either, since multi-composition of $J$-covering sieves is not necessarily associative in the required sense.
\end{remark}

Returning to the general case, we observe that we do not need the composition operation to be well-defined at the level of $\Tcal$-arches in order to reconstruct the full subcategory of $\Sh(\Ccal,J_{\Tcal})$ on the representable sheaves.

\begin{scholium}
Let $(\Ccal,J_{\Tcal})$ be a principal site. Then the full subcategory of $\Sh(\Ccal,J_{\Tcal})$ on the representable sheaves is equivalent to the category whose objects are the objects of $\Ccal$ and whose morphisms $A \to B$ are indexed by the connected components of the category $\Arch_{\Tcal}(A,B)$.

Similarly, if $(\Ccal,J_{\Tcal'})$ is a finitely generated site, then the full subcategory of $\Sh(\Ccal,J_{\Tcal'})$ on the representable sheaves is equivalent to the category whose objects are the objects of $\Ccal$ and whose morphisms $A \to B$ are indexed by the connected components of the category $\mArch_{\Tcal'}(A,B)$.
\end{scholium}
\begin{proof}
Observe that in the definition of composition given in the proof of Proposition \ref{prop:composition}, \textit{any} choice of stability square will produce a $\Tcal$-arch (resp. $\Tcal'$-multiarch) lying in the same component of $\Arch_\Tcal(A,C)$ (resp. $\mArch_{\Tcal'}(A,C)$), since this $\Tcal$-arch (resp. $\Tcal'$-multiarch) will necessarily be mapped by $\ell$ to the composite of the morphisms corresponding to the pair of arches (resp. multiarches) being composed. Thus, even without weak functoriality, the composition is well-defined on connected components, as required.
\end{proof}

In the subcanonical case, where all $\Tcal$-morphisms are strict epimorphisms (resp. all $\Tcal'$-families are jointly strictly epimorphic), the computations from this section simplify greatly. Indeed, $\ell$ is full and faithful in this case, which means that every component of each category $\Arch_{\Tcal}(A,B)$ (resp. $\mArch_{\Tcal'}(A,C)$) contains a unique (multi)arch of the form $(\id_A,f)$. In a $\Tcal$-arch $(t,g)$, $g$ coequalizes every pair of morphisms which $t$ coequalizes by definition, whence the morphism $\ell(t,g)$ corresponds to the unique morphism $A \to B$ factorizing $g$ through $t$; the morphisms representing multiarches are recovered analogously from the universal properties of jointly strictly epic families.

\subsection{Quotients of Principal Sites}
\label{ssec:quotient}

Rather than directly computing the category of representable sheaves in $\Sh(\Ccal,J_{\Tcal})$ via $\Tcal$-arches, we might hope to simplify things by first modifying the principal site.

In Kondo and Yasuda's definition of `$B$-site', they assume that the underlying category is an `$E$-category', which is to say that every morphism is an epimorphism, \cite[Definitions 4.1.1, 4.2.1]{SPM}, which seems very restrictive. However, by taking the quotient of $\Ccal$ by a canonical congruence, we show here that we may at least assume that $\Tcal$ is contained in the class of epimorphisms of $\Ccal$ without loss of generality, since the corresponding topos of sheaves is equivalent to that on the original site. 

\begin{prop}
\label{prop:congruence}
Let $(\Ccal,J_{\Tcal})$ be a principal site. Then there is a canonical congruence $\sim$ on $\Ccal$ such that $(\Ccal/{\sim},J_{\Tcal/{\sim}})$ is a principal site with $\Tcal/{\sim}$ a subclass of the epimorphisms of $\Ccal/{\sim}$, and with $\Sh(\Ccal,J_{\Tcal}) \simeq \Sh(\Ccal/{\sim},J_{\Tcal/{\sim}})$.

Similarly, if $(\Ccal,J_{\Tcal'})$ is a finitely generated site, there is a congruence $\sim$ on $\Ccal$ such that $(\Ccal/{\sim},J_{\Tcal'/{\sim}})$ is a finitely generated site with $\Tcal'/{\sim}$ a subclass of the epimorphisms of $\Ccal/{\sim}$, and with $\Sh(\Ccal,J_{\Tcal'}) \simeq \Sh(\Ccal/{\sim},J_{\Tcal'/{\sim}})$.
\end{prop}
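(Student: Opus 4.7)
The plan is to define the congruence as local equality. In the principal case, set $f \sim g$ for parallel morphisms $f, g \colon A \rightrightarrows B$ in $\Ccal$ iff there exists $t \colon C \to A$ in $\Tcal$ with $ft = gt$; equivalently, $\ell(f) = \ell(g)$ in $\Sh(\Ccal, J_\Tcal)$. In the finitely generated case, $f \sim g$ iff there is a $\Tcal'$-family $\{s_i \colon D_i \to A\}$ with $f s_i = g s_i$ for every $i$. The first task is to verify that $\sim$ is a congruence: reflexivity and symmetry are immediate, transitivity follows by applying stability axiom 3 to the two witnessing morphisms and then invoking closure under composition (axiom 2) to obtain a single witness, and compatibility with pre-composition requires one further application of axiom 3 (post-composition compatibility is immediate).

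Next I would form the quotient $\Ccal/{\sim}$ and set $\Tcal/{\sim} := \{[t] : t \in \Tcal\}$. Stability of $\Tcal/{\sim}$ is inherited by lifting representatives and pushing the resulting stability squares of $\Ccal$ down to the quotient. The crucial additional property is that each $[t] \in \Tcal/{\sim}$ is epic in $\Ccal/{\sim}$: if $[f][t] = [g][t]$ then $ft \sim gt$, so there is $s \in \Tcal$ with $fts = gts$; but $ts \in \Tcal$ by composition, so $f \sim g$ and hence $[f] = [g]$.

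For the equivalence $\Sh(\Ccal, J_\Tcal) \simeq \Sh(\Ccal/{\sim}, J_{\Tcal/{\sim}})$, the cleanest route is to show that the quotient functor $q \colon \Ccal \to \Ccal/{\sim}$ is a dense morphism of sites in the sense of Caramello's \cite{Dense}: it is bijective on objects, full, cover-preserving (it sends $\Tcal$ to $\Tcal/{\sim}$), and cover-reflecting (a $J_{\Tcal/{\sim}}$-covering sieve contains some $[t]$, and then $t$ itself lies in its preimage, which is therefore $J_\Tcal$-covering), and the only obstruction to full faithfulness is the relation $\sim$, which by construction coincides with local equality for $J_\Tcal$. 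Alternatively one can argue directly at the level of sheaves: the sheaf condition for the singleton cover $\{t\}$ with $t \in \Tcal$ forces $F(t)$ to be monic for every $J_\Tcal$-sheaf $F$ (via uniqueness of amalgamation), so $ft = gt$ implies $F(f) = F(g)$ by left-cancellation, hence every $J_\Tcal$-sheaf factors uniquely through $q$; unpacking the sheaf conditions on either side produces a natural bijection between the two sheaf categories. The finitely generated case is parallel throughout, with $\Tcal'$-families replacing $\Tcal$-morphisms. The main obstacle is the combinatorial bookkeeping there: in transitivity and pre-composition compatibility of $\sim$, stability axiom 3' must be applied repeatedly and the resulting outputs assembled into a single $\Tcal'$-family via multicomposition (axiom 2'), which is more involved than the one-morphism argument in the principal case.
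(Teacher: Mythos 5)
Your proposal is correct and follows essentially the same route as the paper: the same congruence (equalization by a $\Tcal$-morphism, resp.\ by a $\Tcal'$-family), the same stability-based verification that it is a congruence and that $\Tcal/{\sim}$-morphisms become epic, and the same conclusion via the quotient functor being a cover-preserving, cover-lifting, full, $J_{\Tcal}$-faithful functor inducing an equivalence of sheaf toposes in the sense of Caramello's \cite{Dense}. The only difference is cosmetic (you split the congruence check into pre- and post-composition and sketch an optional direct sheaf-level argument), so there is nothing further to add.
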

\begin{proof}
Simply let $f\sim f':C \to D$ whenever there is a morphism $h:C' \to C$ in $\Tcal$ with $fh=f'h$. To verify that this is a congruence, given $g \sim g':D \to E$ equalized by $k:D'\to D$, stability of $k$ along $fh = f'h$ gives $k' \in \Tcal$:
\[\begin{tikzcd}
C'' \ar[d, "k'"] \ar[rr] & & D' \ar[d, "k"] & \\
C' \ar[r, "h"] & C \ar[r,shift left, "f"] \ar[r,shift right, "f'"'] & D  \ar[r,shift left, "g"] \ar[r,shift right, "g'"'] & E;
\end{tikzcd}\]
$\Tcal$ is closed under composition and $gfhk' = g'f'hk'$, so $gf \sim g'f'$ as required.

By the definition of the congruence it is immediate that the morphisms in $\Tcal/{\sim}$ are epimorphisms. The canonical functor $(\Dcal,J_{\Tcal}) \to (\Dcal/{\sim},J_{\Tcal/{\sim}})$ is a morphism and comorphism of sites\footnote{See Definitions \ref{dfn:morsite} and \ref{dfn:comor} below for the definitions of morphisms and comorphisms of sites, respectively.}, since it is cover-preserving, cover-lifting and flat by inspection. In the terminology of \cite[Definition 5.14]{Dense}, the quotient functor is $J_{\Tcal}$-full, $J_{\Tcal}$-faithful and $J_{\Tcal/{\sim}}$-dense; indeed, being full and essentially surjective, only the $J_{\Tcal}$-faithfulness needs to be checked, and the definition of the congruence ensures that this holds. Thus this morphism of sites induces an equivalence of sheaf toposes by \cite[Proposition 7.16]{Dense}.

The congruence for a finitely generated site has $f \sim f'$ whenever $fh_i = f'h_i$ for each $h_i$ in a $\Tcal'$-family. The remainder of the proof is analogous.
\end{proof}

The reduced site $(\Ccal/{\sim},J_{\Tcal/{\sim}})$ in Proposition \ref{prop:congruence} can be obtained in various alternative ways. By construction:
\begin{scholium}
The functor $\Ccal \to \Ccal/{\sim}$ in the proof of Proposition \ref{prop:congruence} is the universal functor with domain $\Ccal$ sending $\Tcal$-morphisms to epimorphisms (resp. $\Tcal'$-families to jointly epimorphic families).
\end{scholium}
As such, it is not surprising that $\ell$ canonically factors through the functor $\Ccal \to \Ccal/{\sim}$, since $\ell$ sends $\Tcal$-morphisms to epimorphisms by Fact \ref{fact1}. The resulting site also coincides with the one obtained by lifting the hyperconnected-localic factorisation of the geometric equivalence of toposes to the level of morphisms of sites, as described in \cite[\S 6.5.4]{Dense}. See Section \ref{ssec:morsites} below for more on morphisms of sites.

After taking this quotient, $J_{\Tcal}$-local equality (resp. $J_{\Tcal'}$-local equality) reduces to ordinary equality, so that for example a $\Tcal$-arch from $A$ to $B$ is a $\Tcal$-span as in \eqref{eq:span} such that for any $h,k:D\rightrightarrows C$ with $f \circ h = f \circ k$ we have $g \circ h = g \circ k$. In particular, by considering the arches in which the $\Tcal$-morphism is an identity, we see that the functor $\ell:(\Ccal/{\sim},J_{\Tcal/{\sim}}) \to \Sh(\Ccal,J_{\Tcal})$ is \textit{faithful}, and further that $\ell:(\Ccal,J_{\Tcal}) \to \Sh(\Ccal,J_{\Tcal})$ is faithful if and only if every $\Tcal$-morphism is an epimorphism, which is to say that the congruence ${\sim}$ is trivial.

\begin{crly}
\label{crly:full}
Let $(\Ccal,J_{\Tcal})$ be a principal site and let ${\sim}$ be the congruence on $\Ccal$ from Proposition \ref{prop:congruence}. Then the functor $\ell:(\Ccal,J_{\Tcal}) \to \Sh(\Ccal,J_{\Tcal})$ is full if and only if every $\Tcal/{\sim}$-morphism in $\Ccal/{\sim}$ is a \textbf{strict} epimorphism.

Similarly, if $(\Ccal,J_{\Tcal'})$ is a finitely generated site, then $\ell:(\Ccal,J_{\Tcal'}) \to \Sh(\Ccal,J_{\Tcal'})$ is full if and only if every $\Tcal'/{\sim}$-family in $\Ccal/{\sim}$ is a strictly epimorphic family.
\end{crly}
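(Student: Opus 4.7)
The plan is to reduce to the quotient site via Proposition \ref{prop:congruence} and then unpack what fullness means using Lemma \ref{lem:Trel}. First, observe that the quotient functor $q: \Ccal \to \Ccal/{\sim}$ is full and bijective on objects, and that $\ell$ factors (modulo the geometric equivalence $\Sh(\Ccal,J_{\Tcal}) \simeq \Sh(\Ccal/{\sim},J_{\Tcal/{\sim}})$) as $\bar{\ell} \circ q$, where $\bar{\ell}$ is the analogous functor on the quotient site. Since the induced map $\Hom_{\Ccal}(A,B) \to \Hom_{\Ccal/{\sim}}(qA,qB)$ is surjective and every object of $\Ccal/{\sim}$ is of the form $qA$, it follows that $\ell$ is full if and only if $\bar{\ell}$ is full. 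We may therefore argue throughout in the reduced site, where, as noted in the remarks preceding the corollary, $J_{\Tcal/{\sim}}$-local equality is just equality, every $\Tcal/{\sim}$-morphism is an epimorphism, and $\bar{\ell}$ is faithful.

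In this reduced setting, Lemma \ref{lem:Trel} supplies a surjection from $\Tcal/{\sim}$-arches $(t: C \to A,\ g: C \to B)$ onto $\Hom_{\Sh(\Ccal/{\sim},J_{\Tcal/{\sim}})}(\bar{\ell}(A),\bar{\ell}(B))$, sending $(t,g)$ to the unique $\phi$ with $\phi \circ \bar{\ell}(t) = \bar{\ell}(g)$. Using faithfulness of $\bar{\ell}$, such a $\phi$ lies in the image of $\bar{\ell}$ on morphisms precisely when there exists some $f: A \to B$ in $\Ccal/{\sim}$ with $f \circ t = g$. Hence $\bar{\ell}$ is full if and only if every $\Tcal/{\sim}$-arch $(t,g)$ admits a factorization $g = f \circ t$ in $\Ccal/{\sim}$.

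Now I would compare this directly with the definition of strict epimorphism recalled after Definition \ref{dfn:funnel}. The condition that $(t,g)$ be a $\Tcal/{\sim}$-arch is, after the collapse of local equality to equality, precisely the hypothesis appearing in that definition: namely that $g$ coequalizes every parallel pair coequalized by $t$. Uniqueness of the factorization $g = ft$ is automatic because $\Tcal/{\sim} \subseteq \mathrm{Epi}(\Ccal/{\sim})$ by Proposition \ref{prop:congruence}. Therefore fullness of $\bar{\ell}$ is equivalent to every $\Tcal/{\sim}$-morphism being a strict epimorphism, which proves the principal case.

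The finitely generated case proceeds in exactly the same way, replacing $\Tcal$-arches by $\Tcal'/{\sim}$-multiarches and invoking the multi-version of Lemma \ref{lem:Trel}; the multiarch compatibility conditions translate directly into the universal property of a multifunneling colimit from Definition \ref{dfn:multifun}, so one reads off that $\bar{\ell}$ is full if and only if every $\Tcal'/{\sim}$-family is a strictly epic finite family in $\Ccal/{\sim}$. The main obstacle I anticipate is simply keeping straight the difference between fullness of $\ell$ on $\Ccal$ and fullness of $\bar{\ell}$ on $\Ccal/{\sim}$; once the reduction step is in place, the remainder of the argument is a direct dictionary translation between the arch-theoretic description of morphisms in the sheaf topos and the universal property defining strict epimorphisms.
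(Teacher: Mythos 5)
Your proof is correct and follows essentially the same route as the paper's: reduce to the quotient site of Proposition \ref{prop:congruence}, use the arch description of morphisms between representables (Lemma \ref{lem:Trel}), and match the arch condition against the definition of strict epimorphism, with uniqueness of the factorization supplied by $\Tcal/{\sim}$ consisting of epimorphisms. The only cosmetic difference is that the paper handles the ``if'' direction by simply citing that a subcanonical site has fully faithful $\ell$, whereas you run the arch dictionary uniformly in both directions; the content is the same.
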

\begin{proof}
If the hypothesis holds, then the site $(\Ccal/{\sim},J_{\Tcal/{\sim}})$ is subcanonical, so the induced functor to the topos is full and faithful, whence $\ell$ with domain $(\Ccal,J_{\Tcal})$ is full.

Conversely, given that $\ell$ is full, suppose that $t:C \to A$ is in $\Tcal$. Suppose that we have $g:C \to B$ such that whenever $h,k:C' \to C$ with $t \circ h = t \circ k$, we have $g \circ h = g \circ k$. Then $(t,g)$ is a $\Tcal$-arch, and there is a morphism $\ell(t,g)$ in $\Sh(\Ccal,J_\Tcal)$ completing the triangle. By fullness of $\ell$, this is the image of a morphism $A \to B$ in $\Ccal/{\sim}$, and by the definition of $\sim$ there is at most one such morphism, whence $t/{\sim}$ is a strict epimorphism, as claimed.

The argument for finitely generated sites is analogous.
\end{proof}

This site can alternatively be obtained by considering the lifting of the surjection-inclusion factorisation of the geometric equivalence of toposes to the level of sites, as seen in \cite[Theorem 6.3]{Dense}.

\begin{xmpl}
\label{xmpl:atomic}
Recall that a category $\Ccal$ satisfies the \textbf{right Ore condition} if any cospan can be completed to a commutative square. This is exactly the condition needed to make the class of \textit{all} morphisms of $\Ccal$ stable, and the corresponding principal topology is more commonly called the \textbf{atomic topology}, $J_{at}$, while the site $(\Ccal, J_{at})$ is called an \textbf{atomic site}. The above results show that we may reduce any atomic site to one in which every morphism is epic (hence a `$B$-site' in the terminology of Kondo and Yasuda). 
\end{xmpl}

\subsection{Reductive and Coalescent Categories}
\label{ssec:redcat}

Returning to our study of the subcategories of supercompact and compact objects from the last section, we observe that the epimorphisms they inherit from $\Ecal$ always meet most of the requirements for stability.

\begin{lemma}
\label{lem:stable}
For any Grothendieck topos $\Ecal$, let $\Ccal_s$, $\Ccal_c$ the usual subcategories. Then the class $\Tcal$ of epimorphisms in $\Ccal_s$ which are inherited from $\Ecal$ satisfies axioms 1,2 and 4 for stable classes, while the class $\Tcal'$ of finite jointly epimorphic families on $\Ccal_c$ inherited from $\Ecal$ satisfies axioms 1',2',4' and 5'.
\end{lemma}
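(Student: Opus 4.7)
The plan is to verify each axiom directly, using only standard facts about epimorphisms in any category (plus closure of $\Ccal_s$ and $\Ccal_c$ under the relevant structure, which was established earlier).

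For the singleton class $\Tcal$: Axiom 1 is immediate since identities are (inherited) epimorphisms. Axiom 2 follows because the composite of two epimorphisms in $\Ecal$ is an epimorphism in $\Ecal$, and if both composands lie in $\Ccal_s$ then so does the composite (in fact $\Ccal_s$ is closed under composition simply because it is a full subcategory). Axiom 4 is the standard cancellation fact: if $f \circ g$ is epic in $\Ecal$, then $f$ is epic in $\Ecal$, and this transfers into $\Tcal$ since $f$ already lies in $\Ccal_s$ by hypothesis.

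For the class $\Tcal'$ of finite jointly epimorphic families in $\Ccal_c$: Axiom 1' is trivial. For axiom 2', given a finite jointly epic family $\{f_i : D_i \to C\}$ in $\Ecal$ and finite jointly epic families $\{g_{j,i} : E_j \to D_i\}$ over each $D_i$, the composite family $\{f_i \circ g_{j,i}\}$ is jointly epic in $\Ecal$ by a routine diagram chase (any morphism equalized by all composites is in particular equalized after precomposition with each $g_{j,i}$, hence equalized by each $f_i$, hence an identity on targets). Axiom 4' is immediate: if every morphism in some jointly epic family factors through one of the $f_i$, then the $f_i$ are jointly epic, being a factorization of a jointly epic family. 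Axiom 5' is obvious since enlarging a jointly epic family preserves joint epicness.

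The main (and only mild) obstacle is verifying that axiom 3 is \emph{not} being claimed here: joint epicness in $\Ecal$ need not be stable under pullback in $\Ccal_s$ or $\Ccal_c$ because these subcategories need not have pullbacks at all, and even when they do, the ambient pullback along a non-epimorphism might fail to yield a family in the subcategory. This is precisely why axiom 3 (resp. 3') is omitted from the statement, and is also the reason we later work with more refined notions of reductive and coalescent categories rather than with stable classes per se. Accordingly the proof should end by observing that axioms 1, 2, 4 (resp. 1', 2', 4', 5') are the full content of the claim, with axiom 3 (resp. 3') to be addressed separately when pullback-stability is available.
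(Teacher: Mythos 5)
Your proof is correct and follows essentially the same route as the paper's: a direct verification of each axiom from the standard closure and cancellation properties of (jointly) epimorphic families in $\Ecal$, combined with fullness of the subcategories. The closing remark on why axiom 3 (resp. 3') is excluded also matches the discussion the paper gives immediately after the lemma.
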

\begin{proof}
Clearly $\Ccal_s$ and $\Ccal_c$ inherit identities from $\Ecal$, so $\Tcal$ contains these and $\Tcal'$ contains the singleton families of the identities. Since epimorphisms are stable under composition in $\Ecal$, $\Tcal$ is closed under composition. Multicomposition of finite jointly covering families in $\Ecal$ is similarly direct, giving the second axiom for $\Tcal'$. Axioms 4' and 5' are also straightforward.
\end{proof}

By Theorem \ref{thm:canon}, if $\Ecal$ is supercompactly generated (resp. compactly generated) then axiom 3 (resp. axiom 3') must also be satisfied in each case. Note that the converse fails: stability of $\Ecal$-epimorphisms in $\Ccal_s$ or stability of $\Ecal$-epimorphic finite families in $\Ccal_c$ are not sufficient to guarantee that $\Ecal$ is supercompactly or compactly generated. Indeed, if $\Ecal$ a non-degenerate topos where the only compact object is the initial object, such as that exhibited in Example \ref{xmpl:R}, the stability axioms are trivially satisfied but $\Ecal$ is neither supercompactly nor compactly generated.

In the remainder of this section, we refine the concepts of principal and finitely generated sites in order to obtain a characterisation of the categories $\Ccal_s$ (resp. $\Ccal_c$) of supercompact (resp. compact) objects in supercompactly (resp. compactly) generated toposes.

\begin{dfn}
\label{dfn:reductive}
We say a small category $\Ccal$ is \textbf{reductive} if it has funneling colimits and its class of strict epimorphisms is stable. The \textbf{reductive topology} $J_r$ on a reductive category is the principal topology generated by its class of strict epimorphisms.

We say $\Ccal$ is \textbf{coalescent} if it has multifunneling colimits and its class of (strictly) epic finite families is stable. The \textbf{coalescent topology} $J_c$ on a coalescent category is the finitely generated topology on its class of strictly epic finite families.
\end{dfn}

\begin{rmk}
Some justification for this naming and notation is warranted. 

The names of the categories are intended to evoke the presence of funneling (resp. multifunneling) colimits, since any diagram in them of the respective shapes `is reduced' (resp. `coalesces') by composing with a suitable epimorphism (resp. jointly epimorphic family). Indeed, if we consider a functor from such a category to $\Set$ which preserves strict epimorphisms (resp. jointly strictly epimorphic families), then the images of these epimorphisms have the effect of reducing/coalescing equivalence classes for the relations generated by the images of these diagrams.

The names were also chosen to have their first few letters in common with \textit{regular} and \textit{coherent} respectively, since the regular and coherent objects in a topos are respectively subclasses of the supercompact and compact objects. Thus, while the $r$ and $c$ in $J_r$ and $J_c$ stand for \textit{reductive} and \textit{coalescent} respectively, we shall see that when a category is both regular and reductive, the regular topology (see Definition \ref{dfn:regtop} below) coincides with the reductive topology, so the $r$ on $J_r$ could also mean `regular'; similarly for coalescent and coherent.

\end{rmk}

While not every stable class of finite families need contain a stable class of singleton morphisms, we record the fact that this does happen when the families involved are strictly epimorphic families.

\begin{lemma}
\label{lem:collred}
Any coalescent category is a reductive category with finite colimits and a strict initial object.
\end{lemma}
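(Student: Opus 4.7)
The plan relies on Lemma \ref{lem:multi}, which decomposes multifunneling colimits as finite coproducts of funneling colimits. I would first observe that a coalescent category $\Ccal$ therefore possesses both finite coproducts (multifunneling colimits of discrete diagrams, with the initial object arising as the empty coproduct) and funneling colimits. Since coequalizers are themselves funneling colimits, $\Ccal$ has all finite colimits.

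Next I would establish stability of strict epimorphisms by reducing to stability of strictly epic finite families. A strict epimorphism coincides with a singleton strictly epic family, since a funnel is just a multifunnel with one weakly terminal object; hence axioms 1 and 2 of Definition \ref{dfn:stable} follow directly from axioms 1' and 2' of Definition \ref{dfn:stable2}. For axiom 3, given a strict epimorphism $f: C \to D$ and a morphism $g: B \to D$, I would apply axiom 3' to the singleton family $\{f\}$ and $g$, obtaining a strictly epic family $\{h_j: A_j \to B\}_{j=1}^n$ together with factorisations $g \circ h_j = f \circ g'_j$ for some $g'_j: A_j \to C$. Forming $A := \coprod_j A_j$ with induced morphisms $h := [h_j]: A \to B$ and $g' := [g'_j]: A \to C$, Lemma \ref{lem:multi} lets us re-present the multifunneling colimit exhibiting $\{h_j\}$ as a funneling colimit with weakly terminal object $A$; the resulting morphism is exactly $h$, which is therefore a strict epimorphism. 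The identity $f \circ g' = g \circ h$ follows from the universal property of the coproduct, giving the required stability square.

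For strictness of the initial object $0$ (realised as the empty coproduct, equivalently as the empty multifunneling colimit), I would apply axiom 3' to the empty strictly epic family over $0$ together with an arbitrary morphism $g: X \to 0$. Stability provides a strictly epic family $\{h_j: A_j \to X\}_{j=1}^m$ each of whose composites with $g$ must factor through some member of the empty family over $0$; since no such members exist, we are forced to have $m = 0$. But an empty strictly epic family over $X$ exhibits $X$ itself as the colimit of the empty multifunnel, so $X$ is initial, whence $g$ is an isomorphism.

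The main technical point to spell out carefully is the assertion that the copairing $[h_j]: \coprod_j A_j \to B$ of a strictly epic family is a strict epimorphism; this is essentially Lemma \ref{lem:multi} run in reverse, replacing a multifunneling colimit by the funneling colimit obtained after contracting its weakly terminal objects to a single coproduct. Once that translation is stated precisely, the remaining verifications are purely formal manipulations with coproduct universal properties.
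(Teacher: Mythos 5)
Your proposal is correct and follows essentially the same route as the paper: finite colimits via Lemma \ref{lem:multi}, stability of strict epimorphisms by applying axiom 3' to the singleton family $\{f\}$ and copairing the resulting strictly epic family over a coproduct, and strictness of $0$ by applying stability to the empty strictly epic family. The only difference is that you spell out explicitly the translation between singleton strictly epic families and strict epimorphisms and the justification that the copairing is strict, which the paper leaves implicit.
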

\begin{proof}
If $\Ccal$ is a coalescent category, it certainly has the required colimits, so it suffices to show stability of strict epimorphisms. Indeed, if $t:D \too C$ is strict and $g: B \to C$ is any morphism in $\Ccal$, then since $\{t\}$ is a strictly epic family, there is some strictly epic family $\{h_j: A_j \to B \mid i = 1, \dotsc, m\}$ over $B$ such that each $g \circ h_j$ factors through $t$. Factoring this family through the coproduct $A_1 + \cdots + A_m$ gives a strict epimorphism completing the required stability square (even in the case $m=0$).

To see that the initial object is strict, observe that the empty family is a strict jointly epic family on the initial object, so given any morphism $A \to 0$, stability forces the empty family to be jointly epic over $A$, whence $A$ is also an initial object.
\end{proof}

It would be easy to mistakenly conclude based on the results presented thus far that the subcategory of supercompact objects in the category of sheaves on a reductive category $\Ccal$ should be equivalent to $\Ccal$. The flaw in this reasoning lies in the fact that, while the functor $\ell: \Ccal \to \Sh(\Ccal,J_r)$ is full and faithful (since $(\Ccal,J_r)$ is a subcanoniical site) and this functor preserves strict epimorphisms, it \textit{does not} preserve all funneling colimits; a similar argument applies for coalescent categories.

\begin{xmpl}
\label{xmpl:tworel}
Consider the following categories, $\Ccal$ and $\Ccal'$ on the left and right respectively. It is easily checked that they are reductive, with strict epimorphisms identified with two heads.
\[\begin{tikzcd}[row sep = small]
R_1 \ar[dr, shift left] \ar[dr, shift right] & & R_2 \ar[dl, shift left] \ar[dl, shift right] \\
& A \ar[dd, two heads] \ar[dl, phantom] & \\
\cdot & & \\
& B &
\end{tikzcd}
\hspace{1em}
\begin{tikzcd}[row sep = small]
R_1 \ar[dr, shift left] \ar[dr, shift right] \ar[dd, dashed, shift left] \ar[dd, dashed, shift right] & & R_2 \ar[dl, shift left] \ar[dl, shift right] \ar[dd, dashed, shift left] \ar[dd, dashed, shift right] \\
& A \ar[dd, two heads] \ar[dr, two heads] \ar[dl, two heads] & \\
C \ar[dr, two heads] & & D \ar[dl, two heads] \\
& B &
\end{tikzcd}\]
The coequalization is that suggested by the positioning, so that in the first diagram, the coequalizer of the pair coming from $R_1$ is the terminal object $B$, but in the second diagram, $C$ is the coequalizer of the pair coming from $R_2$.

One can calculate directly that the category of supercompact objects in $\Sh(\Ccal,J_r) \simeq \Sh(\Ccal',J_r)$ is equivalent to $\Ccal'$. Indeed, the functor $\ell: \Ccal \to \Sh(\Ccal,J_r)$ does not preserve the coequalizer diagram $R_1 \rightrightarrows A \too B$.
\end{xmpl}

We shall see a further example of a failure of $\ell$ to preserve coequalizers (and hence funneling colimits) in Example \ref{xmpl:TF}. In order to understand which colimits are preserved by $\ell$, we apply criteria derived by Caramello in \cite[Corollary 2.25]{Dense}, which we recall here; we refer the reader to that monograph for the proof.
\begin{lemma}
\label{lem:colimitpres}
Let $(\Ccal,J)$ be a site, $F: \Dcal \to \Ccal$ a diagram and $\lambda = \{\lambda_D: F(D) \to C_0 \mid D \in \Dcal\}$ a cocone under $F$ with vertex $C_0$. Then $\lambda$ is sent by the canonical functor $\ell:\Ccal \to \Sh(\Ccal,J)$ to a colimit cone if and only if:
\begin{enumerate}[(i)]
	\item For any object $C$ and morphism $g:C \to C_0$ in $\Ccal$, there is a $J$-covering family $\{f_i: C_i \to C \mid i \in I\}$ and for each $i \in I$, an object $D_i$ of $\Dcal$ and an arrow $h_i: C_i \to F(D_i)$ such that $\lambda_{D_i} \circ h_i = g \circ f_i$.
	\item For any object $C$ in $\Ccal$ and morphisms $g_1:C \to F(D^1)$, $g_2:C \to F(D^2)$ such that $\lambda_{D^1} \circ g_1 = \lambda_{D^2} \circ g_2$, there is a $J$-covering family $\{f_i: C_i \to C \mid i \in I\}$ such that for each $i \in I$, $g_1 \circ f_i$ and $g_2 \circ f_i$ lie in the same connected component of $(C_i \downarrow F)$.
\end{enumerate}
\end{lemma}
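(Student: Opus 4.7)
The plan is to translate the statement that $\ell\lambda$ is a colimit cone into a local iso condition on a morphism of presheaves, and then match this condition with (i) and (ii).

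First I would reduce to a question about presheaves. Since sheafification $a:[\Ccal\op,\Set]\to \Sh(\Ccal,J)$ is a left adjoint it preserves colimits, so $\colim_D \ell(F(D)) = a(\colim_D \yon(F(D)))$, where $\yon$ denotes the Yoneda embedding. The cocone $\lambda$ induces in an obvious way a presheaf morphism
\[ \mu: P := \colim_D \yon(F(D)) \longrightarrow \yon(C_0) =: Q, \]
and by universality $\ell\lambda$ is a colimit cone in $\Sh(\Ccal,J)$ if and only if $a(\mu)$ is an isomorphism of sheaves. Since $a$ preserves finite limits and colimits, $a(\mu)$ is an isomorphism if and only if $\mu$ is both locally surjective and locally injective (equivalently, a \emph{bicovering} morphism of presheaves, in the sense that every element of $Q(C)$ is locally in the image of $\mu$, and any two elements of $P(C)$ with the same image under $\mu$ become equal after restriction along some $J$-covering family).

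Next I would unpack $P$ pointwise. Because colimits in $[\Ccal\op,\Set]$ are computed pointwise, for any object $C$ of $\Ccal$,
\[ P(C) = \colim_{D\in\Dcal} \Hom_\Ccal(C, F(D)), \]
and this colimit is, concretely, the set of connected components $\pi_0(C\downarrow F)$; an element is represented by a morphism $g: C \to F(D)$ and two such are identified precisely when they lie in the same connected component of $(C\downarrow F)$. The morphism $\mu_C:P(C)\to Q(C)$ sends $[g:C\to F(D)]$ to $\lambda_D \circ g$, and the restriction action of a morphism $f:C'\to C$ is postcomposition, sending $[g]$ to $[g\circ f]$.

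With this description in hand, the two hypotheses fall out as direct translations. Local surjectivity of $\mu$ says: for every object $C$ and every $g\in Q(C)=\Hom(C,C_0)$, there is a $J$-covering $\{f_i:C_i\to C\}$ such that each $g\circ f_i$ lies in the image of $\mu_{C_i}$, i.e.\ factors as $\lambda_{D_i}\circ h_i$ for some $h_i:C_i\to F(D_i)$; this is precisely condition (i). Local injectivity of $\mu$ says: for any $C$ and any $p_1,p_2\in P(C)$ with $\mu(p_1)=\mu(p_2)$, there is a $J$-covering $\{f_i:C_i\to C\}$ such that $P(f_i)(p_1)=P(f_i)(p_2)$ in $P(C_i)$; writing $p_1=[g_1:C\to F(D^1)]$ and $p_2=[g_2:C\to F(D^2)]$, the equality $\mu(p_1)=\mu(p_2)$ is exactly $\lambda_{D^1}g_1=\lambda_{D^2}g_2$, and the required equality in $P(C_i)$ is exactly the assertion that $g_1\circ f_i$ and $g_2\circ f_i$ lie in the same component of $(C_i\downarrow F)$; this is precisely condition (ii).

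The main conceptual hurdle is justifying the ``bicovering $\Leftrightarrow$ iso after sheafification'' step at the level of generality we need; I would cite (or briefly recall) the standard fact that a morphism of presheaves becomes an isomorphism under $a$ iff it is both locally surjective and locally injective, using that $a$ is exact. Beyond that the argument is essentially bookkeeping: identifying the presheaf colimit pointwise with connected components of comma categories, and reading off that the defining sheaf-theoretic conditions on $\mu$ are verbatim (i) and (ii).
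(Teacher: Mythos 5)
Your argument is correct. Note that the paper does not actually prove this lemma: it is quoted from Caramello's monograph (\cite[Corollary 2.25]{Dense}) with the proof explicitly deferred to that reference, so there is no in-paper proof to compare against. Your route --- identify the comparison map $\colim_D \ell(F(D)) \to \ell(C_0)$ with the sheafification of the presheaf morphism $\mu: \colim_D \yon(F(D)) \to \yon(C_0)$, invoke the standard fact that $a(\mu)$ is invertible if and only if $\mu$ is locally surjective and locally injective, and compute the presheaf colimit pointwise as $P(C) = \pi_0(C \downarrow F)$ --- is the standard direct argument and is essentially the one underlying the cited result. One small point you handle only implicitly: condition (ii) quantifies over chosen representatives $g_1, g_2$ rather than over elements of $P(C)$, but since precomposition with $f_i$ induces a functor $(C \downarrow F) \to (C_i \downarrow F)$ and functors preserve connectedness, both the hypothesis $\lambda_{D^1} \circ g_1 = \lambda_{D^2} \circ g_2$ and the conclusion are independent of the choice of representatives, so the two formulations coincide. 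It would be worth stating this explicitly, and supplying a reference for the ``local isomorphism'' criterion, but no substantive gap remains.
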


Observe that the first condition can be simplified.

\begin{lemma}
\label{lem:simplify}
Condition (i) of Lemma \ref{lem:colimitpres} is equivalent to the requirement that $\{\lambda_D: F(D) \to C_0 \mid D \in \Dcal\}$ is a $J$-covering family. 
\end{lemma}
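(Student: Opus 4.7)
The plan is to show each direction by a direct appeal to the defining closure properties of a Grothendieck topology.

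For the forward implication, I would specialize condition (i) to the case $C = C_0$ and $g = \id_{C_0}$. This produces a $J$-covering family $\{f_i : C_i \to C_0 \mid i \in I\}$ together with morphisms $h_i : C_i \to F(D_i)$ satisfying $\lambda_{D_i} \circ h_i = f_i$. Thus every $f_i$ lies in the sieve $R$ generated by $\{\lambda_D : F(D) \to C_0 \mid D \in \Dcal\}$, so $R$ contains a $J$-covering family and is therefore itself $J$-covering, which is precisely what it means for $\{\lambda_D\}$ to be a $J$-covering family.

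For the backward implication, I would use pullback-stability of covering sieves. Assume the sieve $R$ generated by $\{\lambda_D\}$ is $J$-covering on $C_0$, and let $g : C \to C_0$ be arbitrary. The pullback sieve
\[ g^*(R) = \{\, f : C' \to C \mid g \circ f \in R \,\} \]
is $J$-covering on $C$ by the stability axiom for Grothendieck topologies. Unwinding the definition, a morphism $f : C' \to C$ lies in $g^*(R)$ precisely when there exist $D \in \Dcal$ and $h : C' \to F(D)$ with $\lambda_D \circ h = g \circ f$. Taking $\{f_i : C_i \to C\}$ to be (a generating family of) $g^*(R)$, together with the associated $D_i$ and $h_i$, yields exactly the data required by condition (i).

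No subtle obstacle arises here: both directions are immediate translations between ``the family $\{\lambda_D\}$ generates a covering sieve on $C_0$'' and ``every morphism into $C_0$ can be covered by morphisms that locally factor through some $\lambda_D$'', using only that covering sieves are stable under pullback and that any sieve containing a covering family is covering.
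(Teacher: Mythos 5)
Your proof is correct and follows essentially the same route as the paper: the forward direction specializes condition (i) to $g = \id_{C_0}$ and uses that a sieve containing a covering family is covering, while the converse pulls back the sieve generated by $\{\lambda_D\}$ along $g$ and invokes stability. No gaps.
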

\begin{proof}
Consider the case where $g$ is the identity on $C_0$. There must be some $J$-covering family $\{f_i: C_i \to C \mid i \in I\}$ and for each $i \in I$, an object $D_i$ of $\Dcal$ and an arrow $h_i: C_i \to F(D_i)$ such that $\lambda_{D_i} \circ h_i = g \circ f_i$. Since every morphism in this covering family factors through a leg of the colimit cone, the legs of the cone must form a $J$-covering family. Conversely, given any morphism $g:C \to C_0$, since $J$-covering families are required to be stable, pulling back $\lambda$ gives the required $\Tcal$-morphism over $C$ to fulfill condition (i).
\end{proof}

Applying this in the particular case of funneling or multifunneling colimits in a principal or finitely generated site $(\Ccal,J)$, we get the following:
\begin{prop}
\label{prop:colims}
Let $(\Ccal,J_{\Tcal})$ be a principal site and $F: \Dcal \to \Ccal$ a funneling diagram with weakly terminal object $F(D_0)$ and $\lambda = \{\lambda_D: F(D) \to C_0 \mid D \in \Dcal\}$ a cocone under $F$ with vertex $C_0$. Then $\lambda$ is sent by the canonical functor $\ell:\Ccal \to \Sh(\Ccal,J_{\Tcal})$ to a colimit cone if and only if:
\begin{enumerate}[(i)]
	\item $\lambda_{D_0} \in \Tcal$.
	\item For any object $C$ in $\Ccal$ and morphisms $g_1, g_2:C \rightrightarrows F(D_0)$, such that $\lambda_{D_0} \circ g_1 = \lambda_{D_0} \circ g_2$, there is a $\Tcal$-morphism $t: C' \to C$ such that $g_1 \circ t$ and $g_2 \circ t$ lie in the same connected component of $(C' \downarrow F)$.
\end{enumerate}

Similarly, if $(\Ccal,J_{\Tcal'})$ is a finitely generated site and $F: \Dcal \to \Ccal$ a multifunneling diagram with weakly terminal objects $F(D_1),\dotsc,F(D_n)$ and $\lambda = \{\lambda_D: F(D) \to C_0 \mid D \in \Dcal\}$ a cocone under $F$ with vertex $C_0$, then $\lambda$ is sent by $\ell:\Ccal \to \Sh(\Ccal,J_{\Tcal'})$ to a colimit cone if and only if:
\begin{enumerate}[(i)]
	\item $\{\lambda_{D_1},\dotsc,\lambda_{D_n}\} \in \Tcal'$.
	\item For any object $C$ in $\Ccal$ and morphisms $g_1:C \to F(D_k)$ and $g_2: C \to F(D_l)$ with $1 \leq k,l \leq n$, such that $\lambda_{D_k} \circ g_1 = \lambda_{D_l} \circ g_2$, there is a $\Tcal'$-family $\{t_i: C_i \to C \mid 1 \leq i \leq N\}$ such that $g_1 \circ t_i$ and $g_2 \circ t_i$ lie in the same connected component of $(C_i \downarrow F)$.
\end{enumerate}
\end{prop}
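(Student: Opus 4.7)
The plan is to derive this proposition as a specialisation of Lemma \ref{lem:colimitpres}, using the particular shapes of funneling and multifunneling diagrams, together with the principal (resp. finitely generated) nature of the topologies involved. I treat the principal case in detail; the multifunneling case is a direct generalisation.

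For condition (i), Lemma \ref{lem:simplify} reduces the requirement to the statement that $\{\lambda_D : F(D)\to C_0\mid D\in\Dcal\}$ is a $J_{\Tcal}$-covering family. Since $F(D_0)$ is weakly terminal in $\Dcal$, for every $D$ there is a morphism $D\to D_0$ in $\Dcal$, whence $\lambda_D = \lambda_{D_0}\circ F(D\to D_0)$ factors through $\lambda_{D_0}$. Thus the sieve generated by $\{\lambda_D\}$ contains the principal sieve on $\lambda_{D_0}$, and conversely is contained in it. A principal sieve is $J_{\Tcal}$-covering precisely when its generator lies in (the saturation of) $\Tcal$, which gives (i). The multifunneling case is analogous: the sieve generated by the $\lambda_D$ coincides with the sieve generated by $\{\lambda_{D_1},\ldots,\lambda_{D_n}\}$, and this is $J_{\Tcal'}$-covering iff the finite family lies in $\Tcal'$.

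For condition (ii), I first argue that it suffices to verify the condition of Lemma \ref{lem:colimitpres} for morphisms into $F(D_0)$. Suppose we have $g_1:C\to F(D^1)$ and $g_2:C\to F(D^2)$ with $\lambda_{D^1}\circ g_1 = \lambda_{D^2}\circ g_2$. Choose morphisms $\alpha_k:D^k\to D_0$ in $\Dcal$, and set $g'_k := F(\alpha_k)\circ g_k : C\to F(D_0)$. Then the cocone equations give $\lambda_{D_0}\circ g'_1 = \lambda_{D^1}\circ g_1 = \lambda_{D^2}\circ g_2 = \lambda_{D_0}\circ g'_2$, so the hypothesis of (ii) applies and yields a $\Tcal$-morphism $t:C'\to C$ with $g'_1\circ t$ and $g'_2\circ t$ in the same connected component of $(C'\downarrow F)$. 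But $g_k\circ t$ is connected to $g'_k\circ t$ in $(C'\downarrow F)$ via the arrow $\alpha_k$, so $g_1\circ t$ and $g_2\circ t$ are in the same connected component as required. A single $\Tcal$-morphism generates a $J_{\Tcal}$-covering sieve, so this supplies the covering family demanded by Lemma \ref{lem:colimitpres}(ii). Conversely, any $J_{\Tcal}$-covering family on $C$ contains (by definition of $J_{\Tcal}$) a $\Tcal$-morphism $t:C'\to C$ factoring through some member of the family, and composition with $t$ preserves connectedness in the comma category, so the general condition of Lemma \ref{lem:colimitpres}(ii) specialises to the condition stated in our proposition.

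For the multifunneling case, the same reduction works with the observation that every morphism $D\to D'$ in $\Dcal$ in particular provides a zig-zag in $(C\downarrow F)$ between any pair factoring through it; the cospan produced by morphisms into $F(D_k)$ and $F(D_l)$ is precisely what is needed, and the $\Tcal'$-family supplied by condition (ii) of the proposition plays the role of the covering family of Lemma \ref{lem:colimitpres}(ii), using that $J_{\Tcal'}$-covering sieves are generated by $\Tcal'$-families. The main subtlety is ensuring the reduction in condition (ii) is faithful in both directions; once one notes that morphisms in $\Dcal$ give zig-zags in $(C\downarrow F)$ and that pre-composition with any morphism preserves connected components, the argument is routine.
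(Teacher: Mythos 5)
Your proof is correct and follows essentially the same route as the paper's: both apply Lemma \ref{lem:colimitpres} together with Lemma \ref{lem:simplify}, reduce condition (i) to $\lambda_{D_0}\in\Tcal$ via the factorisation of every cocone leg through $\lambda_{D_0}$ (invoking the saturation/axiom-4 convention on $\Tcal$), and handle condition (ii) by passing between arbitrary $D^1,D^2$ and the weakly terminal object along morphisms of $\Dcal$, which induce connections in the comma category. If anything, you are slightly more explicit than the paper in the converse direction of (ii), where one must extract a single $\Tcal$-morphism from a general $J_{\Tcal}$-covering family and note that precomposition preserves connected components.
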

\begin{proof}
For (i) in each case, we apply Lemma \ref{lem:simplify} and then the fact that every morphism in the cone factors through $\lambda_{D_0}$ (resp. one of the $\lambda_{D_k}$) to deduce, thanks to stability axiom 4 (resp. axioms 4' and 5') that condition (i) of Lemma \ref{lem:colimitpres} is equivalent to the given statement.

Condition (ii) in each case is a consequence of condition (ii) in Lemma \ref{lem:colimitpres}, having simply taken the special case $D^1 = D^2 = D_0$ (resp. $D^1 = D_k$ and $D^2 = D_l$). Conversely, for arbitrary $g_1:C \to F(D^1)$ and $g_2:C \to F(D^2)$ such that $\lambda_{D^1} \circ g_1 = \lambda_{D^2} \circ g_2$, we may extend this via any of the morphisms $p_1: D^1 \to D_0$ and $p_2: D^2 \to D_0$ in the diagram (resp. $p_1: D^1 \to D_k$ and $p_2: D^2 \to D_l$) so that $\lambda_{D_0} \circ Fp_1 \circ g_1 = \lambda_{D_0} \circ Fp_2 \circ g_2$ (resp. $\lambda_{D_k} \circ Fp_1 \circ g_1 = \lambda_{D_k} \circ Fp_2 \circ g_2$). Given a $\Tcal$-morphism $t:C' \to C$  such that $Fp_1 \circ g_1 \circ t$ and $Fp_2 \circ g_2 \circ t$ are in the same connected component of $(C' \downarrow F)$ (resp. a $\Tcal'$-family $\{t_i: C_i \to C\}$ such that $Fp_1 \circ g_1 \circ t_i$ and $Fp_2 \circ g_2 \circ t_i$ are in the same connected component of $(C_i \downarrow F)$), it is clear that $g_1 \circ t$ and $g_2 \circ t$ (resp. $g_1 \circ t_i$ and $g_2 \circ t_i$ for each $i$) also lie in this same component, as required.
\end{proof}

\begin{crly}
\label{crly:coproduct}
For $(\Ccal,J_{\Tcal'})$ a finitely generated site, a cospan $\lambda_1: X_1 \rightarrow Y \leftarrow X_2: \lambda_2$ is mapped by $\ell: \Ccal \to \Sh(\Ccal,J_{\Tcal'})$ to a coproduct cocone if and only if:
\begin{enumerate}[(i)]
	\item $\{\lambda_1,\lambda_2\} \in \Tcal'$,
	\item Whenever $f_1: C \to X_1$ and $f_2: C \to X_2$ have $\lambda_1 \circ f_1 = \lambda_2 \circ f_2$, the empty family is $\Tcal'$ covering on $C$, and
	\item Whenever $f,f':C \rightrightarrows X_1$ are coequalized by $\lambda_1$, there is a $\Tcal'$-covering family on $C$ consisting of morphisms equalizing $f$ and $f'$ (and similarly for pairs of morphisms into $X_2$).
\end{enumerate}
If $\Tcal'$-covering families are jointly epic, we can replace the last condition by the condition that $\lambda_1$ and $\lambda_2$ must be monic.
\end{crly}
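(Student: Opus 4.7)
The plan is to apply Proposition \ref{prop:colims} to the multifunneling diagram $F: \Dcal \to \Ccal$ indexed by the discrete two-object category $\Dcal = \{D_1, D_2\}$, with $F(D_1) = X_1$ and $F(D_2) = X_2$. A cocone under such a diagram is precisely a cospan with apex $Y$, and its colimit (in any category) is a coproduct, so $\ell$ maps the given cospan to a coproduct cocone if and only if conditions (i) and (ii) of Proposition \ref{prop:colims} hold for $\lambda = \{\lambda_1, \lambda_2\}$. Condition (i) of the proposition reads directly as $\{\lambda_1, \lambda_2\} \in \Tcal'$, giving condition (i) of the corollary.

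For condition (ii) of the proposition, I would first observe that since $\Dcal$ has only identity morphisms, for any object $C$ of $\Ccal$ the comma category $(C \downarrow F)$ is the disjoint union of $\Hom(C, X_1)$ and $\Hom(C, X_2)$, viewed as discrete categories. Consequently two morphisms out of $C$ lie in the same connected component if and only if they are literally equal (and, in particular, share a common codomain). The condition then splits into two cases according to the choice of weakly terminal objects $D^1, D^2$.

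When $D^1 = D^2 = D_k$, condition (ii) of the proposition demands that whenever $g_1, g_2: C \rightrightarrows X_k$ satisfy $\lambda_k \circ g_1 = \lambda_k \circ g_2$, there is a $\Tcal'$-family $\{t_i: C_i \to C\}$ with $g_1 \circ t_i = g_2 \circ t_i$ for each $i$; this is exactly condition (iii) of the corollary. When $D^1 \ne D^2$, for any $t_i: C_i \to C$ the morphisms $g_1 \circ t_i: C_i \to X_1$ and $g_2 \circ t_i: C_i \to X_2$ inhabit distinct components of $(C_i \downarrow F)$, so the required condition can only be fulfilled by the empty $\Tcal'$-family, which forces the empty sieve to be $\Tcal'$-covering on $C$, matching condition (ii) of the corollary.

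For the final strengthening, assume that $\Tcal'$-covering families are jointly epic. Then a jointly epic family all of whose members equalize a parallel pair $f, f'$ forces $f = f'$, so condition (iii) of the corollary reduces to the requirement that $\lambda_1$ and $\lambda_2$ both be monic; conversely, if $\lambda_1$ is monic then $\lambda_1 \circ f = \lambda_1 \circ f'$ already gives $f = f'$, so the singleton $\{\id_C\}$ serves as the required family (and similarly for $\lambda_2$). The whole argument is bookkeeping once Proposition \ref{prop:colims} is available; the only subtlety to check is the description of connected components of the comma category for a discrete diagram, which is the main step the reader must absorb.
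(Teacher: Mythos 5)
Your proof is correct and follows essentially the same route as the paper: the paper's own proof is a one-line direct application of Proposition \ref{prop:colims} to a finite discrete diagram, noting that two morphisms lie in the same connected component of $(C \downarrow F)$ if and only if they are equal (which is impossible when the codomains differ). Your spelled-out case analysis of $D^1 = D^2$ versus $D^1 \neq D^2$ and the verification of the monomorphism reformulation are exactly the bookkeeping the paper leaves implicit.
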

\begin{proof}
This is a direct application of Proposition \ref{prop:colims} for multifunneling colimits in the case of a finite discrete diagram, where two morphisms are in the same connected component of $(C \downarrow F)$ if and only if they are equal (which is impossible if they have distinct codomains).
\end{proof}

Imposing these conditions on all funneling and multifunneling colimit cones, after using Lemma \ref{lem:multi} to decompose multifunneling colimits into finite coproducts and funneling colimits, we arrive at the following definition. 

\begin{dfn}
\label{dfn:redeff}
We say a reductive or coalescent category $\Ccal$ is \textbf{effectual} if for every funneling diagram $F: \Dcal \to \Ccal$ with colimit expressed by $\lambda: F(D_0) \too C_0$, for any object $C$ in $\Ccal$ and morphisms $g_1, g_2:C \rightrightarrows F(D_0)$ such that $\lambda \circ g_1 = \lambda \circ g_2$, there is a strict epimorphism $t: C' \too C$ such that $g_1 \circ t$ and $g_2 \circ t$ lie in the same connected component of $(C' \downarrow F)$.

We say a coalescent category is \textbf{positive} if finite coproducts are disjoint and coproduct inclusions are monomorphisms.
\end{dfn}

With these definitions to hand, we can finally express a definitive correspondence result between supercompactly or compactly generated toposes and their canonical sites from Theorem \ref{thm:canon}.

\begin{thm}
\label{thm:correspondence}
Up to equivalence, there is a one-to-one correspondence between supercompactly generated Grothendieck toposes and essentially small effectual, reductive categories. The correspondence sends a topos to its essentially small category of supercompact objects and a reductive category to the topos of sheaves for the reductive topology on that category.

Similarly, there is an up-to-equivalence correspondence between compactly generated Grothendieck toposes and effectual, positive, coalescent categories.
\end{thm}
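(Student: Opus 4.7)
The plan is to establish the correspondence by combining Theorem~\ref{thm:canon}, which already gives $\Ecal \simeq \Sh(\Ccal_s, J_r)$ for any supercompactly generated $\Ecal$, with a bespoke analysis of the round-trip starting from an abstract effectual reductive category. The work splits into (a) verifying that $\Ccal_s$ of such an $\Ecal$ is essentially small, effectual, and reductive, and (b) showing that for an effectual reductive category $\Ccal$, the category of supercompact objects of $\Sh(\Ccal, J_r)$ is equivalent to $\Ccal$ via $\ell$. I will treat the supercompactly generated case in detail; the compactly generated case runs in parallel.

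For (a), essential smallness is Lemma~\ref{lem:scsite} and closure under funneling colimits is Lemma~\ref{lem:closed2}. Stability of strict epimorphisms runs as follows: given a strict epi $t: D \too C$ in $\Ccal_s$ (equivalently, an $\Ecal$-epimorphism in $\Ccal_s$, by Corollary~\ref{crly:strict}) and $g: B \to C$ in $\Ccal_s$, pull $t$ back along $g$ in $\Ecal$ to an epimorphism $t': P \too B$; since $\Ecal$ is supercompactly generated, cover $P$ by a supercompact object via $a: A \too P$, whence $t' \circ a : A \to B$ is an $\Ecal$-epimorphism lying in $\Ccal_s$ and hence a strict epi there (Corollary~\ref{crly:strict} again), completing the stability square. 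For effectuality, the equivalence of Theorem~\ref{thm:canon} identifies $\ell: \Ccal_s \to \Sh(\Ccal_s, J_r)$ with the inclusion $\Ccal_s \hookrightarrow \Ecal$; by Lemma~\ref{lem:closed2} this preserves funneling colimits, so condition (ii) of Proposition~\ref{prop:colims} is automatically satisfied and $\Ccal_s$ is effectual.

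For (b), $\Sh(\Ccal, J_r)$ is supercompactly generated by Proposition~\ref{prop:representable}, and $\ell$ lands in $\Ccal_s$ since representables are supercompact. Full faithfulness of $\ell$ holds because the congruence $\sim$ of Proposition~\ref{prop:congruence} is trivial on a reductive category (strict epimorphisms are epimorphisms, so $\Tcal$-morphisms cancel), giving faithfulness, and Corollary~\ref{crly:full} gives fullness since every $\Tcal$-morphism is by definition strict. For essential surjectivity onto $\Ccal_s$, let $Q$ be supercompact in $\Sh(\Ccal, J_r)$; by Lemma~\ref{lem:scsite} there is a strict epimorphism $q: \ell(C) \too Q$. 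I claim $q$ is the colimit of the funneling diagram $\Dcal_Q$ whose objects are all pairs $\ell(B') \rightrightarrows \ell(C)$ (with $B' \in \Ccal$) that $q$ coequalizes: indeed, given any pair $B \rightrightarrows \ell(C)$ coequalized by $q$, covering $B$ by representables $\{\ell(B'_i) \to B\}$ yields pairs in $\Dcal_Q$ by precomposition, and the joint epimorphism of the cover forces any cocone morphism coequalizing these representable pairs to coequalize the original one. By full faithfulness, $\Dcal_Q$ lifts to a funneling diagram $F: \Dcal_Q \to \Ccal$; its colimit $C_0$ exists in $\Ccal$ by reductiveness, and effectuality applied via Proposition~\ref{prop:colims} ensures $\ell$ preserves this colimit, so $\ell(C_0) \cong Q$.

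The compactly generated case runs on the same template with multifunneling diagrams, decomposing into finite coproducts and funneling colimits via Lemma~\ref{lem:multi}. Effectuality handles the funneling colimits, while positivity together with Corollary~\ref{crly:coproduct} ensures $\ell$ preserves finite coproducts; positivity of $\Ccal_c$ in a compactly generated topos is inherited from $\Ecal$, using that coproducts in a topos are disjoint with monic inclusions, that $\Ccal_c$ is closed under finite coproducts (Lemma~\ref{lem:closed2}), and that monomorphisms coincide in $\Ccal_c$ and $\Ecal$ (Lemma~\ref{lem:monocoincide}). The main obstacle is the essential surjectivity step of (b): the reduction from arbitrary coequalized pairs to pairs with representable domain, and then the application of effectuality to preserve the resulting colimit, is where every structural condition in the definitions of effectual reductive (and coalescent positive) categories must come together coherently.
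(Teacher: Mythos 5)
Your proposal is correct and follows essentially the same two round-trip argument as the paper: Theorem~\ref{thm:canon} handles the topos-to-category-to-topos direction, while the category-to-topos-to-category direction rests on subcanonicity of $J_r$ (resp.\ $J_c$) plus the fact that effectuality (resp.\ effectuality and positivity) makes $\ell$ preserve the funneling (resp.\ multifunneling) colimits exhibiting supercompact (resp.\ compact) objects as colimits of representables. You simply spell out details the paper leaves implicit, such as the stability of strict epimorphisms in $\Ccal_s$ and the reduction of the coequalized pairs over $\ell(C)$ to a funneling diagram with representable vertices.
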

\begin{proof}
Passing from a topos to its subcategory of supercompact (resp. compact) objects and back again gives an equivalent topos by Theorem \ref{thm:canon}; the small category must be an effectual reductive (resp. effectual positive coalescent) category, since the subcategory of supercompact objects is closed under funneling colimits, so the inclusion must preserve them (and coproducts in a topos are disjoint).

In the other direction, since the strict (resp. strict finite family) Grothendieck topology is subcanonical, a reductive (resp. coalescent) category is included faithfully as a full subcategory of the corresponding sheaf topos, and all of the representable sheaves are supercompact (resp. compact). The supercompact (resp. compact) objects are funneling (resp. multifunneling) colimits of the representable sheaves in this topos, but these colimits are preserved by $\ell$ by construction when the category is effectual (resp. effectual and positive), so the category of representable sheaves coincides with the category of supercompact (resp. compact) objects, as required.
\end{proof}

\begin{rmk}
Note that an effectual and positive coalescent category is also an effectual reductive category, by Lemma \ref{lem:collred}. However, the corresponding toposes from Theorem \ref{thm:correspondence} are always distinct, since we have seen that the initial object of a topos is never supercompact but always compact.
\end{rmk}

We shall extend this correspondence to an equivalence of $2$-categories in Section \ref{ssec:morsites}. Since geometric morphisms shall come into play at that point, we add here the following extra definition:
\begin{dfn}
\label{dfn:augmented}
A reductive category $\Ccal$ is \textbf{augmented} if it has an initial object. The \textbf{augmented reductive topology} $J_{r+}$ on such a category has covering sieves generated by singleton or empty strictly jointly epic families. The resulting \textbf{augmented reductive site} $(\Ccal,J_{r+})$ is quasi-principal.
\end{dfn}

\subsection{Locally Regular and Coherent Categories}
\label{ssec:regcoh}

As we observed earlier, supercompactness and compactness have been studied in the context of regular and coherent toposes, which are toposes of sheaves on regular and coherent categories respectively, equipped with suitable Grothendieck topologies.  Here we recall the definitions of these classes of categories, as well as some more general classes, for comparison with reductive and coalescent categories. 

\begin{dfn}
\label{dfn:regcoh}
Recall that an epimorphism $e$ in a category $\Ccal$ is \textbf{extremal} if whenever $e = m \circ g$ with $m$ a monomorphism, then $m$ is an isomorphism.

A category is \textbf{locally regular} if it is closed under connected finite limits, it has an orthogonal (extremal epi, mono) factorisation system, and every span factors through a jointly monic pair via an extremal epimorphism. Such a category is \textbf{regular} if it also has finite products (equivalently, a terminal object). Clearly, a slice (also called an `over-category') of a locally regular category is regular.

We say a category is \textbf{locally coherent} if it is locally regular and finite unions of subobjects (including the minimal subobject) exist and are stable under pullback. Such a category is \textbf{coherent} if it also has finite products; independently, such a category is called \textbf{positive} if it has disjoint finite coproducts.
\end{dfn}

Note that a coherent category may have finite coproducts without these being disjoint; see the discussion after Definition \ref{dfn:JSL} below.

\begin{lemma}
\label{lem:extremal}
Every extremal epimorphism in a locally regular category is regular.
\end{lemma}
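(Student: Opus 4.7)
The plan is to deduce the lemma from the corresponding fact in regular categories, by transferring the problem to a slice. Let $e:A\to B$ be an extremal epimorphism in $\Ccal$; I would view $e$ as a morphism $(A,e)\to(B,\id_B)$ in the slice category $\Ccal/B$, which is regular by Definition \ref{dfn:regcoh}. The whole proof then consists of showing that the relevant data transfers back and forth between $\Ccal$ and $\Ccal/B$ in a well-behaved way.

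The first step is to check that $e$ remains an extremal epimorphism in $\Ccal/B$. Any factorization $e=m\circ g$ in $\Ccal/B$ with $m$ monic there gives, after applying the forgetful $U:\Ccal/B\to\Ccal$, a factorization of $e$ in $\Ccal$ with $m$ a monomorphism (since $U$ preserves and reflects monomorphisms); the extremality hypothesis on $e$ in $\Ccal$ then forces $m$ to be an isomorphism in $\Ccal$, hence in $\Ccal/B$.

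The second step is to invoke the classical regular-category result that every extremal epimorphism in a regular category is a regular epimorphism. Concretely, one forms the kernel pair $p,q$ of $e$ in $\Ccal/B$ (available since $\Ccal$ has pullbacks), coequalizes it inside the regular category $\Ccal/B$ to obtain a factorization $e=n\circ r$ with $r$ regular epic, observes that $n$ is monic because its kernel pair coincides with that of $e$, and concludes that $n$ is an isomorphism by extremality of $e$ inside $\Ccal/B$. This presents $e$ as the coequalizer of the pair $p,q$ in $\Ccal/B$.

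For the final step, I would use that $U:\Ccal/B\to\Ccal$ creates connected colimits, so that the coequalizer diagram witnessing $e$ as a regular epimorphism in $\Ccal/B$ descends to a coequalizer diagram in $\Ccal$, proving $e$ is regular. There is no real obstacle beyond this routine slicing bookkeeping; the essential content is that locally regular categories are defined so that their slices are regular, and extremality, monicity, and connected colimits all transfer cleanly along $U$, so the single-morphism statement reduces automatically to the regular-category version.
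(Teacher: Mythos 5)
Your overall strategy --- slice at $B$, use that $\Ccal/B$ is regular (as asserted in Definition \ref{dfn:regcoh}), and transfer extremality, monomorphisms and the coequalizer back and forth along the forgetful functor --- is sound as far as it goes; the transfer steps you check are all correct (the forgetful functor preserves and reflects monos and epis, is conservative, and creates colimits). The problem is in the step you describe as "invoking the classical regular-category result." Your concrete sketch of that result begins by \emph{coequalizing the kernel pair} of $e$ in $\Ccal/B$ and then comparing $e$ with that coequalizer. But under the definition of regular category in force here (finite limits, an orthogonal (extremal epi, mono) factorisation system, and factorisation of spans through jointly monic pairs), the existence of coequalizers of kernel pairs is \emph{not} among the axioms; it is only a consequence of the very fact you are trying to prove (once every extremal epi is known to be the coequalizer of its own kernel pair). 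So as written, the key step either presupposes a Barr-style axiomatisation that has not been established for $\Ccal/B$, or is circular. The correct form of the classical result for this axiomatisation is \cite[Proposition A1.3.4]{Ele}, whose proof does not construct a coequalizer first but verifies directly that the extremal epimorphism has the universal property of the coequalizer of its kernel pair.

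Once you replace your sketch by that argument, the slicing detour buys you nothing: Johnstone's proof uses only pullbacks and the span factorisation, both of which are already available in the locally regular category $\Ccal$ itself. That is exactly what the paper does --- it runs the A1.3.4 argument verbatim in $\Ccal$, taking the kernel pair $a,b:R\rightrightarrows A$ of $f$, factoring the span $(f,c)$ for any $c$ coequalizing $a,b$ through a jointly monic pair via an extremal epimorphism, and using a diagram of pullbacks together with pullback-stability of extremal epimorphisms to show the relevant comparison map is monic, hence invertible by extremality. So the fix is either to drop the slice entirely and reproduce that direct argument, or to keep the slice but cite (or reprove) the image-factorisation version of the classical result rather than the kernel-pair-coequalizer version.
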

\begin{proof}
We adapt the proof of Johnstone that in a regular category, covers are regular epimorphisms \cite[Proposition A1.3.4]{Ele}; we omit the composition symbol for conciseness in this proof.

Let $f:A \too B$ be an extremal epimorphism in a locally regular category and let $a,b: R \rightrightarrows A$ be its kernel pair. We show that $f$ coequalizes $a$ and $b$.

Suppose $c:A \to C$ has $ca = cb$, and factorize the span $(f,c)$ as an extremal epimorphism followed by a jointly monic pair:
\[\begin{tikzcd}[row sep = small]
& & B \\
A \ar[urr, "f", two heads, bend left] \ar[drr, "c"', bend right] \ar[r, "d", two heads] &
D \ar[ur, "g"] \ar[dr, "h"'] & \\
& & C.
\end{tikzcd}\]
If we can show that $g$ is monic, then extremality of $f$ will force it to be an isomorphism, so that $c = hg^{-1}f$ factors through $f$.

Given $k,l:E \rightrightarrows D$ with $gk = gl$, consider the following diagram composed of pullback squares:
\[\begin{tikzcd}
P \ar[rr, "m", bend left] \ar[dd, "n"', bend right] \ar[d, two heads] \ar[dr, "p", two heads] \ar[r, two heads] &
\cdot \ar[d, two heads] \ar[r] & A \ar[d, two heads, "d"] \\
\cdot \ar[d] \ar[r, two heads] &
E \ar[d, "k"] \ar[r, "l"] & D \\
A \ar[r, two heads, "d"'] & D &
\end{tikzcd}\]
The morphism labelled $p$ is a composite of extremal epimorphisms (by stability) and hence is itself an extremal epimorphism. From this diagram and the preceding assumptions, we have $fm = gdm = gkp = glp = gdn = fn$, whence $(m,n)$ factors through $(a,b)$ via some morphism $q:P \to R$, and we have:
\[hkp = hdm = cm = caq = cbq = cn = hdn = hlp,\]
whence $hk = hl$ by epicness of $p$, but since $(g,h)$ was jointly monic, we have $k = l$, which completes the proof.
\end{proof}

\begin{dfn}
\label{dfn:regtop}
The \textbf{regular topology} on a regular or locally regular category is simply the principal topology generated by the extremal epimorphisms, and similarly the \textbf{coherent topology} on a coherent or locally coherent category is the finitely generated topology generated by the finite jointly extremal epic families.
\end{dfn}

By Lemma \ref{lem:extremal}, we may replace `extremal' with `regular' in the descriptions of the stable classes in this definition, whence we see that these sites are subcanonical. Accordingly we obtain a regular (resp. locally regular, coherent, locally coherent) topos of sheaves on such a site, where here the adjective merely indicates that the topos can be generated by such a site; any topos is automatically a coherent (and hence regular, locally regular and locally coherent) category\footnote{Johnstone explains the reason for the somewhat unfortunate naming convention which we are extending here in \cite[D3.3]{Ele}.}.

By the previous results in this section, any locally regular topos is supercompactly generated, and any locally coherent topos is compactly generated. We are therefore led to wonder when the classes of categories coincide.

\begin{thm}
\label{thm:intersect}
A small category is locally regular with funneling colimits if and only if it is a reductive category with pullbacks.

A small category is locally coherent with multifunneling colimits if and only if it is a coalescent category with pullbacks. The two notions of positivity coincide in this case.

In each case, we can remove the ``locally'' adjective in exchange for adding a terminal object.
\end{thm}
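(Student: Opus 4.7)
My plan is to prove both directions of the reductive/locally regular equivalence explicitly; the coalescent/locally coherent case then runs in parallel with the obvious adjustments.

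For the forward direction --- reductive with pullbacks implies locally regular with funneling colimits --- funneling colimits are given, so it remains to verify (a) equalizers (which together with pullbacks yield all connected finite limits), (b) the orthogonal (extremal epi, mono) factorisation system, and (c) the span factorisation through jointly monic pairs. For (a), given $f,g:A\to B$, I form the coequalizer $q:B\to Q$ (a funneling colimit), form the pullback $B\times_Q B$ with diagonal $\Delta:B\to B\times_Q B$, and pull back $\Delta$ along the induced $(f,g):A\to B\times_Q B$; direct inspection shows this is the equalizer of $f$ and $g$. For (b), I factor $f:A\to B$ as the coequalizer $e:A\twoheadrightarrow C$ of its kernel pair (constructed via pullbacks and a funneling colimit) followed by the induced $m:C\to B$. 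Showing that $m$ is monic is the key step: given $g_1,g_2:X\rightrightarrows C$ with $mg_1=mg_2$, pull back $e$ along each $g_i$ to obtain strict epimorphisms $e_i:P_i\to X$ (using stability from reductivity), form the further pullback $P=P_1\times_X P_2$ with strict epic projections $p_1,p_2$, and let $a_1,a_2:P\to A$ be the two composites into $A$; then $fa_1=fa_2$ forces $(a_1,a_2)$ through the kernel pair of $f$, hence $ea_1=ea_2$, and since the composite $e_1p_1$ is epic we conclude $g_1=g_2$. Orthogonality follows from the general fact that strict epimorphisms are orthogonal to monomorphisms; strict coincides with extremal here thanks to Lemma~\ref{lem:extremal} together with the factorisation. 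Finally (c) is exactly Lemma~\ref{lem:jmonic}.

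For the backward direction --- locally regular with funneling colimits implies reductive with pullbacks --- pullbacks come for free as connected finite limits. To identify the stable class required by reductivity, I note that extremal epimorphisms coincide with regular, hence strict, epimorphisms in this setting (Lemma~\ref{lem:extremal} and a short orthogonality argument using the factorisation), then verify the stability axioms of Definition~\ref{dfn:stable}: axioms 1 and 2 are routine for extremal epimorphisms in any orthogonal factorisation system, and axiom 3 is the standard fact that extremal/regular epimorphisms are pullback-stable in a locally regular category.

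The coalescent/locally coherent case proceeds identically, replacing single strict epimorphisms with strictly epic finite families, funneling colimits with multifunneling colimits (which decompose into finite coproducts plus funneling colimits via Lemma~\ref{lem:multi}), and noting that the two formulations of positivity --- disjoint finite coproducts with monic coproduct inclusions --- agree on the nose. To remove the ``locally'' adjective on each side, observe that adding a terminal object to a category with pullbacks yields all finite products and hence all finite limits, upgrading locally regular to regular (respectively locally coherent to coherent) without disturbing the colimit or stability data. The main obstacle in the proof is the monicness verification in the forward direction, which is the only step that essentially uses pullback stability of strict epimorphisms; once it is in place, both directions assemble routinely.
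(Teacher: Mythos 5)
Your handling of the reductive/locally regular equivalence is correct and in places more self-contained than the paper's. Where the paper obtains equalizers by noting that the slice $\Ccal/C$ over the coequalizer has pullbacks and a terminal object (hence all finite limits), you build the equalizer explicitly as the pullback of the diagonal $B \to B\times_Q B$ along $(f,g)$ --- the same idea, unwound. For the factorisation system the paper simply cites Corollary \ref{crly:orthog} and Lemma \ref{lem:jmonic} (which are stated for subcategories of toposes, so the translation you supply is genuinely needed), whereas you construct the (strict epi, mono) factorisation from the kernel pair and prove monicity of the comparison map by the same double-pullback technique used in Lemma \ref{lem:extremal}; that argument is sound. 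One caution on wording: pulling back $e$ along $g_i$, what the stability axiom of a reductive category with pullbacks gives you directly is that $e_i$ is an \emph{epimorphism} (a strict epimorphism factors through it via the comparison map of the stability square); promoting this to strictness needs the right-cancellation property discussed around Lemma \ref{lem:pbstable}. Since your cancellation step only uses that $e_1p_1$ is epic, the proof survives, but the claim ``strict epimorphisms $e_i$'' should be weakened or justified. The backward direction matches the paper's: extremal $=$ regular $=$ strict by Lemma \ref{lem:extremal}, with stability being the assumed pullback-stability of covers.

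The genuine gap is in the coalescent/locally coherent case, which does not ``proceed identically.'' Local coherence requires, beyond local regularity, that finite unions of subobjects exist and are stable under pullback; this has no counterpart in the regular case, so the ``obvious adjustments'' never produce it. The paper supplies the missing construction: the empty union is the strict initial object guaranteed by Lemma \ref{lem:collred}, and the binary union of two subobjects is the pushout over their intersection (a multifunneling colimit, available by hypothesis), with pullback-stability deduced from stability of strictly epic finite families via Lemma \ref{lem:pbstable}. In the other direction you should also say why strictly epic finite families form a stable class in a locally coherent category --- the paper reduces an arbitrary finite epimorphic family to a family of subobjects via image factorisations and then invokes pullback-stability of finite unions. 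With these two points added, your treatment of positivity and of removing the ``locally'' adjective is fine as stated.
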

\begin{proof}
In one direction, by Lemma \ref{lem:extremal} we have that in a locally regular category, the classes of extremal, strict and regular epimorphisms all coincide, since any strict epimorphism is extremal, and they form a stable class by assumption, whence a locally regular category with funneling colimits is a reductive category with pullbacks.

Conversely, given a reductive category $\Ccal$ with pullbacks, we must show that $\Ccal$ has equalizers, since a category has connected finite limits if and only if it has both pullbacks and equalizers; the remaining conditions follow from Corollary \ref{crly:orthog} and Lemma \ref{lem:jmonic}, thanks to Lemma \ref{lem:pbstable}. Given a pair of morphisms $h,k: A \rightrightarrows B$ in $\Ccal$, consider their coequalizer $c:B \too C$. Then $\Ccal/C$ is regular, since it has pullbacks and a terminal object (so all finite limits), and it inherits the required factorization system, including that for spans, from $\Ccal$. Therefore there exists an equalizer of $h$ and $k$ as morphisms over $C$, and it is clear that this will also be their equalizer in $\Ccal$.

For the locally coherent case, by considering the strictly epic finite families of \textit{subobjects}, we see that the fact that unions are stable under pullback ensures that strictly epic finite families form a stable class, as required.

Conversely, given a coalescent category $\Ccal$ with pullbacks, the argument above, Corollary \ref{crly:orthog} and Lemma \ref{lem:jmonic} give that $\Ccal$ is locally regular. For finite unions of subobjects, observe that it suffices to consider nullary and binary unions. The former are guaranteed by the strict initial object of a coalescent category, seen in Lemma \ref{lem:collred}. For the latter, observe that the union can be expressed as the pushout (a multifunneling colimit) along the intersection of the two subobjects (the pullback of the monomorphisms defining the subobjects), since any subobject containing the given pair of subobjects forms a cone under this diagram. The fact that strictly epic families are stable under pullback ensures that these unions are too, again thanks to Lemma \ref{lem:pbstable}.

Finally, the two definitions of positivity consist of (existence and) disjointness of finite coproducts, whence these concepts are equivalent.
\end{proof}

The reader may have noticed that we did not include the properties of effectiveness for regular and coherent categories in Definition \ref{dfn:regcoh}: 

\begin{dfn}
\label{dfn:regeff}
A (locally) regular category is \textbf{effective}\footnote{Referred to as \textbf{Barr-exactness} in older texts; we follow Johnstone in our terminology.} if all equivalence relations are kernel pairs.
\end{dfn}

We chose a similar name, `effectual', for the concept appearing in Definition \ref{dfn:redeff} because both effectuality and effectiveness are conditions equivalent to the relevant categories being recoverable from the associated topos. Indeed, a locally regular, effective category $\Ccal$ can be recovered from the topos of sheaves on $\Ccal$ for the regular topology, $\Sh(\Ccal,J_r)$, as the category of \textit{regular objects}, which were defined in Example \ref{xmpl:regular}. Similarly, if $\Ccal$ is locally coherent, positive and effective, it can be recovered from $\Sh(\Ccal,J_c)$ as the category of \textit{coherent objects}, which are defined analogously. As a special case, we recover the familiar correspondences between effective regular categories and regular toposes, or between effective, positive coherent categories (also known as \textit{pretoposes}) and coherent toposes. These results are comparable to Theorem \ref{thm:correspondence}. The concepts of effectuality and effectiveness are directly related:

\begin{prop}
\label{prop:effective}
Let $\Ccal$ be a reductive category with pullbacks. Then if $\Ccal$ is effectual as a reductive category, it is also effective as a regular category.
\end{prop}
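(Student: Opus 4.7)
The plan is to show that any equivalence relation $(r_1,r_2): R \rightrightarrows X$ in $\Ccal$ arises as the kernel pair of its coequalizer. Form the coequalizer $q: X \too X/R$ as the funneling colimit of the diagram $F: \Dcal \to \Ccal$ sending the two parallel arrows of $\Dcal$ to $r_1, r_2$; using the pullbacks in $\Ccal$, form the kernel pair $(\pi_1,\pi_2): K \rightrightarrows X$ of $q$ together with the induced comparison morphism $\iota: R \to K$ satisfying $\pi_i \iota = r_i$. Since $(r_1,r_2)$ is jointly monic, $\iota$ is a monomorphism. By Theorem \ref{thm:intersect}, $\Ccal$ is locally regular with an (extremal epi, mono) factorisation system, and by Lemma \ref{lem:extremal} the extremal, regular, and strict epimorphisms all coincide there. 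It therefore suffices to prove that $\iota$ is extremal epic, because an extremal epic monomorphism must be iso via the trivial factorisation $\iota = \iota \circ \id_R$.

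Apply effectuality to the pair $(\pi_1,\pi_2): K \rightrightarrows X$, which is coequalised by $q$. This yields a strict epimorphism $t: K' \too K$ such that $\pi_1 t$ and $\pi_2 t$ lie in the same connected component of the comma category $(K' \downarrow F)$. Unpacking this---the only non-identity morphisms in $\Dcal$ go from the source of the parallel pair to its target---one obtains a zigzag consisting of morphisms $k_1,\dotsc,k_n: K' \to R$ and indices $a_j,b_j \in \{1,2\}$ with $\pi_1 t = r_{a_1} k_1$, $r_{b_j} k_j = r_{a_{j+1}} k_{j+1}$ for $1 \leq j < n$, and $r_{b_n} k_n = \pi_2 t$; the degenerate case $n=0$ means simply $\pi_1 t = \pi_2 t$.

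The main technical step is to collapse this zigzag to a single morphism $h: K' \to R$ with $r_1 h = \pi_1 t$ and $r_2 h = \pi_2 t$, and this is where the full equivalence-relation structure of $R$ is used. Reflexivity $\delta: X \to R$ handles the degenerate case via $h := \delta \circ \pi_1 t$; symmetry $\sigma: R \to R$ (satisfying $r_1 \sigma = r_2$ and $r_2 \sigma = r_1$) normalises each $k_j$ so that one may assume $a_j = 1$ and $b_j = 2$; and transitivity $\tau: R \times_X R \to R$ inductively composes consecutive normalised steps into the desired $h$. By the universal property of the kernel pair, $\iota h = t$. Factoring $\iota = m \circ e$ as (extremal epi, mono), the composite $t = m (eh)$ is extremal epic and factors through the mono $m$, forcing $m$ to be iso; thus $\iota$ is extremal epic, and being also monic, is an isomorphism, identifying $R$ with the kernel pair of $q$.

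The only genuinely delicate step is the zigzag collapse: one must verify that symmetric swaps leave the boundary terms $\pi_1 t$ and $\pi_2 t$ intact and that the binary transitivity $\tau$ is iterated in a consistent order. Conceptually this is the diagrammatic analogue of the classical observation that the symmetric-transitive-reflexive closure of an equivalence relation is itself, and is essentially why the effectual condition (which forces zigzags coming from coequaliser data to lie in $R$ after a strict-epimorphic cover) entails the effective one.
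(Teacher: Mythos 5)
Your proof is correct and follows essentially the same route as the paper's: both apply effectuality to a pair coequalized by the funneling colimit $q$ and collapse the resulting zigzag in the comma category using reflexivity, symmetry and transitivity of $R$. The only (cosmetic) difference is the endgame: you specialize the test pair to the kernel pair $(\pi_1,\pi_2)$ itself and deduce that the comparison monomorphism $\iota$ is extremal epic and hence an isomorphism, whereas the paper verifies the kernel-pair universal property for an arbitrary coequalized pair $(g_1,g_2)$ via the (regular epi, jointly monic) factorization of $(g_1 \circ t, g_2 \circ t)$.
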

\begin{proof}
First suppose that $\Ccal$ is effectual, let $a,b:R \rightrightarrows A$ be an equivalence relation on $A$, and let $\lambda: A \too B$ be its coequalizer. We must show that $(a,b)$ is the kernel pair of $c$. Given $g_1,g_2: C \rightrightarrows A$ with $\lambda \circ g_1 = \lambda \circ g_2$, by effectuality of $\Ccal$ there is a strict epimorphism $t: C' \too C$ such that $g_1 \circ t$ and $g_2 \circ t$ lie in the same connected component of $(C' \downarrow F)$, where $F: \Dcal \to \Ccal$ is the diagram picking out the parallel pair $(a,b)$.

The only form a connecting zigzag can have in $(C' \downarrow F)$ is (omitting the morphisms from $C'$ and any identity morphisms):
\[\begin{tikzcd}
& R \ar[dl, "x_1"'] \ar[dr, "y_1"] & & R \ar[dl, "x_2"'] \ar[dr, "y_2"] & & \cdots \ar[dl, "x_3"'] \ar[dr, "y_{n-1}"] & & R \ar[dl, "x_n"'] \ar[dr, "y_n"] & \\
A && A && A && A && A,
\end{tikzcd}\]
with each $x_i$ and $y_i$ equal to $a$ or $b$. By reflexivity of $R$ we may construct a zigzag consisting of $n > 0$ spans. We show by downward induction that there must always be a zigzag of exactly $1$ span, which corresponds to a factorization of the span $(g_1 \circ t,g_2 \circ t)$ through the relation $(a,b)$.

Clearly if $x_1 = y_1$ we may omit the first zigzag, and similarly for all of the others, so we may assume that $x_i \neq y_i$. By symmetry of $R$, if $(x_i,y_i) = (b,a)$, this factorizes through $(a,b)$, which gives an alternative zigzag of the same length in $(C' \downarrow F)$, so we may assume $x_i = a$ and $y_i = b$. If $n \geq 2$, we may factor through the pullback of $x_2$ along $y_1$, and transitivity of $R$ means that we get a strictly shorter zigzag; iterating this, we reach a zigzag with $n = 1$, as required.

Finally, taking the (regular epimorphism, relation) factorization of $(g_1 \circ t,g_2 \circ t)$, we conclude that the resulting relation (and hence $(g_1,g_2)$) must factor uniquely through $R$, as required.
\end{proof}

As we shall see in Example \ref{xmpl:countable}, the converse of Proposition \ref{prop:effective} fails, which is why we did not employ the same name for these concepts.

\begin{rmk}
While we provided a direct proof of Proposition \ref{prop:effective} for completeness, we could more succinctly have reasoned as follows. When a category is both regular and reductive, the strict and regular topologies on the category coincide. If such a category is effectual, therefore, all of the supercompact objects in its topos of sheaves are regular, and hence it must also be effective, since it is equivalent to the category of regular objects in its associated topos.
\end{rmk}

More generally, one might take an interest in the regular objects in a supercompactly generated topos, or the coherent objects in a compactly generated topos. However, this class of objects need not be stable under pullback in general, and hence may not assemble into a locally regular (resp. locally coherent) category. Nonetheless, by considering the induced Grothendieck topology on this subcategory, we obtain a supercompactly generated subtopos of the original topos. Iterating this process recursively, in the countable limit we obtain a maximal pullback-stable class of regular objects, although the resulting subcategory still may not be a locally regular category in the sense of Definition \ref{dfn:regcoh}, since that definition also required the presence of equalizers. Since it is unclear to us whether this class of objects or the corresponding subtopos have an interesting universal property, and since we lack interesting specific examples of this construction, we terminate our analysis here.

\subsection{Morphisms of Principal Sites}
\label{ssec:morsites}

Morphisms of sites are most easily defined on sites whose underlying category has finite limits. However, there is no reason for this property to hold in a general principal or finitely generated site. We must therefore use the more general definition of morphism of sites, which we quote from Caramello \cite[Definition 3.2]{Dense}.

\begin{dfn}
\label{dfn:morsite}
Let $(\Ccal,J)$ and $(\Dcal,K)$ be sites. Then a functor $F: \Ccal \to \Dcal$ is a \textbf{morphism of sites} if it satisfies the following conditions:
\begin{enumerate}
	\item $F$ sends every $J$-covering family in $\Ccal$ to a $K$-covering family in $\Dcal$.
	\item Every object $D$ of $\Dcal$ admits a $K$-covering family $\{g_i: D_i \to D \mid i \in I\}$ by objects $D_i$ admitting morphisms $h_i: D_i \to F(C'_i)$ to objects in the image of $F$.
	\item For any objects $C_1,C_2$ of $\Ccal$ and any span $(\lambda'_1:D \to F(C_1), \lambda'_2:D \to F(C_2))$ in $\Dcal$, there exists a $K$-covering family $\{g_i: D_i \to D \mid i \in I\}$, a family of spans in $\Ccal$,
	$\{(\lambda_1^i: C'_i \to C_1, \lambda_2^i: C'_i \to C_2) \mid i \in I \},$
	and a family of morphisms in $\Dcal$
	$\{h_i: D_i \to F(C_i)\},$
	such that the following diagram commutes:
	\[\begin{tikzcd}[column sep = small]
		& D_i \ar[dl, "h_i"'] \ar[dr, "g_i"] & \\
		F(C'_i) \ar[d, "F(\lambda_1^i)"'] \ar[drr, "F(\lambda_2^i)", very near start] & &
		D \ar[dll, "\lambda'_1"', very near start] \ar[d, "\lambda'_2"] \\
		F(C_1) & & F(C_2)
	\end{tikzcd}\]
	\item For any pair of arrows $f_1,f_2:C_1 \rightrightarrows C_2$ in $\Ccal$ and any arrow $\lambda':D \to F(C_1)$ of $\Dcal$ satisfying
	$F(f_1) \circ \lambda' = F(f_2) \circ \lambda'$,
	there exist a $K$-covering family in $\Dcal$
	$\{g_i:D_i \to D \mid i \in I\},$
	and a family of morphisms of $\Ccal$
	$\{\lambda^i:C'_i \to C_1 \mid i \in I\},$
	satisfying
	$f_1 \circ \lambda^i = f_2 \circ \lambda^i$
	for all $i \in I$ and of morphisms of $\Dcal$
	$\{h_i:D_i \to F(C'_i) \mid i \in I\},$
	making the following squares commutative:
	\[\begin{tikzcd}
		D_i \ar[d, "h_i"'] \ar[r, "g_i"] & D \ar[d, "g"] \\
		F(C'_i) \ar[r, "F(\lambda^i)"'] & F(C_1)
	\end{tikzcd}\]
\end{enumerate}
\end{dfn}

\begin{rmk}
\label{rmk:findiagcover}
It is not difficult to show by induction on finite diagrams that the last three conditions are equivalent to the following more condensed condition:

Given a finite diagram $A:\Ical \to \Ccal$ and a cone $L'$ over $F \circ A$ in $\Dcal$ with apex $D$, there is a $K$-covering family of morphisms $\{g_i: D_i \to D \mid i \in I\}$ and cones $L_i$ over $A$ in $\Ccal$ with apex $C'_i$ such that $L \circ g_i$ factors through $F(L_i)$ for each $i \in I$, in the sense that there exist morphisms $h_i:D_i \to F(C_i)$ with
\[\begin{tikzcd}
	D_i \ar[d, "h_i"'] \ar[r, "g_i"] & D \ar[d, "\lambda'_j"] \\
	F(C'_i) \ar[r, "F(\lambda^i_j)"'] & F(A(X_j))
\end{tikzcd}\]
for each $i$, where $\lambda'_j$ and $\lambda^i_j$ are the $j$th legs of cones $L'$ and $L_i$ respectively.

Moreover, when the domain site \textit{does} have finite limits, these three conditions reduce to the requirement that $F$ preserves finite limits.
\end{rmk}

A functor is a morphism of sites precisely if $\ell_{\Dcal} \circ F: \Ccal \to \Sh(\Dcal,K)$ is a $J$-continuous flat functor, so that this composite extends along $\ell_{\Ccal}$ to provide the inverse image functor of a geometric morphism $\Sh(\Dcal,K) \to \Sh(\Ccal,J)$.

\begin{crly}
\label{crly:siteprecise}
Suppose $(\Ccal,J)$ and $(\Dcal,K)$ are principal (resp. quasi-principal, finitely generated) sites. Then any morphism of sites $F: (\Ccal,J) \to (\Dcal,K)$ induces a relatively precise (resp. relatively polite, relatively proper) geometric morphism $f: \Sh(\Dcal,K) \to \Sh(\Ccal,J)$.
\end{crly}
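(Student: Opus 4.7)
The strategy is to combine the intrinsic characterisations of relative preciseness, politeness and properness given in Proposition \ref{prop:relpres} with the structural information we already have about sheaves on principal and finitely generated sites. Since $(\Ccal,J)$ is principal (resp. quasi-principal, finitely generated), Proposition \ref{prop:representable} (and Corollary \ref{crly:incl} for the quasi-principal case) tells us that $\Sh(\Ccal,J)$ is supercompactly (resp. supercompactly, compactly) generated, so Proposition \ref{prop:relpres} reduces the task to showing that $f^\ast$ preserves supercompact (resp. `supercompact or initial', compact) objects. The same results applied to $(\Dcal,K)$ tell us what the target of this preservation must look like: every representable $\ell_\Dcal(D)$ is supercompact (resp. supercompact or initial, compact) in $\Sh(\Dcal,K)$, and by Lemma \ref{lem:closed} the same is true of every quotient of such a representable.

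The key bridge is that the induced geometric morphism $f$ satisfies $f^\ast \circ \ell_\Ccal \cong \ell_\Dcal \circ F$; this is just the standard construction of the geometric morphism attached to a morphism of sites, obtained by taking the left Kan extension of $\ell_\Dcal \circ F$ along $\ell_\Ccal$ (the hypotheses in Definition \ref{dfn:morsite} are exactly what is needed to guarantee this Kan extension is flat and $J$-continuous, hence the inverse image of a geometric morphism). Consequently $f^\ast$ carries every representable of $\Sh(\Ccal,J)$ to a representable of $\Sh(\Dcal,K)$, which by the previous paragraph is of the required type.

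Now let $X$ be an arbitrary supercompact (resp. compact) object of $\Sh(\Ccal,J)$. By Lemma \ref{lem:scsite}, $X$ is a quotient $\ell_\Ccal(C) \twoheadrightarrow X$ (resp. $\coprod_{i=1}^n \ell_\Ccal(C_i) \twoheadrightarrow X$) of representables. Since $f^\ast$ is a left adjoint it preserves epimorphisms and finite coproducts, so $f^\ast(X)$ is a quotient of $\ell_\Dcal(F(C))$ (resp. of $\coprod_i \ell_\Dcal(F(C_i))$). In the principal and finitely generated cases, Lemma \ref{lem:closed} immediately yields that $f^\ast(X)$ is supercompact (resp. compact), and Proposition \ref{prop:relpres} delivers the desired conclusion.

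The only point requiring a touch of care is the quasi-principal case: representables $\ell_\Dcal(D)$ in $\Sh(\Dcal,K)$ are either supercompact or initial, since on each object of $\Dcal$ the topology $K$ either contains a principal covering sieve or the empty sieve. A quotient of a supercompact object is supercompact (by Lemma \ref{lem:closed}) and a quotient of an initial object is initial (by strictness of $0$), so $f^\ast$ still preserves the class `supercompact or initial', which is exactly what Proposition \ref{prop:relpres} demands for relative politeness. The mild obstacle throughout is checking that we may indeed identify $f^\ast \ell_\Ccal$ with $\ell_\Dcal F$, but this is a routine unpacking of Definition \ref{dfn:morsite} and requires no new ideas beyond those already recorded in \cite{Dense}.
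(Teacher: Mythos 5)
Your proof is correct and follows essentially the same route as the paper: reduce via Proposition \ref{prop:relpres} to showing $f^*$ preserves supercompact (resp.\ supercompact-or-initial, compact) objects, then use $f^*\circ\ell_{\Ccal}\cong\ell_{\Dcal}\circ F$ together with Proposition \ref{prop:representable} to see that representables land where they should. Your extra step passing from representables to arbitrary supercompact objects as quotients (via Lemma \ref{lem:scsite}, preservation of epimorphisms by $f^*$, and Lemma \ref{lem:closed}) is left implicit in the paper's one-line proof but is exactly the right way to close that small gap.
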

\begin{proof}
Since the conditions on the sites ensure that the representables are supercompact (resp. supercompact or initial, compact), and the restriction of the inverse image functor to these is precisely $\ell_{\Dcal} \circ F$, we conclude that $f^*$ preserves these objects, whence $f$ is relatively precise (resp. relatively polite, relatively proper) by Proposition \ref{prop:relpres}.
\end{proof}

\begin{rmk}
\label{rmk:morsite}
More generally, suppose $(\Ccal,J)$ is any small-generated site such that $\Sh(\Ccal,J)$ is supercompactly (resp. compactly) generated and $(\Dcal,K)$ is a principal (resp. finitely generated) site. The geometric morphism induced by a morphism of sites $F: (\Ccal,J) \to (\Dcal,K)$ has inverse image functor sending any funneling (resp. multifunneling) colimit of representables to a colimit of supercompact (resp. compact) objects of the same shape, whence by Lemma \ref{lem:closed2} it must in particular preserve supercompact (resp. compact) objects. As such, we can replace the hypotheses of Corollary \ref{crly:siteprecise} with these weaker conditions if we so choose.
\end{rmk}

Beyond morphisms, it is natural to make the extra step of forming a $2$-category of sites. Indeed, any natural transformation between functors underlying morphisms of sites induces a natural transformation between the inverse images of the corresponding geometric morphisms; for subcanonical sites, this mapping is full and faithful. Thus, for example, any equivalence of sites (an equivalence of categories which respects the Grothendieck topologies) lifts to an equivalence between the corresponding toposes.

Having formed these $2$-categories of sites, we may examine $2$-functors between them. We say that a principal site $(\Ccal,J_{\Tcal})$ is \textit{epimorphic} (resp. \textit{strictly epimorphic}) if $\Tcal$ is contained in the class of epimorphisms (resp. strict epimorphisms). An (effectual) reductive site is an (effectual) reductive category equipped with its reductive topology. We employ the \textit{ad hoc} notation of $\mathbf{EffRedSite}$, $\mathbf{RedSite}$, $\mathbf{StrEpPSite}$, $\mathbf{EpPSite}$ and $\mathbf{PSite}$ for the $2$-categories of effectual reductive sites, reductive sites, strictly epimorphic principal sites, epimorphic principal sites and all principal sites respectively, each endowed with morphisms of sites as $1$-cells and natural transformations as $2$-cells. Clearly, we have forgetful $2$-functors:
\begin{equation}
\label{eq:redsite}
\mathbf{EffRedSite} \to \mathbf{RedSite} \to \mathbf{StrEpPSite} \to \mathbf{EpPSite} \to \mathbf{PSite}.
\end{equation}

We apply analogous terminology and notation for the comparable kinds of finitely generated sites and coalescent sites. For example, we write $\mathbf{EPCoalSite}$, $\mathbf{EffCoalSite}$, $\mathbf{PosCoalSite}$ and $\mathbf{FGSite}$ for the $2$-categories of effectual positive coalescent sites, effectual coalescent sites, positive coalescent sites and finitely generated sites, respectively. There results an analogous chain of $2$-functors:
\begin{equation}
\label{eq:coalsite}
\begin{tikzcd}[column sep = 5pt]
& \mathbf{EffCoalSite} \ar[dr] & & & & \\
\mathbf{EPCoalSite} \ar[ur] \ar[dr] & & \mathbf{CoalSite} \ar[r] & \mathbf{StrEpFGSite} \ar[r] & \mathbf{EpFGSite} \ar[r] & \mathbf{FGSite}.\\
& \mathbf{PosCoalSite} \ar[ur] & & & &
\end{tikzcd}
\end{equation}

Consolidating the results of Section \ref{ssec:representable}, we find that several of these forgetful functors have adjoints. 

\begin{crly}
\label{crly:reflect}
Let $(\Ccal,J_{\Tcal})$ be a principal site, ${\sim}$ the canonical congruence of Proposition \ref{prop:congruence}, $\ell(\Ccal)$ the full subcategory of $\Sh(\Ccal,J_{\Tcal})$ on the representable sheaves and $\Ccal_s$ the (essentially small) category of supercompact objects in that topos. Then the canonical functors underly morphisms of sites:
\[\begin{tikzcd}
(\Ccal_s{,}J_r)  &
(\ell(\Ccal){,}J_{can}|_{\ell(\Ccal)}) \ar[l] &
(\Ccal/{\sim}{,}J_{\Tcal/{\sim}}) \ar[l] &
(\Ccal{,}J_{\Tcal}) \ar[l]
\end{tikzcd}\]
which are the units of reflections to the forgetful functors
\[\mathbf{EffRedSite} \to \mathbf{StrEpPSite} \to \mathbf{EpPSite} \to \mathbf{PSite}\]
found in Diagram \eqref{eq:redsite}.

Similarly, if $(\Ccal,J_{\Tcal'})$ is a finitely generated site, then with analogous notation, we have morphisms of sites:
\[\begin{tikzcd}
(\Ccal_c{,}J_c) &
(\ell(\Ccal){,}J_{can}|_{\ell(\Ccal)}) \ar[l] &
(\Ccal/{\sim}{,}J_{\Tcal'/{\sim}}) \ar[l] &
(\Ccal{,}J_{\Tcal'}) \ar[l]
\end{tikzcd}\]
which are units for the reflections of the forgetful functors
\[\mathbf{EPCoalSite} \to \mathbf{StrEpFGSite} \to \mathbf{EpFGSite} \to \mathbf{FGSite}\]
appearing in Diagram \eqref{eq:coalsite}.

All of these units induce equivalences at the level of the associated toposes.
\end{crly}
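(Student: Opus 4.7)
The plan is to verify for each of the three composable arrows that (i) it is a morphism of sites, (ii) composition with it exhibits the universal property of a reflection, and (iii) it induces an equivalence of sheaf toposes; the third claim will largely unwind from results already proved.

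Starting with the rightmost arrow, $(\Ccal,J_{\Tcal}) \to (\Ccal/{\sim}, J_{\Tcal/{\sim}})$ is already shown to be a morphism of sites inducing an equivalence of toposes in the proof of Proposition \ref{prop:congruence}. For the universal property, I would note that if $F: (\Ccal,J_{\Tcal}) \to (\Dcal,J_{\Ucal})$ is a morphism of sites with $\Ucal$ contained in the epimorphisms of $\Dcal$, then whenever $f \sim f'$ we have $fh = f'h$ for some $h \in \Tcal$, so $F(h) \in \Ucal$ is an epimorphism, forcing $F(f) = F(f')$. Thus $F$ factors uniquely through the quotient, and the factorization inherits the morphism-of-sites property from $F$ because the quotient is essentially surjective and bijective on covering sieves.

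For the middle arrow $(\Ccal/{\sim}, J_{\Tcal/{\sim}}) \to (\ell(\Ccal), J_{can}|_{\ell(\Ccal)})$, essential surjectivity is built in, and faithfulness on the reduced site follows from the observation after Corollary \ref{crly:full}. The morphism-of-sites axioms reduce, by Fact \ref{fact1}, to checking that $J_{\Tcal/{\sim}}$-covers correspond exactly to $J_{can}|_{\ell(\Ccal)}$-covers in the image. The equivalence of sheaf toposes is then an instance of the Comparison Lemma, since $\ell(\Ccal)$ is a dense subcategory of $\Sh(\Ccal, J_{\Tcal}) \simeq \Sh(\Ccal/{\sim}, J_{\Tcal/{\sim}})$. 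For the universal property, a morphism of sites from $\Ccal/{\sim}$ to a strictly epimorphic site sends each $\Tcal/{\sim}$-morphism to a strict epimorphism; the extra identifications imposed in the passage to $\ell(\Ccal)$ are precisely those forced by universal properties of strict epimorphisms (as used in the $\Tcal$-arch description of Lemma \ref{lem:Tarch}), so any such morphism factors uniquely through $\ell(\Ccal)$.

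For the leftmost arrow, $(\ell(\Ccal), J_{can}|_{\ell(\Ccal)}) \to (\Ccal_s, J_r)$, the inclusion is a morphism of sites because every supercompact object is a quotient of a representable by Lemma \ref{lem:scsite}, which provides the required $J_r$-cover by a single object of $\ell(\Ccal)$; the remaining axioms of Definition \ref{dfn:morsite} follow by density, and the induced equivalence of toposes is Theorem \ref{thm:canon}. For the universal property, given a morphism of sites $G$ from $\ell(\Ccal)$ into an effectual reductive site, the induced geometric morphism of sheaf toposes has inverse image preserving supercompact objects by Corollary \ref{crly:siteprecise} and Proposition \ref{prop:relpres}; its restriction to $\Ccal_s$ provides the unique extension of $G$ in view of Theorem \ref{thm:correspondence}. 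The finitely generated case proceeds by the same three-step strategy with multiarches in place of arches and multifunneling colimits in place of funneling ones. I expect the main obstacle to be cleanly verifying morphism-of-sites condition (3) for the middle functor, where a span in $\ell(\Ccal)$ must be pulled back to a diagram in $\Ccal/{\sim}$; this requires careful application of the arch calculus from Section \ref{ssec:representable}, using the composition mechanism of Proposition \ref{prop:composition} even without full bicategorical structure.
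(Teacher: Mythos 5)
Your proposal is correct and follows essentially the same route as the paper: the paper likewise deduces the right-hand and middle universal properties from Proposition \ref{prop:congruence} and Corollary \ref{crly:full}, and establishes the left-hand one by observing that the geometric morphism induced by a morphism of sites into an effectual reductive site has inverse image preserving supercompact objects and hence restricts to the required extension on $\Ccal_s$. In fact you supply more detail than the paper, which explicitly omits the morphism-of-sites checks; the only slips are cosmetic (e.g.\ $F(h)$ need not itself lie in $\Ucal$, only generate a covering sieve containing a $\Ucal$-morphism, which still forces it to be epic, and the passage $\Ccal/{\sim} \to \ell(\Ccal)$ adds morphisms rather than imposing identifications).
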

\begin{proof}
We omit the straightforward checks that these are indeed morphisms of sites. The universality of the middle and right hand units has been discussed in and beneath Proposition \ref{prop:congruence} and Corollary \ref{crly:full}; it remains only to show that the final morphism $(\ell(\Ccal){,}J_{can}|_{\ell(\Ccal)}) \to (\Ccal_s{,}J_r)$ is universal.

Let $\Ccal'$ be an effectual, reductive category. Then a morphism of sites $F: (\Ccal,J_{\Tcal}) \to (\Ccal',J_r)$ corresponds to a geometric morphism $\Sh(\Ccal',J_r) \to \Sh(\Ccal,J_{\Tcal})$ whose inverse image functor restricts to $F$ on the representable sheaves, and so sends these to supercompact objects in $\Sh(\Ccal',J_r)$. Since a quotient of a supercompact object is supercompact and inverse image functors preserve quotients, $F$ extends uniquely (up to isomorphism) to a morphism of sites $(\Ccal_s,J_r) \to (\Ccal',J_r)$ inducing the same geometric morphism, as required.

As usual, the proof for finitely generated sites is analogous.
\end{proof}

The morphisms appearing in Corollary \ref{crly:reflect} allow us to give another characterisation of reductive and coalescent categories.

\begin{lemma}
\label{lem:redsect}
Let $(\Ccal,J_{\Tcal})$ be a strictly epimorphic principal site in which $\Tcal$ is the class of all strict epimorphisms of $\Ccal$. Then, assuming the axiom of choice, $\Ccal$ is reductive if and only if the (underlying functor of the) composed unit morphism $(\Ccal,J_{\Tcal}) \to (\Ccal_s,J_r)$ has a left adjoint.

Similarly, a strictly epimorphic finitely generated site $(\Ccal,J_{\Tcal'})$ where $\Tcal'$ consists of the strict jointly epic families has $\Ccal$ coalescent if and only if the morphism of sites $(\Ccal,J_{\Tcal}) \to (\Ccal_c,J_c)$ has a left adjoint.
\end{lemma}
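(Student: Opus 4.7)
The plan is first to unpack what the unit morphism of sites actually looks like. Under the stated hypotheses---$\Tcal$ is the class of all strict epimorphisms of $\Ccal$, which in particular are epic---the canonical congruence of Proposition \ref{prop:congruence} is trivial, the site $(\Ccal,J_\Tcal)$ is subcanonical, and Corollary \ref{crly:reflect} collapses the composed unit to the fully faithful embedding $F : \Ccal \hookrightarrow \Ccal_s$ sending $C$ to $\ell(C)$. Every object of $\Ccal_s$ is a quotient of some $F(C)$ by Lemma \ref{lem:scsite}, and by Corollary \ref{crly:strict} every such epimorphism is strict. Hence $F(\Ccal)$ is \emph{dense} in $\Ccal_s$ through strict epimorphisms.

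For the forward direction, assume $\Ccal$ is reductive. Using the axiom of choice, pick for each object $X$ of (a skeleton of) $\Ccal_s$ a strict epimorphism $\epsilon_X : F(C_X) \twoheadrightarrow X$ with $C_X \in \Ccal$. Let $\mathfrak{D}_X$ denote the funneling diagram in $\Ccal$ whose weakly terminal object is $C_X$ and whose remaining data are all parallel pairs $D \rightrightarrows C_X$ in $\Ccal$ whose image under $F$ is coequalised by $\epsilon_X$. Define $L(X)$ to be the colimit of $\mathfrak{D}_X$ in $\Ccal$; this exists because $\Ccal$ has funneling colimits. To establish the adjunction $L \dashv F$, I would chain four translations of hom-sets: the universal property of $L(X)$ identifies $\Hom_\Ccal(L(X),C')$ with morphisms $C_X \to C'$ coequalising $\mathfrak{D}_X$; full faithfulness of $F$ transports this to morphisms $F(C_X) \to F(C')$ coequalising $F(\mathfrak{D}_X)$; density of $F(\Ccal)$ through strict epimorphisms upgrades this to morphisms $F(C_X) \to F(C')$ coequalising \emph{every} parallel pair in $\Ccal_s$ coequalised by $\epsilon_X$ (any such pair reduces along a strict epi from some $F(D)$, whose composite lies in $F(\Ccal)$ by full faithfulness, hence sits in $\mathfrak{D}_X$); and finally the universal property of the strict epimorphism $\epsilon_X$ identifies these with morphisms $X \to F(C')$. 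Naturality in $C'$ is immediate; naturality in $X$, together with functoriality of $L$, follows by invoking these bijections with $C' = L(Y)$ to produce and characterise $L(f)$ for each $f : X \to Y$.

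For the converse, assume a left adjoint $L : \Ccal_s \to \Ccal$ to $F$. Stability of strict epimorphisms in $\Ccal$ is part of the hypothesis, so by Definition \ref{dfn:reductive} it suffices to produce funneling colimits. Given a funneling diagram $D : \Dcal \to \Ccal$, its image $F \circ D$ has a colimit $K$ in $\Ccal_s$ by Lemma \ref{lem:closed2}, since $\Ccal_s$ is closed in its ambient topos under funneling colimits. Because $F$ is fully faithful the counit $L F \Rightarrow \id_\Ccal$ is a natural isomorphism, and because $L$ is a left adjoint it preserves colimits, giving
\[
L(K) \;\cong\; \colim(L F D) \;\cong\; \colim D
\]
in $\Ccal$, as required. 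The coalescent case proceeds along identical lines: by Lemma \ref{lem:multi} multifunneling colimits decompose into finite coproducts and funneling colimits, and the density of $F(\Ccal)$ in $\Ccal_c$ is now through strictly epic finite families rather than single strict epimorphisms; otherwise the construction of $L(X)$ via a multifunneling diagram and the verification of the adjunction are unchanged.

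I expect the main obstacle to be the density step in the forward direction---verifying that restricting to pairs \emph{in $\Ccal$} in the definition of $\mathfrak{D}_X$ still captures the full universal property of the strict epimorphism $\epsilon_X$ in the larger category $\Ccal_s$. Everything else is either a direct application of universal properties, standard adjoint-functor reasoning, or the bookkeeping already developed in Sections \ref{ssec:stable} and \ref{ssec:quotient}; the coalescent case introduces no genuinely new difficulties.
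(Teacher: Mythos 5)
Your proof is correct and follows essentially the same route as the paper's: both construct $L(X)$ as the colimit in $\Ccal$ of a funneling diagram presenting $X$ over a chosen representable cover $F(C_X)\twoheadrightarrow X$, verify the universal property of the unit using full faithfulness of $\ell$ on a strictly epimorphic site together with the density of the representables through strict epimorphisms, and obtain the converse from $LF\cong\id$ and preservation of colimits by the left adjoint. The density step you flag as the main obstacle is exactly the point the paper handles by noting that the composite $\ell(C_0)\to C\to\ell(D)$ lies in the image of $\ell$ and forms a cone under the same funnel, so your treatment of it is sound.
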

\begin{proof}
If $\Ccal$ is reductive and $\Tcal$ is its class of strict epimorphisms, consider a supercompact object $C$ in $\Sh(\Ccal,J_{\Tcal}) = \Sh(\Ccal,J_r)$. $C$ is a quotient of some representable $\ell(C_0)$, and so is the colimit of some funneling diagram in $\ell(\Ccal)$ with weakly terminal object $\ell(C_0)$. Lifting this to a funneling diagram in $\Ccal$, call its colimit $L(C)$. There is a universal morphism $\eta: C \to \ell(L(C))$, since the image of the strict epimorphism $\ell(C_0 \too L(C))$ forms a cone under the original funnel in $\Sh(\Ccal,J_{\Tcal})$. This $\eta$ is the universal morphism from $C$ to a representable object, since given $C \to \ell(D)$, we have that the composite $\ell(C_0) \to C \to \ell(D)$ is a morphism in the image of $\ell$ (since $\ell$ is full and faithful on a strict principal site) forming a cone under the same funnel, so there is a factoring morphism $\ell(L(C)) \to \ell(D)$, as required. This universality means that $\ell(L(C))$ is well-defined up to isomorphism, and we can use choice to select a representative for each $C$; the universality then ensures that $L$ is functorial, and is a left adjoint to the inclusion $\Ccal \to \Ccal_s$, as required.

Conversely, suppose we have a left adjoint functor $L: \Ccal_s \to \Ccal$. Given a funnel in $\Ccal$, consider its colimit in $\Ccal_s$; this is preserved by $L$, so the colimit exists in $\Ccal$, which is enough to make $\Ccal$ a reductive category.

The argument for coalescent categories is analogous, passing via a finite coproduct to define $L(C)$ in the first part.
\end{proof}

In other words, any reductive category is a coreflective subcategory of an effectual reductive category, and similarly any coalescent category is a coreflective subcategory of an effectual positive coalescent category.

\begin{thm}
\label{thm:2equiv}
Let $\mathbf{SG}\TOP_{\mathrm{rel prec}}$ be the $2$-category of supercompactly generated Grothendieck toposes, relatively precise geometric morphisms and all geometric transformations. Then the object mapping:
\begin{align*}
\mathbf{PSite} &\to \mathbf{SC}\TOP_{\mathrm{rel prec}} \\
(\Ccal,J_{\Tcal}) &\mapsto \Sh(\Ccal,J_{\Tcal}).
\end{align*}
extends to a $2$-functor between these $2$-categories. This functor is faithful on $2$-cells if we restrict the domain to $\mathbf{EpPSite}$. It is full and faithful on $2$-cells and faithful (up to isomorphism) on $1$-cells if we restrict the domain to $\mathbf{StrEpPSite}$. Finally, it is a $2$-categorical equivalence if we restrict the domain to $\mathbf{EffRedSite}$.

Analogously, letting $\mathbf{CG}\TOP_{\mathrm{rel prop}}$ be the $2$-category of compactly-generated Grothendieck toposes and relatively proper geometric morphisms, there is a $2$-functor whose effect on objects is:
\begin{align*}
\mathbf{FGSite} &\to \mathbf{CG}\TOP_{\mathrm{rel prop}} \\
(\Ccal,J_{\Tcal'}) &\mapsto \Sh(\Ccal,J_{\Tcal'}).
\end{align*}
This restricts to an equivalence between $\mathbf{CG}\TOP_{\mathrm{rel prop}}$ and the $2$-category $\mathbf{EPCoalSite}$.

Finally, there is an equivalence between the $2$-category $\mathbf{SG}\TOP_{\mathrm{rel pol}}$ of supercompactly generated Grothendieck toposes, relatively polite geometric morphisms and all geometric transformations, and the $2$-category $\mathbf{EffRed}^{+}\mathbf{Site}$ of augmented reductive sites of Definition \ref{dfn:augmented}.
\end{thm}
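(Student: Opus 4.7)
The plan is to build the $2$-functor from its object level, given by Theorem \ref{thm:correspondence}, and then exploit the reflection results of Corollary \ref{crly:reflect} to verify progressively stronger faithfulness properties as the domain is restricted. First, a morphism of sites $F:(\Ccal,J_{\Tcal}) \to (\Dcal,K)$ induces a geometric morphism whose inverse image extends $\ell_{\Dcal} \circ F$ along $\ell_{\Ccal}$; it is relatively precise by Corollary \ref{crly:siteprecise}. A natural transformation $\alpha : F \Rightarrow G$ induces a natural transformation $\ell_{\Dcal}\alpha : \ell_{\Dcal} F \Rightarrow \ell_{\Dcal} G$, and left Kan extension along $\ell_{\Ccal}$ produces a geometric transformation between inverse images; $2$-functoriality is a standard consequence of the universal property of Kan extensions.

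Next I would handle faithfulness of the $2$-functor one level at a time. When $(\Ccal,J_{\Tcal})$ is epimorphic, the canonical congruence ${\sim}$ of Proposition \ref{prop:congruence} is trivial and so $\ell_{\Ccal}$ is faithful on morphisms (as noted after that proposition); thus distinct natural transformations between morphisms of sites remain distinct after postcomposition with $\ell_{\Dcal}$, which gives faithfulness on $2$-cells. When the site is strictly epimorphic, Corollary \ref{crly:full} upgrades $\ell_{\Ccal}$ to full faithfulness, so any natural transformation between the inverse images factors uniquely through $\ell_{\Ccal}$ and $\ell_{\Dcal}$, giving a natural transformation between the morphisms of sites; the same argument recovers $F$ itself, up to unique isomorphism, from the induced inverse image functor, yielding faithfulness on $1$-cells. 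Essential surjectivity at each intermediate level follows from the corresponding reflection in Corollary \ref{crly:reflect}.

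For the full $2$-equivalence on $\mathbf{EffRedSite}$, essential surjectivity on objects is Theorem \ref{thm:correspondence}; essential surjectivity on $1$-cells is the crux. Given a relatively precise morphism $f:\Fcal \to \Ecal$ between supercompactly generated toposes, Proposition \ref{prop:relpres} says $f^*$ preserves supercompact objects, so its restriction to the categories of supercompact objects gives a functor $F:\Ccal_s^{\Ecal} \to \Ccal_s^{\Fcal}$. Cover-preservation is immediate from $f^*$ preserving strict epimorphisms, while the remaining conditions of Definition \ref{dfn:morsite} — in the compact form of Remark \ref{rmk:findiagcover} — are verified by covering any finite limit cone in $\Fcal$ over the image of a finite diagram in $\Ccal_s^{\Ecal}$ by supercompact objects; since $f^*$ preserves finite limits, each such cover lifts suitably. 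The induced geometric morphism has inverse image agreeing with $f^*$ on representables and hence, by cocontinuity, everywhere up to canonical isomorphism. The coalescent case proceeds identically, using Proposition \ref{prop:relpres} for compact objects and Corollary \ref{crly:siteprecise} for relatively proper morphisms, together with the coalescent analogues of the reflection results. For the polite case, the $2$-category $\mathbf{EffRed}^{+}\mathbf{Site}$ accommodates empty covers via the augmentation, matching the extra possibility that $f^*$ sends a supercompact object to the initial object, as codified by Corollary \ref{crly:precise}.

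The main obstacle is verifying conditions 3 and 4 of Definition \ref{dfn:morsite} for the functor $F:\Ccal_s^{\Ecal} \to \Ccal_s^{\Fcal}$ extracted from a relatively precise geometric morphism; this is where the failure of $\Ccal_s$ to have all finite limits must be compensated by covering finite limits in $\Fcal$ by supercompact objects, using that $\Ccal_s^{\Fcal}$ is separating. Once this is in hand, the chain of equivalences falls out cleanly by combining the universality of the units in Corollary \ref{crly:reflect} with the object-level correspondence of Theorem \ref{thm:correspondence}.
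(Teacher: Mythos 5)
Your proposal is correct and follows essentially the same route as the paper: $2$-functoriality via Corollary \ref{crly:siteprecise}, the faithfulness claims via the (full) faithfulness of $\ell$ on epimorphic and strictly epimorphic principal sites, and the equivalence via Theorem \ref{thm:correspondence} together with the observation that the inverse image of a relatively precise morphism restricts to a morphism of sites between the categories of supercompact objects. In fact you supply more detail than the paper does on the one genuinely delicate point --- verifying conditions 3 and 4 of Definition \ref{dfn:morsite} for that restriction by covering finite limits in $\Fcal$ with supercompact objects --- which the paper asserts in a single sentence.
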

\begin{proof}
The claim of $2$-functoriality is fulfilled thanks to Corollary \ref{crly:siteprecise}.

Faithfulness when we restrict to $\mathbf{EpPSite}$ is by virtue of the observations after Proposition \ref{prop:congruence} that $\ell$ is faithful (on $\Tcal$-arches and hence) on morphisms coming from the site.

Fullness on $2$-cells and faithfulness on $1$-cells when we restrict further to $\mathbf{StrEpPSite}$ is more directly derived from full faithfulness of $\ell$ for such subcanonical sites, and the fact that natural transformations between inverse image functors are determined by their components at the representables. Any such natural transformation (including a natural isomorphism) restricts along $\ell$ to a natural transformation between the underlying morphisms of sites.

The equivalence when we restrict to $\mathbf{EffRedSite}$ is thanks to Theorem \ref{thm:correspondence}. Indeed, since an effectual reductive category is equivalent to the category of supercompact objects in the corresponding topos, any relatively precise morphism between such toposes restricts to a morphism of sites between the underlying effective reductive sites.

As ever, the argument for finitely generated sites and quasi-principal sites is analogous.
\end{proof}


We would be remiss not to mention comorphisms of sites, which \textit{in principle} should provide an alternative choice for extending the correspondence of Theorem \ref{thm:correspondence} to a covariant duality.

\begin{definition}
\label{dfn:comor}
Given sites $(\Ccal,J)$ and $(\Dcal,K)$, a \textbf{comorphism of sites} $F: (\Ccal,J) \to (\Dcal,K)$ is a functor $F:\Ccal \to \Dcal$ which has the \textbf{cover-lifting property}, that for any object $C$ of $\Ccal$ and $K$-covering sieve $S$ on $F(C)$, there exists a $J$-covering sieve $R$ on $C$ with $F(R) \subseteq S$.
\end{definition}

When the topology on the sites is trivial, any functor is a comorphism of sites, and we can characterise the morphisms induced by comorphisms of sites as the essential geometric morphisms. If we restrict to sites on idempotent complete categories, this results in the familiar $2$-equivalence between the $2$-category $\Cat_{ic}$ of small idempotent complete categories, functors and natural transformations, and the $2$-category $\TOP_{ess}$ of presheaf toposes, essential geometric morphisms and geometric transformations, up to reversal of $2$-cells:
\[\Cat_{ic} \simeq \TOP_{ess}\co.\]
We see a special case of this in Section \ref{ssec:localic}.

However, this special case belies the difficulty of characterising geometric morphisms coming from comorphisms of sites, even when the sites are subcanonical and their Grothendieck topologies have simple descriptions. Without such a characterisation, it is difficult to produce a meaningful $2$-equivalence statement. We therefore leave this endeavour as a challenge for the future.

\subsection{Points of Supercompactly and Compactly Generated Toposes}
\label{ssec:Deligne}

We shall study supercompactly and compactly generated localic toposes in Section \ref{sec:xmpls}. Even before characterising these, we can make some observations about their points.

\begin{lemma}
\label{lem:point}
Every point of a localic topos is relatively polite. A localic topos has a relatively precise point if and only if it is totally connected.
\end{lemma}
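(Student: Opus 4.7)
The plan for the first statement is to apply the criterion of Lemma \ref{lem:relprop}. Given a point $p : \Set \to \Sh(L)$ of a localic topos and an inhabited jointly epimorphic family of subobjects $\{g_i : X_i \hookrightarrow p^*(Y)\}_{i \in I}$ in $\Set$, I will take as the required jointly epimorphic family in $\Sh(L)$ the collection of all morphisms $h : Z \to Y$ such that $p^*(h)$ factors through some $g_i$; this trivially has the claimed factorisation property. Verifying joint epimorphicness reduces, by the fact that subterminals separate objects in a localic topos, to showing that every morphism $y : V \to Y$ from a subterminal $V \leq 1$ lies in this family. Since $p^*(V)$ is a subterminal of $p^*(1) = 1$ in $\Set$, it is either $0$ or $1$: in the first case $p^*(y)$ is the empty map and factors through any $g_i$ (one of which exists by inhabitedness of $I$); in the second $p^*(y)$ picks out an element of $p^*(Y) = \bigcup_i X_i$, which lies in some $X_{i_0}$ and so factors through $g_{i_0}$.

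For the second statement, the plan is to first observe that preservation of arbitrary unions of subobjects decomposes into preservation of inhabited unions plus preservation of the empty union. By Lemma \ref{lem:0refl}, the latter condition $p_*(0) \cong 0$ is equivalent to $p^*$ reflecting the initial object. This yields, without needing the supercompactly-generated hypothesis of Corollary \ref{crly:precise}, that any geometric morphism is relatively precise if and only if it is relatively polite and its inverse image reflects $0$. Combined with the first part, a point $p$ of $\Sh(L)$ is relatively precise if and only if $p^*$ reflects the initial object. For $p$ corresponding to a frame homomorphism $p^* : \Ocal(L) \to \Ocal(1) = \{0, 1\}$, testing the reflection condition on the separating subterminals shows this is equivalent to $p^*(V) = 0 \iff V = \bot$, i.e., that $p$ is the unique generic point of $L$; such a point exists precisely when the non-bottom elements of $L$ form a filter (are closed under binary meets).

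The main obstacle is the remaining identification of this ``focal'' condition on $L$ with total connectedness of $\Sh(L)$, which depends on the formulation of total connectedness deployed in Section \ref{sec:xmpls}. Under the standard characterisation that $\Sh(L)$ is totally connected iff it is locally connected and its connected components functor $\pi_0$ preserves finite products, the argument runs as follows. If $L$ is focal then no non-bottom element admits a nontrivial disjoint decomposition, so every element of $L$ is connected; hence $L$ is locally connected, $\pi_0(V) = 1$ for all $V \neq \bot$, and the identity $\pi_0(V \wedge W) = \pi_0(V) \times \pi_0(W)$ is immediate on subterminals and so, by cocontinuity of $\pi_0$ and the fact that subterminals generate, extends to all of $\Sh(L)$. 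Conversely, if $\pi_0$ is left exact and $V, W \neq \bot$, then $\pi_0(V) \times \pi_0(W) \geq 1$ forces $\pi_0(V \wedge W) \geq 1$, whence $V \wedge W \neq \bot$, so that $L$ is focal.
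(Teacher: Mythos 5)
Your proof is correct, and for the first assertion it is essentially the paper's argument: both reduce, via Lemma \ref{lem:relprop}, to the observation that the subterminals covering $Y$ are sent by $p^*$ to $0$ or $1$ and hence factor through the given inhabited family (your ``take all morphisms whose image factors'' family should strictly be cut down to the set of morphisms out of subterminals, which is all you actually use). For the second assertion your route is the same in outline but more complete than the paper's: the paper simply asserts that ``the non-initial opens form a completely prime filter'' is equivalent to total connectedness and only spells out the forward direction, whereas you prove both implications. Two things you do differently are genuinely worth having. First, your observation that an arbitrary union is either empty or inhabited, so that relative preciseness equals relative politeness plus reflection of $0$ \emph{for any geometric morphism}, neatly sidesteps the supercompactly-generated hypothesis in Corollary \ref{crly:precise}; this is exactly what is needed to get the converse direction (totally connected $\Rightarrow$ precise point), which the paper leaves implicit. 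Second, a small caveat: Johnstone's definition of total connectedness (C3.6.16, which the paper cites elsewhere) asks that $\pi_0$ preserve finite \emph{limits}, not just finite products, so your forward direction as written only establishes strong connectedness. The gap is closed in one line: once you know every non-bottom open is connected, $\pi_0$ agrees with $p^*$ for your generic point $p$ on the generating poset of subterminals, and both functors are cocontinuous, so $\pi_0 \cong p^*$ is left exact. You should also include non-degeneracy ($\top \neq \bot$) alongside closure under binary meets in your ``focal'' condition, since the empty meet must lie in the filter and the degenerate topos is not connected.
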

\begin{proof}
We use Lemma \ref{lem:relprop}. Since $\Set$ is atomic, given a point $p:\Set \to \Ecal$ and an object $X$ of $\Ecal$, it suffices to consider the minimal inhabited covering of $p^*(X)$ by singletons $\{ x_i : 1 \hookrightarrow p^*(X) \}$ or, if $p^*(X)$ is empty, by the identity $\{ 0 \hookrightarrow p^*(X) \}$. The characterisation of localic toposes as being generated by subterminal objects means we have a covering of $X$ by subterminals $\{ y_j: X_j \to X \}$, and since $f^*(X_j)$ is necessarily subterminal for every $j$, these factorize through this minimal covering, as required.

Since the inverse image functor of a relatively precise point must reflect $0$, every non-initial open $U$ of $\Ecal$ must have $p^*(U) = 1$, whence the non-initial opens form a completely prime filter, which is equivalent to $\Ecal$ being totally connected. 
\end{proof}

We can say more about the existence of relatively precise points in general.
\begin{lemma}
\label{lem:cofiltered}
Let $(\Ccal,J)$ be a site such that every $J$-covering sieve is inhabited. Consider the $1$ object category equipped with its canonical (equivalently, trivial) topology, $(1,J_{can})$, whose topos of sheaves is equivalent to $\Set$. The unique functor $\Ccal \to 1$ underlies a morphism of sites $(\Ccal,J) \to (1,J_{can})$ if and only if $\Ccal\op$ is filtered, in which case the corresponding point of $\Sh(\Ccal,J)$ is relatively precise point. Moreover, the direct image functor of this point is the inverse image functor of the global sections morphism of $\Sh(\Ccal,J)$, so this topos is totally connected.
\end{lemma}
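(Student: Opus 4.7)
The plan is to check each of the three claims in turn. First I would verify the morphism of sites axioms of Definition \ref{dfn:morsite} for the unique functor $F: \Ccal \to 1$. Condition (1) holds precisely because every $J$-covering family is inhabited, since its image in $1$ then contains the identity. The remaining axioms are best handled using the equivalent form given in Remark \ref{rmk:findiagcover}: for any finite diagram $A: \Ical \to \Ccal$, any cone in $1$ over the constant diagram $F \circ A$ (automatic, with apex $1$) must factor through the image of a cone on $A$ in $\Ccal$ after passing to some $K$-covering family of $1$ (which we may take to be $\{\id_1\}$). This reduces axioms 2--4 to the requirement that every finite diagram in $\Ccal$ admit a cone, which is exactly the condition that $\Ccal\op$ be filtered.

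For relative preciseness I would apply Lemma \ref{lem:relprop}. Since $p^*$ restricted to representables sends $\ell(C)$ to $\ell_1(F(C)) = 1$, each morphism $\hat{y}: \ell(C) \to Y$ (corresponding by Yoneda to an element $y \in Y(C)$) is mapped by $p^*$ to a morphism $1 \to p^*(Y)$ picking out a single element. Given any jointly epic family of subobjects $\{g_i: X_i \hookrightarrow p^*(Y)\}$ in $\Set$, the element singled out by $p^*(\hat{y})$ lies in some $X_i$, so $p^*(\hat{y})$ factors through $g_i$. The family of all such $\hat{y}$ (over all $C$ and $y \in Y(C)$) is jointly epic in $\Sh(\Ccal,J)$ because $Y$ is a colimit of representables, so the criterion of Lemma \ref{lem:relprop} is satisfied.

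To identify $p_* \cong \gamma^*$ and deduce total connectedness, I would compute $p_*(S)(C) = \Hom_{\Sh(\Ccal,J)}(\ell(C), p_*(S)) = \Hom_{\Set}(p^*(\ell(C)), S) = \Hom(1,S) = S$, with identities as transition maps, so $p_*(S)$ is the constant presheaf at $S$. The main step is then verifying that this constant presheaf is actually a sheaf: given a $J$-covering sieve $R$ on an object $C$ and a matching family $(x_f)_{f \in R}$ with values in $S$, for any two members $f: D_1 \to C$ and $g: D_2 \to C$ of $R$, filteredness of $\Ccal\op$ provides (via the standard derivation combining binary cocones and the parallel-pair condition) a commuting span $\alpha: D \to D_1$, $\beta: D \to D_2$ with $f\alpha = g\beta$, whence $x_f = x_{f\alpha} = x_{g\beta} = x_g$; the matching family is therefore constant, and inhabitedness of $R$ then yields a unique amalgamation. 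Consequently $p_* = \gamma^*$, so we obtain the chain $p^* \dashv p_* = \gamma^* \dashv \gamma_*$, exhibiting $p^*$ as a left adjoint to $\gamma^*$. Since $p^*$ preserves finite limits as an inverse image functor, $\Sh(\Ccal,J)$ is totally connected. The main obstacle is the sheaf verification, which depends on both hypotheses working together: filteredness of $\Ccal\op$ to force matching families to be constant, and inhabitedness of covers to guarantee a representative for amalgamation.
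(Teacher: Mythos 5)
Your proof is correct, and for the second and third claims it takes a genuinely different route from the paper. The verification of the morphism-of-sites axioms is the same in both. For the rest, the paper first invokes Johnstone's \cite[Example C3.6.17(c)]{Ele} (a cofiltered site with connected covering sieves yields a totally connected topos), then identifies $p^*$ with the connected-components functor $\pi_0$ by noting that representables are connected, and finally deduces preciseness from the fact that $p_* = \gamma^*$ has a further right adjoint and hence preserves all colimits. You instead verify relative preciseness directly against the criterion of Lemma \ref{lem:relprop} (covering $Y$ by all maps from representables, each of which $p^*$ sends to a global element landing in some $X_i$ --- note this also handles the degenerate case $Y \cong 0$, where both families are empty), and you establish total connectedness by an explicit computation showing $p_*(S)$ is the constant presheaf and that filteredness plus inhabitedness make it a sheaf, yielding $p^* \dashv \gamma^* \dashv \gamma_*$. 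Your version is more self-contained, avoiding the external citation; the paper's version is shorter and delivers the stronger conclusion that the point is precise (not merely relatively precise) essentially for free --- though your identification $p_* = \gamma^*$ would also give that if you cared to note that $\gamma^*$ preserves all colimits.
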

\begin{proof}
Checking that the given conditions on $J$ and $\Ccal$ are necessary and sufficient to produce the claimed morphism of sites is straightforward: condition 1 is ensured by inhabitedness of $J$-covering sieves, while the remaining conditions translate into $\Ccal\op$ being filtered. Given that $\Ccal\op$ is filtered, all inhabited sieves on $\Ccal$ are necessarily connected. It is shown by Johnstone in \cite[Example C3.6.17(c)]{Ele} that if $(\Ccal,J)$ is a cofiltered site in which all $J$-covering sieves are connected, then $\Sh(\Ccal,J)$ is totally connected, and in particular locally connected. Since all of the representable objects are connected (which is to say they are mapped by the left adjoint to the inverse image functor of the global sections geometric morphism to $1$), we see that this coincides up to isomorphism with the inverse image functor of the point induced by the morphism of sites, as required. The point $\Set \to \Sh(\Ccal,J)$ therefore has direct image preserving all colimits (since it has an extra right adjoint) whence it is certainly precise and hence relatively precise.
\end{proof}

\begin{prop}
\label{prop:precpoint}
Given a principal site $(\Ccal,J_{\Tcal})$, the topos $\Sh(\Ccal,J_{\Tcal})$ has a relatively precise point if and only if $\Ccal\op$ is filtered, if and only if $\Sh(\Ccal,J_{\Tcal})$ is totally connected.
\end{prop}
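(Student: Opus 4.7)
The plan is to close a cycle among the three conditions. I write $(1)$, $(2)$, $(3)$ for the three conditions in the order they appear in the statement. The implications $(2) \Rightarrow (1)$ and $(2) \Rightarrow (3)$ follow at once from Lemma \ref{lem:cofiltered}: in any principal site every $J_{\Tcal}$-covering sieve contains a $\Tcal$-morphism and is therefore inhabited, so the hypothesis of that Lemma is met, and it delivers both the relatively precise point and the total connectedness as soon as $\Ccal\op$ is filtered. Hence only $(1) \Rightarrow (2)$ and $(3) \Rightarrow (1)$ remain.

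For $(3) \Rightarrow (1)$, if $\Sh(\Ccal, J_{\Tcal})$ is totally connected, then by definition the inverse image $\gamma^*$ of its global sections geometric morphism admits a further left adjoint $\gamma_!$ that preserves finite limits. Thus $\gamma_!$ is the inverse image of a geometric morphism $p : \Set \to \Sh(\Ccal, J_{\Tcal})$ with direct image $p_* = \gamma^*$. Since $p_*$ has a further right adjoint $\gamma_*$, it preserves all small colimits, in particular arbitrary unions of subobjects; so $p$ is precise, and in particular relatively precise.

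The main content is the implication $(1) \Rightarrow (2)$. Suppose $p : \Set \to \Sh(\Ccal, J_{\Tcal})$ is a relatively precise point, and let $F := p^* \circ \ell : \Ccal \to \Set$ be the associated flat, $J_{\Tcal}$-continuous functor under Diaconescu's equivalence. By Proposition \ref{prop:representable}, the topos is supercompactly generated and each $\ell(C)$ is supercompact; by Proposition \ref{prop:relpres} and Corollary \ref{crly:precise}, the inverse image $p^*$ then preserves supercompact objects and reflects the initial object. Moreover the empty sieve, containing no $\Tcal$-morphism, is never $J_{\Tcal}$-covering, so no $\ell(C)$ is initial in $\Sh(\Ccal, J_{\Tcal})$. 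Hence each $F(C) = p^*(\ell(C))$ is a non-initial supercompact object of $\Set$, namely a singleton. The (covariant) category of elements $\int F$ is then canonically isomorphic to $\Ccal$ itself, and flatness of $F$ translates directly into $\int F \cong \Ccal$ being cofiltered, i.e., $\Ccal\op$ being filtered.

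The only non-routine step is this last implication: one must recognise that a relatively precise point forces the associated flat functor to take singleton values on representables, after which the conclusion is just the standard characterisation of flat set-valued functors via cofilteredness of their category of elements.
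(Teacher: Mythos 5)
Your proof is correct, but the key direction is argued by a genuinely different route from the paper's. For the implication from the existence of a relatively precise point to $\Ccal\op$ being filtered, the paper goes through its site--topos $2$-equivalence (Theorem \ref{thm:2equiv}): a relatively precise point corresponds to a morphism of sites $(\Ccal_s,J_r) \to (1,J_{can})$, which by the reflection of Corollary \ref{crly:reflect} exists if and only if a morphism of sites $(\Ccal,J_{\Tcal}) \to (1,J_{can})$ does, and Lemma \ref{lem:cofiltered} identifies the latter with cofilteredness of $\Ccal$. You instead work directly with the flat functor $p^* \circ \ell$ supplied by Diaconescu's theorem, observing that preservation of supercompact objects by $p^*$ (Proposition \ref{prop:relpres}) forces it to take singleton values on representables --- the supercompact objects of $\Set$ are exactly the singletons, and no $\ell(C)$ is initial on a principal site since the empty sieve is never $J_{\Tcal}$-covering --- so that its category of elements is $\Ccal$ itself and flatness reduces to cofilteredness of $\Ccal$. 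This is more elementary, bypassing the $2$-equivalence machinery entirely, at the cost of being specific to points rather than slotting into the general correspondence between relatively precise morphisms and morphisms of sites. Your explicit treatment of the implication from total connectedness to the existence of a relatively precise point (the left exact extra left adjoint $\gamma_!$ yields a point whose direct image $\gamma^*$ preserves all colimits, hence is precise) is also a useful addition: the paper leaves that direction implicit, only recording it as `clear' in the remark that follows the proposition. The remaining implications in both arguments rest on Lemma \ref{lem:cofiltered}, whose inhabitedness hypothesis you correctly verify for principal sites.
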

\begin{proof}
By Theorem \ref{thm:2equiv}, any relatively precise point of $\Sh(\Ccal,J_{\Tcal})$ comes from a morphism of sites $(\Ccal_s,J_s) \to (1,J_{can})$, where $\Ccal_s$ is the usual subcategory in $\Sh(\Ccal,J_{\Tcal})$. Composing with the canonical morphism of sites $(\Ccal,J_{\Tcal}) \to (\Ccal_s,J_s)$ from Corollary \ref{crly:reflect}, we conclude that such a morphism exists if and only if one exists $(\Ccal,J_{\Tcal}) \to (1,J_{can})$. Applying Lemma \ref{lem:cofiltered} gives the result.
\end{proof}
Note that since the conditions on $\Ccal$ are independent of the choice of principal topology it is equipped with, we may deduce that any relatively precise subtopos of $\Sh(\Ccal,J_{\Tcal})$ is totally connected when the hypotheses hold.

\begin{rmk}
Clearly any totally connected topos has a unique relatively precise point. We do not know of a counterexample of the reverse implication more generally.
\end{rmk}

We immediately recover the following result, which although deducible from \cite[Example C3.6.17(c)]{Ele}, does not seem to have been recorded explicitly anywhere that we know of.
\begin{crly}
Any regular topos is totally connected. 
\end{crly}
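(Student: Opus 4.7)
The approach is to apply Proposition \ref{prop:precpoint} directly to the canonical site presenting the regular topos. By the convention established after Definition \ref{dfn:regtop}, a regular topos $\Ecal$ is one equivalent to $\Sh(\Ccal,J_r)$ for some small regular category $\Ccal$ equipped with its regular topology. Since $J_r$ is a principal topology (generated by the stable class of extremal epimorphisms, as stated in Definition \ref{dfn:regtop}), the hypotheses of Proposition \ref{prop:precpoint} are satisfied as soon as we exhibit $\Ccal\op$ as filtered.

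To verify that $\Ccal\op$ is filtered, I would appeal to the definition of regularity. By Definition \ref{dfn:regcoh}, a regular category has all connected finite limits (from local regularity) together with a terminal object, and hence has all finite limits. Dualising, every finite diagram in $\Ccal\op$ admits a cocone, namely the one obtained from the limit cone of the opposite diagram in $\Ccal$. This is precisely the condition that $\Ccal\op$ be filtered.

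Applying Proposition \ref{prop:precpoint} then yields that $\Sh(\Ccal,J_r) \simeq \Ecal$ is totally connected, completing the proof. There is no real obstacle: the corollary is essentially a restatement of the existence of finite limits in a regular category through the lens of the site-theoretic characterisation of total connectedness developed in Lemma \ref{lem:cofiltered} and Proposition \ref{prop:precpoint}.
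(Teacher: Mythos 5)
Your proof is correct and follows the paper's own argument exactly: the paper's proof also just observes that a regular category has finite limits, hence $\Ccal\op$ is filtered, and then invokes Proposition \ref{prop:precpoint}. Your version merely spells out the intermediate steps (that the regular topology is principal and that finite limits dualise to cocones under finite diagrams) in more detail.
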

\begin{proof}
It suffices to observe that if $\Ccal$ has finite limits, $\Ccal\op$ is filtered, so Proposition \ref{prop:precpoint} applies.
\end{proof}
In particular, by \cite[Theorem C3.6.16(iv)]{Ele}, the category of models of a regular theory in any Grothendieck topos has a terminal object (easily described as the model in which every sort is interpreted as the terminal object). The logic of supercompactly generated toposes more generally is a subject for a future paper.

Recall that Deligne's theorem states that every (locally) coherent topos has enough points. However, the original proof of that theorem, \cite{SGA4Coh}, involves nothing beyond the presentation of such a topos as the topos of sheaves on a finitely-generated site with finite limits. Since the proof only involves taking pullbacks in a single place, we find that we can easily remove the assumption that the site has finite limits thanks to the stability axioms of Definition \ref{dfn:stable2}. For completeness, we do this extension here, although we must note that this is an almost verbatim reproduction of the version of Deligne's proof presented by Johnstone in \cite[\S7.4]{TT}.

Suppose we are given a finitely-generated site $(\Ccal,J_{\Tcal'})$. Given any directed poset $P$ and a functor $F: P \to \Ccal\op$, we can construct a point of $[\Ccal\op,\Set]$ whose inverse image functor is
\[ X \mapsto \colim_{p \in P} X(F(p)),\]
since directed colimits commute with finite limits and all colimits. Following Johnstone, we call such an $F$ a \textit{pseudo-point} of $\Ccal$; we shall write $F'$ to denote the induced inverse image functor described above.

\begin{lemma}
\label{lem:Del1}
Let $F:P \to \Ccal\op$ be a pseudo-point of $\Ccal$, $X$ a $J_{\Tcal'}$-sheaf on $\Ccal$ and $x \neq y \in F'(X)$. Let $\mathfrak{f} = \{f_i: V_i \to V \mid i = 1,\dotsc,n\} \in \Tcal'$ and $v \in F'(\yon(V))$, where $\yon(V)$ is the representable presheaf corresponding to $V$. Then there exists a refinement $G:Q \to \Ccal\op$ (that is, there exists an inclusion of directed posets $i:P \to Q$ with $F = G \circ i$) such that:
\begin{enumerate}[(a)]
	\item The images of $x$ and $y$ under the natural map $F'(X) \to G'(X)$ are distinct, and
	\item The image of $v$ in $G'(\yon(V))$ is in $\bigcup_{i = 1}^n \im(G'(\yon(V_i)) \to G'(\yon(V)))$.
\end{enumerate}
\end{lemma}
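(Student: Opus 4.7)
The plan is to follow the classical Deligne-style strategy, as laid out for instance in \cite[\S 7.4]{TT}, adapted to the pullback-free setting of a finitely generated site by means of Definition \ref{dfn:stable2}. Since $P$ is directed and only finitely many data are in play, I would first replace $p$ by an element of $P$ large enough that $v$ has a representative $v_0 \in \Hom_\Ccal(F(p), V)$ and that $x, y$ have representatives $x_0, y_0 \in X(F(p))$.

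Applying the stability axiom 3' of Definition \ref{dfn:stable2} to the $\Tcal'$-family $\mathfrak{f}$ along the morphism $v_0 : F(p) \to V$ produces a $\Tcal'$-covering family $\{h_k: A_k \to F(p) \mid k = 1, \dotsc, m\}$ together with factorizations $\alpha_k: A_k \to V_{i(k)}$ satisfying $v_0 \circ h_k = f_{i(k)} \circ \alpha_k$. The key claim is that some index $j$ exists with $X(h_j)(x_0) \neq X(h_j)(y_0)$ in $X(A_j)$: were this false, then because $X$ is a $J_{\Tcal'}$-sheaf and $\{h_k\}$ is a $J_{\Tcal'}$-covering family of $F(p)$, the separation axiom would force $x_0 = y_0$ in $X(F(p))$, contradicting $x \neq y$ in $F'(X)$. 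Fix such an index $j$.

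Next, one constructs $Q$ by enlarging $P$ with a new element $q$ placed above $p$ whose image under $G$ is $A_j$, with transition morphism $h_j$. To ensure directedness, for each $p' \in P$ with $p' \geq p$ one adjoins a further element $q_{p'}$ above both $q$ and $p'$, whose $G$-value is produced inductively by applying stability (3') to $\{h_k\}$ along $F(p') \to F(p)$ and then, by a repetition of the sheaf-separation argument at the level of $F(p')$, selecting a family member whose underlying morphism continues to distinguish the images of $x_0$ and $y_0$. The principal obstacle is the formal execution of this construction, i.e.\ arranging that $Q$ is a directed poset with compatible transition morphisms throughout; in the absence of pullbacks in $\Ccal$, this requires the kind of delicate iterated bookkeeping familiar from the classical Deligne argument, with the stability axiom substituting for pullbacks at every step.

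Conditions (a) and (b) then follow by construction. For (b), the representative $v_0 \circ h_j$ of $v$ at $q$ factors through $f_{i(j)}$ via $\alpha_j$, placing the image of $v$ in $\im(G'(\yon(V_{i(j)})) \to G'(\yon(V)))$ as required. For (a), the images of $x$ and $y$ in $G'(X)$ are represented at $q$ by the distinct elements $X(h_j)(x_0) \neq X(h_j)(y_0)$ of $X(A_j)$, and the inductive sheaf-separation arrangement used to build the $q_{p'}$ guarantees that no transition in $Q$ lying above $q$ identifies them.
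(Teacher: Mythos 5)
Your overall strategy is the right one, and your opening steps (choosing a stage $p$ carrying representatives, applying stability axiom 3$'$ along $v_0$, and the verification of condition (b) at the adjoined stage) match the paper's argument. But there is a genuine gap in how you secure condition (a). You fix the index $j$ by requiring only that $X(h_j)(x_0) \neq X(h_j)(y_0)$ in $X(A_j)$, i.e.\ you certify distinctness at a \emph{single finite stage} of the new directed system. That is not enough: what is needed is that the images of $x$ and $y$ remain distinct in the colimit $G'(X)$, and distinctness at one stage can be destroyed further up the tower. Your proposed repair --- inductively adjoining, over each $F(p')$ with $p' \geq p$, a covering member that ``continues to distinguish'' $x_0$ and $y_0$ --- does not go through: the separation axiom for the sheaf $X$ applied to a $\Tcal'$-covering family of $F(p')$ only guarantees that \emph{some} member of that family separates the two elements, and nothing forces that member to factor through the previously chosen $h_j$, which it must do for $q_{p'} \geq q$ to carry a transition morphism $G(q_{p'}) \to A_j$. (The members of the family over $F(p')$ that do happen to factor through $h_j$ need not themselves form a covering family, so separation cannot be invoked for them alone.) At each inductive step you may therefore be forced to choose between preserving the poset structure of $Q$ and preserving the separation of $x$ and $y$; this is not mere bookkeeping, and no choices made above the first stage can repair a wrong choice of $j$ at that stage.

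The paper resolves this by making the choice at the level of colimits rather than at a finite stage: for every $p' \geq p$ it applies stability along $F(p') \to F(p) \to V$ to get a covering family $\{U_{p',i'} \to F(p')\}$ whose members factor through $\mathfrak{f}$, uses the sheaf condition to get monomorphisms $X(F(p')) \hookrightarrow \prod_{i'} X(U_{p',i'})$, and then uses the fact that filtered colimits preserve finite products and monomorphisms to obtain $F'(X) = \colim_{p' \in p/P} X(F(p')) \hookrightarrow \prod_{i'} \colim_{p'} X(U_{p',i'})$. Since $x \neq y$ in $F'(X)$, some index already witnesses $x \neq y$ \emph{in the colimit} $\colim_{p'} X(U_{p',i'})$, and $Q$ is then built as $P \times \{0\} \sqcup (p/P) \times \{1\}$, suitably ordered, precisely so that $G'(X)$ \emph{is} that colimit. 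If you reorganise your argument so that the separating index is selected after passing to the colimit rather than before, your proof will close up.
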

\begin{proof}
By definition, $v$ corresponds to some morphism $F(p) \to V$ with $p \in P$. Given $p' \geq p$, applying stability of $\Tcal'$ along the composite $F(p') \to F(p) \to V$, we obtain\footnote{We assume for the purposes of this proof that stability axiom 3' provides a choice of $\Tcal'$-family completing the relevant squares.} a finite family $\mathfrak{f}_{p'}' \in \Tcal'$ on $F(p')$ whose members factor through the members of $\mathfrak{f}$. As such, since $X$ is a $J_{\Tcal}$-sheaf, we have that the morphism $X(F(p')) \to \prod_{i' = 1}^{n'} X(U_{p',i'})$, where $U_{p',i'}$ are the domains of the morphisms in $\mathfrak{f}_{p'}'$, is a monomorphism.

But filtered colimits preserve finite products and monomorphisms, so $F'(X) = \colim_{p' \in p/P} X(F(p')) \hookrightarrow \prod_{i' = 1}^{n'} \colim_{p' \in p/P}X(U_{p',i'})$. In particular, we can find some index $i_{p'}'$ such that $x,y$ have distinct images in $\colim_{p' \in p/P}X(U_{p',i_{p'}'})$. Now let $Q$ be the poset $P \times \{0\} \sqcup p/P \times \{1\}$ with ordering $(p,m) \leq (p',m')$ if and only if $p \leq p'$ and $m \leq m'$. Then $Q$ is clearly filtered, and we may define the pseudo-point $G: Q \to \Ccal\op$ via the assignment
\[(p,0) \mapsto F(p); \, (p',1) \mapsto U_{p',i_{p'}'}.\]
Then for any presheaf $Y$ on $\Ccal$, we have $G'(Y) = \colim_{p' \in p/P} Y(U_{p',i'})$, whence the pseudo-point has the required properties.
\end{proof}

\begin{lemma}
\label{lem:Del2}
Let $F,X,x,y$ be as in Lemma \ref{lem:Del1}. Then there exists a refinement $G$ of $F$ such that:
\begin{enumerate}[(a)]
	\item The images of $x$ and $y$ under the natural map $F'(X) \to G'(X)$ are distinct, and
	\item For each object $V$ of $\Ccal$, each $\Tcal'$-family $\mathfrak{f}$ over $V$ and each $v \in F'(\yon(V))$, the image of $v$ in $G'(\yon(V))$ is in the image of $G'(S(\mathfrak{f})) \hookrightarrow G'(\yon(V))$, where $S(\mathfrak{f})$ is the sieve on $V$ generated by $\mathfrak{f}$.
\end{enumerate}
\end{lemma}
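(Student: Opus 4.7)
The plan is to obtain $G$ as a transfinite iteration of applications of Lemma \ref{lem:Del1}, in the manner of Deligne's original proof. Write $\Phi$ for the class of triples $(V, \mathfrak{f}, v)$ that need to be handled; note that each $F'(\yon(V))$ is a set (an iterated colimit over sets) and $\Ccal$ is essentially small, so the set of triples associated to any pseudo-point is small. Call a triple handled (relative to a given pseudo-point $H$) if the image of $v$ in $H'(\yon(V))$ lies in the image of $H'(S(\mathfrak{f}))$.

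Fix a regular cardinal $\kappa$ strictly larger than the cardinality of the set of morphisms of $\Ccal$, of $P$, and of the set of $\Tcal'$-families in $\Ccal$. I build a chain of refinements $F = F_0 \leq F_1 \leq \cdots \leq F_\alpha \leq \cdots$ for $\alpha < \kappa$ by transfinite recursion, each obtained from an ascending chain of directed posets $P = P_0 \subseteq P_1 \subseteq \cdots$. Using a bijection $\kappa \cong \kappa \times \kappa$, fix in advance an enumeration scheme so that at each successor stage $\alpha+1$ we process a designated currently-unhandled triple of $F_\alpha$ (if one exists) by invoking Lemma \ref{lem:Del1}, producing a refinement preserving the distinction of $x$ and $y$. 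At each limit ordinal $\alpha$, set $P_\alpha = \bigcup_{\beta < \alpha} P_\beta$ and let $F_\alpha$ be the induced pseudo-point; this is still directed because a directed union of directed posets is directed, and the inverse image $F_\alpha'$ is the corresponding filtered colimit, so condition (a) is preserved since filtered colimits preserve inequality. Take $G = F_\kappa$.

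It remains to show that every triple is handled by $G$. By construction, $G'(\yon(V)) = \colim_{\alpha < \kappa} F_\alpha'(\yon(V))$, so any $v \in G'(\yon(V))$ is the image of some $v_\alpha \in F_\alpha'(\yon(V))$ for some $\alpha < \kappa$; by regularity of $\kappa$ and the chosen enumeration, the triple $(V, \mathfrak{f}, v_\alpha)$ is addressed at some stage $\beta$ with $\alpha \leq \beta < \kappa$, after which $v$ lies in the image of a morphism from some $\yon(V_i) \in \mathfrak{f}$, hence factors through $G'(S(\mathfrak{f}))$. Distinction of $x$ and $y$ in $G'(X)$ follows from an immediate transfinite induction, using the preservation guaranteed by Lemma \ref{lem:Del1}(a) at successor stages and the observation above at limit stages.

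The main obstacle is purely combinatorial bookkeeping: fresh elements of $F_\alpha'(\yon(V))$ may appear as $\alpha$ grows, so one needs to arrange the enumeration so that every triple which eventually arises is scheduled to be processed at a later ordinal. The standard trick of pairing $\kappa$ with itself and processing triples in a ``dovetailed'' fashion (together with the regularity of $\kappa$, which ensures that no element first appears at the top) is what makes the scheme work, and it is for this reason that we must use a transfinite recursion rather than just an $\omega$-indexed iteration.
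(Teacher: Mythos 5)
Your argument is correct, but it proves more than the statement asks for, and in doing so it effectively merges this lemma with the paper's Lemma \ref{lem:Del3}. Note that condition (b) only quantifies over $v \in F'(\yon(V))$ --- elements already present in the \emph{original} pseudo-point $F$ --- so the set $Z$ of triples to be handled is fixed before the construction begins. The paper simply well-orders $Z$ (of some ordinal length $\alpha$) and runs a single transfinite pass of Lemma \ref{lem:Del1}, taking unions of the directed posets at limit stages; no dovetailing, no regular cardinal, and no concern about fresh elements of $F_\beta'(\yon(V))$ appearing mid-construction is needed, precisely because those fresh elements are outside the scope of the statement. The catch-up for elements arising in later refinements is exactly what Lemma \ref{lem:Del3} does afterwards, by iterating the present lemma $\omega$ times and passing to the colimit. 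Your version, with the bijection $\kappa \cong \kappa \times \kappa$ and a regular $\kappa$ bounding the sizes of $\mathrm{Mor}(\Ccal)$, $P$, and the class of $\Tcal'$-families, does go through (the poset stays of size $< \kappa$ at every stage, so every triple ever arising is scheduled below $\kappa$, and distinctness of $x$ and $y$ survives filtered colimits), and it yields the conclusion of Lemma \ref{lem:Del3} in one shot. What you lose is the clean factorization into two elementary steps, and what you pay is heavier set-theoretic bookkeeping; if you keep your version, you should state explicitly that you are proving the stronger conclusion with $v$ ranging over $G'(\yon(V))$, since as written your proof does not separately address the literal condition (b) about images of elements of $F'(\yon(V))$ (though of course it implies it).
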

\begin{proof}
Let $Z$ be the set of triples $(\mathfrak{f},V,v)$, where $\mathfrak{f} \in \Tcal'$ over the object $V$ and $v \in F'(\yon(V))$. Well-ordering $Z$, we may index its members by some ordinal $\alpha$. We now use transfinite induction to define a sequence of pseudo-points $\{ G_{\beta}: Q_{\beta} \to \Ccal \mid \beta \leq \alpha \}$ as follows:

$G_{0} = F$.

If $\beta = \gamma +1$, $G_\beta$ is obtained from $G_{\gamma}$ by applying the construction of Lemma \ref{lem:Del1} to $\mathfrak{f}$ and the image of $v_{\gamma}$ in $G_{\gamma}'(V_{\gamma})$.

If $\beta$ is a limit ordinal, then $G_{\beta}$ is the unique common refinement of the $G_{\gamma}$ with $\gamma < \beta$, whose underlying filtered poset is the colimit of the $Q_{\gamma}$.

It is then easily seen that the images of $x$ and $y$ in $G_{\beta}'(X)$ are distinct for all $\beta$ and that if $\gamma < \beta$ then $G_{\beta}$ is a refinement of $G_{\gamma}$ such that the image of $v_{\gamma}$ in $G_{\beta}'(\yon(V_{\gamma}))$ is in the image of $G_{\beta}'(S(\mathfrak{f}_{\gamma}))$. So $G_{\alpha}$ is the required refinement of $F$.
\end{proof}

\begin{lemma}
\label{lem:Del3}
Let $F,X,x,y$ be as in Lemma \ref{lem:Del1}. Then there exists a refinement $G$ of $F$ such that the induced point restricts to a point of $\Sh(\Ccal,J_{\Tcal'})$ and such that the images of $x$ and $y$ in $G'(X)$ are distinct.
\end{lemma}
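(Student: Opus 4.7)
The plan is to iterate Lemma \ref{lem:Del2} countably many times. Define a sequence of pseudo-points $H_0 = F, H_1, H_2, \ldots$ recursively: given $H_n$, let $x_n, y_n$ be the images of $x, y$ in $H_n'(X)$, and take $H_{n+1}$ to be the refinement of $H_n$ provided by Lemma \ref{lem:Del2} applied to $H_n, X, x_n, y_n$. Let $G: Q \to \Ccal\op$ be the pseudo-point whose underlying filtered poset $Q$ is the colimit of the chain of underlying posets of the $H_n$, with $G$ extending each $H_n$ compatibly.

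For any presheaf $Y$ on $\Ccal$, we then have a canonical identification $G'(Y) \cong \colim_n H_n'(Y)$, since colimits commute with colimits. Since the images of $x$ and $y$ in $H_n'(X)$ remain distinct at every stage by condition (a) of Lemma \ref{lem:Del2}, and since filtered colimits in $\Set$ identify two elements only if they are identified at some finite stage, their images in $G'(X)$ remain distinct.

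It remains to check that the induced point of $[\Ccal\op,\Set]$ restricts to a point of $\Sh(\Ccal,J_{\Tcal'})$; preservation of finite limits and arbitrary colimits by $G'$ is automatic from it being a pseudo-point, so this amounts to showing that $G'$ sends every $J_{\Tcal'}$-covering sieve to a jointly surjective family in $\Set$. Let $\mathfrak{f} \in \Tcal'$ be a family over some $V \in \Ccal$, generating the covering sieve $S(\mathfrak{f})$, and let $v \in G'(\yon(V))$. By construction of $G'$ as a filtered colimit, $v$ is the image of some $v_n \in H_n'(\yon(V))$ at some finite stage $n$. Condition (b) of Lemma \ref{lem:Del2}, applied at stage $n$, guarantees that the image of $v_n$ in $H_{n+1}'(\yon(V))$ lies in the image of $H_{n+1}'(S(\mathfrak{f})) \to H_{n+1}'(\yon(V))$, and hence the image of $v$ in $G'(\yon(V))$ lies in the image of $G'(S(\mathfrak{f}))$, as required.

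The key technical input is that Lemma \ref{lem:Del2} handles all triples $(\mathfrak{f},V,v)$ relevant at a given stage simultaneously, so that after countably many iterations every element arising anywhere in $G'$ has been dealt with at some earlier stage. The main step that could feel subtle is the stability of distinctness and of the covering property under the passage to the filtered colimit, but both reduce to the standard facts that filtered colimits in $\Set$ preserve finite limits and commute with each other.
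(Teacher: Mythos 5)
Your proposal is correct and follows essentially the same route as the paper's proof: iterate Lemma \ref{lem:Del2} countably many times, take the colimit pseudo-point $G$, identify $G'(Y)$ with $\colim_n H_n'(Y)$, and deduce both the persistence of distinctness and the covering condition from the fact that every element of a filtered colimit in $\Set$ arises at some finite stage. The only difference is that you spell out these last two verifications in slightly more detail than the paper does.
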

\begin{proof}
Define a sequence of pseudopoints $F_n$ by setting $F_0 = F$ and defining $F_{n+1}$ to be the refinement of $F_n$ constructed in Lemma \ref{lem:Del3}. Define $G$ to be the colimit of the $F_n$; then we clearly have $G'(Y) = \colim_{n} F_{n}'(Y)$ for any presheaf $Y$ on $\Ccal$. Hence $x$ and $y$ have distinct images in $G'(X)$, and if $S(\mathfrak{f}) \hookrightarrow \yon(V)$ is a $J_{\Tcal'}$-covering sieve, then each element of $G'(\yon(V))$ derives from an element of $F_{n}'(\yon(V))$ for some $n$, and hence from an element of $F_{n+1}'(S(\mathfrak{f}))$. So $G'$ defines a point of $[\Ccal\op,\Set]$ having the desired properties which factors through $\Sh(\Ccal,J_{\Tcal'})$, as required.
\end{proof}

\begin{thm}
\label{thm:Del}
Employing the axiom of choice, any compactly generated topos has enough points.
\end{thm}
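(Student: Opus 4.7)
The plan is to assemble the three preceding lemmas into a proof. By Theorem \ref{thm:canon}, any compactly generated topos $\Ecal$ is equivalent to $\Sh(\Ccal, J_{\Tcal'})$ for some small finitely generated site (for instance, the canonical site $(\Ccal_c, J_c)$), so that the pseudo-point machinery developed in Lemmas \ref{lem:Del1}--\ref{lem:Del3} applies directly.

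Next, I will reduce the statement ``$\Ecal$ has enough points'' to a separation condition on sections. The representables $\ell(C)$ form a separating set for $\Ecal$, and for any sheaf $X$ the morphisms $\ell(C) \to X$ correspond bijectively to elements of $X(C)$, so it suffices to prove the following: for every $C \in \Ccal$, every sheaf $X$, and every pair of distinct elements $x \neq y$ of $X(C)$, there exists a point $p: \Set \to \Ecal$ whose inverse image distinguishes the two corresponding morphisms $\ell(C) \rightrightarrows X$. Collecting one such point for each such triple $(C, x, y)$ then produces a jointly conservative family of points, which is exactly the assertion that $\Ecal$ has enough points.

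To produce such a point for a given triple $(C, x, y)$, I start with the trivial pseudo-point $F_0: \{*\} \to \Ccal\op$ sending $* \mapsto C$, where $\{*\}$ is the one-element (trivially directed) poset. Then $F_0'(X) = X(C)$, so $x$ and $y$ are already distinct elements of $F_0'(X)$. Applying Lemma \ref{lem:Del3} to $F_0$ together with this pair yields a refinement $G: Q \to \Ccal\op$ whose induced inverse image functor on presheaves factors through sheafification---and hence defines a genuine point $p_{C,x,y}$ of $\Sh(\Ccal, J_{\Tcal'}) = \Ecal$---while keeping the images of $x$ and $y$ distinct in $G'(X) = p_{C,x,y}^*(X)$. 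The distinction propagates to the morphisms $p_{C,x,y}^*\ell(C) \rightrightarrows p_{C,x,y}^*(X)$ via the canonical element of $p_{C,x,y}^*\ell(C)$ coming from $\id_C$ under the natural map $F_0'(\ell(C)) \to G'(\ell(C))$.

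The main technical obstacle---replacing the pullback constructions of Deligne's original proof with appeals to the stability axioms of Definition \ref{dfn:stable2} so that finite limits in $\Ccal$ need not be assumed---has already been surmounted in Lemmas \ref{lem:Del1}--\ref{lem:Del3}. The axiom of choice enters only through the transfinite well-ordering of the set of triples $(\mathfrak{f}, V, v)$ in the proof of Lemma \ref{lem:Del2}, and the assembly above introduces no further difficulty.
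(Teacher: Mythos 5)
Your proposal is correct and follows essentially the same route as the paper: the paper's (very terse) proof likewise presents $\Ecal$ as sheaves on a finitely generated site via the canonical site on compact objects and applies Lemma \ref{lem:Del3} to the jointly conservative family of evaluation points of $[\Ccal\op,\Set]$, which are exactly your trivial pseudo-points $* \mapsto C$. Your write-up merely makes explicit the reduction to separating pairs of elements $x \neq y \in X(C)$ and the propagation of the distinction through the canonical element of $p^*\ell(C)$, which the paper leaves implicit.
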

\begin{proof}
Expressing $\Ecal$ as a topos of the form $\Sh(\Ccal,J_{\Tcal'})$ of sheaves on a finitely generated site (always possible, since we may take the canonical site on the compact objects), we apply Lemma \ref{lem:Del3} to each member of the jointly epimorphic set of points of the presheaf topos $[\Ccal\op,\Set]$ to obtain a jointly epimorphic set of points of $\Ecal$.
\end{proof}

Following a comment of Johnstone in \cite[Remark D1.5.11]{Ele}, we prove a constructive version of this result in the special case that we are given the data required to avoid the axiom of choice. 

\begin{schl}
Suppose that $(\Ccal,J_{\Tcal'})$ is a finitely generated site such that the morphisms of $\Ccal$ admit a well-ordering. Then the corresponding toposes have enough points, constructively.
\end{schl}
\begin{proof}
The source of non-constructiveness is the use of the well-ordering principle in the proof of Lemma \ref{lem:Del2}. With the given well-orderings on the site, however, we may construct such a well-ordering directly.

First, observe that in the construction of Lemma \ref{lem:Del1}, if we have a well-ordering of the elements of $P$, then the elements of $Q$, which consist of the set $P \times \{0\} \sqcup p/P \times \{1\}$ can be endowed with a well-ordering where every element of the former set comes before every element of the latter\footnote{Observe that this well-ordering need not be compatible with the partial ordering defined in that Lemma.}; this similarly extends to the limit ordinals in the transfinite induction of Lemma \ref{lem:Del2}. Thus we may assume throughout, without loss of generality, that $P$ is endowed with such a well-ordering. As such, given a pseudopoint $F:P \to \Ccal\op$, each $v \in F'(\yon(V))$ has a canonical minimal representative in the diagram, consisting of the morphism $v_0: F(p) \to V$ such that $p$ is the minimal element in the well-ordering on $P$ carrying a representative, and $v_0$ is the minimal representing morphism in this hom-set according to the well-ordering on $\Ccal$. Taking the lexicographic ordering on the minimal representatives $(p,v_0:F(p) \to V)$, we obtain a well-ordering on $F'(\yon(V))$.

We can also well-order the finite families in $\Tcal'$ using the well-ordering on $\Ccal$, ordering first by the number of members, and then by the lexicographical ordering on the family's members.

As such, the set $Z$ of triples $(\mathfrak{f},V,v)$ where $\mathfrak{f} \in \Tcal'$ has codomain $V$ and $v \in F'(\yon(V))$, acquires a well-ordering, as required.
\end{proof}

\subsection{Examples of Reductive Categories}
\label{ssec:xmpl}

In this subsection and the next section, we present examples of reductive and coalescent categories, as well as principal and finitely generated sites, in order to address some hypotheses about relationships between the concepts presented in this paper.

\begin{xmpl}
\label{xmpl:nonreg}
There exist reductive categories without equalizers, products or pullbacks (or even pullbacks of monomorphisms), so which in particular are not locally regular. Indeed, simplifying Example \ref{xmpl:tworel}, consider the category of presheaves on $f,g: A \rightrightarrows B$. The subcategory of supercompact objects in this topos is simply the coequalizer diagram $A \rightrightarrows B \too C$, so that in particular the pair of monomorphisms $f,g$ has neither an equalizer nor a pullback, and the product $B \times B$ does not exist. We obtain a similar example from any finite category containing a parallel pair of morphisms lacking an equalizer. 
\end{xmpl}

\begin{xmpl}
\label{xmpl:simple}
Any discrete category (a category with no non-identity morphisms) with more than one object is a reductive and locally regular category which is not regular.

The free finite cocompletion of a discrete category $\Ccal$ is a coalescent category, since it is equivalent to $[\Ccal\op,\FinSet]$, where $\FinSet$ is the category of finite sets\footnote{This expression for the free finite cocompletion applies if and only if $\Ccal$ has finite hom-sets, since this is necessary and sufficient for the representable presheaves to lie in $[\Ccal\op,\FinSet]$. This is trivially the case when $\Ccal$ is discrete.}, and this is a coalescent category, whence $[\Ccal\op,\FinSet]$ has funneling colimits computed pointwise. $\FinSet$ is in some sense the simplest possible coalescent category, since by inspection, $\Sh(\FinSet,J_c) \simeq \Set$. Since $\FinSet$ has pullbacks, these free finite cocompletions are coherent categories. Dually, the free finite limit completion of a discrete category, $[\Ccal, \FinSet]\op$, is a coalescent category with pullbacks.
\end{xmpl}

\begin{xmpl}
We may also consider the category $\FinSet_+$ of \textit{inhabited} finite sets; since a funneling colimit of inhabited finite sets is inhabited and finite, and taking the pullback in $\FinSet$ of an epimorphism in $\FinSet_+$ along a morphism with inhabited domain gives another epimorphism with inhabited domain, we have that $\FinSet_+$ is another example of a reductive category without pullbacks (since the two inclusions $1 \rightrightarrows 2$ `should' have empty intersection).

By reintroducing the empty set and declaring that the empty sieve is covering over it, we obtain an augmented reductive site with underlying category $\FinSet$ having an equivalent topos of sheaves; since the coalescent topology on $\FinSet$ is a refinement of the augmented reductive one, we see that we have a (relatively proper) inclusion of toposes:
\[\Set \simeq \Sh(\FinSet,J_c) \hookrightarrow \Sh(\FinSet_+,J_r), \]
which is not an equivalence since the sheaves represented by the finite sets of cardinality at least $2$ are supercompact in the latter topos but merely compact in the former.
\end{xmpl}

\begin{xmpl}
Expanding on the dual construction, we have that $\Sh(\FinSet\op,J_c)$ embeds into the \textit{classifying topos for the theory of objects}, $[\FinSet,\Set]$. We mention this as a counterexample to an extension of Lemma \ref{lem:point} to the idea that a point of any Grothendieck topos is polite: the points of $[\FinSet,\Set]$ correspond to sets (objects of $\Set$), and the correspondence sends a geometric morphism to the set which is the image of the representable functor $\yon(1)$. But $\yon(1)$ is supercompact, so a point corresponding to any set with more than one element fails to be polite.
\end{xmpl}

\begin{xmpl}
\label{xmpl:simplex}
For yet another related example, consider the simplex category $\Delta$, whose objects are \textit{inhabited} finite ordinals,
\[ [n] = \{0, \dotsc, n-1\}, \, \text{ } \, n \geq 1 \]
and whose morphisms are the order-preserving maps between these. Clearly this does not have pullbacks (since the intersection of the two inclusions $[1] \rightrightarrows [2]$ would be the empty ordinal which is not an object of $\Delta$).

$\Delta$ has funneling colimits: given any collection of morphisms into the object $[n]$ of $\Delta$, their colimit is the quotient of $[n]$ identifying $f(x),f(x)+1,\cdots,g(x)$ (or $g(x),g(x)+1,\cdots,f(x)$) for each parallel pair $f,g:[n']\rightrightarrows [n]$ in the diagram and each $x \in [n']$. Moreover, each epimorphism $g:[n] \too [m]$ is split (so in particular is strict) by the monomorphism $\mathrm{min}(g^{-1}):[m]\rightrightarrows [n]$, say.

In particular, by Remark \ref{rmk:split} the collection of strict epimorphisms is stable, which makes $\Delta$ a reductive category with $\Sh(\Delta,J_r) = [\Delta\op,\Set]$, the topos of \textit{simplicial sets}. While this tells us little about the theory of strict linear intervals classified by $[\Delta\op,\Set]$ (referred to as `orders' in \cite{MLM}[\S8.VIII]), it does recover a non-trivial fact about the topos of simplicial sets: every quotient of a representable simplicial set is also representable, so that every simplicial set is a union of its representable subsets.
\end{xmpl}

\begin{xmpl}
\label{xmpl:nonpresh}
To contrast Examples \ref{xmpl:simple} and \ref{xmpl:simplex}, we recall an example of a supercompactly generated topos which is not equivalent to a presheaf topos; this should be contrasted with Proposition \ref{prop:localic}, below.

Consider the Schanuel topos, $\Sh(\FinSet\op_{\mathrm{mono}},J_{at})$. We see that $(\FinSet\op_{\mathrm{mono}},J_{at})$ is an atomic site with pullbacks, but moreover it is a reductive and regular site, since all of the morphisms in the category are regular epimorphisms which are stable under pullback.

We know of two ways to show that this topos is not a presheaf topos. The first is to show that the site is effectual, which explicitly involves providing an algorithm which, given a cofunnel $F$ in $\FinSet_{\mathrm{mono}}$ with weakly initial object $F(D_0)$ and $x,y:F(D_0) \rightrightarrows C$ equalized by its limit, constructs an inclusion $i: C \hookrightarrow C'$ and a connecting zigzag between $i \circ x$ and $i \circ y$ in $(F \downarrow C')$.

It follows that the supercompact objects (equivalently, atoms) in the Schanuel topos are precisely the representable sheaves coming from $\FinSet\op_{\mathrm{mono}}$. However, if $I$ is a finite set, $A$ is any set of cardinality larger than that of $I$, and we have inclusions from a further finite set $B$ into both $A$ and $I$, there can be no monomorphism completing the triangle,
\begin{equation*}
\label{eq:injective}
\begin{tikzcd}
& A \ar[dl, dashed] \\
I & B \ar[u, hook] \ar[l, hook],
\end{tikzcd}
\end{equation*}
whence $I$ is not injective in $\FinSet_{\mathrm{mono}}$ and hence is not projective in $\FinSet\op_{\mathrm{mono}}$. It follows that no object of $\FinSet\op_{\mathrm{mono}}$ is projective, whence the Schanuel topos has no indecomposable projective objects, but is non-degenerate, and so is not a presheaf topos (the category of representable presheaves in a presheaf topos being recoverable up to idempotent completion as the projective indecomposable objects, which in particular must be supercompact).

An alternative proof, which we thank Olivia Caramello for describing to us, is to observe that the category of representables in a presheaf topos can be identified, up to idempotent-completion, with the full subcategory of finitely presentable objects in its category of points. We also have that a presheaf topos is atomic if and only if the representing category is a groupoid. The Schanuel topos classifies infinite decidable objects, so its category of points corresponds to the category of infinite sets (and monomorphisms); since any inhabited full subcategory of this category is not a groupoid (every infinite set has an injective endomorphism which is not invertible) and the topos is non-degenerate, it again follows that this cannot be a presheaf topos.
\end{xmpl}

\begin{xmpl}
\label{xmpl:abelian}
Since any abelian category is effective regular, any small abelian category with funneling colimits is reductive. This is the case for the finitely presented (right) modules of a (right) Noetherian ring, say, since these coincide with finitely generated modules and so is closed under quotients in the large category of modules. For example, the category of finitely generated abelian groups is a reductive category with finite limits and colimits.

In order to construct a small abelian category which does \textit{not} have funneling colimits, we look for a coherent ring $R$ whose collection of finitely generated ideals is not closed under infinite intersections (note that if the ring is an integral domain, coherence ensures that it will be closed under finite intersections). Let $I$ be an infinitely generated ideal which is obtained as such an intersection. In the large category of modules, we may identify $R/I$ as the colimit of the funneling diagram consisting of the inclusions of finitely generated sub-ideals of $I$, along with the parallel zero maps, into the ring, viewed as a (right) module over itself. If the colimit of this diagram existed in the category of finitely presented $R$-modules, it would have to be a quotient of $R$ by some finitely generated ideal, but by construction there is no initial finitely presented quotient under this diagram, whence the colimit does not exist.

Consider the ring $R$ of eventually constant sequences valued in the field on two elements (with point-wise operations). Observe that all finitely generated ideals in this ring are principal. An ideal generated by a sequence $g$ is the cokernel of the module homomorphism $R \to R$ sending $x$ to $x \cdot (1-g)$, so this is indeed a coherent ring. For each index $i$ we have a `basis element' $e_i$ which is $1$ at $i$ and $0$ elsewhere. The ideal $I_i$ generated by $(1-e_i)$ consists of those sequences which are $0$ at $i$. Consider $\bigcap_{j = 1}^{\infty} I_{2j}$: this consist of sequences which are non-zero only at odd indices, but by the eventually constant criterion, no single sequence can generate this ideal, whence the ideal fails to be finitely generated.

We would like to thank Jens Hemelaer for helping us to identify the sufficient structure needed to find this counterexample and Ryan C. Schwiebert for identifying a ring realising that structure (via math.stackexchange.com).

Note that a non-trivial abelian category cannot be coherent or coalescent since the initial object is not strict in such a category (cf Lemma \ref{lem:collred}).
\end{xmpl}

\begin{xmpl}
\label{xmpl:TF}
As an example of a regular and reductive category that fails to be effective, let alone effectual, we adapt an example of Johnstone, \cite[before Example A1.3.7]{Ele}.

Consider the category $\mathbf{TF}_{fg}$ of finitely-generated torsion-free abelian groups. By considering it as a reflective subcategory of the category of finitely generated abelian groups, we find that it is regular and has all coequalizers. We can moreover check that it has funneling colimits, since the full category of abelian groups is cocomplete and any quotient of a finitely generated group is finitely generated (to obtain the quotient in $\mathbf{TF}_{fg}$, the torsion parts of such a quotient are annihilated). Thus it is a reductive category with finite limits. However, as Johnstone points out, the equivalence relation
\[R = \{(a,b) \in \Zbb \times \Zbb \mid a \equiv b \, \mathrm{mod} \, 2 \} \cong \Zbb \times \Zbb \]
is not a kernel pair of any morphism in $\mathbf{TF}_{fg}$, so this category is not effective regular.
\end{xmpl}

\begin{xmpl}
\label{xmpl:countable}
For an example of a regular and reductive category that is effective but not effectual, we modify another of Johnstone's examples, \cite[Example D3.3.9]{Ele}. Let $\Set_\omega$ be the full subcategory of $\Set$ on the finite and countable sets. This has pullbacks and funneling colimits which are stable under pullback, inherited from $\Set$. Moreover, all epimorphisms are regular, so this is a regular and reductive category (indeed, it is a coherent and coalescent category too!). It also inherits the property of being effective from $\Set$.

However, it is not effectual. Johnstone exhibits the following coequalizer diagram:
\begin{equation}
\label{eq:Ncoeq}
\begin{tikzcd}
\Nbb \ar[r, shift left, "\id"] \ar[r, shift right, "s"'] & \Nbb \ar[r, two heads] & 1,
\end{tikzcd}
\end{equation}
where $s$ is the successor function. He concludes by considering the natural number object in $\Sh(\Set_\omega, J_c)$ that this coequalizer is not preserved by the canonical functor $\ell: \Set_\omega \to \Sh(\Set_\omega,J_c)$; we could deduce the failure of effectuality from that. Instead, we prove it directly by considering the morphisms
\[\begin{tikzcd}
\Nbb \ar[rr, shift left, "\id"] \ar[rr, shift right, "g := 2 \times -"'] & & \Nbb,
\end{tikzcd}\]
which are clearly coequalized by the epimorphism in \eqref{eq:Ncoeq}. If $\Set_\omega$ were effectual, there would exist some epimorphism $t: X \too \Nbb$ such that $t$ and $g \circ t$ lie in the same connected component of $(X \downarrow F)$, where $F$ is the parallel arrow diagram $(\id,s)$ whose coequalizer is shown in \eqref{eq:Ncoeq}. However, given any finite zigzag,
\[\begin{tikzcd}
X \ar[dd, "t"'] \ar[r, equal] & X \ar[d] \ar[r, equal]& X \ar[dd] \ar[rr, equal] & & X \ar[dd] \ar[r, equal] & X \ar[d] \ar[r, equal] & X \ar[dd, "g \circ t"] \\
& \Nbb \ar[dl, "x_1"'] \ar[dr, "y_1"] & & \cdots \ar[dl, "x_2"'] \ar[dr, "y_{n-1}"] & & \Nbb \ar[dl, "x_n"'] \ar[dr, "y_n"] & \\
\Nbb && \Nbb && \Nbb && \Nbb,
\end{tikzcd}\]
where $x_i$ and $y_i$ are $\id$ or $s$, composing with any morphism $m: 1 \to X$ such that $t \circ m > n$ as elements of $\Nbb$, we conclude that since the difference between the image of $m$ in the first copy of $\Nbb$ and the last copy of $\Nbb$ is at most $n$, we have a contradiction. That is, no zigzag of finite length is sufficient to connect $t$ and $g \circ t$ in $(X \downarrow F)$.
\end{xmpl}

\section{Case Study: Localic Supercompactly and Compactly Generated Toposes}
\label{sec:xmpls}

A natural question is how the classes of supercompactly or compactly generated toposes interact with the most well-studied class of toposes, the localic toposes.
Recall that a Grothendieck topos is \textbf{localic} if it is of the form $\Sh(L)$, the category of (set-valued) sheaves on some locale $L$. These toposes can equivalently be characterised by the fact that their set of subterminal objects is separating, and these subterminals form a frame isomorphic to the frame of opens of the locale.

\subsection{Posets and Supercompact Generation}
\label{ssec:localic}

As usual, the supercompactly generated case of localic toposes are the more straightforward case to present. A $1$-categorical account of the dualities in this section can be found in the work of Caramello on Stone-type dualities \cite[\S 4.1]{Stonetype}. We include a full exposition both to compare with the compactly generated case in the next section and to illustrate the extra insight gained from the results obtained in the present article.

\begin{prop}
\label{prop:localic}
A localic topos is supercompactly generated if and only if it is equivalent to $[\Ccal\op,\Set]$ for some poset $\Ccal$. Moreover, any poset is an instance of a reductive category.
\end{prop}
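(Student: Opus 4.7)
The plan is to handle the two assertions separately. For the ``moreover'' clause, I would first check that in a poset $\Ccal$ the strict epimorphisms coincide with the identities: if $h: a \to b$ is strict then every $k: a \to c$ must factor through $h$, and taking $k = \id_a$ forces $b \leq a$ and hence $a = b$. Identities trivially form a stable class, and funneling diagrams in a poset have their weakly terminal object as colimit (it is already an upper bound for the diagram, so \emph{a fortiori} its supremum). Thus any poset is a reductive category.

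For the forward direction of the main statement, given a poset $\Ccal$, the presheaf topos $[\Ccal\op,\Set]$ is supercompactly generated by Proposition \ref{prop:xmpls}(iii). It is localic because each representable $\yon(c)$ is already a subterminal object (for any poset, $\Hom(d,c)$ has cardinality at most one), so the subterminals contain a separating set of objects.

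For the converse, suppose $\Ecal$ is a localic supercompactly generated topos. The key step is to show that every supercompact object of $\Ecal$ is subterminal. Presenting $\Ecal \simeq \Sh(L)$ for a locale $L$, the representable sheaves are exactly the subterminals corresponding to opens, and by Lemma \ref{lem:scsite} every supercompact object is a quotient of such a representable. A quotient of a subterminal in any topos is itself subterminal: if $U$ is subterminal and $e: U \too V$ is epi, then from $\pi_1 = \pi_2 : U \times U \to U$ together with the fact that $e \times e : U \times U \to V \times V$ is epi (products of epis being epi in a topos), we obtain $\pi_1 = \pi_2 : V \times V \to V$, so $V$ is subterminal. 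Consequently $\Ccal_s$ embeds fully into the poset of subterminals of $\Ecal$, and is therefore itself a poset.

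Finally, by the computation in the first paragraph, the reductive topology $J_r$ on $\Ccal_s$ is trivial, so Theorem \ref{thm:canon} gives $\Ecal \simeq \Sh(\Ccal_s, J_r) \simeq [\Ccal_s\op, \Set]$, completing the proof. I do not anticipate any serious obstacle: the only non-formal step is the observation that supercompactness forces subterminality in the localic setting, after which everything reduces to elementary poset bookkeeping.
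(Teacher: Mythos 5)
Your proposal is correct and follows essentially the same route as the paper: the crux in both is that supercompact objects of a localic topos are quotients of subterminal representables and hence subterminal, so $\Ccal_s$ is a poset on which the induced (reductive/canonical) topology is trivial, and the comparison with Theorem \ref{thm:canon} yields the presheaf presentation. You additionally spell out the easy direction and the ``moreover'' clause (strict epimorphisms in a poset are identities, funneling colimits are given by the weakly terminal object), which the paper's own proof leaves implicit; these verifications are accurate.
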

\begin{proof}
If $\Ecal$ is supercompactly generated, each subterminal must be covered by its supercompact subobjects, so the supercompact subterminal objects generate the topos.

A subterminal object is supercompact if and only if it has no proper cover by strictly smaller subterminals. This forces the canonical topology on the supercompact objects to be trivial, whence $\Ecal \simeq [\Ccal_s^{\mathrm{op}},\Set]$. By considering the expression of $\Ecal$ as a category of sheaves on a locale, we see that the supercompact objects must all be quotients of subterminals, and hence themselves subterminals, so $\Ecal$ really is the category of presheaves on $\Ccal_s$.
\end{proof}

\begin{dfn}
We say a locale is an \textbf{Alexandroff locale} if it has a sub-base of supercompact opens. These are precisely the locales presenting the toposes appearing in Proposition \ref{prop:localic}.
\end{dfn}

There are several ways to extend this to a full duality, of which we present two. The first is to consider all order-preserving functions between the posets, or equivalently all functors between them when they are viewed as categories. At the level of toposes, these correspond to essential geometric morphisms, and descending back to the level of Alexandroff locales they correspond to locale maps whose inverse image mappings preserve arbitrary intersections of opens, which we call \textbf{completely continuous} maps.

We may also regard the category of posets as a $2$-category (or more precisely a $(1,2)$-category, since there is at most one natural transformation between any pair of order-preserving functions). Recall that the Yoneda embedding induces a $2$-cell-reversing $2$-equivalence between the $2$-category of idempotent-complete categories and the $2$-category of presheaf toposes. Since posets are always idempotent complete, this restricts to a $2$-equivalence between posets and localic presheaf toposes. At the level of locale maps, we can consider natural transformations between corresponding frame homomorphisms.

This recovers the constructive version of Alexandroff duality, stated by Caramello in \cite[Theorems 4.1 and 4.2]{Stonetype}, and extended here to a $2$-categorical (or $(1,2)$-categorical) result:
\begin{prop}
\label{prop:dual1}
Let $\Pos$ be the $2$-category of posets and order-preserving functions with their pointwise ordering. Let $\mathbf{LocPSh}_{\mathrm{ess}}$ be the $2$-category of localic presheaf toposes and essential geometric morphisms with geometric transformations. Let $\mathbf{AlexLoc}_{cc}$ be the $2$-category of Alexandroff locales and completely continuous maps between these with the pointwise ordering on frame homomorphisms. Then we have equivalences of $2$-categories:
\[ \Pos\co \simeq \mathbf{LocPSh}_{\mathrm{ess}} \simeq \mathbf{AlexLoc}_{cc}. \]
\end{prop}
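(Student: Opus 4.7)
The plan is to establish each of the two equivalences separately, leveraging Proposition \ref{prop:localic} for essential surjectivity on objects and invoking standard $2$-categorical machinery to handle $1$-cells and $2$-cells, then finally to check the direction conventions align.

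For the first equivalence $\Pos\co \simeq \mathbf{LocPSh}_{\mathrm{ess}}$, I would define a $2$-functor sending a poset $\Ccal$ to $[\Ccal\op,\Set]$. On $1$-cells, an order-preserving map $F:\Ccal \to \Dcal$ (i.e., a functor between the associated thin categories) induces the essential geometric morphism $f:[\Ccal\op,\Set] \to [\Dcal\op,\Set]$ with inverse image $f^* = -\circ F\op$ and extra left adjoint $f_! = \mathrm{Lan}_{F\op}$. Essential surjectivity on objects is Proposition \ref{prop:localic}; fullness and faithfulness on $1$-cells is the standard fact (see \cite[A4.1]{Ele}) that essential geometric morphisms between presheaf toposes on idempotent-complete categories correspond to functors between the underlying categories, which applies trivially to posets. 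On $2$-cells, a natural transformation $\alpha: F \Rightarrow G$ (equivalently, the pointwise inequality $F(c) \leq G(c)$) induces by the contravariant action on presheaves a natural transformation $G^* \Rightarrow F^*$, yielding a $2$-cell $g \Rightarrow f$ in the reverse direction; this reversal is exactly what the ``$\co$'' encodes. Fullness on $2$-cells follows from the $2$-Yoneda lemma applied at the representables.

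For the second equivalence $\mathbf{LocPSh}_{\mathrm{ess}} \simeq \mathbf{AlexLoc}_{cc}$, I would specialise the classical $2$-equivalence between localic toposes and locales. Any localic topos $\Sh(L)$ recovers $L$ as its locale of subterminal objects, and by Proposition \ref{prop:localic} this localic topos is a presheaf topos precisely when the frame of subterminals admits a sub-base of supercompact elements, which is the definition of $L$ being Alexandroff. A geometric morphism $\Sh(L) \to \Sh(M)$ corresponds (uniquely up to isomorphism) to a frame homomorphism $\Omega(M) \to \Omega(L)$, being the restriction of the inverse image to subterminals; it is essential if and only if its inverse image preserves arbitrary limits of subterminals, which translates exactly to the corresponding frame homomorphism preserving arbitrary meets, i.e., complete continuity of the associated locale map. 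On $2$-cells, a geometric transformation $f \Rightarrow g$ of localic morphisms is a natural transformation $f^* \Rightarrow g^*$, which by restriction to subterminals is determined by the pointwise ordering of the corresponding frame homomorphisms; conversely any such inequality extends uniquely to a geometric transformation.

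The main obstacle will be the essential surjectivity of the $1$-cell correspondence in the second equivalence: showing that every completely continuous frame homomorphism $\phi:\Omega(M) \to \Omega(L)$ between Alexandroff frames arises from an essential geometric morphism. The key observation is that $\phi$ preserves arbitrary meets (by complete continuity) and arbitrary joins (being a frame homomorphism), and hence admits a left adjoint $\phi_!$. When $\Omega(L) \cong \mathrm{Down}(\Ccal)$ and $\Omega(M) \cong \mathrm{Down}(\Dcal)$ for posets $\Ccal,\Dcal$ of supercompact opens, one checks that $\phi_!$ sends the principal down-set $\downarrow c$ to a principal down-set $\downarrow F(c)$, defining an order-preserving map $F:\Ccal \to \Dcal$ which induces $\phi$ via the first equivalence. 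A secondary but routine task is to align the direction conventions of $2$-cells across the three $2$-categories so that the composition of the two equivalences produces the canonical localic-topos duality in the appropriate direction.
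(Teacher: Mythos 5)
Your proposal is correct and follows essentially the same route as the paper, which deduces the result from the standard $2$-cell-reversing equivalence between idempotent-complete categories and presheaf toposes with essential morphisms, restricted to posets via Proposition \ref{prop:localic}, and then translated to locales. Your explicit verification that a completely continuous frame homomorphism between Alexandroff frames sends principal down-sets to principal down-sets under its left adjoint (and hence comes from an order-preserving map) is a correct filling-in of a step the paper leaves implicit.
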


As Caramello remarks in \cite{Stonetype}, there are two canonical ways to recover a topological duality from this localic result (we omit the extension of these dualities to preorders here). Since the toposes of sheaves on Alexandroff locales are presheaf toposes, Alexandroff locales always have enough points; each point corresponds to an inhabited, upward-closed subset of the poset. The corresponding presheaf toposes moreover have enough \textit{essential} points, and (again since posets are trivially idempotent-complete) these correspond to elements of the posets.

Classically, we have a correspondence between preorders and \textbf{Alexandroff spaces}, which are characterised by the fact that their open sets are closed under arbitrary intersections. The correspondence sends a preorder to its underlying set, topologized by making subsets which are upward-closed with respect to the ordering open, and conversely it puts the specialization order on the points of an Alexandroff space; we recognize this as the space of essential points of the associated presheaf topos on the \textit{opposite} of the preorder. On morphisms, there is a correspondence between order-preserving functions and \textit{all} continuous maps. Indeed, we have that:
\begin{lemma}
Any continuous map between Alexandroff spaces is completely continuous (the inverse image mapping preserves arbitrary meets of open sets).
\end{lemma}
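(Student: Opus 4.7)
The plan is to observe that the complete continuity condition reduces, in the Alexandroff setting, to a tautology about set-theoretic inverse images. The key point is that in any Alexandroff space, the meet of a family of opens computed in the frame of opens coincides with their set-theoretic intersection, simply because that intersection is automatically open.

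Concretely, let $f : X \to Y$ be a continuous map between Alexandroff spaces and let $\{U_i \mid i \in I\}$ be an arbitrary family of opens of $Y$. First I would record that $\bigcap_{i \in I} U_i$ is open in $Y$ by the Alexandroff hypothesis, and is therefore the meet of the $U_i$ in the frame $\mathcal{O}(Y)$. Next, since the set-theoretic inverse image under any function preserves arbitrary intersections, we have the equality
\[
f^{-1}\!\left( \bigcap_{i \in I} U_i \right) \;=\; \bigcap_{i \in I} f^{-1}(U_i)
\]
of subsets of $X$. By continuity each $f^{-1}(U_i)$ is open in $X$, and by the Alexandroff hypothesis on $X$ their intersection is open too, so this intersection is the meet of the $f^{-1}(U_i)$ in $\mathcal{O}(X)$. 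Hence $f^{-1} : \mathcal{O}(Y) \to \mathcal{O}(X)$ preserves arbitrary meets, which is precisely complete continuity.

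There is no real obstacle here: the entire content is that Alexandroff spaces are designed so that arbitrary intersections of opens are opens, which forces the frame-theoretic meet to agree with the naive set-theoretic one, on both sides of the equation. The only thing to be careful about is not conflating meets in $\mathcal{O}(Y)$ with meets in the powerset of $Y$ in general; the Alexandroff hypothesis is exactly what collapses this distinction.
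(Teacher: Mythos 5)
Your proof is correct and is essentially the same as the paper's: both arguments reduce complete continuity to the facts that set-theoretic inverse images preserve arbitrary intersections and that the Alexandroff hypothesis makes arbitrary intersections of opens open, so that frame-theoretic meets agree with set-theoretic ones. Your version merely spells out the identification of meets with intersections on both sides more explicitly.
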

\begin{proof}
For a map of topological spaces, the inverse image map is defined on all subsets of points of the codomain, and preserves arbitrary intersections of these. Combined with the fact that an arbitrary intersection of opens is open, it follows that the inverse image map preserves arbitrary intersections of opens.
\end{proof}

The disparity between the correspondence in Proposition \ref{prop:dual1} and the classical duality is non-trivial: there are continuous maps between Alexandroff locales which are \textit{not} completely continuous.
\begin{xmpl}
Consider the ordinals $\omega$ and $\omega + 1$ as posets\footnote{As is conventional, we identify the elements of an ordinal $\alpha$ with the ordinals $\beta < \alpha$.}. The corresponding Alexandroff spaces $X_{\omega}$ and $X_{\omega +1}$ have frames of opens $(\omega + 1)\op$ and $(\omega + 2)\op$, respectively, since any upward-closed subset is either empty or has a least element. These are, of course, also the frames of opens of Alexandroff locales, being the frames of subterminals in $[\omega,\Set]$ and $[(\omega + 1),\Set]$, respectively.

The collection of all points in these locales correspond to the inhabited, upward-closed subsets of $\omega\op$ and $(\omega + 1)\op$, of which there are $\omega + 1$ and $\omega + 2$ respectively. Of these, all but one of the points are essential: we have an essential point for each element of the original poset, plus a limiting non-essential point corresponding to the upward-closed subset with no least element.

The topological spaces $X^+_{\omega}$ and $X^+_{\omega +1}$ obtained by topologizing the full collections of points are the respective sobrifications of $X_{\omega}$ and $X_{\omega +1}$. In each case, the added point (indexed by the limit ordinal $\omega$) is contained in every open set indexed by $n < \omega$, but the singleton consisting of that point is not open. That is, the open sets in $X^+_{\omega}$ and $X^+_{\omega +1}$ are not closed under arbitrary intersection, so these are not Alexandroff spaces!

Consider the frame homomorphism $(\omega + 1)\op \to (\omega + 2)\op$ defined by $\omega \mapsto \omega + 1$ and $n \mapsto n$ for $n < \omega$. This necessarily corresponds (contravariantly) to a continuous map $X^+_{\omega + 1} \to X^+_{\omega}$; indeed, it is the map sending the points $\omega$ and $\omega + 1$ to $\omega$ and sending $n \mapsto n$ for $n < \omega$. However, it fails to correspond to any continous map $X_{\omega + 1} \to X_{\omega}$, since it does not preserve the intersection of the family of opens indexed by $n < \omega$.
\end{xmpl}

In summary, we have:
\begin{crly}
Let $\mathbf{AlexSp}$ be the $2$-category of Alexandroff spaces and continuous maps with the pointwise specialization ordering. The equivalences of Proposition \ref{prop:dual1} extend to:
\[ \Pos\co \simeq \mathbf{LocPSh}_{\mathrm{ess}} \simeq \mathbf{AlexLoc}_{cc} \simeq \mathbf{AlexSp}\co. \]
\end{crly}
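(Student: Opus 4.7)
The plan is to factor the final equivalence through $\Pos$ by establishing a $2$-equivalence $\Pos \simeq \mathbf{AlexSp}$ that refines the classical Alexandroff correspondence recalled above, and then to compose with Proposition \ref{prop:dual1}.

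First I would define a $2$-functor $X_{(-)}: \Pos \to \mathbf{AlexSp}$ sending a poset $P$ to the Alexandroff space $X_P$ whose underlying set is $P$ and whose opens are the upward-closed subsets of $P$, sending an order-preserving map to the same function (which is continuous, since preimages of upward-closed sets under order-preserving functions are upward-closed), and sending the pointwise order on order-preserving maps to the pointwise specialization order on continuous maps. To prove essential surjectivity, I would argue that an arbitrary Alexandroff space $Y$ is determined by its specialization poset: the defining closure-under-arbitrary-intersection property of Alexandroff spaces ensures that for each $y \in Y$ the intersection $U_y$ of all opens containing $y$ is itself open; these $U_y$ form a minimal open neighbourhood basis, and unwinding shows that the opens of $Y$ are exactly the upward-closed subsets under the specialization order, so $Y \cong X_P$ for $P$ the specialization poset of $Y$.

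The key calculation for matching $1$-cells and $2$-cells is that the specialization order on $X_P$ agrees with the original order on $P$: using the convention $x \leq y \iff x \in \overline{\{y\}}$, we have $x \leq_{sp} y$ iff every open containing $x$ contains $y$; since $\{z \in P : z \geq x\}$ is the smallest open neighbourhood of $x$, this is equivalent to $x \leq_P y$. It follows that a function $X_P \to X_Q$ is continuous if and only if the underlying function $P \to Q$ is order-preserving, and that the two natural pointwise orderings on the collection of such maps coincide, so $X_{(-)}$ is fully faithful on both $1$-cells and $2$-cells.

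Combining the $2$-equivalence $\Pos \simeq \mathbf{AlexSp}$ with Proposition \ref{prop:dual1} then yields
\[\mathbf{AlexLoc}_{cc} \simeq \Pos\co \simeq \mathbf{AlexSp}\co,\]
as required. I expect the main obstacle to be purely one of bookkeeping: the ``$\co$'' appearing on $\mathbf{AlexSp}\co$ arises from the ``$\co$'' on $\Pos\co$ in Proposition \ref{prop:dual1}, which in turn reflects the convention that $2$-cells between geometric morphisms correspond to natural transformations between their inverse image functors; once this is tracked through, the remaining verifications are routine classical facts about Alexandroff spaces.
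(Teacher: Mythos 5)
Your proof is correct and follows essentially the same route as the paper, which likewise obtains the final equivalence from the classical correspondence between posets and Alexandroff spaces (up-set topology, minimal open neighbourhoods, specialization order recovering the order, continuity equalling monotonicity) composed with Proposition \ref{prop:dual1}; the paper merely packages the comparison directly against $\mathbf{AlexLoc}_{cc}$ via the small lemma that continuous maps of Alexandroff spaces are automatically completely continuous, which your routing through $\Pos$ renders unnecessary. The only point worth flagging is that a non-$T_0$ Alexandroff space has a specialization \emph{preorder} rather than a poset, so essential surjectivity of $X_{(-)}$ holds only up to equivalence in the $2$-category $\mathbf{AlexSp}$ --- a looseness the paper itself shares.
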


The second way to extend the correspondence between posets and locales to a duality is to allow all locale maps whose inverse images preserve supercompact opens, which by Theorem \ref{thm:2equiv} correspond to morphisms of sites between posets equipped with their trivial topologies; equivalently, they corresponds to the flat functors between these posets.

\begin{dfn}
Let $\Ccal$, $\Dcal$ be posets and let $f:\Ccal \to \Dcal$ be an order-preserving function. We say $f$ is \textbf{flat} if:
\begin{itemize}
	\item For any $d \in \Dcal$ there exists $c \in \Ccal$ such that $d \leq f(c)$;
	\item For any element $d \in \Dcal$ and any elements $c, c' \in \Ccal$ such that $d \leq f(c)$ and $d \leq f(c')$ there exists $c'' \in \Ccal$ such that $c'' \leq c$, $c'' \leq c'$, and $d \leq f(c'')$.
\end{itemize}
\end{dfn}

\begin{prop}
\label{prop:dual2}
Let $\Pos_f$ be the $2$-category of posets and flat functions with their pointwise ordering. Let $\mathbf{LocPSh}_{\mathrm{rpre}}$ be the $2$-category of localic presheaf toposes and relatively precise geometric morphisms with geometric transformations. Let $\mathbf{AlexLoc}_{sc}$ be the $2$-category of Alexandroff locales and locale maps between these whose inverse images preserve supercompact opens, with the pointwise ordering on frame homomorphisms. Then we have equivalences of $2$-categories:
\[ \Pos_{f}\op \simeq \mathbf{LocPSh}_{\mathrm{rpre}} \simeq \mathbf{AlexLoc}_{sc}. \]
\end{prop}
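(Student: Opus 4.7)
The plan is to obtain both equivalences by restricting Theorem \ref{thm:2equiv} and then transferring across the standard localic biequivalence. As a warm-up, observe that any poset $P$, viewed as a thin category, is trivially an effectual reductive category: parallel pairs always coincide, so the only strict epimorphisms are identities and the reductive topology is the trivial topology; a funneling diagram is a cocone on its weakly terminal vertex $D_0$ and its colimit is $D_0$ itself; and effectuality is vacuous because there are no nontrivial parallel pairs to separate. Thus $(P,J_r)$ is the presheaf site and $\Sh(P,J_r)\simeq [P\op,\Set]$. Combining Proposition \ref{prop:localic} with the observation that a down-set of $P$ is supercompact in $[P\op,\Set]$ exactly when it is principal, we see that the category of supercompact objects of $[P\op,\Set]$ is $P$ itself, so $P$ appears on the nose as an effectual reductive site in the sense of Theorem \ref{thm:2equiv}.

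Next I would unfold Definition \ref{dfn:morsite} for a functor $F:(P,J_{\mathrm{triv}})\to (Q,J_{\mathrm{triv}})$. Since all covering sieves are maximal and all parallel pairs in a poset coincide, conditions 1 and 4 are vacuous; conditions 2 and 3 must be witnessed by identities (the only covering morphisms in the trivial topology), and they reduce exactly to the two clauses in the definition of a flat order-preserving function. Conversely, every flat order-preserving function is patently a morphism of sites. Pointwise inequalities between such functions correspond bijectively, via Yoneda, to natural transformations between the induced functors $\ell_Q\circ F,\,\ell_Q\circ G:P\to [Q\op,\Set]$, and hence to geometric transformations between the associated geometric morphisms. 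Therefore the contravariant $2$-functor $P\mapsto [P\op,\Set]$ embeds $\Pos_f$ fully and faithfully on $1$-cells and $2$-cells into $\mathbf{SG}\TOP_{\mathrm{rel prec}}$. Its essential image consists precisely of the localic presheaf toposes, by Proposition \ref{prop:localic} together with the identification above of the representing effectual reductive site as $P$; this yields the first equivalence $\Pos_f\op\simeq \mathbf{LocPSh}_{\mathrm{rpre}}$.

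For the second equivalence I would invoke the standard biequivalence between localic Grothendieck toposes and locales (with locale maps ordered pointwise through their inverse image frame homomorphisms). Under this biequivalence, Alexandroff locales correspond to localic toposes whose frame of subterminals has a sub-base of supercompact elements, which by Proposition \ref{prop:localic} is exactly the class of localic presheaf toposes on posets. For the $1$-cells, note that in a localic supercompactly generated topos every supercompact object is a subterminal (as used in the proof of Proposition \ref{prop:localic}), and so corresponds to a supercompact open in the associated locale. Combining this with Proposition \ref{prop:relpres}, a geometric morphism of such toposes is relatively precise if and only if its inverse image preserves supercompact objects, if and only if the corresponding frame homomorphism preserves supercompact opens. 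This yields the desired $2$-equivalence $\mathbf{LocPSh}_{\mathrm{rpre}}\simeq \mathbf{AlexLoc}_{sc}$.

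The chief technical nuisance is the site-theoretic identification of morphisms of sites with flat order-preserving functions: because a general poset lacks finite limits, one cannot appeal to the clean ``flat plus cover-preserving'' characterisation and must instead work directly with Caramello's four-clause Definition \ref{dfn:morsite}. All remaining steps are routine unpackings of definitions, applications of Propositions \ref{prop:localic} and \ref{prop:relpres} and Theorem \ref{thm:2equiv}, and standard facts about the localic biequivalence.
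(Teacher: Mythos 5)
Your proposal is correct and follows essentially the route the paper intends: the paper states Proposition \ref{prop:dual2} without an explicit proof, but the paragraph preceding it indicates exactly your argument, namely restricting Theorem \ref{thm:2equiv} to posets viewed as effectual reductive sites with trivial reductive topology, identifying morphisms of sites between them with flat order-preserving functions, and transferring to locales via Proposition \ref{prop:localic} and Proposition \ref{prop:relpres}. Your write-up simply makes explicit the verifications (posets are effectual reductive, the unfolding of Definition \ref{dfn:morsite}) that the paper leaves to the reader.
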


\subsection{Join Semilattices and Compact Generation}
\label{ssec:jsl}

\begin{dfn}
\label{dfn:JSL}
In this paper, a \textbf{join semi-lattice} is a poset having all finite joins, including the bottom element. We say a join semi-lattice is \textbf{distributive} if for any triple of objects $(a,b,c)$ with $a \leq b \vee c$, there are elements $b' \leq b$ and $c' \leq c$ such that $a = b' \vee c'$; see \cite[\S II.5]{GLT}. This in particular holds in any distributive lattice.
\end{dfn}

Distributivity, which inductively extends to arbitrary finite joins, is precisely the condition ensuring that the collection of finite join covers is a stable class of finite families. Note that finite join covers are coproduct injections, so that \textit{any distributive join semi-lattice is a coalescent category}. Moreover, a distributive join semi-lattice has pullbacks if and only if it is a distributive lattice, so any distributive lattice is an example of a coalescent and coherent category which fails to be positive.

Even if we had not insisted on the presence of a bottom element in a join semi-lattice, by taking $c = a$ in the definition of distributivity, we would have that any pair of elements in a distributive join semi-lattice has a lower bound, although there need not be a greatest such.

\begin{lemma}
Suppose that $a,b,c$ are elements of a distributive join semilattice. If the meets $a \wedge b$ and $a \wedge c$ exist then so does $a \wedge (b \vee c)$, and the distributive law holds:
\[a \wedge (b \vee c) = (a \wedge b) \vee (a \wedge c).\]
Similarly, if $b \wedge c$ exists then so does $(a \vee b) \wedge (a \vee c)$ via the dual distributivity law:
\[a \vee (b \wedge c) = (a \vee b) \wedge (a \vee c).\]
\end{lemma}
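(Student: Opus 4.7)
The plan is to verify each candidate formula on the right-hand side is indeed a lower bound of the relevant pair and then use the distributivity axiom of Definition~\ref{dfn:JSL} to check it is the greatest such lower bound, which simultaneously establishes existence of the meet and the claimed equation.

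For the first part, I would take the candidate $m := (a \wedge b) \vee (a \wedge c)$. Since $a \wedge b \leq a$ and $a \wedge c \leq a$, we have $m \leq a$; likewise $m \leq b \vee c$. For the universal property, suppose $d \leq a$ and $d \leq b \vee c$. Distributivity applied to $d \leq b \vee c$ yields $b' \leq b$ and $c' \leq c$ with $d = b' \vee c'$. Then $b' \leq d \leq a$ together with $b' \leq b$ forces $b' \leq a \wedge b$, and analogously $c' \leq a \wedge c$, so $d = b' \vee c' \leq m$. Hence $a \wedge (b \vee c)$ exists and equals $m$.

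For the second part, the candidate is $m' := a \vee (b \wedge c)$; it is immediate that $m' \leq a \vee b$ and $m' \leq a \vee c$. For universality, suppose $d \leq a \vee b$ and $d \leq a \vee c$. Applying distributivity to $d \leq a \vee b$ produces a decomposition $d = a_1 \vee b_1$ with $a_1 \leq a$ and $b_1 \leq b$. Then $b_1 \leq d \leq a \vee c$, so a second application of distributivity gives $b_1 = a_2 \vee c_2$ with $a_2 \leq a$ and $c_2 \leq c$. Since $c_2 \leq b_1 \leq b$ and $c_2 \leq c$, and $b \wedge c$ exists by hypothesis, $c_2 \leq b \wedge c$. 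Consequently $b_1 = a_2 \vee c_2 \leq a \vee (b \wedge c) = m'$ and hence $d = a_1 \vee b_1 \leq m'$, as required.

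The main obstacle is the second part: one must apply distributivity twice and then recognize the resulting $c_2$ as a lower bound of both $b$ and $c$ in order to invoke the hypothesised meet $b \wedge c$. The first application alone is not enough because it only decomposes $d$ relative to the join $a \vee b$; the second application is needed to extract the piece of $b_1$ that also lies below $c$. One must also be careful not to invoke any binary meets beyond the two whose existence is explicitly assumed, since a general distributive join semilattice need not be a lattice.
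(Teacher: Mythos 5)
Your proof is correct and follows essentially the same route as the paper's: in each case you verify the candidate is a common lower bound and then use the distributivity axiom (once for the first identity, twice for the second) to show any other lower bound lies beneath it, concluding for the dual law that the piece $c_2$ sits below both $b$ and $c$ and hence below the hypothesised meet $b \wedge c$. The only difference is notational.
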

\begin{proof}
Suppose $a \wedge b$ and $a \wedge c$ exist. Certainly $(a \wedge b) \vee (a \wedge c)$ is a lower bound for both $a$ and $b \vee c$. Given any element $x$ with $x \leq a$ and $x \leq b \vee c$, by assumption there exist $b' \leq b$, $c' \leq c$ with $b' \vee c' = x \leq a$. Thus $b' \leq a \wedge b$ and $c' \leq a \wedge c$, whence $x = b' \vee c' \leq (a \wedge b) \vee (a \wedge c)$, as required.

Now suppose $b \wedge c$ exists. This time, $a \vee (b \wedge c)$ is a lower bound for both $a \vee b$ and $a \vee c$. Given $y$ such that $y \leq a \vee b$ and $y \leq a \vee c$, firstly we have $a' \leq a$ and $b' \leq b$ with $a' \vee b' = y$. Then $b' \leq y \leq a \vee c$ so we have $a'' \leq a$ and $c' \leq c$ with $a'' \vee c' = b'$. In summary,
\[y = a' \vee a'' \vee c' \leq a \vee c' \leq a \vee (b \wedge c),\]
since $c' \leq c$ and $c' \leq b' \leq b$.
\end{proof}

\begin{rmk}
This result looks like it should be deducible from Scholium \ref{schl:finpb}, since we can view meets as pullbacks. That result does allow us to prove that the distributivity laws hold when all of the terms are well-defined, but it doesn't seem to provide enough constraints to construct one meet from others as we have done in the proof above.
\end{rmk}

\begin{prop}
\label{prop:localic2}
A localic topos is compactly generated if and only if it is the category of sheaves on a distributive join semilattice with respect to the topology that makes finite joins covering. 
\end{prop}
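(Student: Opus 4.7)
The approach proceeds by linking the machinery developed in the paper to the order-theoretic structure of distributive join semi-lattices, treating each direction in turn.

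For the forward direction, let $\Ecal$ be a compactly generated localic topos, and let $L$ be the full subcategory of $\Ecal$ on the compact subterminal objects. I would first show that $L$ is separating: since $\Ecal$ is localic, every object is a union of its subterminal subobjects, and each such subterminal is in turn a union of its compact subobjects (using that $\Ecal$ is compactly generated), which by image factorisation and Lemma~\ref{lem:closed} can be taken to be simultaneously compact and subterminal. Next, $L$ is a distributive join semi-lattice: the binary join $U \vee V$ in the frame is the image of the coproduct $U + V$, which is compact by Lemma~\ref{lem:closed2}, so Lemma~\ref{lem:closed} ensures $U \vee V$ is compact; the empty join is the initial object, which is subterminal in a localic topos and trivially compact. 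For distributivity of $L$, given $U \leq V \vee W$ in $L$, I would use the frame identity $U = (V \wedge U) \vee (W \wedge U)$ together with the fact that the possibly non-compact subterminals $V \wedge U$ and $W \wedge U$ are themselves unions of compact subterminals, and then apply compactness of $U$ to reduce to a finite refinement whose partial joins yield $V' \leq V$ and $W' \leq W$ in $L$ with $U = V' \vee W'$. The Comparison Lemma then gives $\Ecal \simeq \Sh(L, J_{can}^{\Ecal}|_L)$, and the induced topology is identified with the finite-joins topology by observing that jointly epic families on $a \in L$ are exactly those whose members join to $a$ in the frame, and that compactness of $a$ reduces any such family to a finite subfamily.

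For the converse direction, given a distributive join semi-lattice $L$, let $\Tcal'$ be the class of finite families $\{a_i \leq a\}$ with $\bigvee a_i = a$. The crux is to check that $\Tcal'$ is a stable class of finite families in the sense of Definition~\ref{dfn:stable2}: axioms $1'$ and $2'$ follow directly from basic properties of joins, while axiom $3'$ is exactly the inductive extension of the binary distributivity condition of Definition~\ref{dfn:JSL} to finite joins. Thus $(L, J_{\Tcal'})$ is a finitely generated site, and Proposition~\ref{prop:representable} gives that $\Sh(L, J_{\Tcal'})$ is compactly generated; it is localic because $L$ is a poset, so its representables are subterminal and separating.

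The main obstacle is the distributivity argument in the forward direction. The naive choice $V' := V \wedge U$ fails because this meet need not be compact and hence need not lie in $L$, so one must genuinely use the hypothesis that the compact subterminals form a base of the frame of subterminals together with compactness of $U$ to cut an \emph{a priori} infinite join expression down to a finite one from which witnesses for distributivity can be assembled inside $L$.
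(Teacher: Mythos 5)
Your proposal is correct and follows essentially the same route as the paper: pass to the compact subterminal objects, verify they form a distributive join semilattice whose finite-join topology is the induced canonical one, and use the stability of finite join covers (Proposition \ref{prop:representable}) for the converse. The paper's own proof is much terser and in particular asserts distributivity without argument, so your explicit verification via frame distributivity plus compactness of $U$ usefully fills in a step the paper leaves implicit.
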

\begin{proof}
Unlike in the supercompact case, the compact subterminal objects no longer populate all of $\Ccal_c$, but a similar argument applies: if $\Ccal$ is the subcategory of $\Ecal$ on the compact \textit{subterminal} objects, then $\Ccal$ is a distributive join semilattice since a finite union of compact subterminal objects is compact, and so $\Ecal = \Sh(\Ccal,J)$ where $J$ is the topology on $\Ccal$ whose covering families are finite joins.
\end{proof}

By considering the locale of subterminal objects in the topos of sheaves on a distributive join semi-lattice, we recover a constructive version of the topological duality results for distributive join semi-lattices presented by Gr\"{a}tzer in \cite[\S II.5]{GLT}\footnote{Note that Gr\"{a}tzer uses the term \textit{Stone space} to refer to the result of the general dualising construction presented there, rather than the more specific meaning of this term.}. Without meets, one cannot define prime ideals directly as one would in a distributive lattice. However, one can define prime filters (or prime \textit{dual ideals} as they are called in \cite{GLT}) and define prime ideals as complements of these. Working constructively, it makes more sense to work with the prime filters, especially given that the topology that Gr\"{a}tzer defines on the prime ideals has a sub-base consisting of prime ideals which do \textit{not} contain a particular element of the distributive semilattice. Indeed, the points of the locale corresponding to (the localic topos of sheaves on) a distributive join semilattice, when viewed as completely prime filters in the frame of opens, correspond bijectively by restriction to prime filters in the distributive join semi-lattice. When there are enough prime filters, which is always the case if we allow ourselves to assume the axiom of choice, the subsets of the corresponding sober space of filters containing a given element of the semilattice do form a sub-base for the topology, too.

In \cite[Theorem II.5.8]{GLT}, Gr\"{a}tzer characterises the spaces arising from join semilattices via this duality as the $T_0$ spaces $X$ having a base of compact open subsets and satisfying the additional condition,
\begin{quote}
(S2) If $F$ is a closed set in $X$, $\{U_k \mid k \in K\}$ is a directed family of compact opens in $\Ocal(X)\op$, and $U_k \cap F \neq \emptyset$ for all $k$, then $\left(\bigcap_{k \in K} U_k \right) \cap F \neq \emptyset$.
\end{quote}
Gr\"{a}tzer himself describes this condition as ``complicated''. It follows from our reasoning above that we may discard (S2) by replacing the $T_0$ condition with the condition that $X$ should be sober:
\begin{crly}
A sober topological space is the space of prime ideals of a distributive join semilattice if and only if it has a sub-base of compact open sets. Moreover, the space is compact if and only if the semilattice has a top element, and is coherent if and only if the distributive semilattice is a distributive lattice.
\end{crly}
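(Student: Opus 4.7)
The plan is to invoke Proposition \ref{prop:localic2} to identify compactly generated localic toposes with categories of sheaves on distributive join semilattices, and then to transfer this localic characterization into the topological setting via the classical correspondence between sober topological spaces and localic toposes with enough points. Concretely, for $X$ sober we have $\Sh(X) \simeq \Sh(\Ocal(X))$, so the task reduces to characterizing when $\Sh(X)$ is a compactly generated localic topos in terms of topological data on $X$, and identifying the associated distributive join semilattice.

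For the forward direction, suppose $X$ is a sober space with a sub-base $\Scal$ of compact open sets. Let $D$ denote the distributive join semilattice generated by $\Scal$ inside $\Ocal(X)$; its elements are finite unions of members of $\Scal$, hence compact, and distributivity in the sense of Definition \ref{dfn:JSL} is inherited from the ambient frame distributivity of $\Ocal(X)$. The inclusion $D \hookrightarrow \Ocal(X)$ is a cover-preserving and cover-lifting morphism of sites from $(D, J)$ (with $J$ the finite-join topology) to the canonical site on $\Ocal(X)$, and is moreover $J$-dense in the sense of \cite{Dense}, thereby inducing by \cite[Proposition 7.16]{Dense} an equivalence $\Sh(D, J) \simeq \Sh(X)$. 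Sobriety then identifies $X$ with the space of points of $\Sh(D, J)$, which is precisely Gratzer's space of prime ideals of $D$. For the reverse direction, given a distributive join semilattice $D$, Proposition \ref{prop:localic2} supplies $\Sh(D, J)$ as a compactly generated localic topos, and by Proposition \ref{prop:representable} each $a \in D$ is a compact subterminal object. The corresponding opens $D_a = \{p : a \notin p\}$ on the sober space of points are compact, and by construction of the point-set topology they form a sub-base.

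For the moreover clauses, $X$ is a compact topological space if and only if the terminal open is compact, if and only if $1 \in \Sh(X)$ is a compact subterminal; via the equivalence $\Sh(X) \simeq \Sh(D, J)$ this translates into $D$ having a top element (representing $1$). The space $X$ is coherent precisely when, in addition, finite intersections of compact opens remain compact; since meets of subterminals in $\Sh(X)$ correspond to meets of the associated elements in $D$, and compact subterminals are precisely the elements of $D$, this condition is equivalent to $D$ being closed under binary meets, which together with the existing bottom (empty join) and finite joins upgrades $D$ to a distributive lattice.

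The principal obstacle is establishing that the inclusion $D \hookrightarrow \Ocal(X)$ is a genuinely dense morphism of sites, which requires showing that every open of $X$ is a union of elements of $D$ lying below it. Although the sub-base property alone only presents the frame via finite intersections and arbitrary joins of $\Scal$, combining it with sobriety and compactness of the sub-basic opens forces the semilattice $D$ to generate $\Ocal(X)$ under arbitrary joins: the completely prime filters of $\Ocal(X)$ are determined by their restrictions to $D$, so by injectivity of the induced frame map $\mathrm{Id}(D) \to \Ocal(X)$ (which follows from compactness of each element of $D$) and the sobriety-driven surjectivity, one recovers $\Ocal(X) \cong \mathrm{Id}(D)$, as required.
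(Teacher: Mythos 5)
Your overall strategy is the same as the paper's: the corollary is meant to follow from Proposition \ref{prop:localic2} together with the identification of the points of the locale of $\Sh(D,J)$ with prime filters of $D$, and your treatment of the two ``moreover'' clauses is consistent with that reading. However, the step you yourself single out as the principal obstacle --- that a sub-base of compact opens generates $\Ocal(X)$ under joins \emph{alone}, so that $\Sh(X)$ is genuinely compactly generated and Proposition \ref{prop:localic2} applies --- is not actually closed by your final paragraph. The map $\mathrm{Idl}(D) \to \Ocal(X)$, $I \mapsto \bigcup I$, is only known to preserve finite meets (hence to be a frame homomorphism at all) once one already knows that each intersection $a \cap b$ of elements of $D$ is a union of elements of $D$, which is precisely the density you are trying to establish; and ``sobriety-driven surjectivity'' is not an argument: sobriety identifies $X$ with the points of $\Ocal(X)$ and tells you that completely prime filters are determined by their traces on the sub-base, but that only gives control at the level of points and does not yield surjectivity of $\mathrm{Idl}(D) \to \Ocal(X)$, i.e.\ that every open is a union of compact opens. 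The same unproved density is also hidden in your earlier claim that distributivity of $D$ is ``inherited from the ambient frame'': frame distributivity produces the witnesses $a \wedge b$ and $a \wedge c$, which need not lie in $D$; to replace them by elements of $D$ one must cover them by members of $D$ and invoke compactness of $a$, and the existence of such a cover is again the density in question.

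In fairness, the paper is equally terse here (``it follows from our reasoning above''), and what that reasoning actually supplies is the forward direction --- the prime spectrum of a distributive join semilattice is sober with a base, a fortiori a sub-base, of compact opens --- together with the bijection between points and prime filters. To complete the converse you need either a direct proof that in a sober space a sub-base of compact opens forces every open to be a union of compact opens, or to read ``sub-base'' in the stronger sense that every open is a join of compact opens; under that reading everything else you wrote goes through: $D$ is dense in $\Ocal(X)$, distributive by the compactness argument just sketched, and $\Sh(X) \simeq \Sh(D,J)$ by the Comparison Lemma, after which the identification of $X$ with the space of prime filters and the compactness and coherence clauses follow as you describe.
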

Celani and Calomino also arrive at this simplification in \cite[Theorem 20]{DSLat}\footnote{Note that Celani et al work dually with distributive meet semilattices, but this does not affect the results to any significant degree}, although their direct proof is that sobriety of a $T_0$ space is equivalent to condition (S2) takes over a page. Celani also extended the correspondence between semilattices and spaces of Gr\"{a}tzer to a duality between meet semi-lattices and a class of ordered topological spaces, which on the space side required introducing \textit{meet relations} between spaces, in \cite[Definition 22]{Celani}, which do not in general reduce to continuous maps. We shall not explicitly reproduce that duality result here, although we observe that it might be possible to recover it by considering comorphisms of join semilattice sites.



Our topos-theoretic approach gives us an alternative duality result analogous to Proposition \ref{prop:dual2}. Translating the definition of morphism of sites into this setting, we arrive at the following definition.
\begin{dfn}
Let $\Ccal$, $\Dcal$ be distributive join semilattices. An order-preserving map $f: \Ccal \to \Dcal$ is a \textbf{distributive join homomorphism} if it:
\begin{itemize}
	\item Preserves finite joins (including the bottom element).
	\item For any $d \in \Dcal$ there exists $c \in \Ccal$ such that $d \leq f(c)$;
	\item For any element $d \in \Dcal$ and any elements $c, c' \in \Ccal$ such that $d \leq f(c)$ and $d \leq f(c')$ there exists $c_1, \dotsc, c_n \in \Ccal$ and $d_1, \dotsc, d_n \in \Dcal$ such that $c_i \leq c$, $c_i \leq c'$, $d_i \leq f(c_i)$ and $d = d_1 \vee \cdots \vee d_n$.
\end{itemize}
\end{dfn}

\begin{crly}[Stone duality for distributive join-semilattices]
\label{crly:Stone}
Let $\mathbf{DJSL}_f$ be the $2$-category of distributive join semilattices, distributive join homomorphisms, and pointwise comparison morphisms. Let $\mathbf{LocCG}_{rpro}$ be the $2$-category of compactly generated localic toposes, relatively proper geometric morphisms, and geometric transformations. Let $\mathbf{preCoh}_{c}$ be the $2$-category of locales having a sub-base of compact opens, continuous maps whose inverse image mappings preserve compact open sets, and pointwise comparisons of these. Then we have equivalences:
\[\mathbf{DJSL}_f \simeq \mathbf{LocCG}_{rpro} \simeq \mathbf{preCoh}_{c}\]
Given the axiom of choice, by Theorem \ref{thm:Del}, we may identify the objects of the latter category with the sober topological spaces having a sub-base of compact open subsets.
\end{crly}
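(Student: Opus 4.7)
The plan is to establish the two equivalences in the displayed chain separately, leveraging Proposition \ref{prop:localic2}, Corollary \ref{crly:siteprecise}, Proposition \ref{prop:relpres}, and the classical $2$-equivalence between localic Grothendieck toposes and locales; the final remark about sober spaces will then follow from Theorem \ref{thm:Del}.

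For $\mathbf{DJSL}_f \simeq \mathbf{LocCG}_{rpro}$, I construct a $2$-functor sending a distributive join semilattice $L$ to the topos $\Sh(L, J_\vee)$, where $J_\vee$ is the topology whose covering families are the finite join-covers; essential surjectivity on objects is exactly Proposition \ref{prop:localic2}. On $1$-cells, I verify that a functor $L \to L'$ is a morphism of sites $(L, J_\vee) \to (L', J_\vee)$ in the sense of Definition \ref{dfn:morsite} if and only if it is a distributive join homomorphism: condition $1$ becomes finite-join preservation; conditions $2$ and $3$ become the two flatness clauses (using that in a poset the $J_\vee$-covering families are exactly the finite join-covers, so the distributivity of $L'$ is just what is needed to simplify these clauses to the stated form); and condition $4$ is vacuous in a poset, since any two parallel morphisms coincide. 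By Corollary \ref{crly:siteprecise}, these morphisms of sites induce relatively proper geometric morphisms. For the converse direction, if $f: \Sh(L', J_\vee) \to \Sh(L, J_\vee)$ is relatively proper, then $f^*$ preserves compact objects by Proposition \ref{prop:relpres}; since the compact subterminals of $\Sh(L, J_\vee)$ coincide (via the full and faithful Yoneda embedding on the subcanonical site) with $L$, the restriction $f^*|_L$ lands in $L'$, preserves finite joins (being finite-colimit preserving), and inherits flatness from $f^*$, thus defining a distributive join homomorphism inducing $f$ up to isomorphism. Matching on $2$-cells is immediate, because a geometric transformation between localic geometric morphisms is determined by its action on a generating family of subterminal objects, reducing to a pointwise comparison of inverse image functors.

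For $\mathbf{LocCG}_{rpro} \simeq \mathbf{preCoh}_c$, I invoke the standard $2$-equivalence between localic Grothendieck toposes and locales, sending a localic topos $\Ecal$ to the locale with frame of opens $\mathrm{Sub}_\Ecal(1)$. Under this, compact generation translates to the frame being generated (as a frame) by its compact elements, which is precisely the condition of possessing a sub-base of compact opens; a relatively proper geometric morphism corresponds, by the same restriction argument as in the preceding paragraph, to a locale map whose inverse image frame homomorphism preserves compact opens; and $2$-cells are pointwise comparisons on both sides. Finally, Theorem \ref{thm:Del} (with the axiom of choice) guarantees that every compactly generated topos has enough points, so the corresponding locales are spatial and hence determined by their underlying sober spaces. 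The main obstacle lies in the first equivalence: one must verify carefully that, in the poset setting, flatness of a functor reduces precisely to the stated distributive join homomorphism axioms, and that the inverse image of an arbitrary relatively proper morphism really retains enough structure when restricted to the distributive join semilattice of compact subterminals to reconstruct a morphism of sites inducing it.
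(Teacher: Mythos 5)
Your proposal is correct and assembles exactly the ingredients the paper intends for this corollary (which it states without a separate proof): Proposition \ref{prop:localic2} for objects, the identification of morphisms of sites between join-semilattice sites with distributive join homomorphisms for $1$-cells, Corollary \ref{crly:siteprecise} and Proposition \ref{prop:relpres} for the correspondence with relatively proper morphisms, the standard localic reflection for the second equivalence, and Theorem \ref{thm:Del} for spatiality. In particular, your direct restriction-to-compact-subterminals argument for fullness on $1$-cells is the right move, since distributive join semilattices are coalescent but not positive, so the equivalence part of Theorem \ref{thm:2equiv} cannot be invoked verbatim here.
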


Observe that this duality is not merely an extension of the Stone duality between distributive lattices and coherent locales recalled by Caramello in \cite[\S 4.2]{Stonetype} (with reference to \cite[II.3.2, II.3.3]{Stone}), but the maximal extension within Caramello's framework, in the sense that the toposes involved are the full class of compactly generated localic toposes, and hence correspond to the full class of locales having a sub-base of compact opens.


\subsection{Localic Examples}

By applying Theorem \ref{thm:closure}, we can immediately conclude that the above characterisations apply to the localic reflections of supercompactly and compactly generated toposes.
\begin{crly}
The localic reflection of any supercompactly generated topos is a (localic) presheaf topos. The localic reflection of a compactly generated topos is a topos of sheaves on a distributive join semilattice.
\end{crly}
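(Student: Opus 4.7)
The plan is to exhibit the localic reflection as the codomain of a hyperconnected geometric morphism from the given topos, and then to apply the closure and characterisation results already established. Concretely, for a Grothendieck topos $\Ecal$, the localic reflection $L(\Ecal)$ arises as the intermediate topos in the hyperconnected-localic factorisation
\[\Ecal \xrightarrow{h} L(\Ecal) \xrightarrow{\ell} \Set\]
of the global sections geometric morphism, as recalled in \cite[A4.6]{Ele}. In particular, $h: \Ecal \to L(\Ecal)$ is hyperconnected.

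With this identification in hand, the argument is immediate: by Theorem \ref{thm:closure}(3), the codomain of a hyperconnected morphism out of a supercompactly (resp. compactly) generated topos is again supercompactly (resp. compactly) generated. Thus $L(\Ecal)$ is a supercompactly (resp. compactly) generated localic topos. Now apply Proposition \ref{prop:localic} in the supercompactly generated case to conclude that $L(\Ecal) \simeq [\Ccal\op,\Set]$ for some poset $\Ccal$, and Proposition \ref{prop:localic2} in the compactly generated case to conclude that $L(\Ecal)$ is the topos of sheaves on a distributive join semilattice equipped with the topology generated by finite joins.

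There is essentially no obstacle here, since all the heavy lifting was done in Theorem \ref{thm:closure} and in the localic characterisations of Section \ref{sec:xmpls}; the only point worth emphasising explicitly is that the hyperconnected-localic factorisation realises the localic reflection precisely as the target of a hyperconnected morphism, which is exactly the hypothesis required to invoke part (3) of Theorem \ref{thm:closure}.
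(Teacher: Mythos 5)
Your proposal is correct and matches the paper's intended argument exactly: the paper deduces the corollary immediately from Theorem \ref{thm:closure}(3) applied to the hyperconnected part of the hyperconnected--localic factorisation of the global sections morphism, followed by Propositions \ref{prop:localic} and \ref{prop:localic2}. You have simply spelled out the steps the paper leaves implicit.
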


\begin{xmpl}
In the proof of Corollary \ref{crly:hypepres}, we observe that hyperconnected morphisms are precise, so that any supercompactly generated two-valued topos is supercompact. Taking $\Ccal$ to be any non-trivial poset with a maximal element and taking $\Ecal$ to be the category of presheaves on this poset provides an example of a supercompactly generated, supercompact topos which is not two-valued.
\end{xmpl}

\begin{xmpl}
\label{xmpl:nonsc}
The objects of a reductive category need not be supercompact within this category, in spite of Corollary \ref{crly:strict}. For example, consider the four-element lattice:
\[\begin{tikzcd}
	& 1 & \\
	a \ar[ur] & & b \ar[ul] \\
	& 0 \ar[ur] \ar[ul] &
\end{tikzcd}\]
By Proposition \ref{prop:localic}, it forms a reductive category. However, the colimit of the span $a \leftarrow 0 \rightarrow b$ is $1$, which is to say that the arrows from $a$ and $b$ to $1$ form a strictly epic family containing no strict epimorphism. Even if we relax to mere epimorphisms here, the empty family is strictly epic over $0$ yet has no inhabited subfamilies.

By considering the lattice of subsets of $\Nbb$ as a distributive join semilattice, we similarly find that objects of coalescent categories need not be compact in those categories.
\end{xmpl}

\begin{xmpl}
\label{xmpl:R}
A familiar example of a localic, locally connected topos which is not supercompactly (or even compactly) generated is the topos of sheaves on the real numbers: no non-trivial open sets in the reals are compact. $\Sh(\Rbb)$ is not totally connected, so by Lemma \ref{lem:point} none of its points are relatively precise.
\end{xmpl}

\begin{xmpl}
\label{xmpl:Nlocalic}
As a more original non-example, we construct a localic, totally connected topos which is not compactly generated. Consider the poset $P$ whose objects are the natural numbers (excluding $0$), and with order given by $n \leq m$ iff $n$ is divisible by $m$, so that $1$ is terminal. Endow this poset with the Grothendieck topology $J$ whose covering sieves on a natural number $n$ are those containing cofinitely many prime multiples of $n$.

All of these sieves are connected and effective epimorphic; that is, $(P,J)$ is a localic, locally connected, subcanonical site. For every $n$, $\ell(n)$ is therefore an indecomposable subterminal object of $\Sh(P,J)$. Since $P$ has finite limits, the topos $\Sh(P,J)$ is moreover totally connected. To show that $\Sh(P,J)$ fails to be compactly generated it suffices to show that none of the $\ell(n)$ are. But by construction each $\ell(n)$ has a nontrivial infinite covering family by other representables which contains no finite subcovers. Thus this topos has no supercompact objects, and the only compact object is the initial object.
\end{xmpl}

\begin{xmpl}
\label{xmpl:surjprec}
As promised earlier, we demonstrate that it is not possible to extend Theorem \ref{thm:hype2}(i) or (ii) to relatively precise or relatively proper surjections.

Consider the poset $P$ constructed as a fractal tree with countably many roots and branches. Explicitly, it has elements \textit{non-empty} finite sequences of natural numbers, $\coprod_{n=1}^\infty \Nbb^n$, with $\vec{x} \leq \vec{y}$ if $\vec{y}$ is an initial segment of the $\vec{x}$. The Alexandroff locale $L$ corresponding to $P$ has opens which are downward-closed subsets in this ordering, so that for any sequence $\vec{x}$ in an open set, all extensions of $\vec{x}$ also lie in that open.

Consider the collection of opens $U$ such that if $(x_1,\dotsc,x_{k-1},x_k) \in U$, then $(x_1,\dotsc,x_{k-1},y) \in U$ for cofinitely many values of $y$. This collection is clearly closed under finite intersections and arbitrary unions (we needed the sequences in $P$ to be non-empty to ensure that the empty intersection of opens was included here), which makes it a subframe of $\Ocal(L)$. This corresponds to some locale $L'$ such that there is a surjective locale map $L \to L'$, and hence a geometric surjection $s: \Sh(L) \to \Sh(L')$. Moreover, this surjection is relatively precise. Indeed, if $X$ is a sheaf on $L'$ and we are given a covering of $s^*(X)$ in $\Sh(L)$, we may without loss of generality assume that $s^*(X)$ is covered by supercompact opens of $L$, and each supercompact open contains the open of $L'$ consisting of the strict extensions of sequences it contains; $X$ is necessarily the union of these in $\Sh(L')$.

However, $\Sh(L')$ is not supercompactly or even compactly generated, since the opens of the form $s^*(U)$ are not compact, with the exception of the initial open, despite $s^*$ preserving any supercompact objects which exist.
\end{xmpl}

\bibliographystyle{plain}
\bibliography{classificationbib}

\end{document}